\numberwithin{equation}{section}
\newcommand{\sub}{\subseteq}
\newcommand{\Z}{\mathbb{Z}}
\newcommand{\R}{\mathbb{R}}
\newcommand{\C}{\mathbb{C}}
\newcommand{\N}{\mathbb{N}}
\newcommand{\eps}{\varepsilon}
\newcommand{\supp}{\mathrm{supp}}
\numberwithin{chap}{section}
\newtheorem{thm}{Theorem}
\numberwithin{thm}{section}
\newtheorem{conj}[thm]{Conjecture}
\newtheorem{prop}[thm]{Proposition}
\newtheorem{defn}[thm]{Definition}
\newtheorem{lem}[thm]{Lemma}
\newtheorem{claim}[thm]{Claim}
\newtheorem{cor}[thm]{Corollary}
\newtheorem{example}[thm]{Example}
\DeclarePairedDelimiterX{\inp}[2]{\langle}{\rangle}{#1, #2}
\let\oldnorm\norm
\def\norm{\@ifstar{\oldnorm}{\oldnorm*}}
\begin{document}

\pagestyle{myheadings} \thispagestyle{empty} \markright{}
\title{Study guide for ``On restricted projections to planes in $\R^3$''}

\author{Tainara Borges, Siddharth Mulherkar and Tongou Yang}
\address[Tainara Borges]{Department of Mathematics, Brown University\\
Providence, RI 02912 United States
}
\email{tainara\textunderscore gobetti\textunderscore borges@brown.edu}

\address[Siddharth Mulherkar]{Department of Mathematics, University of California, Los Angeles\\
CA 90095, United States 
}
\email{sidmulherkar@math.ucla.edu}

\address[Tongou Yang]{Department of Mathematics, University of California, Los Angeles\\
CA 90095, United States 
}
\email{tongouyang@math.ucla.edu}

\date{}

\begin{abstract}
    This article is a study guide for ``{\it On restricted projections to planes in $\R^3$}" by Gan, Guo, Guth, Harris, Maldague and Wang \cite{GGGHMW2022}. We first present the main problems and preliminaries related to restricted projections in $\R^3$. Then we introduce the high-low method and decoupling, which are the two central and novel ideas in their proofs. We hope to provide as many details as possible so that this study guide is self-contained, with the only exception of the Bourgain-Demeter decoupling inequality for curves in the appendix.
\end{abstract}

\maketitle

%\tableofcontents

\section{Introduction}

\subsection{Marstrand's and Mattila's projection theorems}
\subsubsection{Dimension preserving property in \texorpdfstring{$\R^2$}{Lg}}
In $\R^2$, given a direction $u\in \mathbb S^1$ on the unit circle, consider the orthogonal projection $p_u(x):=x\cdot u$ for $x\in \R^2$. For each Borel set $A\sub \R^2$, we consider the size of the image $p_u(A)\sub \R$ relative to the size of $A$. In particular, for each $u$ we ask if it is {\it dimension preserving}, that is, whether we have\footnote{Here and throughout this article, $\dim $ will always denote the Hausdorff dimension.}
\begin{equation}\label{eqn_dimension_preserving_2D}
    \dim p_u(A)=\min\{1,\dim A\}.
\end{equation}
Note that $\leq$ in \eqref{eqn_dimension_preserving_2D} is trivial since the range is a subset of $\R$ and a projection cannot increase dimensions. The celebrated Marstrand's projection theorem \cite{Marstrand} states that $\sigma^1$-a.e. $u\in \mathbb S^1$ is dimension preserving for $A$, where $\sigma^1$ denotes the standard surface measure on $\mathbb S^1$. For this reason, we say a direction $u$ is {\it exceptional} if it is not dimension preserving, that is, we have $<$ holds in \eqref{eqn_dimension_preserving_2D}. Lastly, we remark that the set of exceptional directions naturally depend on the choice of $A$; for instance, if $A$ is a straight line with direction $v\in \mathbb S^1$, then $u$ is dimension preserving for $A$ if and only if $u\cdot v\ne 0$.

\subsubsection{Exceptional set estimate in \texorpdfstring{$\R^2$}{Lg}}\label{sec_exceptional}
In many cases, we can prove stronger results than dimension preserving properties, known as exceptional set estimates. Given a Borel $A\sub \R^2$. For each $0< s<\min\{1,\dim A\}$, we consider the set of exceptional directions
\begin{equation}\label{eqn_defn_E_s}
    E_s:=\{u\in \mathbb S^1:\dim p_u(A)<s\}.
\end{equation}
If we can show that $\dim E_s<1$, then Marstrand's theorem follows as a corollary. In fact, Kaufman \cite{Kaufman} and Falconer \cite{Falconer} established the bound $\dim E_s\le s$ and $\dim E_s\le \max\{0,1+s-\dim A\}$, respectively. For $\dim A<1$ Kaufman's estimate is better, while for $\dim A>1$ Falconer's estimate is better. Recently, Ren and Wang \cite{RenWang} showed that we always have $\dim E_s\le \max\{0,2s-\dim A\}$, which is better than both Kaufman's and Falconer's results. Moreover, a simple example given by Kaufman and Mattila \cite{KaufmanMattila} shows that this is sharp.

\subsubsection{Dimension preserving property in higher dimensions}
Now we generalize to $\R^d$, $d \ge 2$. For each $1\le k\le d-1$, consider the set $G(d,k)$ of all $k$-dimensional subspaces equipped with the standard Haar measure $\sigma(d,k)$. Mattila \cite{Mattila_Marstrand} generalized Marstrand's theorem \cite{Marstrand} to higher dimensions. More precisely, given $1\le k\le d-1$ and a subspace $S\in G(d,k)$, consider the projection $p_S$ from $\R^d$ onto $S$. Then for every Borel set $A\sub \R^d$, we have
\begin{equation}
    \dim p_S(A)=\min\{k,\dim A\},\quad \text{for $\sigma(d,k)$-a.e. $S\in G(d,k)$}.
\end{equation}
One may also consider exceptional estimates in higher dimensions; however, obtaining a sharp exceptional estimates will be much more difficult than in $\R^2$, and so we will not discuss this topic here.

\subsection{Restricted projection theorems in \texorpdfstring{$\R^3$}{Lg}}
\subsubsection{Background}
Now we consider $d=3$ and $k=1,2$. For $k=1$ we are considering projection to lines, namely, for each $e\in \mathbb S^2$ we consider the function $p_e(x):=x\cdot e$. For $k=2$ we are considering projection to planes, namely, for each $e\in \mathbb S^2$ we consider the function $\pi_e(x):=x-(x\cdot e)e$.

Denote by $\sigma^2$ the standard surface measure on $\mathbb S^2$. Mattila's theorem implies that given any Borel $A\sub \R^3$, for $\sigma^2$-a.e. $e\in \mathbb S^2$ we have both
\begin{equation}\label{eqn_dimension_preserving_3D}
    \dim p_e(A)=\min\{1,\dim A\},\quad \dim \pi_e(A)=\min\{2,\dim A\}.
\end{equation}
However, this provides no information if we would like to study the projections of $A$ along a subset $N\sub \mathbb S^2$ with zero $\sigma^2$ measure, most typically when $N$ is the image of a continuous curve on $\mathbb S^2$. Nevertheless, this problem is still interesting, and we may ask the following main question: if $\gamma(\theta):[0,1]\to \mathbb S^2$ is a continuous curve, and $A\sub \R^3$ is a Borel set, is it true that for $\mathcal L^1$-a.e. $\theta\in [0,1]$ we have either equality below:
\begin{equation}\label{eqn_dimension_preserving_3D_restricted}
    \dim p_{\gamma(\theta)}(A)=\min\{1,\dim A\},\quad \dim \pi_{\gamma(\theta)}(A)=\min\{2,\dim A\}?
\end{equation}
The simple answer is no. In fact, if we take the image of $\gamma$ to be the equator and $A$ to be the straight line passing through the north and south poles, then the former equality of \eqref{eqn_dimension_preserving_3D_restricted} fails to hold for any $\theta\in [0,1]$. Similarly, if we take the image of $\gamma$ to be the equator and $A$ a $2$-dimensional cube contained in the Equator plane, say $A=[-1,1]^2\times \{0\}$ then the latter equality of \eqref{eqn_dimension_preserving_3D_restricted} fails to hold for any $\theta\in [0,1]$.

However, this triviality can be easily avoided once we require that the image of $\gamma$ ``escapes from being a great circle". More precisely, we require that $\gamma$ be $C^2$, and that the
following non-degeneracy condition hold:
\begin{equation}\label{eqn_non_degenaracy}
        \det (\gamma,\gamma',\gamma'')(\theta)\ne 0,\quad \forall \,\theta\in [0,1].
\end{equation}
Geometrically, this means that $\gamma$ is the velocity of a curve with nonzero torsion. A model case of $\gamma$ is a non-great circle which generates the standard light cone, that is,
\begin{equation}\label{eqn_light_cone}
\gamma(\theta)=\frac 1 {\sqrt 2}\left(\cos \sqrt 2 \,\theta, \sin \sqrt 2 \,\theta,1\right).
\end{equation}
The factor $\sqrt 2$ here is to normalize so that $|\gamma'|=1$ as well. We remark here that an interesting property of the standard light cone is that $\gamma$ and $\gamma\times \gamma'$ are two different parametrizations of the same circle, since as one can easily check $(\gamma\times \gamma')(\theta)=\frac{1}{\sqrt{2}}(-\cos\sqrt{2}\theta,-\sin\sqrt{2}\theta,1))$. This fails for a general non-degenerate curve $\gamma$.

\subsubsection{Literature review}
With the non-degeneracy condition \eqref{eqn_non_degenaracy}, F\"assler and Orponen \cite{FasslerOrponen} put forward the following conjecture:
\begin{conj}\label{conj_FasslerOrponen}
    Let $\gamma:[0,1]\to \mathbb S^2$ be $C^2$ and satisfies the non-degeneracy condition \eqref{eqn_non_degenaracy}. Then for every Borel set $A\sub \R^3$, we have    \begin{equation}\label{eqn_dimension_preserving_3D_restricted_conj}
    \dim p_{\gamma(\theta)}(A)=\min\{1,\dim A\},\quad \dim \pi_{\gamma(\theta)}(A)=\min\{2,\dim A\}
\end{equation}
both hold for $\mathcal L^1$-a.e. $\theta \in [0,1]$.
\end{conj}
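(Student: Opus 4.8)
The plan is to establish the two equalities of \eqref{eqn_dimension_preserving_3D_restricted_conj} separately. The first, $\dim p_{\gamma(\theta)}(A)=\min\{1,\dim A\}$ for a.e.\ $\theta$, was obtained earlier (the projection-to-lines case being due to Pramanik, Yang and Zahl, via a Furstenberg/incidence-type argument), so I focus on the second, $\dim\pi_{\gamma(\theta)}(A)=\min\{2,\dim A\}$, which is the content of \cite{GGGHMW2022}. Since the inequality $\le$ is immediate (the range lies in a $2$-plane and projections do not raise dimension), fix $s<\min\{2,\dim A\}$ and some $\alpha\in(s,\dim A)$; by Frostman's lemma pick a compactly supported probability measure $\mu$ on a compact subset of $A$ with $\mu(B(x,r))\le r^\alpha$ for all $x,r$, so that the $t$-energy $I_t(\mu)=c\int_{\R^3}|\widehat\mu(\xi)|^2|\xi|^{t-3}\,d\xi$ is finite for a chosen $t\in(s,\alpha)$. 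Let $\mu_\theta$ be the image of $\mu$ under $\pi_{\gamma(\theta)}$, regarded as a measure on $\R^2$ via the natural isometry $\Phi_\theta\colon\R^2\to\gamma(\theta)^\perp$; one checks $\widehat{\mu_\theta}(\eta)=\widehat\mu(\Phi_\theta(\eta))$. It then suffices to prove
\[
\int_0^1 I_s(\mu_\theta)\,d\theta = c\int_0^1\!\!\int_{\R^2}\bigl|\widehat\mu(\Phi_\theta(\eta))\bigr|^2|\eta|^{s-2}\,d\eta\,d\theta < \infty,
\]
for then $I_s(\mu_\theta)<\infty$ for a.e.\ $\theta$, hence $\dim\mathrm{supp}\,\mu_\theta\ge s$, and we let $s\uparrow\min\{2,\dim A\}$. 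Decomposing dyadically in $|\eta|\sim R$ (the part $|\eta|\lesssim 1$ being harmless since $|\widehat\mu|\le 1$), this reduces to a single-scale estimate: for every $\eps>0$ and the Littlewood--Paley piece $f_R:=\widehat\mu\cdot\chi_{\{|\xi|\sim R\}}$,
\[
\int_0^1\!\!\int_{|\eta|\sim R}\bigl|f_R(\Phi_\theta(\eta))\bigr|^2\,d\eta\,d\theta \;\lesssim_\eps\; R^{\eps}\,R^{-1}\,\|f_R\|_2^2 .
\]
Indeed, writing $\|f_R\|_2^2=c_R R^{3-t}$ with $\sum_R c_R<\infty$ (this is just the dyadic decomposition of $I_t(\mu)<\infty$), the sum of $R^{s-2}$ times the left-hand side is comparable to $\sum_R c_R R^{s-t+\eps}$, which converges once $\eps<t-s$.

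The single-scale estimate encodes the geometry of restricted projections. A change of variables shows that the Jacobian of $(\theta,\eta)\mapsto\Phi_\theta(\eta)$ is, up to sign, $\Phi_\theta(\eta)\cdot\gamma'(\theta)$, which vanishes exactly when $\xi:=\Phi_\theta(\eta)$ satisfies $\xi\perp\gamma(\theta)$ and $\xi\perp\gamma'(\theta)$, i.e.\ $\xi\parallel\gamma(\theta)\times\gamma'(\theta)$; away from a neighborhood of these degenerate directions the naive change of variables already produces the $R^{-1}$ gain, and all the difficulty is concentrated there. This is where the non-degeneracy hypothesis \eqref{eqn_non_degenaracy} enters, in the form that the auxiliary spherical curve $\theta\mapsto\gamma(\theta)\times\gamma'(\theta)$ is again non-degenerate: the frequency plates $P_\theta:=\{\xi:\mathrm{dist}(\xi,\gamma(\theta)^\perp)\lesssim 1,\ |\xi|\lesssim R\}$ genuinely twist as $\theta$ moves, so that their union over a suitable net of $\theta$'s is a ``cone-like'' neighborhood of a non-degenerate curve rather than a thickened single plane. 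This is precisely what rules out the degenerate configurations noted after \eqref{eqn_dimension_preserving_3D_restricted} (the equator together with the polar line, or an equatorial square), for which $\gamma$ fails \eqref{eqn_non_degenaracy}; and it explains why the light cone \eqref{eqn_light_cone}, for which $\gamma$ and $\gamma\times\gamma'$ trace the same circle, is the cleanest model case.

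To prove the single-scale estimate one uses the two techniques this guide is built around. First a high--low decomposition: partition $[0,1]$ into arcs on which the plates $P_\theta$ lie in a common slab, and split the contribution of $f_R$ into a ``low'' part, slowly varying across such a slab, and a complementary ``high'' part. The low part is controlled by $L^2$-orthogonality together with an induction on scales (the low part at scale $R$ being dominated by the analogous quantity at the coarser scale $\sqrt R$), which is self-improving and absorbs into $R^\eps$. The high part is where one must recover the loss coming from the degenerate directions: being genuinely high-frequency it cannot concentrate on few plates, and this is quantified by the Bourgain--Demeter $\ell^2$-decoupling inequality for non-degenerate curves in $\R^3$ (the version recorded in the appendix), applied to the plate family $\{P_\theta\}$, which decouples with only an $R^\eps$ loss. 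Combining the resulting $L^6$-type gain with the $L^2$ and ball information carried by $\mu$ (by interpolation, and by estimating single-plate contributions through $\mu(B(x,r))\le r^\alpha$) closes the induction and yields the displayed bound; summing over dyadic $R$ completes the proof.

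The main obstacle is the interlocking of these last two ingredients. One must set up the high--low decomposition at exactly the rescaling adapted to the curvature and torsion of $\gamma$ so that the low part genuinely self-improves, and then match the exponents so that the gain from curve decoupling precisely compensates the loss incurred at the degenerate directions identified in the second step. By contrast the reduction in the first paragraph and the Jacobian computation in the second are routine; it is this marriage of the high--low method with decoupling for curves, together with the induction-on-scales bookkeeping, that constitutes the real content of \cite{GGGHMW2022}.
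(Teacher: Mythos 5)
Your proposal takes a genuinely different route from the paper's. You reduce the plane-projection statement to the classical energy-integral machinery (Frostman measure $\mu$, Fourier representation of $I_s(\mu_\theta)$, Littlewood--Paley decomposition, and then a single-scale $L^2$ estimate on annuli), whereas the paper proves a Falconer-type exceptional set estimate (Theorem~\ref{firstmainthm}) by discretizing to a ball--tube incidence statement (Theorem~\ref{thm4paper}/\ref{thm_thm5}) via Frostman's lemma and dyadic pigeonholing (Section~\ref{discretethm1}), and then proves that incidence theorem by the high-low method and cone decoupling. Your route most closely resembles what the paper does later for Theorem~\ref{thm_thm8} (absolute continuity, $a>2$), where they work directly with $\widehat\mu$, carry out precisely your change of variables with Jacobian $|\xi\cdot\gamma'(\theta)|$, identify the same degenerate cone $\{r\,\gamma\times\gamma'\}$, and then split $\mu$ into good and bad parts. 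But that argument requires $a>2$ and is not used to prove the conjecture; Corollary~1 comes from Theorem~1 alone, which has no energy integrals or Littlewood--Paley in it.

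The concrete gap is your displayed single-scale estimate
\begin{equation*}
\int_0^1\!\!\int_{|\eta|\sim R}\bigl|f_R(\Phi_\theta(\eta))\bigr|^2\,d\eta\,d\theta \lesssim_\eps R^{\eps-1}\|f_R\|_2^2,
\end{equation*}
which is stated purely in terms of $\|f_R\|_2^2$ and \emph{cannot} be correct in that form. By the Jacobian computation you yourself record, the weight appearing after the change of variables is $\sum_{\theta:\,\xi\cdot\gamma(\theta)=0}|\xi\cdot\gamma'(\theta)|^{-1}$, which is $\sim R^{-1}$ away from the cone $\Gamma=\{r\,\gamma\times\gamma'\}$ but $\sim (R\,\mathrm{dist}(\xi,\Gamma))^{-1/2}$ near it, since $\xi\cdot\gamma(\theta)$ develops a double zero on $\Gamma$. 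So if $f_R$ concentrates its $L^2$ mass on an $O(1)$-neighborhood of $\Gamma$, the left side is of order $R^{-1/2}\|f_R\|_2^2$, not $R^{-1+\eps}\|f_R\|_2^2$. Any correct version of the estimate must feed in the Frostman condition $\mu(B(x,r))\le r^\alpha$---which you mention only in passing, ``by estimating single-plate contributions.'' That is not a bookkeeping issue but the entire difficulty: ruling out Fourier concentration near the cone is exactly what the paper's good/bad decomposition (Lemmas~\ref{lem_lem6} and~\ref{lem_lem7}) and the refined Strichartz estimate (Theorem~\ref{thm_Strichartz}) are designed to do, and those tools themselves rely on the discretized tube-incidence Theorem~\ref{thm_thm7}. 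Your last two paragraphs gesture at high-low plus decoupling as closing the argument, but without the discretization--pigeonholing layer (Lemmas~\ref{Lemma1paper} and~\ref{Lemma2paper}) that converts the Frostman hypothesis into $(\delta,s)$-set structure on tubes, neither the low part's self-improvement nor the decoupled high part actually absorbs the loss near $\Gamma$. The outline correctly identifies where the difficulty sits but does not supply the step that makes the approach converge.
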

In \cite{FasslerOrponen}, the authors mentioned that the former equality in \eqref{eqn_dimension_preserving_3D_restricted_conj} is easy when $\dim A\le 1/2$, and the latter equality in \eqref{eqn_dimension_preserving_3D_restricted_conj} is easy when $\dim A\le 1$ (see \cite{JJLL}). In the same article, they were able to prove something a little stronger. More precisely, if $\dim A>1/2$, they showed there is some $\sigma>1/2$ depending on $\dim A$ such that $\dim p_{\gamma(\theta)}>\sigma$ for a.e. $\theta$. If $\dim A>1$, then they showed there is some $\sigma>1$ depending on $\dim A$ such that $\dim \pi_{\gamma(\theta)}>\sigma$ for a.e. $\theta$.

Since \cite{FasslerOrponen}, a dozens of partial results have been established towards the main conjecture, see for instance \cite{JarvenpaaJarvenpaa,OberlinOberlin,OrponenVenieri,KOV,Harris2022}. The most significant result among them is \cite{KOV}, which is the first to establish the former equality in \eqref{eqn_dimension_preserving_3D_restricted_conj} when $\gamma$ is given by a non-great circle such as \eqref{eqn_light_cone}. Their main technique is to use Wolff's \cite{Wolff2000} incidence estimates on tangency of circles, which relies in turn on the algebraic property of the circle. Therefore, one major downside of this method is that it fails to work in our case when $\gamma$ is merely $C^2$. We also remark here that \cite{KOV} established the following exceptional set estimate: for every $0<s<\min\{1,\dim A\}$, we have
\begin{equation}\label{eqn_KOV_exceptional_set}
    \dim E_s\le \frac {s+1}2,
\end{equation}
where the exceptional set $E_s$ is given by
\begin{equation}\label{eqn_exceptional set}
    E_s:=\{\theta\in [0,1]:\dim p_{\gamma(\theta)}(A)<s\}.
\end{equation}
The full proof of Conjecture \ref{conj_FasslerOrponen} is given by Pramanik-Yang-Zahl \cite{PYZ}, Gan-Guth-Maldague \cite{GanGuthMaldague} and Gan-Guo-Guth-Harris-Maldague-Wang \cite{GGGHMW2022}. Among them, \cite{PYZ} and \cite{GanGuthMaldague} independently proved the former equality of \eqref{conj_FasslerOrponen}, using very different methods, and obtaining different exceptional set estimates. More precisely, \cite{PYZ} proved a Kaufman type exceptional set estimate, namely,
\begin{equation}
    \dim E_s\le s,\quad \forall\,\, 0<s<\min\{1,\dim A\}.
\end{equation}
In contrast, \cite{GanGuthMaldague} proved a Falconer type exceptional set estimate, namely,
\begin{equation}
    \dim E_s\le \max\{0,1+s-\dim A\},\quad \forall \,\,0<s<\min\{1,\dim A\}.
\end{equation}
As mentioned right after \eqref{eqn_defn_E_s}, neither estimate is strictly better than the other. 

We remark that \cite{harris2023length} also gives an analogue of Theorem \ref{thm_thm8} below in the case of projection to lines in $\R^3$.

\cite{GGGHMW2022} proved the latter equality of \eqref{conj_FasslerOrponen} via a Falconer type exceptional set estimate (see Theorem \ref{firstmainthm} below).
This article is devoted to the understanding of \cite{GGGHMW2022}.

\subsection{Main results}
We now focus solely on the paper \cite{GGGHMW2022}.

Let $\gamma:[0,1]\to \mathbb S^2$ be a $C^2$ curve. We assume it satisfies the non-degeneracy condition in (\ref{eqn_non_degenaracy}). We may also assume without loss of generality that $|\gamma'|=1$, whence
\begin{equation}
    \mathbf e_1:=\gamma,\quad \mathbf e_2:=\gamma',\quad \mathbf e_3:=\gamma\times \gamma'
\end{equation}
form an orthonormal basis for $\R^3$ known as a Frenet frame.

\begin{thm}[Theorem 1 in \cite{GGGHMW2022}]\label{firstmainthm}
    Suppose $A\sub \R^3$ is a Borel set of Hausdorff dimension $\alpha\in (0,3]$. For $0< s<\min\{2,\alpha\}$, define the exceptional set 
    $$E_s=\{\theta \in [0,1]\colon \textnormal{dim}(\pi_{\theta}(A))<s\}$$
    Then we have
    $$\textnormal{dim}(E_s)\leq \max\{1+s-\alpha,0\}$$
\end{thm}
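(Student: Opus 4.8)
The plan is to reduce the Hausdorff-dimension statement to a discretized, single-scale estimate at scale $\delta=2^{-k}$ and then run the high-low method together with decoupling for the curve $\gamma$. First I would set up the standard Fourier-analytic reformulation: fix $\alpha'<\alpha$ and $s'<s$, let $\mu$ be a Frostman measure on $A$ with $\mu(B(x,r))\lesssim r^{\alpha'}$, and suppose for contradiction that $\dim E_s > \max\{1+s-\alpha,0\}$, so there is a Frostman measure $\nu$ on a subset of $E_s$ with exponent $t'$ slightly larger than $1+s'-\alpha'$. The key quantity is the $L^2$ norm of the projected measure $\pi_{\theta\#}\mu$ at frequency scale $\lesssim\delta^{-1}$, integrated in $\theta$ against $\nu$; by the potential-theoretic characterization of dimension, if $\dim \pi_\theta(A)<s$ for $\theta$ in the support of $\nu$, then this integral must be large (of size roughly $\delta^{-(s'-t'\text{-correction})}$ or similar), while the whole point of the argument is to prove an upper bound on it that contradicts this.

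Next I would carry out the high-low decomposition of the relevant square function. Writing $\mu_\delta=\mu*\phi_\delta$ for a smooth bump at scale $\delta$, one decomposes $\widehat{\mu_\delta}$ (or rather the pieces localized to the planes $\pi_\theta$) into a ``low'' part supported where $|\xi|\le \delta^{-1+\epsilon}$-ish and a ``high'' part. The low part is controlled trivially by the Frostman condition on $\mu$ (it sees $\mu$ at a coarser scale), giving a bound governed by $\alpha'$. For the high part, the decisive input is the Bourgain–Demeter decoupling inequality for the nondegenerate curve $\gamma$ (the $\ell^2$-decoupling for curves quoted in the appendix): the frequency support of $\widehat{\mu}$ relevant to the family of projections $\{\pi_{\gamma(\theta)}\}$ lies in a neighborhood of the cone/curve generated by $\gamma$, and decoupling into $\delta^{1/2}$-caps converts the single-scale square function estimate into a sum of pieces each of which is again estimated by the Frostman bound, at the cost of a $\delta^{-\epsilon}$ loss and the decoupling constant. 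Balancing the high and low contributions, using that $\det(\gamma,\gamma',\gamma'')\neq 0$ to guarantee the curve is genuinely nondegenerate so decoupling applies uniformly in $\theta$, yields the exponent $1+s-\alpha$.

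The main obstacle, and the genuinely novel part, is the high-frequency estimate: one must show that after throwing away the low frequencies, the $\theta$-average of the high part of $|\widehat{\mu_\delta}(\pi_\theta\text{-frequencies})|^2$ against $\nu$ is bounded by $\delta^{-\epsilon}$ times the ``expected'' Frostman-type quantity. This is where high-low meets decoupling: the high-frequency localization is exactly what makes the pieces in the decoupling for $\gamma$ behave like transverse, essentially orthogonal contributions, so that summing the decoupled pieces (each estimated crudely by the $\alpha'$-dimensional Frostman bound, using that $\mu$ restricted to a $\delta^{1/2}$-tube has small mass) does not lose more than the decoupling constant, which for the nondegenerate curve is $\delta^{-\epsilon}$. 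Handling the uniformity in $\theta$ of the Frenet frame, and making sure the $\nu$-average (rather than the Lebesgue average in $\theta$) only costs the Frostman exponent $t'$ of $E_s$ in the right place, is the bookkeeping that produces the precise inequality $\dim E_s\le \max\{1+s-\alpha,0\}$; passing $\alpha'\uparrow\alpha$, $s'\uparrow s$, $\epsilon\downarrow 0$ and $k\to\infty$ then finishes the proof.
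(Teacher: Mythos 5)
Your proposal captures the high-level philosophy (discretize via Frostman measures, then combine a high-low decomposition with decoupling for the nondegenerate curve), but the way you assign roles to the two main tools is structurally wrong and would not survive an attempt to carry it out.

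First, the frequency decomposition in the paper is not a two-part split on $|\xi|$. Each projection $\pi_\theta$ contributes a thin slab $P_\theta$ with short direction $\mathbf e_1(\theta)=\gamma(\theta)$, and the variable controlling overlap between distinct slabs is not $|\xi|$ but the $\mathbf e_2(\theta)$-component $\xi_2$, i.e.\ the distance to the cone $\{r\,\mathbf e_3(\theta)\}$ with $\mathbf e_3=\gamma\times\gamma'$. Accordingly, each $P_\theta$ is split into a low piece ($|\xi_2|,|\xi_3|\lesssim K^{-1}$), a high piece ($|\xi_2|\gtrsim K^{-1}$), and a dyadic family of $\lambda$-mixed pieces ($|\xi_2|\sim\lambda\in[\delta^{1/2},K^{-1}]$, $|\xi_3|\gtrsim K^{-1}$). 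Crucially, decoupling is \emph{not} applied to the high part: once $|\xi_2|\gtrsim K^{-1}$ the high pieces of distinct slabs have only $O(K^2)$ overlap by elementary geometry (Lemma~\ref{lem_high_orthogonality_3D}), so Plancherel alone handles them. Decoupling enters only for the mixed pieces, which cluster near the cone and overlap badly; there the paper applies the Bourgain--Demeter \emph{cone} decoupling (Theorem~\ref{thm_Bourgain-Demeter_decoupling}) at scale $\lambda^2$, followed by a flat-decoupling step (Theorem~\ref{thm_flat_decoupling}) to reconcile the slab thickness $\delta$ with the canonical decoupling thickness $\lambda^2$. Your sketch merges the mixed regime into ``high'' and puts decoupling there; that both wastes the orthogonality the genuine high part already possesses and leaves the delicate near-cone overlap --- the actual content of the argument --- unaddressed. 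A two-piece high/low decomposition on $|\xi|$ cannot produce the $K^2$ overlap bound, since slabs that are far apart in angle still meet near the cone axis.

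Second, your reference to the ``decoupling for curves quoted in the appendix'' misreads the machinery: Theorem~\ref{thm_decoupling_C2} (planar $C^2$ curve decoupling) is used in the appendix only as an ingredient to \emph{prove} the 3D cone decoupling for $C^2$ curves on $\mathbb S^2$; the inequality applied in the main argument is the cone decoupling along $\mathbf e_3$. Finally, the paper does not estimate an energy integral $\iint|\widehat{\pi_{\theta\#}\mu}|^2|\xi|^{s-2}\,d\xi\,d\nu(\theta)$; instead it reduces Theorem~\ref{firstmainthm} to a purely combinatorial tube-counting statement (Theorem~\ref{thm4paper}), with Lemmas~\ref{Lemma1paper} and~\ref{Lemma2paper} converting the Frostman hypotheses into the $(\delta,s)$-set conditions needed for the wave-packet argument. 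An $L^2$-potential-theoretic route is plausible in spirit (and closer to how the paper later proves Theorem~\ref{thm_thm8}), but here the $s$-dimensional condition on $\pi_\theta(A)$ is used precisely as a counting hypothesis on tube families, not as an energy bound, and your sketch does not explain how to bridge that.
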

The proof of Theorem \ref{firstmainthm} will be given in Section \ref{discretethm1}. 

Theorem \ref{firstmainthm} gives the following corollary, which solves Conjecture \ref{conj_FasslerOrponen}.
\begin{cor}[Corollary 1 in \cite{GGGHMW2022}]
    Suppose $A\sub \R^3$ is a Borel set of Hausdorff dimension $\alpha$. Then we have 
    $$\textnormal{dim}(\pi_{\theta}(A))=\min\{2,\alpha\},\,\text{for a.e. }\theta\in[0,1]$$
\end{cor}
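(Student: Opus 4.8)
The plan is to deduce the Corollary from Theorem \ref{firstmainthm} by a standard argument showing that an exceptional set estimate of Falconer type implies the almost-everywhere dimension conservation statement. First I would observe that the inequality $\dim(\pi_\theta(A)) \le \min\{2,\alpha\}$ is automatic for every $\theta$: the range of $\pi_\theta$ is a two-dimensional subspace, so the image has Hausdorff dimension at most $2$, and since $\pi_\theta$ is Lipschitz it cannot increase Hausdorff dimension, giving $\dim(\pi_\theta(A)) \le \dim A = \alpha$ as well. Thus only the lower bound $\dim(\pi_\theta(A)) \ge \min\{2,\alpha\}$ needs to be established for a.e. $\theta$.

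Next I would fix a sequence $s_n \uparrow \min\{2,\alpha\}$ with each $s_n < \min\{2,\alpha\}$, for instance $s_n = \min\{2,\alpha\} - 1/n$ for $n$ large. For each such $s_n$, Theorem \ref{firstmainthm} gives
\begin{equation*}
    \dim(E_{s_n}) \le \max\{1 + s_n - \alpha, 0\}.
\end{equation*}
The point is that $1 + s_n - \alpha < 1 + \min\{2,\alpha\} - \alpha \le 1$, since $\min\{2,\alpha\} \le \alpha$; hence $\dim(E_{s_n}) < 1$, and in particular $E_{s_n}$ has Lebesgue measure zero in $[0,1]$ (a set of Hausdorff dimension strictly less than $1$ is $\mathcal L^1$-null). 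Taking the countable union, $E := \bigcup_n E_{s_n}$ also has $\mathcal L^1$-measure zero. For every $\theta \in [0,1] \setminus E$ and every $n$ we then have $\dim(\pi_\theta(A)) \ge s_n$, and letting $n \to \infty$ gives $\dim(\pi_\theta(A)) \ge \min\{2,\alpha\}$. Combined with the trivial upper bound, this yields $\dim(\pi_\theta(A)) = \min\{2,\alpha\}$ for all $\theta$ outside the null set $E$, which is precisely the claim.

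There is essentially no obstacle here — the corollary is a routine consequence once Theorem \ref{firstmainthm} is in hand; the only mild points to be careful about are the edge cases. If $\alpha = 0$ the statement is vacuous (both sides are $0$ and there is nothing to project nontrivially), and if $\alpha \ge 2$ the relevant range is $0 < s < 2$, with $1 + s - \alpha \le s - 1 < 1$, so the same argument applies verbatim. One should also note the hypothesis $0 < s < \min\{2,\alpha\}$ in Theorem \ref{firstmainthm} is an open condition, which is why approaching $\min\{2,\alpha\}$ from below through a countable sequence (rather than using the endpoint directly) is the right move. All the real work — the high-low method and decoupling — is hidden inside the proof of Theorem \ref{firstmainthm}, which is taken up in Section \ref{discretethm1}.
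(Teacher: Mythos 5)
Your proof is correct and follows essentially the same route as the paper: approximate $\min\{2,\alpha\}$ from below by a sequence $s_n$, apply Theorem \ref{firstmainthm} to see each $E_{s_n}$ has dimension strictly less than $1$ (hence Lebesgue measure zero), and take a countable union. The only cosmetic difference is that you treat the cases $\alpha\le 2$ and $\alpha>2$ in a unified way via $s_n=\min\{2,\alpha\}-1/n$ whereas the paper splits into two cases, and you also spell out the (standard) upper bound; the substance is identical.
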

\begin{proof}
    There are two possible cases: $\textnormal{dim}(A)=\alpha\leq 2$ or $\textnormal{dim}(A)=\alpha>2$. In the first case, take $s_{n}=\alpha-1/n$. Theorem \ref{firstmainthm} gives 
    $\textnormal{dim}(E_{s_{n}})\leq 1-1/n$. In particular, 
    $$\mathcal{L}^1(E_{s_n})=0$$

    Since 
    $$E_{\alpha}=\cup_{n=1}^{\infty}E_{s_n}$$
    it also follows that $ \mathcal{L}^1(E_{\alpha})=0$.
    The case $\textnormal{dim}(A)>2$ is similar, by taking $s_n=2-1/n$ and $E_2=\cup_{n=1}^{\infty} E_{s_n}$.
\end{proof}

\begin{thm}[Theorem 2 in \cite{GGGHMW2022}]\label{thm_thm2}
    Suppose $A \sub \R^3$ is a Borel set of Hausdorff dimension greater than $2$. Then $\mathcal H^2(\pi_\theta(A))>0$ for a.e. $\theta\in [0,1]$. 
\end{thm}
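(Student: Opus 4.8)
The plan is to run an $L^2$ (energy) version of the argument behind Theorem~\ref{firstmainthm}. Dimension bounds alone cannot produce positive $\mathcal H^2$-measure, so I would recast the conclusion as an $L^2$ estimate for the projected measures and then feed the curvature of $\gamma$ into it through essentially the same high-low/decoupling machinery. \textbf{Step 1 (reduction to an $L^2$ bound).} Since $\dim A>2$, fix $s\in(2,\dim A)$ and, by Frostman's lemma, a compactly supported probability measure $\mu$ on a compact subset of $A$ with $\mu(B(x,r))\le r^s$, equivalently $I_s(\mu)=\iint|x-y|^{-s}\,d\mu(x)\,d\mu(y)<\infty$. Put $\mu_\theta=(\pi_\theta)_*\mu$, a probability measure on the plane $V_\theta:=\gamma(\theta)^\perp\cong\R^2$. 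By standard measure theory (inner regularity and weak-$\ast$ compactness in $L^2$) it suffices to prove $\mu_\theta\in L^2(V_\theta)$ for $\mathcal L^1$-a.e.\ $\theta$; this forces $\mu_\theta\ll\mathcal L^2$, hence $\mathcal H^2(\pi_\theta(A))\ge\mathcal L^2(\supp\mu_\theta)>0$. Fixing a Schwartz bump $\psi$ and writing $\psi_\delta=\delta^{-2}\psi(\cdot/\delta)$, this in turn follows (by Fatou) from
\begin{equation}\label{eqn_thm2_goal}
    \sup_{0<\delta<1}\ \int_0^1\|\mu_\theta*\psi_\delta\|_{L^2(V_\theta)}^2\,d\theta<\infty .
\end{equation}

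\textbf{Step 2 (Fourier side).} As $\pi_\theta$ is the orthogonal projection onto $V_\theta$, for $\xi\in V_\theta$ one has $\xi\cdot\pi_\theta(x)=\xi\cdot x$, so $\widehat{\mu_\theta}(\xi)=\hat\mu(\xi)$ there. By Plancherel on the $2$-plane $V_\theta$ and the rapid decay of $\widehat{\psi_\delta}$ off $\{|\xi|\lesssim\delta^{-1}\}$, \eqref{eqn_thm2_goal} reduces to
\begin{equation}\label{eqn_thm2_fourier}
    \int_0^1\int_{V_\theta}|\hat\mu(\xi)|^2\,d\mathcal L^2_{V_\theta}(\xi)\,d\theta\lesssim_\mu 1 ,
\end{equation}
and, after a dyadic decomposition in $|\xi|\sim R$, to $\sum_{R\ \mathrm{dyadic}}T_R<\infty$, where $T_R:=\int_0^1\int_{V_\theta\cap\{|\xi|\sim R\}}|\hat\mu(\xi)|^2\,d\mathcal L^2_{V_\theta}(\xi)\,d\theta$. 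Writing $a_R:=R^{s-3}\int_{|\xi|\sim R}|\hat\mu(\xi)|^2\,d\xi$, the identity $I_s(\mu)\simeq\int|\hat\mu(\xi)|^2|\xi|^{s-3}\,d\xi$ gives $\sum_R a_R\simeq I_s(\mu)<\infty$.

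\textbf{Step 3 (single-scale estimate: the heart).} It then suffices to prove that for every $\epsilon>0$ and dyadic $R\ge1$,
\begin{equation}\label{eqn_thm2_single}
    T_R\ \lesssim_\epsilon\ R^{\epsilon}\,R^{-1}\int_{|\xi|\sim R}|\hat\mu(\xi)|^2\,d\xi\ =\ R^{\epsilon+2-s}\,a_R ,
\end{equation}
since then, choosing $\epsilon<s-2$ (possible as $s>2$), we get $R^{\epsilon+2-s}\le1$ and hence $\sum_R T_R\lesssim\sum_R a_R<\infty$. Inequality \eqref{eqn_thm2_single} asserts a gain of (essentially) $R^{-1}$ upon restricting $\hat\mu$ to a $2$-plane, and it is false for an arbitrary $1$-parameter family of planes: a direct computation (using $\mathbf e_2'=\gamma''$ and $\mathbf e_3'=-\tau\mathbf e_2$ with $\tau=\det(\gamma,\gamma',\gamma'')\ne0$) shows the planes $V_\theta$ pile up along the cone $\bigcup_\theta\R(\gamma\times\gamma')(\theta)$ --- for the light cone \eqref{eqn_light_cone} this is again the light cone --- so a priori $\hat\mu$ could concentrate in a $1/R$-neighbourhood of it. The route I would take to \eqref{eqn_thm2_single} is the high-low method of \cite{GGGHMW2022}: for a parameter $\rho$ running dyadically over $[1,R^{1/2}]$, partition the $R$-shell into plates dual to a $\rho^{-1}$-cap decomposition of the relevant cone and split $\hat\mu$ (restricted to $\{|\xi|\sim R\}$) into a low part, essentially constant along each $V_\theta$ at scale $\rho$, and a high part. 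For the low part I would use a direct geometric count --- the great circles $V_\theta\cap\{|\xi|\sim R\}$, $\theta\in[0,1]$, overlap with bounded multiplicity after $\rho$-thickening, exactly because of the non-degeneracy \eqref{eqn_non_degenaracy} --- which supplies the $R^{-1}$. For the high part I would invoke the Bourgain--Demeter decoupling for the non-degenerate curve (equivalently, cone decoupling) recalled in the appendix, giving the $L^2$-almost-orthogonality of the conical caps needed for the gain. Telescoping the high-low splitting over $\rho$ then yields \eqref{eqn_thm2_single}.

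\textbf{Main obstacle.} Everything substantive is in \eqref{eqn_thm2_single}; this is the same mechanism that proves Theorem~\ref{firstmainthm} in Section~\ref{discretethm1}, specialised to the endpoint exponent $s=2$ and packaged inside the $L^2$-Plancherel reduction of Steps 1--2. I would not expect a shortcut through Theorem~\ref{firstmainthm} itself, since upgrading ``$\dim\pi_\theta(A)\ge2$'' to ``$\mathcal H^2(\pi_\theta(A))>0$'' is precisely the gap between an $\ell^p$- and an $L^2$-based argument.
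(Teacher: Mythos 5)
Your Step~1 reduction is sound modulo the phrase ``it suffices to prove $\mu_\theta\in L^2(V_\theta)$,'' which is a legitimate sufficient condition but one that is strictly stronger than what the paper actually establishes: Theorem~\ref{thm_thm8} gives only $\pi_{\theta\#}\mu\ll\mathcal H^2$, not $\pi_{\theta\#}\mu\in L^2$, and the paper's proof of Theorem~\ref{thm_thm2} from Theorem~\ref{thm_thm8} is otherwise identical to your Step~1. The genuine gap is in Step~3, and it is not a detail: the single-scale bound \eqref{eqn_thm2_single} is false. You correctly flag that $\hat\mu$ can pile up near the tangent cone $\Gamma=\bigcup_\theta\R\,\mathbf e_3(\theta)$, but the repairs you sketch do not close the gap. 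The planes $V_\theta$ are \emph{tangent} to $\Gamma$, so $\xi\cdot\gamma(\theta)$ vanishes to second order in $\theta$ near the tangency; consequently the $\rho$-thickened circles $V_\theta\cap\{|\xi|\sim R\}$ overlap with multiplicity on the order of $(R/\rho)^{1/2}$ near $\Gamma$ (not with bounded multiplicity, as claimed for the low part), and this is exactly the multiplicity that destroys the $R^{-1}$ gain in \eqref{eqn_thm2_single}. Invoking decoupling for the high part does not help either: decoupling is an $L^p$ ($p>2$) inequality, and at the $L^2$ level of \eqref{eqn_thm2_fourier} Plancherel already gives exact orthogonality of Fourier-disjoint pieces, so decoupling supplies no additional gain. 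In short, the co-area/Jacobian weight $|\xi\cdot\mathbf e_2(\theta)|^{-1}$ that converts $T_R$ into a weighted integral of $|\hat\mu|^2$ over the shell genuinely blows up on $\Gamma$, and the Frostman condition alone does not prevent $\hat\mu$ from putting enough mass there to ruin \eqref{eqn_thm2_goal}.

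This is precisely why the paper's proof of Theorem~\ref{thm_thm8} is organised around a good/bad wave-packet decomposition (Section~\ref{sec_proof_thm8}) rather than a single $L^2$ Sobolev estimate. The bad part $\mu_b$ collects the wave packets whose planks are near $\Gamma$ and carry heavy $\mu$-mass; for this part one proves only an $L^1$ bound (Lemma~\ref{lem_lem6}, via Corollary~\ref{cor_cor_3}, ultimately Theorem~\ref{thm_thm7}), which yields absolute continuity but not square integrability. Only the complementary light part $\mu_g$ admits the $L^2$ estimate (Lemma~\ref{lem_lem7}), and even there the proof uses the refined Strichartz inequality of Section~\ref{sec_Strichartz} rather than the plain decoupling-plus-high-low scheme of Theorem~\ref{firstmainthm}. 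So the sentence ``this is the same mechanism that proves Theorem~\ref{firstmainthm} \ldots specialised to $s=2$'' understates what is needed: passing from the dimension estimate to positive $\mathcal H^2$-measure requires the good/bad split, the incidence-theoretic input of Theorem~\ref{thm_thm7} and Corollaries~\ref{cor_cor_2}--\ref{cor_cor_3}, and the refined Strichartz inequality, none of which appear in the Theorem~\ref{firstmainthm} argument. As written, your Step~3 would have to prove an unconditional $L^2$ restriction estimate for $\hat\mu$ on the family $\{V_\theta\}$ that the paper deliberately avoids claiming.
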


\subsection{Reduction of Theorem 2 to Theorem 8}

We first state the following theorem, from which Theorem \ref{thm_thm2} follows as a corollary.
\begin{thm}[Theorem 8 in \cite{GGGHMW2022}]\label{thm_thm8}
    If $a\in (2,3]$ and $\mu$ is a compactly supported Borel measure on $\R^3$ such that
     \begin{equation}\label{eqn_Frostman_mu_intro}
        \rho_a(\mu):=\sup_{x\in \R^3,r\in (0,1)}r^{-a}\mu(B^3(x,r))<\infty,
    \end{equation}
    then $\pi_{\theta\#}\mu$ is absolutely continuous with respect to $\mathcal H^2$, for a.e. $\theta\in [0,1]$. Here and throughout this article, $\pi_{\theta\#} \mu$ denotes the standard push-forward measure, namely, for every Borel set $A\sub \pi_\theta(\R^3)$,
    \begin{equation}\label{eqn_push_forward_defn}
        \pi_{\theta\#} \mu(A):=\mu(\pi_\theta^{-1}(A)).
    \end{equation}
\end{thm}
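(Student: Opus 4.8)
The plan is to prove that $\pi_{\theta\#}\mu$ is absolutely continuous with respect to $\mathcal H^2$ by showing its $L^2$ density bound on average over $\theta$. Concretely, I would fix a smooth bump function $\psi\ge 0$ supported near $[0,1]$ and consider the quantity
\begin{equation}\label{eqn_plan_main}
  \int \psi(\theta)\,\bigl\|\pi_{\theta\#}\mu\bigr\|_{L^2(\mathbb R^2)}^2\,d\theta,
\end{equation}
where the $L^2$ norm is that of the density of $\pi_{\theta\#}\mu$ against Lebesgue measure on the plane $\pi_\theta(\mathbb R^3)\cong \mathbb R^2$. If one can show this integral is finite, then $\pi_{\theta\#}\mu \ll \mathcal H^2$ for a.e.\ $\theta$, which is exactly the conclusion. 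The first real step is therefore to expand \eqref{eqn_plan_main} using the Fourier transform on the plane: writing $\widehat{\pi_{\theta\#}\mu}(\xi) = \widehat\mu(\Pi_\theta^{*}\xi)$ for $\xi$ in the $2$-plane, Plancherel converts \eqref{eqn_plan_main} into
\begin{equation*}
  \int \psi(\theta)\int_{\mathbb R^2} \bigl|\widehat\mu(\Pi_\theta^{*}\xi)\bigr|^2\,d\xi\,d\theta
  = \iint |\widehat\mu(\eta)|^2\,K(\eta)\,d\eta,
\end{equation*}
where the kernel $K(\eta)$ records how much $\theta$-time the lifted frequency $\Pi_\theta^{*}\xi$ spends near $\eta$. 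The second step is the geometric heart: estimating $K(\eta)$. Since $\pi_\theta$ projects off the direction $\gamma(\theta)$, the cotangent lift $\Pi_\theta^{*}\xi$ ranges over the plane orthogonal to $\gamma(\theta)$, so $K(\eta)$ is controlled by $|\{\theta : \operatorname{dist}(\eta/|\eta|,\, \gamma(\theta)^\perp) \lesssim |\eta|^{-1}\}|$, i.e.\ by how long the great circle $\gamma(\theta)^\perp$ stays within $O(1/|\eta|)$ of a fixed point on $\mathbb S^2$. The non-degeneracy condition \eqref{eqn_non_degenaracy} forces the family $\{\gamma(\theta)^\perp\}_\theta$ of great circles to sweep with nonvanishing speed and curvature, which should give a bound like $K(\eta)\lesssim |\eta|^{-1}$ (up to the usual loss for the tangential portion).

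Plugging the bound $K(\eta)\lesssim |\eta|^{-1}$ back in reduces everything to showing $\int |\widehat\mu(\eta)|^2 |\eta|^{-1}\,d\eta < \infty$, which is essentially a statement about the $1$-dimensional energy of $\mu$ in the frequency variable; and here is where the hypothesis $a>2$ must be used: the Frostman condition \eqref{eqn_Frostman_mu_intro} gives $\int_{|\eta|\le R} |\widehat\mu(\eta)|^2\,d\eta \lesssim R^{3-a}$, so $\int |\widehat\mu(\eta)|^2|\eta|^{-1}\,d\eta \lesssim \sum_j 2^{-j}\, 2^{j(3-a)} = \sum_j 2^{j(2-a)} < \infty$ precisely because $a>2$. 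So modulo the kernel estimate the argument is a clean Fourier-analytic computation. I should be careful about two technical points: the projection planes $\pi_\theta(\mathbb R^3)$ vary with $\theta$, so to make the Plancherel step rigorous one should either parametrize each plane by an orthonormal pair from the Frenet frame $(\mathbf e_2,\mathbf e_3)$ and track the Jacobians, or work with the truncated/mollified measure $\mu * \phi_\delta$ and pass to the limit; and the naive bound $K(\eta)\lesssim |\eta|^{-1}$ may fail on a measure-zero bad set of directions $\eta$ where the sweeping great circles are tangent, which is handled by the standard stationary-phase/van der Corput refinement giving an extra logarithmic or $\epsilon$-power that is harmless after summing the dyadic pieces.

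The step I expect to be the main obstacle is the kernel estimate $K(\eta) \lesssim |\eta|^{-1}$, i.e.\ quantifying ``how long does the great circle $\gamma(\theta)^\perp$ linger near a fixed direction.'' Making this uniform over all $\eta$ requires carefully exploiting \eqref{eqn_non_degenaracy} — one wants that $\theta \mapsto \gamma(\theta)^\perp$ (equivalently $\theta\mapsto \gamma\times\gamma'$, which is $\mathbf e_3$) together with its derivative never degenerate, so that the sub-level set of the relevant phase function has the expected $1/|\eta|$ measure. In \cite{GGGHMW2022} this is exactly the point where the high-low decomposition and decoupling for the curve $\gamma$ enter, so a fully self-contained proof of Theorem \ref{thm_thm8} at the sharp exponent $a>2$ genuinely needs those tools rather than just van der Corput; the elementary Fourier argument sketched above recovers absolute continuity but, as is, only for $a$ strictly larger than what decoupling ultimately allows — closing that gap to all $a>2$ is where the real work of \cite{GGGHMW2022} lies, and I would defer to their high-low/decoupling machinery (as developed later in this guide) for that endgame.
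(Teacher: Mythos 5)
Your outline correctly identifies the key diagnostic: after Plancherel and the change of variables $(\theta,\eta_2,\eta_3)\mapsto\eta_2\mathbf e_2(\theta)+\eta_3\mathbf e_3(\theta)$, everything hinges on the kernel $K(\eta)$, and the paper's own computation shows that the Jacobian of this map is $|\eta_2|$, so $K(\eta)\sim|\eta_2|^{-1}$. Your claimed bound $K(\eta)\lesssim|\eta|^{-1}$ is correct only when $\eta$ lies at angular distance $\gtrsim 1$ from the cone $\Gamma=\{r\,\mathbf e_3(\theta)\}$ (this is precisely the paper's $\mu_{g2}$ estimate, where $k\le j\eps$ forces $|\eta_2|\gtrsim|\xi|^{1-\eps/2}$). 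Near the cone the bound degrades to roughly $K(\eta)\lesssim(|\eta|\cdot\mathrm{dist}(\eta,\Gamma))^{-1/2}$, which only closes the dyadic sum for $a>5/2$ or so — the classical Falconer-range threshold. You acknowledge this, but your fix is framed as ``a refined sub-level/van der Corput estimate on $K(\eta)$ via decoupling.'' That is not what the paper does, and I think it would not work: there is no reason to expect the global $L^2$ quantity $\int\int|\pi_{\theta\#}\mu|^2\,d\mathcal H^2\,d\theta$ to be finite for general $\mu$ with $\rho_a(\mu)<\infty$, $a>2$, and the paper never claims or needs this.

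The paper's actual mechanism is structural, not analytic refinement of $K$. After covering frequency space by planks $P_{\theta,j,k}$ adapted to $\Gamma$ and defining wave packets $M_T\mu$, one splits $\mu=\mu_b+\mu_g$ according to whether a wave packet near the cone carries heavy mass ($\mu(T)\ge 2^{-k(a_1+1)/2-a(j-k)}$) or not. For the good part $\mu_g$ the $L^2$ computation you sketch does go through (Lemma 7), precisely because lightness caps each $\|f_T\|_2$ and a refined Strichartz inequality (Theorem \ref{thm_Strichartz}) converts this into a summable bound even where $K(\eta)$ is large. The bad part $\mu_b$ is \emph{not} estimated in $L^2$ at all; it is estimated in $L^1(\mathcal H^2\times d\theta)$ (Lemma 6), using the incidence bound from Corollary 3 — which is where the high-low method and cone decoupling (Theorem 7) do the work — and an $L^1$ bound on $\pi_{\theta\#}\mu_b$ is already enough for absolute continuity. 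So the essential idea you are missing is the $L^1/L^2$ hybrid: you do not try to fix the kernel estimate where it fails; you instead show that the contribution where it fails is supported on a small set of heavy wave packets whose total push-forward $L^1$-mass is controllable by incidence geometry. Without that decomposition, deferring to ``the high-low/decoupling machinery'' does not assemble into a proof of the theorem as stated.
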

The proof of Theorem \ref{thm_thm8} will be given in Section \ref{sec_proof_thm8}. We also remark that since $\mu$ is compactly supported, we may actually regard $\mathcal H^2$ as supported on the unit ball of $\pi_\theta(\R^3)$, for all $\theta\in[0,1]$. Also, Theorem \ref{thm_thm8} actually gives that $\pi_{\theta \#}\mu$ can be identified with an integrable function on $\pi_\theta(\R^3)$ for a.e. $\theta$.

We now prove Theorem \ref{thm_thm2} assuming Theorem \ref{thm_thm8}. For this purpose, we recall the preliminary concept of Frostman measure.
\begin{defn}[Frostman measure]\label{deffrostmeasure} Let $A\subset \R^d$ be a Borel set. A finite nonzero measure $\mu$ supported in $A$ is said to be an $a$-Frostman measure in $A$ if $\rho_a(\mu)<\infty$ (see \eqref{eqn_Frostman_mu_intro} above).
\end{defn}
We state Frostman's Lemma below, where $\mathcal{H}^{a}$ stands for the $a$-dimensional Hausdorff measure in $\R^d$.

\begin{thm}[Frostman's lemma, Theorem 2.7 in \cite{Mattilabook}]\label{thm_Frostman_lemma}
    Let $A\subset \R^d$ be a Borel set. Then $A$ admits an $a$-Frostman measure if and only if $\mathcal{H}^{a}(A)\in (0,\infty]$.
\end{thm}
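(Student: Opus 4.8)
The statement is an equivalence, so the plan is to prove the two directions separately: ``$A$ carries an $a$-Frostman measure $\Rightarrow \mathcal H^a(A)\in(0,\infty]$'' is the soft \emph{mass distribution principle}, while the converse is the substantive half, proved by an explicit dyadic construction of the measure. For the easy direction, suppose $A$ carries an $a$-Frostman measure $\mu$, a nonzero finite measure supported in $A$ with $\mu(B^d(x,r))\le\rho_a(\mu)\,r^a$ for all $x\in\R^d$ and $r\in(0,1)$. Given any countable cover $A\sub\bigcup_i U_i$ with $\operatorname{diam}U_i<\delta<1$, discard the pieces missing $A$ and pick $x_i\in U_i\cap A$ in the rest; since $U_i\sub B^d(x_i,\operatorname{diam}U_i)$,
\[
    0<\mu(A)\le\sum_i\mu(U_i)\le\rho_a(\mu)\sum_i(\operatorname{diam}U_i)^a .
\]
Taking the infimum over such covers gives $\mathcal H^a_\delta(A)\ge\mu(A)/\rho_a(\mu)$, and sending $\delta\to0$ gives $\mathcal H^a(A)\ge\mu(A)/\rho_a(\mu)>0$; since $\mathcal H^a(A)\le\infty$ is automatic, $\mathcal H^a(A)\in(0,\infty]$.

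\textbf{Hard direction, the construction.} Now assume $\mathcal H^a(A)>0$. First I would reduce to the case $A=K$ compact: when $\mathcal H^a(A)<\infty$ this is inner regularity of the finite Borel measure $E\mapsto\mathcal H^a(A\cap E)$ on $\R^d$, and when $\mathcal H^a(A)=\infty$ one invokes the standard (but nontrivial) fact that a Borel set of infinite $\mathcal H^a$-measure contains a compact subset of positive finite $\mathcal H^a$-measure. Rescaling, take $K\sub[0,1)^d$, fix a large $m\in\N$, and let $\mathcal D_n$ be the dyadic cubes of side length $2^{-n}$, with $\ell(Q)$ the side length of $Q$. Build $\mu_m$ by a top-down pruning: assign to each $Q\in\mathcal D_m$ meeting $K$ the mass $2^{-ma}=\ell(Q)^a$, spread as a multiple of Lebesgue measure on $Q$, and mass $0$ elsewhere; then for $n=m-1,m-2,\dots,0$ in turn, whenever the current mass of some $Q\in\mathcal D_n$ exceeds $\ell(Q)^a$, multiply $\mu_m$ restricted to $Q$ by the factor $\ell(Q)^a/(\text{current mass of }Q)<1$. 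Since pruning only lowers the masses of subcubes, the final $\mu_m$ satisfies $\mu_m(Q)\le\ell(Q)^a$ for every dyadic $Q$ of generation $\le m$, and comparing an arbitrary ball $B^d(x,r)$ with the $O_d(1)$ dyadic cubes of comparable side that it meets yields $\mu_m(B^d(x,r))\le C_d\,r^a$, uniformly in $m$.

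\textbf{Hard direction, the lower bound and the limit.} The crucial point is that $\mu_m(\R^d)$ does not degenerate as $m\to\infty$. Let $\{Q_j\}$ be the maximal dyadic cubes of generation $\le m$ with $\mu_m(Q_j)=\ell(Q_j)^a$; every $x\in K$ lies in one of them --- the topmost ancestor of its scale-$m$ cube that got rescaled, or that scale-$m$ cube itself if none did --- so $\{Q_j\}$ is a finite disjoint cover of $K$. Hence, using $\operatorname{diam}Q_j=\sqrt d\,\ell(Q_j)$,
\[
    \mu_m(\R^d)\ge\sum_j\mu_m(Q_j)=d^{-a/2}\sum_j(\operatorname{diam}Q_j)^a\ge d^{-a/2}\,\mathcal H^a_\infty(K)>0,
\]
the last inequality because the Hausdorff content $\mathcal H^a_\infty$ vanishes exactly when $\mathcal H^a$ does. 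Since also $\mu_m(\R^d)\le1$ and each $\mu_m$ is supported in the $\sqrt d\,2^{-m}$-neighbourhood of $K$, some subsequence converges weak-$*$ to a measure $\mu$; then $\mu$ is supported in $K\sub A$, has mass $\ge d^{-a/2}\mathcal H^a_\infty(K)>0$, and inherits $\mu(B^d(x,r))\le C_d r^a$ by testing against a continuous function equal to $1$ on $B^d(x,r)$ and supported in $B^d(x,2r)$. Thus $\rho_a(\mu)<\infty$ and $\mu$ is the desired $a$-Frostman measure.

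\textbf{Main obstacle.} The delicate step is the uniform-in-$m$ lower bound on $\mu_m(\R^d)$: the stopping family $\{Q_j\}$ must be chosen so that it simultaneously covers $K$ and consists of cubes carrying the \emph{maximal} admissible mass $\ell(Q_j)^a$, which is exactly what converts ``$\mu_m$ is not too small'' into a bound by the Hausdorff content, hence by $\mathcal H^a(K)>0$. The only genuinely external ingredient, which I would simply cite, is the reduction of a general Borel set of infinite $\mathcal H^a$-measure to a compact subset of positive finite measure.
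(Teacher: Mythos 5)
The paper does not prove this theorem; it cites it as Theorem~2.7 of Mattila's textbook \cite{Mattilabook} and uses it as a black box, so there is no in-paper argument to compare against. Your proof is correct and is exactly the standard Frostman construction from that reference: the mass distribution principle for the easy direction, and for the converse a dyadic top-down pruning of a measure whose scale-$m$ masses start at $\ell(Q)^a$, with the stopping-cube cover of $K$ converting the uniform-in-$m$ lower bound on $\mu_m(\R^d)$ into a comparison with the Hausdorff content $\mathcal H^a_\infty(K)$, followed by a weak-$*$ limit (valid since all $\mu_m$ live in a fixed compact set, so mass is preserved in the limit). You correctly single out the one genuinely nontrivial external ingredient --- that a Borel set of infinite $\mathcal H^a$-measure contains a compact subset of positive finite $\mathcal H^a$-measure (the subsets-of-finite-measure theorem, see e.g.\ Theorem~8.19 of Mattila's \emph{Geometry of Sets and Measures in Euclidean Spaces}) --- and it is entirely reasonable to cite that rather than reprove it.
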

\begin{proof}[Proof of Theorem \ref{thm_thm2} assuming Theorem \ref{thm_thm8}]
    We may of course assume $A\sub [-1,1]^3$. Since $\dim A\in (2,3]$, we have $\mathcal H^a(A)>0$ where $a:=\frac {\dim A+2}2\in (2,\dim A)$ (indeed, since $\mathcal{H}^{a}(A)=\infty$ for such $a$). By Theorem \ref{thm_Frostman_lemma}, we can find an $a$-Frostman measure $\mu$ supported in $A$. By Theorem \ref{thm_thm8}, we have $\pi_{\theta\#}\mu$ is absolutely continuous with respect to $\mathcal H^2$ for a.e. $\theta\in [0,1]$. 
    
    For every such $\theta$, we claim that $\mathcal H^2(\pi_\theta(A))>0$. Otherwise, by definition of absolute continuity, we have $\pi_{\theta\#}\mu(\pi_\theta(A))=0$, that is, $\mu(\pi_\theta^{-1}(\pi_\theta(A)))=0$. Since $A\sub \pi_\theta^{-1}(\pi_\theta(A))$, this implies $\mu(A)=0$, a contradiction.
\end{proof}

\subsection{Preliminaries about \texorpdfstring{$C^2$}{Lg} curves}

Define 
\begin{equation}
\mathbf{e}_1(\theta)=\gamma(\theta),\,\mathbf{e}_{2}(\theta)=\gamma'(\theta),\,\mathbf{e}_3(\theta)=\mathbf{e}_1(\theta)\times \mathbf{e}_2(\theta).
\end{equation}
Then for any $\theta\in [0,1]$, $\{\mathbf{e}_i(\theta)\colon i=1,2,3\}$ is an orthonormal basis of $\R^3$.

\begin{lem}\label{lem_frame_derivatives}
    We have the following identities:    \begin{equation}
    \mathbf e_1'=\mathbf e_2,\quad    \mathbf e_2'=-\mathbf e_1+\uptau \mathbf e_3,\quad
    \mathbf e_3'=-\uptau \mathbf e_2,
\end{equation}
where we denote
$\uptau=\det (\gamma,\gamma',\gamma'')$.
\end{lem}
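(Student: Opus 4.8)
The idea is to exploit that $\{\mathbf e_1,\mathbf e_2,\mathbf e_3\}$ is an orthonormal frame, so differentiating the orthonormality relations $\mathbf e_i\cdot\mathbf e_j=\delta_{ij}$ forces the matrix of derivatives (in this basis) to be antisymmetric. Concretely, write $\mathbf e_i'=\sum_j a_{ij}\mathbf e_j$; then $0=(\mathbf e_i\cdot\mathbf e_j)'=a_{ij}+a_{ji}$, so the $3\times 3$ matrix $(a_{ij})$ has zero diagonal and $a_{ji}=-a_{ij}$. Thus I only need to pin down three entries: $a_{12}$, $a_{13}$, $a_{23}$.

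First, $\mathbf e_1'=\gamma'=\mathbf e_2$ is immediate from the definition $\mathbf e_1=\gamma$, $\mathbf e_2=\gamma'$; this gives $a_{12}=1$, $a_{13}=0$, and hence by antisymmetry $a_{21}=-1$, $a_{31}=0$. It remains to identify $a_{23}=\mathbf e_2'\cdot\mathbf e_3$. Compute $\mathbf e_2'=\gamma''$, so $a_{23}=\gamma''\cdot(\gamma\times\gamma')=\det(\gamma'',\gamma,\gamma')$. Using that the scalar triple product is invariant under cyclic permutations, $\det(\gamma'',\gamma,\gamma')=\det(\gamma,\gamma',\gamma'')=\uptau$. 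Therefore $a_{23}=\uptau$ and $a_{32}=-\uptau$. Substituting back: $\mathbf e_2'=-\mathbf e_1+\uptau\,\mathbf e_3$ and $\mathbf e_3'=-\uptau\,\mathbf e_2$, as claimed.

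One should double-check that $\mathbf e_3'$ computed directly from the product rule on $\mathbf e_3=\mathbf e_1\times\mathbf e_2$ agrees: $\mathbf e_3'=\mathbf e_1'\times\mathbf e_2+\mathbf e_1\times\mathbf e_2'=\mathbf e_2\times\mathbf e_2+\gamma\times\gamma''=\gamma\times\gamma''$, and one verifies $\gamma\times\gamma''=-\uptau\,\mathbf e_2$ by pairing against the basis (its $\mathbf e_1$ and $\mathbf e_3$ components vanish since $\gamma\times\gamma''\perp\gamma$ and $(\gamma\times\gamma'')\cdot(\gamma\times\gamma')=\det$-type quantity that expands, via the identity $(\mathbf a\times\mathbf b)\cdot(\mathbf a\times\mathbf c)=(\mathbf a\cdot\mathbf a)(\mathbf b\cdot\mathbf c)-(\mathbf a\cdot\mathbf c)(\mathbf a\cdot\mathbf b)$, to $0$ using $\gamma\cdot\gamma'=0$), leaving only the $\mathbf e_2$ component $(\gamma\times\gamma'')\cdot\gamma' = \det(\gamma,\gamma'',\gamma') = -\uptau$. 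There is essentially no obstacle here; the only thing to be careful about is sign bookkeeping in the triple products and consistent use of the cyclic-permutation rule for determinants. The antisymmetry observation is what makes the computation short.
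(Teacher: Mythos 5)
Your proof is correct and takes a genuinely cleaner route than the paper's. The paper expands each of $\mathbf e_2'$ and $\mathbf e_3'$ in the frame and determines every coefficient from scratch: for $\mathbf e_2'$ it first derives $\gamma\cdot\gamma''=-1$, and for $\mathbf e_3'$ it needs a somewhat ad hoc device — taking the dot product of $\gamma\times\gamma''=b_1\gamma+b_2\gamma'+b_3\,\gamma\times\gamma'$ with $\gamma''$ to conclude $b_3\uptau=0$, hence $b_3=0$ because $\uptau\neq 0$. You instead observe that the matrix $(a_{ij})$ with $\mathbf e_i'=\sum_j a_{ij}\mathbf e_j$ is automatically antisymmetric (differentiate $\mathbf e_i\cdot\mathbf e_j=\delta_{ij}$), which kills the diagonal entries and cuts the work down to identifying $a_{12},a_{13},a_{23}$; only $a_{23}=\gamma''\cdot(\gamma\times\gamma')=\uptau$ requires any computation. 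This is the standard Frenet--Serret argument, and it has the minor advantage of not invoking the non-degeneracy hypothesis $\uptau\neq 0$ at all. Your direct double-check of $\mathbf e_3'=\gamma\times\gamma''=-\uptau\,\mathbf e_2$ via the Lagrange identity and the triple product is also sound. One small remark: the antisymmetry step implicitly uses that each $\mathbf e_i$ is at least $C^1$, which does hold here since $\gamma\in C^2$ makes $\mathbf e_1$, $\mathbf e_2=\gamma'$ and $\mathbf e_3=\gamma\times\gamma'$ all $C^1$; it's worth saying so explicitly given that the paper is careful about this low-regularity point elsewhere (e.g.\ in Proposition~\ref{prop_error_e_i}).
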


\begin{proof}
This is just a standard consequence of a Frenet frame with curvature $1$ and torsion $\uptau$, but for completeness we also give a simple proof here. The first relation is just the definition. To prove the second relation, we will need the relation 
\begin{equation}\label{eqn_gamma_times_gamma''}
    \gamma\cdot \gamma''=-1,
\end{equation}
which follows from differentiating $\gamma\cdot \gamma'=0$ and using $|\gamma'|=1$. Now write
\begin{equation}
    \mathbf e_2'=\gamma'':=a_1 \gamma+a_2\gamma'+a_3\gamma'\times \gamma',
\end{equation}
and using \eqref{eqn_gamma_times_gamma''} and the fact that $\{\gamma,\gamma',\gamma\times \gamma'\}$ forms an orthonormal basis, we get $a_1=-1$, $a_2=0$, and 
\begin{equation}
    a_3=\gamma''\cdot (\gamma\times \gamma')=\det (\gamma,\gamma',\gamma'')=\uptau.
\end{equation}
Similarly, we note that
\begin{equation}
    \mathbf e_3'=(\gamma\times \gamma')'=\gamma\times \gamma''.
\end{equation}
Write
\begin{equation}\label{eqn_Aug_27}
    \gamma\times \gamma''=b_1 \gamma+b_2\gamma'+b_3\gamma\times \gamma',
\end{equation}
and using the fact that $\{\gamma,\gamma',\gamma\times \gamma'\}$ forms an orthonormal basis, we get $b_1=0$ and $b_2=-\uptau$. To find $b_3$, taking dot products with $\gamma''$ on both sides of \eqref{eqn_Aug_27} and using $\gamma'\cdot \gamma''=0$ which comes from differentiating $|\gamma'|^2=1$, we get $b_3\uptau=0$, and so $b_3=0$ since $\uptau\ne 0$.    

\end{proof}

The following slightly more generalized Taylor expansion theorem will be used throughout the article to deal with low regularity issues. In words, with the assumption below, we can pretend that $f$ was a $C^2$ function when we do Taylor expansions.

\begin{prop}\label{prop_Taylor}
    Let $f:[a,b]\to \R^3$ be a $C^1$ function such that $f'=gh$ where $g:[a,b]\to \R^3$ is $C^1$, $g(a)=0$ and $h:[a,b]\to \R$ is bounded. Then we have the estimate
    \begin{equation}
        |f(b)-f(a)|\leq \frac 1 2(b-a)^2\norm{h}_\infty \norm{g'}_\infty.
    \end{equation}
\end{prop}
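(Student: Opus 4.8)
The plan is to write the increment $f(b)-f(a)$ as an integral of $f'=gh$ and then exploit the vanishing of $g$ at the left endpoint to extract the quadratic factor. First I would invoke the fundamental theorem of calculus, which applies since $f$ is $C^1$: $f(b)-f(a)=\int_a^b f'(t)\,dt=\int_a^b g(t)h(t)\,dt$, the integrand being continuous because it equals $f'$, hence certainly integrable. Next, since $g$ is $C^1$ with $g(a)=0$, a second application of the fundamental theorem gives $g(t)=g(t)-g(a)=\int_a^t g'(s)\,ds$, whence the pointwise bound $|g(t)|\le (t-a)\,\norm{g'}_\infty$ for every $t\in[a,b]$.

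Combining these two observations, I would estimate, using the triangle inequality for vector-valued integrals, $|f(b)-f(a)|\le \int_a^b |g(t)|\,|h(t)|\,dt \le \norm{h}_\infty\,\norm{g'}_\infty\int_a^b (t-a)\,dt = \frac{1}{2}(b-a)^2\,\norm{h}_\infty\,\norm{g'}_\infty$, which is precisely the claimed inequality.

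There is essentially no real obstacle here; the only point deserving a moment's care is that $h$ is assumed merely bounded, not continuous, so that integrability of the integrand must be justified not from the factored form $gh$ but from the hypothesis that $f$ is $C^1$, which makes $f'=gh$ continuous. Everything else reduces to two invocations of the fundamental theorem of calculus together with the triangle inequality.
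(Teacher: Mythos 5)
Your proof is correct and follows essentially the same strategy as the paper: two applications of the fundamental theorem of calculus together with the triangle inequality, the only difference being that you integrate over $[a,b]$ directly while the paper parametrizes via $t\in[0,1]$. Your remark on integrability (that continuity of $f'$, not of the factors $g,h$, justifies the first FTC step) is a nice clarification that the paper leaves implicit.
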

\begin{proof}
    By the fundamental theorem of calculus, we have
    \begin{align*}
        |f(b)-f(a)|
        &=\left|\int_0^1 (b-a) f'((1-t)a+tb)dt\right|\\
        &\le (b-a) \int_0^1 \left|g((1-t)a+tb)h((1-t)a+tb)\right|dt\\
        &\le (b-a) \norm{h}_\infty \int_0^1 \left|g((1-t)a+tb)\right|dt.
    \end{align*}
    For each $t\in [0,1]$, we can use the fundamental theorem of calculus again to get
    \begin{align*}
        |g((1-t)a+tb)|
        &=|g((1-t)a+tb)-g(a)|\\
        &=\left|\int_0^1 t(b-a) g'((1-s)a+s((1-t)a+tb))ds\right|\\
        &\le \int_0^1 \norm{g'}_\infty |t(b-a)|ds\\
        &= t (b-a)\norm {g'}_\infty.
    \end{align*}
    The result then follows.
\end{proof}

We also need the following elementary proposition.

\begin{prop}\label{prop_error_e_i}
    If $|\theta-\theta'|\leq\delta^{1/2}$, then we have
    \begin{align}
        |\mathbf{e}_i(\theta)\cdot \mathbf{e}_i(\theta')-1|&\lesssim \delta,\quad \forall i=1,2,3,\label{item_01_error_curve}\\
        |\mathbf{e}_2(\theta)\cdot \mathbf{e}_j(\theta')|&\lesssim \delta^{1/2},\quad \forall j=1,3,\label{item_02_error_curve}\\
        |\mathbf{e}_1(\theta)\cdot \mathbf{e}_3(\theta')|&\lesssim \delta.\label{item_03_error_curve}
    \end{align}
\end{prop}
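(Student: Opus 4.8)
The plan is to fix $\theta\in[0,1]$ and regard each of the three expressions as a $C^1$ function of $\theta'$, exploit that $\{\mathbf e_1,\mathbf e_2,\mathbf e_3\}$ is orthonormal at \emph{every} point (so that the function in question vanishes, resp.\ equals $1$, when $\theta'=\theta$), and then estimate the $\theta'$-derivative using the Frenet relations of Lemma \ref{lem_frame_derivatives}. All implicit constants are allowed to depend on $\gamma$ only through $\norm{\uptau}_\infty=\sup_{[0,1]}|\det(\gamma,\gamma',\gamma'')|$, which is finite because $\gamma$ is $C^2$; in particular, Lemma \ref{lem_frame_derivatives} shows each $\mathbf e_i$ is $C^1$ with $\norm{\mathbf e_i'}_\infty\lesssim 1$.

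For \eqref{item_01_error_curve} I would use that $\mathbf e_i(\theta)$ and $\mathbf e_i(\theta')$ are unit vectors, so $\mathbf e_i(\theta)\cdot\mathbf e_i(\theta')=1-\tfrac12|\mathbf e_i(\theta)-\mathbf e_i(\theta')|^2$; since $\norm{\mathbf e_i'}_\infty\lesssim1$, the mean value inequality gives $|\mathbf e_i(\theta)-\mathbf e_i(\theta')|\lesssim|\theta-\theta'|\le\delta^{1/2}$, and squaring yields \eqref{item_01_error_curve}. For \eqref{item_02_error_curve}, fix $j\in\{1,3\}$ and consider $\phi(\theta')=\mathbf e_2(\theta)\cdot\mathbf e_j(\theta')$: it is $C^1$ with $\phi(\theta)=\mathbf e_2(\theta)\cdot\mathbf e_j(\theta)=0$, and $\phi'(\theta')=\mathbf e_2(\theta)\cdot\mathbf e_j'(\theta')$ is bounded because $\norm{\mathbf e_j'}_\infty\lesssim1$; hence the mean value theorem gives $|\phi(\theta')|=|\phi(\theta')-\phi(\theta)|\lesssim|\theta-\theta'|\le\delta^{1/2}$. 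The identical argument applied to $\theta'\mapsto\mathbf e_1(\theta)\cdot\mathbf e_2(\theta')$ (which also vanishes at $\theta'=\theta$ and has bounded derivative) furnishes the auxiliary bound $|\mathbf e_1(\theta)\cdot\mathbf e_2(\theta')|\lesssim|\theta-\theta'|$ for all $\theta,\theta'$, which I will need next.

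The only step with real content is \eqref{item_03_error_curve}, and it is precisely what Proposition \ref{prop_Taylor} was set up for. Put $\psi(\theta')=\mathbf e_1(\theta)\cdot\mathbf e_3(\theta')$; it is $C^1$ and $\psi(\theta)=0$, but the crude bound $|\psi(\theta')|\lesssim|\theta-\theta'|\le\delta^{1/2}$ is weaker than required. Differentiating once and using $\mathbf e_3'=-\uptau\,\mathbf e_2$ gives $\psi'(\theta')=-\uptau(\theta')\,\big(\mathbf e_1(\theta)\cdot\mathbf e_2(\theta')\big)$. Thus $\psi'=g\,h$ with $g(\theta'):=-\mathbf e_1(\theta)\cdot\mathbf e_2(\theta')$ of class $C^1$, $g(\theta)=0$, $\norm{g'}_\infty\lesssim1$ (since $g'(\theta')=-\mathbf e_1(\theta)\cdot\mathbf e_2'(\theta')=-\mathbf e_1(\theta)\cdot(-\mathbf e_1(\theta')+\uptau(\theta')\mathbf e_3(\theta'))$), and $h:=\uptau$ bounded; Proposition \ref{prop_Taylor} (read for a real-valued $f$, its proof being insensitive to the target dimension) then yields $|\psi(\theta')|=|\psi(\theta')-\psi(\theta)|\le\tfrac12|\theta-\theta'|^2\norm{h}_\infty\norm{g'}_\infty\lesssim\delta$, which is \eqref{item_03_error_curve}. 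One can also dispense with Proposition \ref{prop_Taylor} and simply integrate, since $|\psi'|\lesssim|\theta-\theta'|$ by the auxiliary bound above.

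The main obstacle is the bookkeeping in \eqref{item_03_error_curve}: one needs two orders of vanishing at $\theta'=\theta$, i.e.\ a $\delta$ rather than a $\delta^{1/2}$, yet $\gamma$ is only $C^2$, so $\mathbf e_3$ is merely $C^1$ and $\uptau$ merely continuous, and a naive second-order Taylor expansion of $\psi$ is not available. The resolution is structural rather than computational: one differentiation, followed by the Frenet relation $\mathbf e_3'=-\uptau\,\mathbf e_2$, rewrites $\psi'$ as $-\uptau$ times the off-diagonal inner product $\mathbf e_1(\theta)\cdot\mathbf e_2(\theta')$, which already vanishes to first order by the elementary argument used for \eqref{item_02_error_curve}; integrating once more --- legitimately, via Proposition \ref{prop_Taylor} --- restores the missing factor of $|\theta-\theta'|$.
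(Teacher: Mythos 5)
Your proof is correct. The most noteworthy departure from the paper's argument is your treatment of \eqref{item_01_error_curve}: you replace the paper's three separate cases (direct Taylor expansion for $i=1$, Proposition \ref{prop_Taylor} for $i=3$, a chain-of-integrals manipulation for $i=2$) by a single unified observation, namely that for unit vectors $\mathbf e_i(\theta)\cdot\mathbf e_i(\theta')-1=-\tfrac12\left|\mathbf e_i(\theta)-\mathbf e_i(\theta')\right|^2$, after which the purely first-order Lipschitz bound $\left|\mathbf e_i(\theta)-\mathbf e_i(\theta')\right|\lesssim|\theta-\theta'|$ delivers the quadratic gain upon squaring. This is genuinely cleaner than the paper's route and entirely sidesteps the low-regularity bookkeeping for $i=2,3$, since it needs only that each $\mathbf e_i$ be Lipschitz rather than twice differentiable. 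For \eqref{item_03_error_curve} the two routes differ mildly: the paper Taylor-expands $\mathbf e_1=\gamma$ (which is honestly $C^2$) to second order about $\theta'$ and pairs with $\mathbf e_3(\theta')$, so no substitute for the second derivative is needed; you instead expand $\mathbf e_3$, which is only $C^1$, and compensate by routing the argument through Proposition \ref{prop_Taylor} (or, as you note, an explicit integration of $\psi'$ using the first-order vanishing of $\mathbf e_1(\theta)\cdot\mathbf e_2(\theta')$). Both are valid; the paper's is more direct because it expands the smoother member of the pair. Your treatment of \eqref{item_02_error_curve} coincides with the paper's.
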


\begin{proof}
The estimate \eqref{item_02_error_curve} is easy, since each $\mathbf e_i(\theta)$ is a Lipschitz function, and $\{\mathbf e_i(\theta):1\le i\le 3\}$ is an orthonormal basis for each $\theta$. The inequality \eqref{item_03_error_curve} follows from Taylor expanding $\mathbf e_1(\theta)$ with respect to the centre $\theta'$ and using that  $\{\mathbf e_i(\theta'):1\le i\le 3\}$ is an orthonormal basis.

To prove \eqref{item_01_error_curve} for $i=1$, we note that it follows directly from Lemma \ref{lem_frame_derivatives} and Taylor expansion. For $i=3$ we need to be slightly more careful since $\mathbf e_3$ is merely $C^1$.
Take $f(t)=\mathbf{e}_3(\theta)\cdot \mathbf{e}_3(t)$ and observe that by Lemma \ref{lem_frame_derivatives} one has $f'(t)=-\tau(t)\mathbf{e}_3(\theta)\cdot \mathbf{e}_2(t)=-\tau(t)g(t)$, for $g(t)=\mathbf{e}_3(\theta)\cdot \mathbf{e}_2(t)$. Since $g(\theta)=0$ and $f(\theta)=1$, Proposition \ref{prop_Taylor} gives $|\mathbf{e}_3(\theta')\cdot\mathbf{e}_3(\theta)-1 |\lesssim |\theta-\theta'|^{2}\lesssim \delta$.

It remains to prove the inequality for $i=2$. The idea is similar to the proof of Proposition \ref{prop_Taylor}: we compute, using Lemma \ref{lem_frame_derivatives},
\begin{align*}
|\mathbf e_2(\theta')\cdot \mathbf e_2(\theta)-1|
&=|\mathbf e_2(\theta')\cdot \mathbf e_2(\theta)-\mathbf e_2(\theta)\cdot \mathbf e_2(\theta)|\\
&=\left|\mathbf e_2(\theta)\cdot \int_0^1 \mathbf e'_2((1-t)\theta+t\theta')(\theta'-\theta)dt\right|\\
&\le |\theta'-\theta| \int_0^1 \left|\mathbf e_2(\theta)\cdot\mathbf e_1((1-t)\theta+t\theta')\right|dt\\
&+|\theta'-\theta| \int_0^1 \left|\uptau((1-t)\theta+t\theta')\mathbf e_2(\theta)\cdot\mathbf e_3((1-t)\theta+t\theta')\right|dt.
\end{align*}
Using $|\uptau|\sim 1$ and applying the already proved inequality (\ref{item_02_error_curve}) to control the inner products left inside the integrals by $\delta^{1/2}$, we are done.
\end{proof}

\subsection{Notation}
\begin{enumerate}
    \item $\lessapprox$ and $\gtrapprox$ will mean up to logarithmic losses. More precisely, for two nonnegative functions $A(\delta),B(\delta)$, we use $A(\delta)\lessapprox B(\delta)$ to mean the following: there exists some constant $\alpha>0$ and $C>1$ such that
    \begin{equation}
        A(\delta)\le C |\log \delta|^{\alpha}B(\delta),\quad \forall \delta\in (0,1/2].
    \end{equation}
    We can define $\gtrapprox$ in a similar way.
    
    \item We say a collection $\mathcal C$ of sets is $A$-overlapping if $\sum_{C\in \mathcal C}1_C\le A$.
    \item Unless otherwise specified, all implicit constants in this article are allowed to depend on the curve $\gamma$.
    \item For a family of subsets $\mathbb{T}$ of $\R^n$, we denote $\cup \mathbb{T}:=\cup_{T\in \mathbb{T}}T$.
    \item $\text{dim}(A)$ stands for the Hausdorff dimension of $A$.
    
\end{enumerate}

\subsection{Acknowledgements} {We are grateful to the organizers of the ``Study Guide Writing Workshop 2023" at the University of Pennsylvania for the opportunity of participating in this very interesting project and to all participants of this workshop for many fruitful discussions. We would like to give special thanks to our mentor, Professor Yumeng Ou, for all her guidance and support.}

\section{Some important definitions}
We first discuss the proof of Theorem \ref{firstmainthm}. It is proved using a standard technique commonly referred to as discretization. Namely, we first reduce Theorem \ref{firstmainthm} to a discretized version which can be proved using Fourier analysis. (see Theorem \ref{thm4paper} below). We start with some preliminaries in geometric measure theory.

We recall the definition of Hausdorff dimension in a metric space $X$. For $A\subset X$,
$$\textnormal{dim}(A)=\inf\{s\geq 0\colon \mathcal{H}^{s}(A)=0\}=\sup\{s\geq 0\colon \mathcal{H}^{s}(A)=\infty\}$$
where $\mathcal{H}^{s}(A)=\lim_{\delta\rightarrow 0}\mathcal{H}^{s}_{\delta}(A)$ and for $0<\delta\leq \infty$,
$$\mathcal{H}_{\delta}^{s}(A)=\inf\left\{\sum_{i=0}^{\infty} (\textnormal{diam}\,U_i)^s\colon A\subset \cup_{i=0}^{\infty}\, U_i, \,U_i\subset X\,\textnormal{ with } \textnormal{diam}\,(U_i)\leq \delta,\,\forall i\right\}.$$ 

%\subsection{Definition 1 (P2)}

\begin{defn}[Definition 1 in \cite{GGGHMW2022}]
    For a number $\delta$ and any set $X$, we use $|X|_{\delta}$ to denote the maximal number of $\delta$-separated points in $X$.
\end{defn}

\begin{example}
Here are two typical cases:
    \begin{itemize}
    \item If $X=B^d(0,r)$ and $\delta<r$, then $|X|_\delta\sim (r/\delta)^d$.

    \item If $X=C_n\subseteq \R$, the $n$-th stage of Cantor middle third set, then 
    \begin{itemize}
        \item if $\delta<3^{-n}$, we have $|X|_\delta\sim 2^{n}3^{-n}\delta^{-1}$.
        \item if $\delta>3^{-n}$, say $\delta=3^{-m}$ where $m<n$, then $|X|_\delta\sim 2^m$. In particular, if $C$ is the Cantor middle third set, then $|C|_\delta\sim 2^{m}=\delta^{-s}$ where $s=\text{dim}(C)=\log_32$ (see Figure \ref{fig:Cantor example}).
    \end{itemize}
\end{itemize}
\end{example}

\begin{figure}[h]
    \centering
    \scalebox{4}{
\begin{tikzpicture}
\draw[gray, line width=0.3pt] (0,0)--(1,0);

\draw[red, line width=0.7pt] (0,0)--(1/27,0);
\draw[red, line width=0.7pt] (2/27,0)--(3/27,0);

\draw[red, line width=0.7pt] (2/9,0)--(2/9+1/27,0);
\draw[red, line width=0.7pt] (2/9+2/27,0)--(2/9+3/27,0);

\draw[red, line width=0.7pt] (2/3,0)--(2/3+1/27,0);
\draw[red, line width=0.7pt] (2/3+2/27,0)--(2/3+3/27,0);

\draw[red, line width=0.7pt] (8/9,0)--(8/9+1/27,0);
\draw[red, line width=0.7pt] (8/9+2/27,0)--(8/9+3/27,0);

\draw[red] (1.2,0) node {\scalebox{.3}{$C_n$}};
\draw[blue] (0,-0.1)--(1/9,-0.1);
\draw[blue] (2/9,-0.1)--(2/9+1/9,-0.1);
\draw[blue] (2/3,-0.1)--(2/3+1/9,-0.1);
\draw[blue] (8/9,-0.1)--(8/9+1/9,-0.1);
\draw[blue] (1/18,-0.2) node {\scalebox{.3}{$\delta$} };
\end{tikzpicture}}
    \caption{Middle third Cantor set example.}
    \label{fig:Cantor example}
\end{figure}
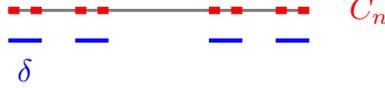

    The following proposition exhibits some uncertainty principle.

    \begin{prop}\label{prop_blurred}
    Given any finite set $X$, $|X|_\delta\sim |N_{\delta/4}(X)|_\delta$ where $N_\delta(X)$ denotes the $\delta$-neighbourhood of $X$. In words, when considering $|X|_\delta$, we can only see a $\delta/4$-blurred picture of $X$.
    \end{prop}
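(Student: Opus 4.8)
The plan is to prove the two inequalities $|X|_\delta \lesssim |N_{\delta/4}(X)|_\delta$ and $|N_{\delta/4}(X)|_\delta \lesssim |X|_\delta$ separately, each by taking a maximal $\delta$-separated subset of the relevant set and showing it can be used to build a comparably large $\delta$-separated subset of the other.

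First I would prove the easier direction $|X|_\delta \le |N_{\delta/4}(X)|_\delta$. Let $S \subseteq X$ be a maximal $\delta$-separated subset, so $|S| = |X|_\delta$. Since $X \subseteq N_{\delta/4}(X)$, the set $S$ is also a $\delta$-separated subset of $N_{\delta/4}(X)$, hence $|X|_\delta = |S| \le |N_{\delta/4}(X)|_\delta$. (In fact this gives the bound with constant $1$, no $\sim$ losses.)

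For the reverse direction, let $S' \subseteq N_{\delta/4}(X)$ be a maximal $\delta$-separated subset, so $|S'| = |N_{\delta/4}(X)|_\delta$. For each $y \in S'$, by definition of the $\delta/4$-neighbourhood there is a point $\phi(y) \in X$ with $|y - \phi(y)| \le \delta/4$. The map $\phi : S' \to X$ need not be injective, but if $\phi(y) = \phi(y')$ then $|y - y'| \le \delta/2 < \delta$, which by $\delta$-separation of $S'$ forces $y = y'$; so $\phi$ is injective and its image $\phi(S') \subseteq X$ has $|\phi(S')| = |S'|$. The image is not $\delta$-separated, but it is $\delta/2$-separated, since $|\phi(y) - \phi(y')| \ge |y - y'| - \delta/2 \ge \delta - \delta/2 = \delta/2$ for distinct $y,y'\in S'$. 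A set that is $\delta/2$-separated in $\R^d$ (or in any metric space of interest here) contains a $\delta$-separated subset of size at least a dimensional constant times its own size — e.g. by a greedy/volume-packing argument, each $\delta$-ball meets at most $O(1)$ points of a $\delta/2$-separated set, so one can extract a $\delta$-separated subset of proportional size. This yields $|N_{\delta/4}(X)|_\delta = |S'| = |\phi(S')| \lesssim |X|_\delta$, completing the proof.

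The only mildly nontrivial point — the ``main obstacle'' — is the last step: passing from a $\delta/2$-separated set to a $\delta$-separated subset of comparable cardinality. In $\R^d$ this is the standard fact that separated sets and $\epsilon$-nets have comparable sizes up to the constant $C(d)$ hidden in $\sim$; concretely, cover a $\delta/2$-separated set $P$ by balls of radius $\delta$ centered at a maximal $\delta$-separated subset $Q \subseteq P$, and observe each such ball contains at most $C(d)$ points of $P$ by a volume comparison of the disjoint $\delta/4$-balls around them, so $|Q| \ge |P|/C(d)$. This is exactly the kind of routine covering estimate the $\sim$ notation is designed to absorb, and it is why the proposition is stated with $\sim$ rather than with explicit constants.
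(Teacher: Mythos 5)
Your proof is correct and follows essentially the same strategy as the paper's: push a maximal $\delta$-separated subset of $N_{\delta/4}(X)$ into $X$ by choosing a nearby point for each, observe the image is $\delta/2$-separated (and hence of the same cardinality), then pass to a $\delta$-separated subset of comparable size. The only cosmetic difference is in the final step, where you use a greedy/volume-packing bound while the paper invokes a $\delta/2$-grid pigeonholing; these are equivalent standard arguments and your slightly more explicit treatment of the injectivity of the nearest-point map is a fine addition.
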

    \begin{proof}
        It suffices to show $|X|_\delta\gtrsim |N_{\delta/4}(X)|_\delta$. We need to show that given any $\delta$-separated subset $A\sub N_{\delta/4}(X)$, we have $\#A\lesssim |X|_\delta$.
For each point $a\in A$ choose $x_a\in X$ with $|x_a-a|<\delta/4$. 
        For any $u,v\in A$ with $u\ne v$ one has  $|u-v|>\delta$, and $|x_u-u|<\delta/4$, $|x_v-v|<\delta/4$. Thus $|x_u-x_v|>\delta/2$. Thus we have obtained a $\delta/2$-separated subset $B=\{x_a\colon a\in A\}\sub X$ with $\#A=\#B$. By dividing $\R^d$ into a $\delta/2$-grid and a simple pigeonholing argument, we can find a subset $B'\sub B$ such that $B'$ is $\delta$-separated and $\#B\sim \# B'$. Thus we have $\#A\sim \# B'\leq |X|_\delta$.

    \end{proof}

%\subsection{Definition 2 (P5)}
To make the Definition 2 in \cite{GGGHMW2022} slightly more rigorous, we define
\begin{defn}[Definition 2 in \cite{GGGHMW2022}]
    Let $X\sub \R^d$ be a bounded set. Let $\delta>0$ and let $0\le s\le d$. We say that $X$ is a $(\delta,s,C)$-set if for every ball $B_r\sub \R^d$,
    \begin{equation}
        |X\cap B_r|_\delta\le C(r/\delta)^s,\quad \forall\, \delta\le r\le 1.
    \end{equation}
    For simplicity, we say that $X$ is a $(\delta,s)$-set if $X$ is a $(\delta,s,C)$-set for some $C$ depending on $s,d$ only.
\end{defn}
By Proposition \ref{prop_blurred}, we see that if $X$ is a $(\delta,s)$-set, then $N_{\delta/4}(X)$ is also a $(\delta,s)$-set.

    \begin{example}
         A typical case of a $(\delta,s)$-set is given by the union of $2^n$ disjoint intervals of length $\delta$, which appears at the $n$-th stage of the standard middle $(1-2\alpha)$-Cantor set $C$, where $\delta=\alpha^{n}$ and $\dim C=\log_{\alpha^{-1}}(2)$.

    To check such set $C_n$ is a $(\delta,s)$-set for $\delta=\alpha^{n}$ and $s=\text{dim}(C)$ we observe that for any $0\leq m<n$ and $r=\alpha^m$ we have
    $|C_n\cap B_{\alpha^m}|_{\alpha^{n}}\sim 2^{n-m}$
    and 
    \begin{equation}
        (r/\delta)^{s}=\alpha^{-(n-m)s}=2^{n-m} \text{(see Figure \ref{fig:second example})}.
    \end{equation}

    \end{example}

\begin{figure}[h]
    \centering
    \scalebox{4}{
\begin{tikzpicture}
\draw[gray, line width=0.3pt] (0,0)--(1,0);

\draw[red, line width=0.7pt] (0,0)--(1/27,0);
\draw[red, line width=0.7pt] (2/27,0)--(3/27,0);

\draw[red, line width=0.7pt] (2/9,0)--(2/9+1/27,0);
\draw[red, line width=0.7pt] (2/9+2/27,0)--(2/9+3/27,0);

\draw[red, line width=0.7pt] (2/3,0)--(2/3+1/27,0);
\draw[red, line width=0.7pt] (2/3+2/27,0)--(2/3+3/27,0);

\draw[red, line width=0.7pt] (8/9,0)--(8/9+1/27,0);
\draw[red, line width=0.7pt] (8/9+2/27,0)--(8/9+3/27,0);

\draw[red] (1.2,0) node {\scalebox{.3}{$C_n$}};
\draw[blue] (1.2,-0.2) node {\scalebox{.3}{$C_m$}};

\draw[blue](0,-0.1)--(1/3,-0.1);
\draw[blue](2/3,-0.1)--(1,-0.1);
%\draw[blue] (0,-0.1)--(1/9,-0.1);
%\draw[blue] (2/9,-0.1)--(2/9+1/9,-0.1);
\draw[blue] (2/3,-0.1)--(2/3+1/9,-0.1);
\draw[blue] (8/9,-0.1)--(8/9+1/9,-0.1);

\draw[blue] (1/9+1/18,-0.2) node {\scalebox{.2}{$3^{-m}$} 
};
\draw[red] (1/36,0.1) node {\scalebox{.15}{$3^{-n}$} 
};
%\draw[green] (0,0.25)--(1/3,0.25);
\draw[orange] (2/9,0.3) node {\scalebox{.2}{$2^{n-m}$ red intervals within each blue interval}};
\draw[orange,->] (1/9+1/18,0.25)--(1/9+1/18,0.1);
\end{tikzpicture}}
    \caption{$(3^{-n},\log_{3}(2))$-set example, with $n=3$ and $m=1$.}
    \label{fig:second example}
\end{figure}
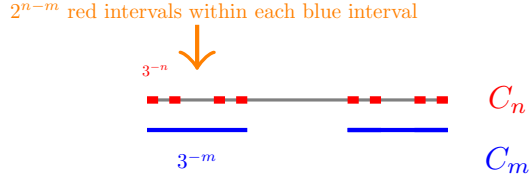

\section{Discretized Marstrand's Theorem via high-low method}

To illustrate the high-low method in Fourier analysis, we pause the proof of Theorem \ref{firstmainthm} and give an alternative proof of Marstrand's projection theorem in $\R^2$, resulting in a Falconer type exceptional set estimate (see Section \ref{sec_exceptional}).

\subsection{Statement of discretized Marstrand's Theorem}
We first slightly rephrase Proposition 1 of \cite{GGGHMW2022} as follows.
\begin{prop}[Proposition 1]\label{Mdiscrete}
    Fix $0<s<1$. Fix a small scale $\delta>0$ and a $\delta$-separated set $\Theta\sub [0,\pi)$. For each $\theta\in \Theta$, define $L_\theta:=\{x\in \R^2:\arg (x)=\theta\}\cup\{(0,0)\}$ and let $p_\theta:\R^2\to L_\theta$ be the projection. Suppose 
    \begin{itemize}
        \item $A\sub B^2(0,1)$ is a union of disjoint $\delta$-balls with measure $|A|=\delta^{2-a}$, or equivalently $|A|_\delta\sim \delta^{-a}$.
        \item Assume for every $\theta\in \Theta$, $p_\theta(A)$ (which is a union of line segments of length $\delta$ in $L_\theta$) satisfies the $s$-dimensional condition: for each $r\in [\delta,1]$ and line segment $I_r\sub L_\theta$ of length $r$, we have 
    \begin{equation}\label{eqn_s_dim}
        |p_\theta(A)\cap I_r|_\delta\lesssim (r/\delta)^s.
    \end{equation}   
    \end{itemize}
     
    Then
    \begin{equation}
       \#\Theta\lesssim_s \delta^{-1+a-s}.
    \end{equation}

    In particular, if $\#\Theta\sim \delta^{-1}$, then the above becomes
    \begin{equation}
        \delta^{-a}\lesssim_s \delta^{-s}.
    \end{equation}
\end{prop}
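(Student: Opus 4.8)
The plan is to run a standard high-low frequency decomposition of the indicator function $f := 1_A$ at scale $\delta$, exploiting the transversality between the curve $\theta \mapsto L_\theta^\perp$ (a nondegenerate family of directions) and the $s$-dimensional structure of the projections. First I would smooth things out: replace $A$ by a weight supported on its $\delta$-neighborhood and work with $f = 1_A$, so $\|f\|_1 = |A| = \delta^{2-a}$ and $\|f\|_2^2 = |A| = \delta^{2-a}$. For each $\theta \in \Theta$ let $\sigma_\theta$ denote a smooth cap on $\mathbb{S}^1$ of angular width $\sim \delta$ around the direction $e_\theta$ normal to $L_\theta$ (so that the Fourier support of $p_{\theta\#}(f\,dx)$ lives in the $\delta$-tube dual to $L_\theta$); convolving $\widehat{f}$ against a suitable bump, decompose $f = f_{\mathrm{low}} + f_{\mathrm{high}}$ where $f_{\mathrm{low}}$ has Fourier support in $B(0, \delta^{-1+\epsilon_0})$ (or simply at the scale where the caps $\sigma_\theta$ stop being essentially disjoint) and $f_{\mathrm{high}}$ carries the rest. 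The key quantity to estimate from below is $\sum_{\theta \in \Theta} \int |p_{\theta\#}(f\,dx)|^2$, or in Fourier terms $\sum_{\theta\in\Theta}\int |\widehat f(\xi)|^2 \psi_\theta(\xi)\,d\xi$ where $\psi_\theta$ localizes to the tube dual to $L_\theta$; the $s$-dimensional hypothesis \eqref{eqn_s_dim} gives an \emph{upper} bound on each $\int |p_{\theta\#}(f\,dx)|^2 \lesssim \delta \cdot \delta^{-s} \cdot \|f\|_1 / \delta^{?}$ after normalizing, which when summed over $\theta$ and compared with a lower bound forces $\#\Theta$ to be small.

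More concretely, here is the mechanism I expect to use. On the low-frequency side, the tubes dual to the $L_\theta$ for $\delta$-separated $\theta$ are essentially disjoint at frequencies $\gtrsim \delta^{-1}$ but all overlap near the origin; so $\sum_\theta \psi_\theta^{\mathrm{low}} \lesssim 1$ pointwise away from a small ball, giving $\sum_\theta \int |\widehat{f_{\mathrm{low}}}|^2 \psi_\theta \lesssim \|f\|_2^2 = \delta^{2-a}$ plus a crude contribution from the origin ball of size $\lesssim \#\Theta \cdot (\text{small}) \cdot \|f\|_1^2$. On the high-frequency side one uses the transversality/nondegeneracy of the direction curve: the $\delta$-tubes dual to the $L_\theta$ have bounded overlap, so again $\sum_\theta \int |\widehat{f_{\mathrm{high}}}|^2 \psi_\theta \lesssim \|f\|_2^2 = \delta^{2-a}$. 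Meanwhile the lower bound comes from the $s$-dimensional hypothesis read in physical space: because $p_\theta(A)$ occupies at least $\gtrsim ?$ many $\delta$-intervals (here I'd use that $|A|_\delta = \delta^{-a}$ and $A \subset B(0,1)$, together with \eqref{eqn_s_dim}, to get a lower bound $|p_\theta(A)|_\delta \gtrsim \delta^{-a}/\delta^{-s}$ — wait, that's an upper bound from the other direction; the correct inference is that since $p_\theta$ collapses the $\delta^{-a}$ balls of $A$ into $\lesssim \delta^{-s}$ bins but also $\lesssim \delta^{-1}$ bins total, the $L^2$ mass $\int |p_{\theta\#}(f\,dx)|^2 \gtrsim |A|^2 / |p_\theta(A)| \gtrsim \delta^{2(2-a)}/(\delta\cdot\delta^{-s}) = \delta^{3-2a+s}\cdot\delta^{?}$). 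Summing this lower bound over $\Theta$ and matching against the high–low upper bound $\lesssim \delta^{2-a}$ yields $\#\Theta \cdot \delta^{3-2a+s} \lesssim \delta^{2-a}$, i.e. $\#\Theta \lesssim \delta^{-1+a-s}$ after bookkeeping. The very last line, plugging $\#\Theta \sim \delta^{-1}$, is then immediate.

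The main obstacle I anticipate is \textbf{the high-frequency estimate}: controlling $\sum_{\theta \in \Theta} \int |\widehat{f_{\mathrm{high}}}|^2 \psi_\theta$ requires that the family of $\delta$-tubes $\{T_\theta\}$ dual to the lines $L_\theta$, weighted by the cap geometry of the nondegenerate curve $\gamma$, is $O(1)$-overlapping (or $O(\log)$-overlapping) at each frequency annulus — this is exactly where the transversality hypothesis \eqref{eqn_non_degenaracy}, via Lemma \ref{lem_frame_derivatives} and Proposition \ref{prop_error_e_i}, must enter, controlling how fast the normal directions $e_\theta$ spread apart. A subtler point within this is that the naive $L^2$ orthogonality only sees the \emph{first}-order separation of directions; to get the full strength one must either iterate the high-low decomposition across dyadic frequency scales $\delta^{-1} \le \lambda \le \delta^{-1}$... (i.e. there is really only one relevant scale here since we are in the discretized setting), or — and this is the cleaner route — observe that for the $2$-dimensional Marstrand problem a single high-low split at frequency $\delta^{-1}$ suffices because the projection to a line is a genuine \emph{Fourier restriction to a line through the origin}, so the relevant Fourier supports are honest tubes and their overlap is governed purely by angular separation, which is $\gtrsim \delta$ by hypothesis. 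I would also need to be slightly careful with the normalization of $p_{\theta\#}(f\,dx)$ versus $|p_\theta(A)|_\delta$ (a $\delta$-thickening is implicit), but that is routine bookkeeping rather than a genuine difficulty.
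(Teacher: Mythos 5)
Your proposal takes a genuinely different route from the paper's, and unfortunately it has a gap at the low-frequency estimate that is not easily repaired in the generality of Proposition~\ref{Mdiscrete}.

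The paper does \emph{not} work with $f = 1_A$. It constructs a wave-packet sum $f = \sum_{\theta}\sum_{T_\theta}\psi_{T_\theta}$, where the $T_\theta$ are $\delta\times 1$ tubes covering $p_\theta^{-1}(p_\theta(A))$; by design $f \gtrsim \#\Theta$ on $A$. The $s$-dimensional hypothesis~\eqref{eqn_s_dim} is then used at the intermediate scale $K\delta$ to obtain a \emph{pointwise} bound on the low part, $|f_l(x)| \lesssim \#\Theta\, K^{s-1}$ (equation~\eqref{eqn_low_part_upper_bound}), which for $K = K(s)$ large is beaten by $|f(x)|\gtrsim\#\Theta$; orthogonality is used only for the high part, at frequencies $\gtrsim K^{-1}\delta^{-1}$, where the dual rectangles have $O(K)$-overlap. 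You instead propose to use~\eqref{eqn_s_dim} only at scale $r=1$, as a Cauchy--Schwarz lower bound $\int|p_{\theta\#}(1_A)|^2 \gtrsim |A|^2/|p_\theta(A)|\gtrsim\delta^{3-2a+s}$, and then to obtain a matching upper bound on $\sum_\theta\int|\widehat{1_A}(te_\theta)|^2\,dt$ from Plancherel. This is the classical Kaufman/Falconer energy-integral mechanism, and the two methods encode the $s$-dimensional structure in fundamentally different places (pointwise low-part bound vs.\ $L^2$ lower bound per direction).

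The gap is precisely the piece you wave at as ``a crude contribution from the origin ball.'' Because $\widehat{p_{\theta\#}(1_A\,dx)}(t) = \widehat{1_A}(te_\theta)$ is a restriction of $\widehat{1_A}$ to a \emph{line through the origin}, the thickened ``tubes'' all pass through $\xi=0$; at radius $r<\delta^{-1}$ their pointwise overlap is of order $(r\delta)^{-1}$, not $O(1)$. Since $|\widehat{1_A}(\xi)|$ can be as large as $|A|=\delta^{2-a}$ throughout a ball near the origin, a trivial estimate of the origin contribution gives $\#\Theta\cdot R\cdot|A|^2$, which for $R\sim K^{-1}\delta^{-1}$ and $\#\Theta\sim\delta^{-1}$ is $\sim K^{-1}\delta^{2-2a}$; this dominates both the desired $\|1_A\|_2^2=\delta^{2-a}$ and (for a constant $K=K(s)$) the lower bound $\#\Theta\cdot\delta^{3-2a+s}$, so the argument does not close. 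To control this piece you would need Frostman/energy control on $\widehat{1_A}$, i.e.\ an $(\delta,a)$-set hypothesis on $A$ itself, but Proposition~\ref{Mdiscrete} assumes nothing about the structure of $A$ beyond $|A|_\delta\sim\delta^{-a}$; the $s$-dimensional hypothesis is only on the projections, which at this single scale gives you no help. Incidentally, you flag the \emph{high}-frequency overlap as the obstacle, but that part is actually fine (bounded overlap at $|\xi|\gtrsim\delta^{-1}$ follows exactly from the $\delta$-separation of $\Theta$, cf.\ Lemma~\ref{lem_high_orthogonality}); the danger is entirely at low frequency, which is exactly why the paper routes the $s$-dimensional hypothesis into a pointwise low-part bound rather than into the lower bound of an $L^2$ quantity.
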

Under the uncertainty scale $\delta$, this form of $A$ is general enough.
The set of directions $\Theta'$ can be thought of as containing almost all directions since it has cardinality $\sim \delta^{-1}$. For simplicity, we may just take $\Theta'=\Theta$.

Intuitively, this proposition roughly implies the following. Assume $a=\dim A$. Then for each uncertainty scale $\delta$, $A$ can be thought of as a disjoint union of  $\delta^{-a}$ disjoint $\delta$-balls. If the projections of $A$ along almost all directions is a $(\delta,s)$-set, then $a\le s$. This is roughly the statement of Marstrand's Projection Theorem. For more precise details about the discretization step for Marstrand's theorem we refer to Subsection \ref{subsecdiscmarstrand}.

Now we start with the proof of Proposition \ref{Mdiscrete}, divided into the following subsections.
\subsection{Construction of wave packets}

    For each $\theta\in \Theta$, let $\mathbb T_\theta$ be a set of $\delta\times 1$ tubes that cover $p_\theta^{-1}(p_\theta(A))\cap B^2(0,1)$ and hence cover $A$. We may assume all tubes have the same orientation, that is, perpendicular to $L_\theta$, and are all centred on $L_\theta$. Note that $\#\mathbb T_\theta\lesssim \delta^{-s}$ by the $s$-dimensional condition.

    Now we come to the Fourier side. Let $S_\theta$ be the dual rectangle to all $T_\theta$'s, that is, $S_\theta$ is a $\delta^{-1}\times 1$ rectangle centred at $0$ with orientation perpendicular to $T_\theta$. 
    
    For each $T_\theta\in \mathbb T_\theta$, define a Schwartz function $\psi_{T_\theta}:\R^2\to \R$ Fourier supported on $S_\theta$ such that $\psi_{T_\theta}\gtrsim 1$ on $T_\theta$ and decays rapidly outside $T_\theta$. 
    
    For a concrete example, assume $L_\theta$ is the $x$-axis and $T_\theta=[c-\delta,c+\delta]\times [-1,1]$. Then $S_\theta=[-\delta^{-1},\delta^{-1}]\times [-1,1]$, and we may take 
    $$
    \psi_{T_\theta}(x,y)=\eta(\delta^{-1} (x-c))\eta(y)
    $$
    where $\widehat{\eta}$ is a nonnegative even function supported on $(-1,1)$ and such that $\widehat \eta(0)=1$. 

    Lastly, we define
    \begin{equation}
        f_\theta=\sum_{T_\theta\in \mathbb T_\theta}\psi_{T_\theta},\quad f=\sum_{\theta\in \Theta}f_\theta.
    \end{equation}
    Thus $f_\theta$ is also Fourier supported on $S_\theta$, but physically essentially supported on $\cup\mathbb T_\theta$. The function $f$ is Fourier supported on $\cup_{\theta\in \Theta}S_\theta$, which is a union of rectangles of the same size centered at $0$, but with $\delta$-separated orientations.

    \subsection{High-low decomposition}
    The high-low (frequency) decomposition is based on the following simple fact in plane geometry. This gives some sort of orthogonality when we deal with the high part.
    \begin{lem}\label{lem_high_orthogonality}
        Let $\mathcal S$ be a collection of $\delta^{-1}\times 1$ rectangles centred at the origin, with $\delta$-separated orientations. If $1\le K\le \delta^{-1}$, then $\cup \mathcal S\backslash B(0,K^{-1}\delta^{-1})$ is at most $O(K)$-overlapping.
    \end{lem}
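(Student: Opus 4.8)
The plan is to reduce the statement to a pointwise count: fix $\xi \in \R^2$ with $|\xi| \ge K^{-1}\delta^{-1}$, and bound the number of rectangles $S \in \mathcal S$ containing $\xi$. Each $S$ is a $\delta^{-1} \times 1$ rectangle centred at $0$, so it is essentially the slab $\{\eta : |\eta \cdot u_S^\perp| \le 1\}$ intersected with the ball $B(0,\delta^{-1})$, where $u_S$ is the unit vector giving the long direction of $S$. If $\xi \in S$, then the angle between $\xi$ and $u_S$ (or $-u_S$) is at most $O(1/|\xi|) \le O(K\delta)$, because the component of $\xi$ transverse to the long axis is at most $1$ while $|\xi| \ge K^{-1}\delta^{-1}$.

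First I would make this angle estimate precise: write $\xi = |\xi| (\cos\phi, \sin\phi)$ and $u_S = (\cos\psi_S, \sin\psi_S)$; membership $\xi \in S$ forces $|\xi|\,|\sin(\phi - \psi_S)| \le 1$, hence $|\sin(\phi-\psi_S)| \le 1/|\xi| \le K\delta$, so $\psi_S$ lies in one of two arcs (around $\phi$ and around $\phi + \pi$) each of length $\lesssim K\delta$. Since the orientations $\psi_S$ of the rectangles in $\mathcal S$ are $\delta$-separated, each such arc contains $\lesssim K\delta / \delta = K$ of them. Summing over the two arcs gives that at most $O(K)$ rectangles of $\mathcal S$ contain any fixed $\xi$ outside $B(0, K^{-1}\delta^{-1})$, which is exactly the assertion $\sum_{S \in \mathcal S} \mathbf 1_{S \setminus B(0,K^{-1}\delta^{-1})} \le O(K)$.

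There is essentially no serious obstacle here; the only mild care needed is the bookkeeping that ``$\delta$-separated orientations'' for rectangles through the origin means the long directions are $\delta$-separated as points of $\R P^1$ (equivalently, $\delta$-separated angles modulo $\pi$), so that the two antipodal arcs of length $\lesssim K\delta$ together account for $\lesssim K$ rectangles rather than being double-counted. One should also note that the condition $K \le \delta^{-1}$ guarantees $K\delta \le 1$, so the arcs are genuine short arcs and the $\sin(\phi-\psi_S) \approx \phi - \psi_S$ linearization is harmless; and $K \ge 1$ guarantees the bound $O(K)$ is at least the trivial lower bound $1$. I would also remark that this is sharp in the regime $|\xi| \sim K^{-1}\delta^{-1}$: a frequency at radius $\sim K^{-1}\delta^{-1}$ genuinely lies in $\sim K$ of the rectangles, which is the point of the lemma — the ``high part'' retains only $O(K)$-fold overlap, restoring near-orthogonality up to a factor $K$.
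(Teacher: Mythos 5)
Your proof is correct and takes essentially the same route as the paper: fix a point $\xi$ outside $B(0,K^{-1}\delta^{-1})$, observe that any rectangle containing it must have its long axis within angle $O(K\delta)$ of the direction of $\xi$, and then invoke the $\delta$-separation of orientations to cap the count at $O(K)$. The paper phrases this via a triangle inequality between the axes of two overlapping tubes rather than a direct count over arcs, and is less explicit about the antipodal pair of arcs, but the underlying geometry is identical.
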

    \begin{proof}[Proof of lemma]
        Let $T\in \mathcal S$ and let $l_T$ be the central axis along the longer side of $T$. If $x\in T\backslash B(0,K^{-1}\delta^{-1})$, then the angle between $Ox$ and $l_T$ is less than $O(K)\delta$ (see Figure \ref{pic:Koverlap}). Thus if $x\in (T\cap T')\backslash B(0,K^{-1}\delta^{-1})$, by the triangle inequality, the angle between $l_{T'}$ and $l_{T}$ is less than $O(K)\delta$. Since $\mathcal S$ is $\delta$-separated, this gives the claim.
    \end{proof}

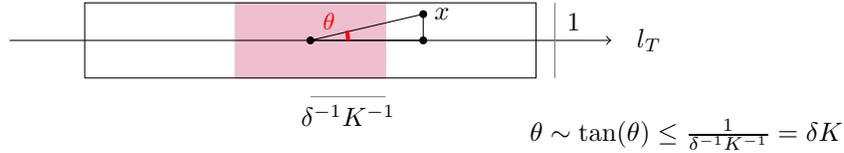
\begin{figure}[h]\label{pic:Koverlap}
\begin{center}
         \scalebox{1}{
\begin{tikzpicture}
%rectangles
\filldraw[purple!25!white]  (-1,-0.5)--(1,-0.5)--(1,0.5)--(-1,0.5)--(-1,-0.5);
\draw (-3,-0.5)--(3,-0.5)--(3,0.5)--(-3,0.5)--(-3,-0.5);
\draw [->] (-4,0)--(4,0);
%triangle
\draw (0,0)--(1.5,0)--(1.5,0.35)--(0,0);
%points
\fill (0,0) circle (1.5pt);
\fill (1.5,0.35) circle (1.5pt);
\fill (1.5,0) circle (1.5pt);
%\fill (1,0) circle (1.5pt);
%segment lengths
\draw[gray] (0,-0.75)--(1,-0.75);
\draw[gray] (3.25,-0.5)--(3.25,0.5);
%angle
\draw[line width=1.5pt,red] (0.5,0) arc (0:15:0.5);
\draw[red] (0.25,0.25) node {\small{$\theta$}};
%nodes
\draw (4.5,0) node {$l_T$};
\draw (1.75,0.35) node {$x$};
\draw (0.5,-1) node {{$\delta^{-1}K^{-1}$}};
\draw (3.5,0.25) node {$1$};
%geometric obs
\draw (5,-1.25) node {$\theta\sim\tan(\theta)\leq \frac{1}{\delta^{-1}K^{-1}}=\delta K$};
\end{tikzpicture}}
\end{center}
\caption{Illustration of Lemma \ref{lem_high_orthogonality}.}
\end{figure}

Now given $K\in [1,\delta^{-1}]$ to be determined, and let $\eta_l$ be a smooth bump function adapted to $B^2(0,K^{-1}\delta^{-1})$ in the frequency space. Let $\eta_h=1-\eta_l$. We decompose 
\begin{equation}
    f=f_l+f_h
\end{equation}
where $\widehat {f_l}=\widehat f \eta_l$, $\widehat {f_h}=\widehat f \eta_h$. 

\subsection{Analysis of high part}

In this part, we use Fourier analysis. By Plancherel's identity and Lemma \ref{lem_high_orthogonality}, we have
\begin{equation}
\begin{split}
     \int |f_h|^2=\int |\widehat {f_h}|^2&\leq\int |\sum_{\theta\in \Theta}\hat{f_{\theta}}\eta_h\cdot\mathbb{1}\text{supp}(\hat{f}_{\theta,h})|^2\\
     &\lesssim \int K\sum_{\theta\in\Theta}|\widehat{f_\theta}|^2\\
     &\lesssim K\sum_{\theta\in \Theta}\int |f_\theta|^2.
\end{split}
\end{equation}
By the rapid decay of $\psi_{T_\theta}$ (see \eqref{eqn_weight_single_T} and Corollary \ref{cor_sum_weights} for details), we have 
\begin{equation}
    \int |f_\theta|^2\sim \delta\#\mathbb T_\theta\lesssim \delta^{1-s}.
\end{equation}
Thus
\begin{equation}\label{eqn_high_upper_bound}
    \int |f_h|^2\lesssim K(\#\Theta)\delta^{1-s}.
\end{equation}

\subsection{Analysis of low part}\label{sec_step_4}

For the low part, we do not have orthogonality to aid us. Instead, we use the $s$-dimensional condition \eqref{eqn_s_dim} in the assumption of the proposition. We first claim that for each $x\in A$ and $\theta\in \Theta$, we have
\begin{equation}\label{eqn_blur_scale_K}
|f_\theta*{\eta}_l^\vee(x)|\lesssim K^{-1}\#\{T_\theta\in\mathbb T_\theta: T_\theta\cap B^2(x,CK\delta)\neq \varnothing\}.
\end{equation}
Intuitively, this holds by the uncertainty principle, since we now view the rectangles $T_\theta$ blurred by scale $K\delta$.

We give a heuristic proof of \eqref{eqn_blur_scale_K} first. Assume for simplicity that $f_\theta\approx \sum_{T_\theta\in \mathbb T_\theta}1_{T_\theta}$. Assume also $\eta_l^\vee\approx (K^{-1}\delta^{-1})^21_{B^2(0,K\delta)}$. Then for each $x\in \R^2$,
\begin{equation}    1_{T_\theta}*1_{B^2(0,K\delta)}(x)
\begin{cases}
    =0, &\text{ if } T_\theta\cap B^2(x,K\delta)=\varnothing,\\
    \lesssim K\delta^2, &\text{ if } T_\theta\cap B^2(x,K\delta)\ne\varnothing.
\end{cases}
\end{equation}
The rigorous proof will be given later in Section \ref{sec_weight} to avoid distraction from the main idea of the proof.

Using \eqref{eqn_blur_scale_K} and the $s$-dimensional condition \eqref{eqn_s_dim}, for every $x\in A$ we have
\begin{equation}\label{eqn_low_part_upper_bound}
    |f_l(x)|\leq \sum_{\theta\in \Theta}|f_\theta*\eta_l^\vee(x)|\lesssim \# \Theta K^{-1}(K\delta/\delta)^s\sim \# \Theta K^{s-1}.
\end{equation}

\subsection{Choosing \texorpdfstring{$K$}{Lg} such that the high part dominates}

For each $x\in A$, we have $f_\theta(x)\gtrsim 1$ by construction of $\psi_{T_\theta}$. Thus $f(x)\gtrsim \# \Theta$ for $x\in A$.

Since $s<1$, if $K=K(s)$ is chosen to be large enough, then $|f_l(x)|<|f(x)|/2$ for each $x\in A$ by \eqref{eqn_low_part_upper_bound}. Thus, by the triangle inequality, the high part dominates for $x\in A$.

\subsection{Conclusion}

Since the high part dominates, by \eqref{eqn_high_upper_bound}, we have
\begin{equation}
    \int_A f^2\lesssim K\#\Theta\delta^{1-s}.
\end{equation}
On the other hand, since $f(x)\gtrsim \# \Theta$ on $A$, we trivially have the lower bound
\begin{equation}
    \int_A f^2\gtrsim (\# \Theta)^2 |A|\sim (\# \Theta)^2\delta^{2-a}.
\end{equation}
Since $K$ depends on $s$ only, we have $(\# \Theta)^2\delta^{2-a}\lesssim_s \#\Theta\delta^{1-s}$, as required.

\subsection{Weight inequalities}\label{sec_weight}
We now give a rigorous proof of the heuristic argument used to prove \eqref{eqn_blur_scale_K}, by introducing weight functions. See also \cite{BD2017_study_guide} for inequalities of weights. 

\subsubsection{Notation}
Let $\psi:\R\to \C$ be a Schwartz function such that $\psi\gtrsim 1$ in $(-1,1)$. Given an axis parallel rectangle $T=[c_1-r_1,c_1+r_1]\times [c_2-r_2,c_2+r_2]$, we define the (unnormalised) Schwartz function $\eta_T$ adapted to $T$ to be
\begin{equation}\label{eqn_Schwartz_definition}
\eta_T(x_1,x_2)=\psi\left(\frac {x_1-c_1}{r_1}\right)\psi\left(\frac {x_2-c_2}{r_2}\right).
\end{equation}
For a general rectangle $T$, define $\eta_T$ in the most natural way with rotations.

Let
\begin{equation}\label{eqn_definition_E}
    E=\max\left\{100,1+\frac 2 {1-s}\right\},
\end{equation}
where we recall $s\in (0,1)$ is given at the beginning of Proposition \ref{Mdiscrete}.

Similarly to \eqref{eqn_Schwartz_definition}, we define weight functions $w=w^{(E)}$
\begin{equation}
    w(x_1,x_2)=(1+|x_1|)^{-E}(1+|x_2|)^{-E}.
\end{equation}
In general, for a rectangle $T\sub \R^d$ with centre $c$, sides $e_i$ and dimensions $r_i$, we define
\begin{equation}\label{eqn_weight_T_2D}
    w_T(x)=w_{T,E}(x)=\prod_{i=1}^d\left(1+\frac{|(x-c)\cdot e_i|}{r_i}\right)^{-E}.
\end{equation}
Similarly, for a ball $B=B^2(c,r)$, we define $\eta_B=\eta_T$ and $w_B=w_T$ where $T$ is the square centred at $c$ of side length $r$.

It is easy to see that 
\begin{equation}\label{eqn_weight_single_T}
    1_T\lesssim \eta_T\lesssim w_T.
\end{equation}
Also, if $T$ and $T'$ are essentially the same, meaning $C^{-1}T\sub T'\sub CT$ for some absolute constant $C>1$, then $w_T\sim w_{T'}$. All implicit constants below are allowed to depend on $E$. (Note that this property fails if $w$ is replaced by $\eta$. Similar for Proposition \ref{prop_weights_convolution} below.)

We also need the following proposition.
\begin{prop}\label{prop_tiling_weight}
    Suppose $\mathbb I$ is a tiling of $\R$ by intervals $I$ of the same size. Let $C>1$, $x\in \R$ and $\mathbb I'\sub \mathbb I$ be such that $\mathrm{dist}(x,I)\ge C|I|$ for every $I\in \mathbb I'$. Then
    \begin{equation}
        \sum_{I\in \mathbb I'}\left(1+\frac{|x-c_I|}{|I|}\right)^{-E}\lesssim C^{-E+1},
    \end{equation}
    where $c_I$ denotes the centre of $I$.
\end{prop}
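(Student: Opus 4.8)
The plan is to prove Proposition~\ref{prop_tiling_weight} by a direct summation estimate, comparing the sum over the intervals $I\in\mathbb I'$ to a convergent integral. Without loss of generality normalize so that $|I|=1$ for all $I\in\mathbb I$ (rescale $x$ and the intervals accordingly; the quantities in the statement are scale-invariant). Then $\mathbb I$ is a tiling of $\R$ by unit intervals, the centres $c_I$ form a $1$-separated set, and the hypothesis says $\mathrm{dist}(x,I)\ge C$ for each $I\in\mathbb I'$, which forces $|x-c_I|\ge C+\tfrac12 \ge C$.

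First I would group the intervals $I\in\mathbb I'$ according to the dyadic-type annuli in which they sit: for $k\ge 0$ let $\mathbb I'_k=\{I\in\mathbb I': C2^k\le |x-c_I|< C2^{k+1}\}$. Since the centres are $1$-separated, a crude volume count gives $\#\mathbb I'_k\lesssim C2^{k+1}$. For $I\in\mathbb I'_k$ we have $(1+|x-c_I|)^{-E}\le (C2^k)^{-E}$, so
\begin{equation}
\sum_{I\in\mathbb I'}\left(1+|x-c_I|\right)^{-E}
\le \sum_{k\ge 0}\#\mathbb I'_k\,(C2^k)^{-E}
\lesssim \sum_{k\ge 0} C2^{k+1}(C2^k)^{-E}
= 2\,C^{-E+1}\sum_{k\ge 0}2^{k(1-E)}.
\end{equation}
Since $E>1$ (indeed $E\ge 100$ by \eqref{eqn_definition_E}), the geometric series $\sum_{k\ge 0}2^{k(1-E)}$ converges to an absolute constant, and we obtain the claimed bound $\lesssim C^{-E+1}$, with the implicit constant depending only on $E$. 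Undoing the normalization replaces $|x-c_I|$ by $|x-c_I|/|I|$ throughout, which is exactly the form stated.

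There is essentially no serious obstacle here; the only point requiring a little care is the counting estimate $\#\mathbb I'_k\lesssim C2^k$, which uses that the intervals $I$ are pairwise disjoint (a tiling) and of unit length, so at most $O(C2^k)$ of them can fit inside the annulus $\{y: C2^k\le |x-y|<C2^{k+1}\}$ — and one should make sure the hypothesis $\mathrm{dist}(x,I)\ge C|I|$ (not merely $|x-c_I|\ge C|I|$) is what guarantees the lowest annulus $k=0$ is the first nonempty one and that no $I\in\mathbb I'$ is "too close" to $x$. An alternative, even more mechanical, route is to dominate the sum directly by $\int_{|y-x|\ge C'} (1+|y-x|)^{-E}\,dy$ for a suitable $C'\sim C$ using disjointness of the intervals and the near-monotonicity of $(1+|\cdot|)^{-E}$, then compute the one-dimensional tail integral, which is $\lesssim C^{-E+1}$; I would present whichever is cleaner but the dyadic version above is self-contained.
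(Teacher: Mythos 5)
Your proof is correct and is essentially the same approach as the paper's: after normalizing $|I|=1$, the paper simply dominates the Riemann sum by $\int_{|y|\ge C}(1+|y|)^{-E}\,dy\sim C^{-E+1}$, and your dyadic-annulus decomposition is just a spelled-out discretization of that same integral comparison (you even note this alternative route yourself at the end).
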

\begin{proof}
    By scaling we may assume $|I|=1$, for all $I\in \mathbb{I}$. By translation we may assume $x=0$. Then the left hand side is essentially bounded above by
    \begin{equation}
        \int_{|y|\ge C}(1+|y|)^{-E}dy\sim C^{-E+1}.
    \end{equation}
\end{proof}
\begin{cor}\label{cor_sum_weights}
    If $\mathbb T$ is a tiling of $\R^2$ by rectangles $T$, then $\sum_{T\in \mathbb T}w_T\sim 1$.
\end{cor}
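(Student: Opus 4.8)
The plan is to reduce the two-dimensional statement to the one-dimensional estimate in Proposition \ref{prop_tiling_weight}, exploiting the product structure of the weights $w_T$. First I would write the tiling $\mathbb T$ as a product tiling: since $\mathbb T$ tiles $\R^2$ by congruent axis-parallel rectangles (after a rotation we may assume they are axis-parallel), we can write $\mathbb T = \mathbb I \times \mathbb J$, where $\mathbb I$ is a tiling of the first coordinate axis by intervals of one fixed length and $\mathbb J$ is a tiling of the second coordinate axis by intervals of another fixed length; a rectangle $T \in \mathbb T$ is then $T = I \times J$ with centre $c_T = (c_I, c_J)$. By definition \eqref{eqn_weight_T_2D}, the weight factors as $w_T(x_1,x_2) = \bigl(1+\tfrac{|x_1-c_I|}{|I|}\bigr)^{-E}\bigl(1+\tfrac{|x_2-c_J|}{|J|}\bigr)^{-E}$, so the double sum $\sum_{T\in\mathbb T}w_T(x_1,x_2)$ factors as a product of two one-dimensional sums, one over $\mathbb I$ and one over $\mathbb J$.

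It then suffices to show that for a tiling $\mathbb I$ of $\R$ by intervals of equal length and any $x \in \R$, we have $\sum_{I\in\mathbb I}\bigl(1+\tfrac{|x-c_I|}{|I|}\bigr)^{-E}\sim 1$. The upper bound comes from Proposition \ref{prop_tiling_weight} applied with, say, $C=2$ to the sub-collection $\mathbb I'$ of intervals at distance $\ge 2|I|$ from $x$, together with the trivial observation that only $O(1)$ intervals of $\mathbb I$ lie within distance $2|I|$ of $x$ (each contributing at most $1$ to the sum). The lower bound is immediate: the interval $I_0 \in \mathbb I$ containing $x$ satisfies $|x - c_{I_0}| \le |I_0|/2$, so its term alone is $\gtrsim 1$. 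Multiplying the two one-dimensional estimates gives $\sum_{T\in\mathbb T}w_T \sim 1$ pointwise, which is the claim.

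I do not expect any genuine obstacle here; the only mild subtlety is bookkeeping — checking that a tiling of $\R^2$ by congruent rectangles really does decompose as a product tiling after rotating to the rectangles' axes, and that the implicit constants depend only on $E$ (which they do, through the single application of Proposition \ref{prop_tiling_weight} and the $O(1)$ count of nearby intervals, the latter being absolute). The argument is entirely elementary and short.
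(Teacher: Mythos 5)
Your reduction to the one–dimensional estimate of Proposition~\ref{prop_tiling_weight} via the product structure of the weight $w_T$ is the natural (and clearly intended) argument, and the one–dimensional upper bound (near terms $O(1)$ plus far terms $O(C^{-E+1})$) and lower bound (the interval containing $x$ contributes $\ge (3/2)^{-E}$) are both correct. The one step that does not survive scrutiny is the structural claim: a tiling of $\R^2$ by congruent axis-parallel rectangles need not decompose as a product tiling $\mathbb I\times\mathbb J$. A brick-wall tiling, in which alternate rows of unit squares are offset by $1/2$, is a tiling by congruent axis-parallel rectangles that is not a product, so the double sum over $\mathbb T$ does not literally factor as a product of two one-dimensional sums. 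This does not affect the paper — in every place Corollary~\ref{cor_sum_weights} is invoked, the tubes in question are taken from a fixed translated lattice, so the product structure is genuine — but as written your proof covers somewhat less than the statement of the corollary.

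Two easy ways to close the gap: (a) for tilings that decompose into rows (lattice tilings, brick walls), group the rectangles by the second-coordinate interval $J$ they project onto; for fixed $x$ the inner sum over each row is $O(1)$ by Proposition~\ref{prop_tiling_weight}, and the outer sum over the intervals $J$ is again a one-dimensional sum, hence $O(1)$. (b) For a fully general tiling by congruent axis-parallel $r_1\times r_2$ rectangles, count directly: the rectangles $T$ with $\max\{|x_1-c_{T,1}|/r_1,\; |x_2-c_{T,2}|/r_2\}\in[n,n+1)$ have centres in a box of area $O\big((n+1)^2 r_1 r_2\big)$, hence number $O\big((n+1)^2\big)$ by disjointness, and each contributes $O\big((1+n)^{-E}\big)$; summing over $n\ge 0$ gives the upper bound since $E\ge 100>3$, and the lower bound again comes from the rectangle containing $x$.
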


\subsubsection{Elementary inequalities}

\begin{prop}
    Let $A,B\sub \R^d$ be Borel sets. Then
    \begin{equation}
        1_A*1_B\leq \min\{|A|,|B|\} 1_{A+B}.
    \end{equation}
\end{prop}
The proof is trivial. Using the case $d=1$ and $A,B$ being intervals entry-wise, this can be upgraded to the following proposition:
\begin{prop}\label{prop_weights_convolution}
    Let $T,T'$ be parallel (perpendicular) rectangles of dimensions $a\times b$, $a'\times b'$, respectively. Then    
\begin{equation}
    w_{T}*w_{T'}\lesssim \min\{a,a'\}\min\{b,b'\}w_{T+T'}.
\end{equation}
\end{prop}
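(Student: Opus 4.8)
The plan is to reduce everything to the one-dimensional statement $w_I * w_{I'} \lesssim \min\{|I|,|I'|\}\, w_{I+I'}$ for intervals $I,I'\subseteq\R$, and then take a tensor product. First I would record the one-variable inequality: if $I=[c_1-a/2,c_1+a/2]$ and $I'=[c_1'-a'/2,c_1'+a'/2]$, then by translation invariance we may take $c_1=c_1'=0$, and by scaling we may assume $\min\{a,a'\}=1$, say $a'\le a$. The claim then becomes $w_I * w_{I'}(x) \lesssim w_{I+I'}(x)$, where $w_J$ here means $(1+|\cdot-c_J|/|J|)^{-E}$; since $I+I'$ has length $a+a' \sim a$ and is centred at $0$, this is $w_{I+I'}(x)\sim (1+|x|/a)^{-E}$. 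To prove it one splits the convolution integral $\int (1+|x-y|)^{-E}(1+|y|/a)^{-E}\,dy$ into the region $|y|\le |x|/2$ and $|y|>|x|/2$; on the first region $(1+|x-y|)^{-E}\lesssim (1+|x|/2)^{-E}\lesssim (1+|x|/a)^{-E}$ (using $a\ge1$) and the remaining integral $\int (1+|y|/a)^{-E}\,dy \sim a$ is absorbed into the constant after we recall $\min\{a,a'\}=1$ was normalised away — so this region contributes $\lesssim (1+|x|/a)^{-E}$; on the second region $(1+|y|/a)^{-E}\lesssim (1+|x|/(2a))^{-E}\lesssim (1+|x|/a)^{-E}$ and $\int (1+|x-y|)^{-E}\,dy \sim 1$. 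Reinstating the scaling and translation gives the full one-dimensional inequality with the factor $\min\{|I|,|I'|\}$.

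Next I would tensor. Suppose first $T,T'$ are parallel axis-parallel rectangles, $T = I_1\times I_2$, $T'=I_1'\times I_2'$ with $|I_1|=a$, $|I_2|=b$, $|I_1'|=a'$, $|I_2'|=b'$. By definition \eqref{eqn_weight_T_2D}, $w_T(x_1,x_2) = w_{I_1}(x_1)\,w_{I_2}(x_2)$ and similarly for $w_{T'}$, so the two-dimensional convolution factors as a product of one-dimensional convolutions:
\begin{equation}
    (w_T * w_{T'})(x_1,x_2) = (w_{I_1}*w_{I_1'})(x_1)\,(w_{I_2}*w_{I_2'})(x_2).
\end{equation}
Applying the one-dimensional inequality to each factor yields $\lesssim \min\{a,a'\}\min\{b,b'\}\, w_{I_1+I_1'}(x_1)\, w_{I_2+I_2'}(x_2) = \min\{a,a'\}\min\{b,b'\}\, w_{T+T'}(x_1,x_2)$, since $T+T' = (I_1+I_1')\times(I_2+I_2')$. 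For the perpendicular case, a rotation by $90^\circ$ turns $T'$ into a parallel rectangle of dimensions $b'\times a'$, and since $w$ is defined via rotations in \eqref{eqn_weight_T_2D} and the convolution and the Minkowski sum are rotation-covariant, the same computation applies with the roles of $a',b'$ swapped, giving the same bound $\min\{a,a'\}\min\{b,b'\}\,w_{T+T'}$. To handle general (not axis-parallel) $T$: a global rotation reduces to one of the two listed cases, so there is no loss in assuming $T$ is axis-parallel.

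The main obstacle — such as it is — is purely bookkeeping: making sure that the implicit constant truly depends only on $E$ (and $d=2$) and not on the aspect ratios $a/b$, $a'/b'$ or on the rectangles' positions. This is exactly why one must pass to the normalisation $\min\{a,a'\}=1$ \emph{before} estimating, so that the ``bad'' integral $\int(1+|y|/a)^{-E}dy\sim a$ is comparable to $\min\{a,a'\}^{-1}a = a/\min\{a,a'\}$, which after reinstating the scale is precisely the factor $\min\{a,a'\}$ appearing on the right-hand side; no hidden dependence on $a$ survives. The one subtlety worth stating explicitly is that $(1+|x|/a)^{-E} \lesssim (1+|x|/a)^{-E}$ uses $a+a'\sim a$ when $a'\le a$, i.e. the centre and length of $T+T'$ are comparable to those of the larger rectangle; this is what lets us identify $w_{I_1+I_1'}$ with $w_{I_1}$ up to constants and keeps the argument honest.
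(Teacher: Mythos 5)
Your overall strategy is the right one — tensor into a one-dimensional statement for the weights $w_I$, prove the one-dimensional version, and handle the perpendicular case by relabeling — and the reduction itself is clean, since the two-dimensional convolution genuinely factors as a product. But the justification of the first region in the one-dimensional estimate is wrong as written, and it needs to be repaired.

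You claim that after normalising $a'=1\le a$, the contribution from $\{|y|\le|x|/2\}$ is $\lesssim a\,(1+|x|/a)^{-E}$ and that the factor $a$ "is absorbed into the constant after we recall $\min\{a,a'\}=1$ was normalised away." But $a$ is unbounded after this normalisation; it is precisely the \emph{ratio} of the two side lengths and cannot be treated as a constant. Indeed for $|x|$ small the claimed inequality $a(1+|x|)^{-E}\lesssim (1+|x|/a)^{-E}$ fails: at $x=0$ the left side is $a$ and the right side is $1$. The correct repair is either to observe that the integral over the truncated region is $\lesssim\min\{|x|,a\}$ rather than $\sim a$, or (equivalently) to split into the two regimes $|x|\le a$ and $|x|>a$. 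In the first regime the target $(1+|x|/a)^{-E}$ is $\sim 1$, and you instead bound the whole convolution by $\|w_I\|_\infty\int w_{I'}\lesssim 1$. In the second regime the integral $\lesssim a$ is indeed correct, and now the factor $a$ is harmless because the target $(1+|x|/a)^{-E}\sim (|x|/a)^{-E}$ carries $a^{E}$, and $a\le a^{E}$ for $a\ge 1$, $E\ge 1$. This case split is exactly where the hypothesis $E\ge 1$ (or rather $E$ large) enters, and without it the first region cannot be controlled. Your second-region estimate is fine as stated.

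A secondary point: in the perpendicular case you assert that "the same computation applies with the roles of $a',b'$ swapped, giving the same bound $\min\{a,a'\}\min\{b,b'\}$." Swapping $a'$ and $b'$ actually produces $\min\{a,b'\}\min\{b,a'\}$, which is a genuinely different quantity (e.g.\ $a=1,b=10,a'=2,b'=5$). Whether this matters depends on the convention behind "a perpendicular rectangle of dimensions $a'\times b'$" — if $a'$ is by definition the side of $T'$ along $T$'s first axis, the perpendicular case simply reduces to the parallel one by relabeling and the tensor argument goes through verbatim — but the rotation step as you phrase it does not give what you assert. It is worth stating the convention explicitly rather than claiming the two expressions coincide.
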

We leave the elementary proof to the reader.

\subsubsection{Proof of \texorpdfstring{\eqref{eqn_blur_scale_K}}{Lg}}

For each $x$, denote $\mathbb T_\theta(x):=\{T_\theta\in \mathbb T_\theta:T_\theta\cap B^2(x,CK\delta)\ne \varnothing\}$. 

Let $x\in A$. Then $\mathbb T_\theta(x)\neq \varnothing$. Write
\begin{equation}
    |f_\theta*\eta_l^\vee(x)|\leq \sum_{T_\theta\in \mathbb T_\theta(x)}|\psi_{T_\theta}*\eta_l^\vee(x)|+\sum_{T_\theta\notin \mathbb T_\theta(x)}|\psi_{T_\theta}*\eta_l^\vee(x)|:=I_1(x)+I_2(x).
\end{equation}
Using \eqref{eqn_weight_single_T} and Proposition \ref{prop_weights_convolution}, we bound 
\begin{equation}
    |\psi_{T_\theta}*\eta_l^\vee(x)|\lesssim K^{-2}\delta^{-2}\cdot CK\delta\cdot \delta w_{T'_\theta}(x)=CK^{-1}w_{T'_\theta}(x),
\end{equation}
where $T'_\theta:=T_\theta+B^2(0,K\delta)$ is a rectangle with the same orientation and centre as $T_\theta$ and with dimensions $1\times CK\delta$.

Using this and $w_{T'_\theta}\lesssim 1$, we trivially bound
\begin{equation}
    I_1(x)\lesssim CK^{-1}\#\mathbb T_\theta(x).
\end{equation}
To bound $I_2(x)$, we note that $\{T'_\theta\}$ are at most $O(K)$ overlapping. Since all centres of rectangles $T_\theta$ are on the same straight line, using Proposition \ref{prop_tiling_weight}, we have 
\begin{equation}
    I_2(x)\lesssim CK^{-1} \cdot K C^{-E+1}.
\end{equation}
Since $\#\mathbb T_\theta(x)\ge 1$ for each $x\in A$, choosing $C=K^{\frac {1}{E-1}}$ gives that
\begin{equation}    |f_\theta*\eta^\vee_l(x)|\lesssim K^{-1+\frac 1 {E-1}},
\end{equation}
which is actually slightly weaker than \eqref{eqn_blur_scale_K}, but it it turns out to be sufficient for our purpose. Indeed, note that \eqref{eqn_low_part_upper_bound} becomes
\begin{equation}
    |f_l(x)|\lesssim C\#\Theta K^{s-1}\lesssim \#\Theta K^{\frac {s-1}2},
\end{equation}
in view of our choice of $E$ in \eqref{eqn_definition_E}. Thus the low part is still dominated by the high part.

\section{Discretization}

In order to make some steps of the discretization of Theorem \ref{firstmainthm} more precise, we will make use of Frostman measures in place of the Hausdorff content, since $\mathcal{H}^{a}_{\infty}$ have the disadvantage of only being an outer measure. See Definition \ref{deffrostmeasure} for  the definition of Frostman measure.

By Frostman's Lemma (see Theorem \ref{thm_Frostman_lemma}) we know that $A$ admits an $s$-Frostman measure if and only if $\mathcal{H}^{s}(A)\in (0,\infty]$. Hence, $A$ admits an $s$-Frostman measure $\nu_s$ for any $0<s<\text{dim}(A)$.

\subsection{Lemma 1 and Lemma 2}

\begin{lem}[Lemma 1 in \cite{GGGHMW2022}]\label{Lemma1paper}
    Let $\delta\in (0,1)$, $s\in (0,d]$ and $B\sub [0,1]^d$. Assume there is an $s$-Frostman measure $\nu$ such that $\nu(B)=\kappa>0$. Then, there exists a $\delta$-separated finite $(\delta,s)$-set $P\sub B$ with cardinality $\#P\gtrsim \rho_s(\nu)^{-1} \kappa \delta^{-s}$.
\end{lem}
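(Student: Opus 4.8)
The goal is to extract from an $s$-Frostman measure $\nu$ with $\nu(B)=\kappa$ a genuinely combinatorial object — a $\delta$-separated $(\delta,s)$-set inside $B$ of the claimed size. The natural approach is a pigeonholing/greedy selection on the $\delta$-grid. First I would discretize: cover $[0,1]^d$ by the standard tiling into $\delta$-cubes, and let $\mathcal Q$ be the collection of such cubes $Q$ that intersect $B$ with $\nu(Q\cap B)>0$; pick one point $p_Q\in Q\cap B$ from each, forming a point set $P_0$. Since the $\delta$-cubes tile, $\sum_{Q\in\mathcal Q}\nu(Q\cap B)\ge \nu(B)=\kappa$.

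**Getting the right cardinality and separation.** The point set $P_0$ is automatically $c_d\delta$-separated-ish only up to neighboring cubes, so a bounded pigeonholing (coloring the grid with $O_d(1)$ colors so that same-colored cubes are $\delta$-separated) produces a $\delta$-separated $P\subseteq P_0$ with $\#P\gtrsim_d \#\mathcal Q$. To lower bound $\#\mathcal Q$: by the Frostman condition $\nu(Q\cap B)\le \nu(B^d(x,C_d\delta))\le \rho_s(\nu)(C_d\delta)^s \lesssim \rho_s(\nu)\delta^s$ for each cube $Q$ (using $\rho_s$ as in \eqref{eqn_Frostman_mu_intro}). Hence $\kappa\le \sum_{Q\in\mathcal Q}\nu(Q\cap B)\le \#\mathcal Q\cdot \rho_s(\nu)\delta^s$, which gives $\#\mathcal Q\gtrsim \rho_s(\nu)^{-1}\kappa\,\delta^{-s}$, and therefore $\#P\gtrsim \rho_s(\nu)^{-1}\kappa\,\delta^{-s}$ as well.

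**The $(\delta,s)$-set condition.** It remains to check that $P$ is a $(\delta,s)$-set, i.e. $|P\cap B_r|_\delta\lesssim (r/\delta)^s$ for all $\delta\le r\le 1$. Since $P$ is $\delta$-separated, each point of $P\cap B_r$ lies in its own $\delta$-cube $Q$ with $p_Q$ contributing mass $\nu(Q\cap B)>0$, and these cubes are disjoint and all contained in the slightly enlarged ball $B_{r+C_d\delta}\subseteq B_{2r}$ (as $r\ge\delta$). Summing the Frostman bound over these cubes against the mass on $B_{2r}$: actually the clean way is $|P\cap B_r|_\delta = \#P\cap B_r \le \#\{Q\in\mathcal Q: Q\subseteq B_{2r}\}$, and $\kappa$-type reasoning won't directly bound this. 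Instead use $\nu$ again from above on the ball: $\sum_{Q\subseteq B_{2r}}\nu(Q\cap B)\le \nu(B_{2r}\cap B)\le \rho_s(\nu)(2r)^s$, while each term is the mass of a point $p_Q$ — no, the terms aren't bounded below uniformly. The fix: do the whole construction so that each selected cube carries mass $\ge \rho_s(\nu)\delta^s \cdot c$ by a dyadic pigeonhole on $\nu(Q\cap B)\in[\rho_s(\nu)\delta^s 2^{-j-1},\rho_s(\nu)\delta^s 2^{-j}]$, keeping only the popular level $j_0$; the $\#\mathcal Q$ bound degrades only by a $\log(1/\delta)$ factor (absorbed since the statement is only up to constants — actually the statement has no log, so instead observe the popular level with $2^{-j_0}\gtrsim \kappa/\#\mathcal Q_{\text{all}}$ suffices). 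With uniform lower mass $\ge c\,\rho_s(\nu)\delta^s 2^{-j_0}$ on the kept cubes, disjointness inside $B_{2r}$ gives $\#\{Q: Q\subseteq B_{2r}\}\cdot c\,\rho_s(\nu)\delta^s 2^{-j_0}\le \rho_s(\nu)(2r)^s$, i.e. $|P\cap B_r|_\delta\lesssim 2^{j_0}(r/\delta)^s\lesssim (\#P/\kappa\cdot\rho_s(\nu)\delta^s)^{?}\cdots$ — and one checks $2^{j_0}\lesssim 1$ because the kept level must be at least a $1/\log$-fraction, but to kill the log one picks $j_0$ minimal with a popularity threshold, forcing $2^{-j_0}\sim 1$ up to constants when $\#\mathcal Q\sim\kappa\rho_s(\nu)^{-1}\delta^{-s}$.

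**Main obstacle.** The delicate point is precisely reconciling the *cardinality* lower bound (which wants many cubes, hence small individual masses) with the *$(\delta,s)$-set* upper bound inside every sub-ball (which wants each selected cube to carry a *comparable* share of mass so that disjointness converts the Frostman upper bound on $\nu(B_{2r})$ into a count). The standard resolution is a single dyadic-pigeonhole on the mass level $\nu(Q\cap B)$, retaining a uniform level; one then verifies that the popular level automatically has mass $\sim\rho_s(\nu)\delta^s$ (not much smaller), because the total mass $\kappa$ spread over at most $\sim\delta^{-s}$-many $\delta$-cubes in $[0,1]^d$ — wait, that's $\delta^{-d}$ — so this needs care. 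I expect the author's proof handles this by choosing the level to simultaneously satisfy both bounds, which is the crux; the grid-coloring and the two applications of the Frostman inequality (once for a single $\delta$-cube, once for an $r$-ball) are routine.
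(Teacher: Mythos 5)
The crux you correctly identify — the cardinality lower bound wants many cubes (hence small individual masses), while the $(\delta,s)$ condition at every scale $r$ wants each selected cube to carry a comparable share so that disjointness converts the Frostman upper bound on $\nu(B_{2r})$ into a count — is real, and your proposed fix does not close it. If you pigeonhole to a single mass level $\nu(Q\cap B)\in[\rho_s(\nu)\delta^s2^{-j_0-1},\rho_s(\nu)\delta^s2^{-j_0}]$, the disjointness argument inside $B_{2r}$ yields only $\#(P\cap B_r)\lesssim 2^{j_0}(r/\delta)^s$, i.e.\ a $(\delta,s,O(2^{j_0}))$-set. Nothing forces $2^{j_0}=O(1)$: the total number of $\delta$-cubes meeting $B$ may be much larger than $\kappa\rho_s(\nu)^{-1}\delta^{-s}$ (with most cubes carrying tiny mass), so the popular level can have $2^{j_0}$ growing like a power of $\log(1/\delta)$. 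Since the lemma requires a $(\delta,s,C)$-set with $C$ depending only on $s,d$, this is a genuine loss — and "thinning by a factor $2^{j_0}$" to repair it is exactly the multi-scale pruning you'd then have to invent.

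The paper's proof is a different, top-down argument. It starts with one point per $\delta$-cube and then, for each dyadic scale $2^{-m}$ increasing from $\delta$ to $1$, removes points from any $2^{-m}$-cube that would violate the $s$-dimensional condition at scale $2^{-m}$, so the final $P$ is a $(\delta,s)$-set with an absolute constant by construction, not by a mass argument. The cardinality bound is then recovered by an accounting over the collection $\mathcal C_0$ of "terminal" cubes where pruning occurred (which still covers $B$): each terminal $2^{-m}$-cube retains $\ge 2^{(n-m)s}$ points (because it was pruned down to exactly the threshold), while its $\nu$-mass is $\lesssim \rho_s(\nu)2^{-ms}$; summing, $\kappa\le\nu(\cup\mathcal C_0)$ gives $\#P\gtrsim\rho_s(\nu)^{-1}\kappa\delta^{-s}$. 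This sidesteps the tension you noticed because the $(\delta,s)$ condition is imposed directly and the Frostman inequality is used only as an upper bound on the mass of the terminal cubes, never as a lower bound on the mass of a selected cube. Your bottom-up scheme would have to reproduce this greedy pruning to actually produce a $(\delta,s)$-set with a uniform constant.
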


\begin{proof}[Proof of Lemma \ref{Lemma1paper}]
By losing at most a dimensional constant in $\gtrsim$, we may assume $\delta=2^{-n}$ for some $n\in \N$. For each $0\le m\le n$, denote by $\mathcal D_m$ the tiling of $[0,1]^d$ with axis parallel cubes of side length $2^{-m}$ that intersect $B$. We may also assume the cubes are closed on the left and open on the right for each coordinate; thus the tiling becomes strictly disjoint. 

We say a finite set $P\sub [0,1]^d$ satisfies the $s$-dimensional condition at scale $r=2^{-m}$ if
\begin{equation}
    |P\cap Q|_\delta\leq 2(r/\delta)^s,\quad \forall Q\in \mathcal D_m.
\end{equation}
The factor $2$ here is introduced just for technical treatment of rounding errors. Note that if $P$ is assumed to be $\delta$-separated, then the above is equivalent to
\begin{equation}\label{eqn_s_dimensional_condition_numbers}
    \# (P\cap Q)\le 2(r/\delta)^s,\quad \forall Q\in \mathcal D_m.
\end{equation}
Thus a $\delta$-separated set $P\sub [0,1]^d$ is a $(\delta,s)$-set if and only if it satisfies the $s$-dimensional condition \eqref{eqn_s_dimensional_condition_numbers} above at every dyadic scale $r\in [\delta,1]$.

We start at scale $2^{-n}$. Define $P_n$ by choosing one point from each member of $\mathcal D_n$; 

The set $P_n$ clearly satisfies the $s$-dimensional condition at scale $2^{-n}$. Note $B\sub \cup \mathcal D_n$. Denote $\mathcal C_n:=\mathcal D_n$.

    We now consider the scale $2^{1-n}$. For each $Q_{n-1}\in \mathcal D_{n-1}$, we say it is bad (at this level) if $Q_{n-1}\cap P_n$ does not satisfy the $s$-dimensional condition at scale $2^{1-n}$. In this case, we remove points from $Q_{n-1}\cap P_n$ until it just becomes good. Meanwhile, we remove from $\mathcal C_n$ all cubes $Q_n\sub Q_{n-1}$, and add $Q_{n-1}$ to it. Perform this for each bad cube $Q_{n-1}$, and call the resulting point set $P_{n-1}\sub P_n$ and the resulting collection of cubes $\mathcal C_{n-1}$. Note that $P_{n-1}$ satisfies the $s$-dimensional condition at scales $2^{-n}$ and $2^{1-n}$, and also $B\sub \cup \mathcal C_{n-1}$.
    
    We now consider the scale $2^{2-n}$. For each $Q_{n-2}\in \mathcal D_{n-2}$, if $Q_{n-2}\cap P_{n-1}$ does not satisfy the $s$-dimensional condition at scale $2^{2-n}$, we remove points from $Q_{n-2}\cap P_{n-1}$ until it does so. Also, replace all cubes $Q_{n-1}\sub Q_{n-2}$ from $\mathcal C_{n-1}$ and replace them with $Q_{n-2}$. Perform this for each bad cube $Q_{n-2}$, and call the resulting point set $P_{n-2}\sub P_{n-1}$ and collection of cubes $\mathcal C_{n-2}$. Note that $P_{n-2}$ satisfies the $s$-dimensional condition at scales $2^{-n}$, $2^{1-n}$ and $2^{2-n}$. Also, $B\sub \cup \mathcal C_{n-2}$.

    We run the above algorithm exactly $n$ times until we arrive at a point set $P=P_0$ and a collection of cubes $\mathcal C_0$. By construction, $P$ is a $(\delta,s)$-set. Also, $B\sub \cup \mathcal C_0$.

    It remains to prove that $\#P\gtrsim \rho_s(\nu)^{-1}\kappa \delta^{-s}$. For each $0\le m\le n$, we denote by $\mathcal C^m$ the collection of cubes in $\mathcal C_0$ that has length $2^{-m}$. Using the fact that $\nu$ is an $s$-Frostman measure, we have 
\begin{equation}\label{eqn_lower_bound_Hausdorff_content}
        \sum_{m=0}^n 2^{-ms}\rho_s(\nu)\# (\mathcal C^m)\gtrsim \sum_{m=0}^{n}\nu(\cup \mathcal{C}^m)\ge \nu(\cup \mathcal{C}_0)\ge\nu(B)= \kappa.
    \end{equation}
    On the other hand, recall that for each bad cube $Q_m$ at each scale $2^{-m}$, $0\le m\le n-1$, we removed points from $P_{m+1}\cap Q_m$ until $Q_m$ just becomes good. Also, if $Q_{m}\in \mathcal C^m$, this means that $Q_m$ is left untouched by the algorithm at subsequent larger scales. Thus 
    \begin{equation}
        \#(P\cap Q_m)=\#(P_m\cap Q_m)> 2(2^{-m}/2^{-n})^s-1\ge 2^{(n-m)s}.
    \end{equation}
    For $Q_n\in \mathcal C^n$, by construction we have $\# (P\cap Q_n)=1=2^{(n-n)s}$. Thus we have
    \begin{equation}
    \begin{split}
        \#P=\sum_{m=0}^n \#(P\cap Q_m)\#(\mathcal C^m)\ge&\sum_{m=0}^n 2^{(n-m)s} \#(  \mathcal C^m)\\
        \gtrsim &2^{ns}\rho_s(\nu)^{-1}\kappa=\rho_s(\nu)^{-1}\kappa \delta^{-s},
    \end{split}
    \end{equation}
    where we have used \eqref{eqn_lower_bound_Hausdorff_content}.
\end{proof}

We will now move to the proof of Lemma 2 in \cite{GGGHMW2022}. We introduce the following notation: let $\mathcal{D}$ be the collection of dyadic squares in $[0,1]^2$ and let $\mathcal{D}_{2^{-j}}$ denote the collection of dyadic squares with side-lengths $2^{-j}$.

\begin{lem}[Lemma 2 in \cite{GGGHMW2022}]\label{Lemma2paper}
Fix $\epsilon >0$ and suppose $\dim(X)<s$. Then there exists a family of collections of dyadic squares $\{\mathcal{C}_{2^{-k}}\}_{k=1}^{\infty}$ such that $\mathcal{C}_{2^{-k}} \subset \mathcal{D}_{2^{-k}}$ such that the following properties hold. 
    \begin{enumerate}
        \item $\sum_{k \geq 0} \sum_{D \in \mathcal{C}_{2^{-k}}} r(D)^s < \epsilon$
       \item  $X \subseteq \bigcup_{k \geq 0} \bigcup_{D \in \mathcal{C}_{2^{-k}}} D$
       \item For any $l < k$ and any $D \in \mathcal{D}_{2^{-l}}$ we have that 
       $$\# \{D' \in \mathcal{C}_{2^{-k}}: D' \subset D \} \leq 2^{(k-l)s}.$$ 
       This is called the $s$-dimensional condition.
    \end{enumerate}
\end{lem}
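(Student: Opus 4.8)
The plan is to realize $X$ inside its Hausdorff-content covers and then run a stopping-time/pruning algorithm at dyadic scales, almost identical in spirit to the construction in the proof of Lemma~\ref{Lemma1paper}, but now selecting cubes rather than points. First I would use the hypothesis $\dim(X)<s$: by definition of Hausdorff dimension, $\mathcal H^s(X)=0$, so $\mathcal H^s_\infty(X)=0$ as well (the limit defining $\mathcal H^s$ dominates $\mathcal H^s_\infty$ up to constants, or one argues directly). Hence for the given $\epsilon>0$ there is a countable cover $X\subseteq\bigcup_i U_i$ with $\sum_i (\operatorname{diam} U_i)^s<\epsilon'$, where $\epsilon'$ is a small multiple of $\epsilon$ to be fixed later. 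Replacing each $U_i$ by a dyadic cube of comparable size containing it (at the cost of a dimensional constant, and noting $r(D)^s\sim (\operatorname{diam} U_i)^s$), I obtain a collection $\mathcal G\subseteq\mathcal D$ of dyadic squares covering $X$ with $\sum_{D\in\mathcal G} r(D)^s<\epsilon$ after adjusting constants. We may also assume $\mathcal G$ is an \emph{antichain} (if $D\subset D'$ with both in $\mathcal G$, discard $D$), which only decreases the sum and preserves the covering property since $D\subseteq D'$.

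Next I would prune $\mathcal G$ to enforce the $s$-dimensional condition (3). Process the squares of $\mathcal G$ from the largest scale to the smallest. At stage $k$, having already committed collections $\mathcal C_{2^{-l}}$ for $l<k$, look at the squares of $\mathcal G$ of side $2^{-k}$: tentatively put them all into $\mathcal C_{2^{-k}}$, then for each ancestor $D\in\mathcal D_{2^{-l}}$ with $l<k$, if $\#\{D'\in\mathcal C_{2^{-k}}:D'\subset D\}$ exceeds $2^{(k-l)s}$, delete squares $D'\subset D$ from $\mathcal C_{2^{-k}}$ until the count drops to $\lfloor 2^{(k-l)s}\rfloor\le 2^{(k-l)s}$. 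Crucially, the squares so deleted are \emph{not} discarded from the construction: instead I would cover each deleted $D'$ by the (unique) ancestor $D$ that forced its deletion — choosing, say, the largest such ancestor — and add that ancestor to the collection at its own scale. This is the exact analogue of the "replace small cubes by the offending big cube'' move in Lemma~\ref{Lemma1paper}, and it guarantees property (2): every point of $X$ lies in some original $D\in\mathcal G$, and either $D$ survives in the appropriate $\mathcal C$ or it has been swallowed by a larger cube that was added.

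The main obstacle — and the step requiring genuine care — is verifying property (1), i.e.\ that the pruning does not blow up the sum $\sum_k\sum_{D\in\mathcal C_{2^{-k}}} r(D)^s$. The point is that when we delete a batch of children $D'\subset D$ and replace them by $D$, we have $\#\{\text{deleted }D'\}\le \#\{D'\subset D, D'\in\mathcal G\}$, and each contributes $r(D')^s = 2^{-ks} = 2^{-(k-l)s} r(D)^s$; since we only delete the \emph{excess} above the threshold $2^{(k-l)s}$, the total $s$-content we remove from scale $k$ inside $D$ is at least (a constant times) the content $r(D)^s = 2^{-ls}$ of the replacement cube $D$. So each newly added cube $D$ is "paid for'' by deleted content of comparable or larger size. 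Making this a genuine charging argument requires being careful that a given big cube $D$ is charged only once (solved by the "largest offending ancestor'' convention, or by processing ancestors outermost-first), and that the bookkeeping across infinitely many scales telescopes: the upshot is that the final total is bounded by a fixed constant times the original $\sum_{D\in\mathcal G} r(D)^s < \epsilon'$, so choosing $\epsilon'$ to absorb that constant yields property (1). Finally one checks (3) holds at the end — it holds by construction at the moment each $\mathcal C_{2^{-k}}$ is finalized, and adding \emph{larger} cubes at later (coarser) scales cannot violate an inequality that constrains how many \emph{finer} squares sit inside a \emph{coarser} one, so the condition is stable; a short induction on scales records this.
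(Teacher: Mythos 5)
Your overall plan---start from a cheap $\mathcal H^s$-cover, dyadicize, then prune/replace at dyadic scales---is the right instinct and is indeed in the spirit of Lemma \ref{Lemma1paper}, but the core charging step as you wrote it is wrong, and the place where you say ``requiring genuine care'' is exactly where the argument breaks.

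The bug is in ``we only delete the excess above the threshold $2^{(k-l)s}$, \ldots so the total $s$-content we remove \ldots is at least (a constant times) $r(D)^s=2^{-ls}$.'' This is false. If inside some $D\in\mathcal D_{2^{-l}}$ there are $N$ members of $\mathcal C_{2^{-k}}$ with $N>2^{(k-l)s}$, and you delete only the excess, you delete $N-\lfloor 2^{(k-l)s}\rfloor$ cubes, each of content $2^{-ks}$. When $N$ is just barely above threshold (say $N=\lfloor 2^{(k-l)s}\rfloor+1$), you remove content $\approx 2^{-ks}$, while the cube $D$ you add costs $2^{-ls}$. Since $k>l$, the ratio $2^{-ks}/2^{-ls}=2^{-(k-l)s}$ can be arbitrarily small, so the replacement can \emph{increase} the sum $\sum r(D)^s$, and property (1) is not preserved. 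The fix (which is precisely what the paper does) is to delete \emph{all} of the children $D'\subset D$ in $\mathcal C_{2^{-k}}$, not just the excess: then the removed content is $N\cdot 2^{-ks}>2^{(k-l)s}\cdot 2^{-ks}=2^{-ls}=r(D)^s$, and the sum strictly decreases. Deleting all of them is also safe for (2), since $D$ covers everything deleted.

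There is a second, softer gap: after you add $D$ at scale $2^{-l}$, the collection $\mathcal C_{2^{-l}}$ that you have already ``finalized'' may now violate (3) relative to an even coarser ancestor at some scale $2^{-l'}$ with $l'<l$. Your remark that adding a larger cube ``cannot violate an inequality that constrains how many finer squares sit inside a coarser one'' misreads the direction of the constraint: $D$ itself is a finer square relative to $\mathcal D_{2^{-l'}}$, so adding it can push a count over threshold. A greedy coarse-to-fine pass therefore does not obviously terminate in a consistent state; you would need a cascading re-check across infinitely many scales. The paper sidesteps this entirely by working non-constructively: it orders all covers in $\Pi$ (those satisfying (1) and (2)) by a partial order $\prec$, invokes Zorn's lemma to produce a maximal element, and then observes that maximality forces (3), because if (3) failed you could perform the ``delete all children, add $D$'' move and obtain a strictly larger element of $\Pi$ (the sum decreases by the computation above, so (1) is preserved; (2) is preserved since $D$ covers the removed cubes). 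No iteration and no termination argument are needed. If you want to salvage your algorithmic version, you should (i) switch to deleting all offending children, and (ii) either replace the iteration by the Zorn's lemma argument or supply a genuine limiting argument showing the cascading replacements stabilize.
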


\textbf{Remark}: The first two conditions can be deduced immediately from the condition that $\dim(X)<s$. The third condition corresponds to the construction of a $(\delta, s)$ set (see Lemma 1) where in this particular case $\delta = 2^{-k}$.

\begin{proof}
We consider the collection $\Pi$ of all coverings $\mathcal{C}$ of $X$ by dyadic squares which also satisfy conditions $(1)$ and $(2)$ in Lemma $\ref{Lemma2paper}$. For each such covering, $\mathcal{C}$, we may assume (by the structure of dyadic squares) that the dyadic squares in $\mathcal{C}$ are disjoint. We define a partial order $\prec$ over the collection of all such coverings as follows: If $\mathcal{C} = \mathcal{C}'$ we declare that $\mathcal{C} \prec \mathcal{C}'$ and also that $\mathcal{C}' \prec \mathcal{C}$. If $\mathcal{C} \neq \mathcal{C}'$, we say $\mathcal{C} \prec \mathcal{C}'$  when $\mathcal{C} \neq \mathcal{C}'$ if the following two conditions hold:
\begin{enumerate}%[label=(\alph*)]
    \item There exists (a unique) $k_0>0$ such that $\mathcal{C} \cap \mathcal{D}_{2^{-k}} = \mathcal{C}' \cap \mathcal{D}_{2^{-k}}$ for $k<k_0$ and $\mathcal{C} \cap \mathcal{D}_{2^{-k_0}} \subsetneq \mathcal{C}' \cap \mathcal{D}_{2^{-k_0}}$. 
    \item For each $x \in X$, the dyadic square contained in $\mathcal{C}$ that contains $x$ must be a subset of of the dyadic square in $\mathcal{C}'$ that contains $x$. 
\end{enumerate}

We check that $\prec$ determines a partial order on $\Pi$. It is easy to see that $\prec$ is reflexive and antisymmetric. We now that that it is transitive. Suppose, $\mathcal{C} \prec \mathcal{C}'$ and $\mathcal{C}' \prec \mathcal{C}''$. We must show that $\mathcal{C} \prec \mathcal{C}''$. Note that if either $\mathcal{C} = \mathcal{C}'$ or $\mathcal{C}' = \mathcal{C}''$ then this is immediate. Thus, let use assume that $\mathcal{C} \neq \mathcal{C}' \neq \mathcal{C}''$. By condition (a) we see that there exists $k_1>1$ such that $\mathcal{C} \cap  \mathcal{D}_{2^{-k}} = \mathcal{C} \cap \mathcal{D}_{2^{-k}}$ for $k< k_1$ and $\mathcal{C} \cap \mathcal{D}_{2^{-k_1}} \subsetneq \mathcal{C} \cap \mathcal{D}_{2^{-k_1}}$ and similarly there exists $k_2 > 0$ such that $\mathcal{C} \cap \mathcal{D}_{2^{-k}} = \mathcal{C}'' \cap \mathcal{D}_{2^{-k}}$ for $k< k_2$ and $\mathcal{C} \cap \mathcal{D}_{2^{-k_2}} \subsetneq \mathcal{C} \cap \mathcal{D}_{2^{-k_2}}$. Let $k_0 = \min\{k_1, k_2\}$. Note that for $k < k_0$ we have that $\mathcal{C} \cap \mathcal{D}_{2^{-k}} = \mathcal{C}'' \cap \mathcal{D}_{2^{-k}}$. Moreover we have that either $\mathcal{C} \cap \mathcal{D}_{2^{-k_0}} \subsetneq \mathcal{C}' \cap \mathcal{D}_{2^{-k_0}}$ or $\mathcal{C}' \cap \mathcal{D}_{2^{-k_0}} \subsetneq \mathcal{C}'' \cap \mathcal{D}_{2^{-k_0}}$. AS a consequence we have that $\mathcal{C} \cap \mathcal{D}_2^{-k_0} \subsetneq \mathcal{C}'' \cap \mathcal{D}_{2^{-k_0}}$. This proves condition (a). If $x \in X$, then since the $\mathcal{C} \prec \mathcal{C}' \prec \mathcal{C}''$, the dyadic cube that contains $x$ in $\mathcal{C}''$ contains the dyadic cube that contains $x$ in $\mathcal{C}'$ which in turn contains the dyadic cube that contains $x$ in $\mathcal{C}$. Thus condition (b) is also satisfied and thus $\mathcal{C} \prec \mathcal{C}''$.

Suppose $\mathcal{C}$ is a covering that is maximal with respect to the ordering $\prec$. We claim that such a covering must satisfy condition (3) in Lemma \ref{Lemma2paper}. Indeed, suppose in order to obtain a contradiction that (3) does not hold: then there exist $l$ and $k$ with $l<k$ such that for some square $D \in \mathcal{D}_{2^{-l}}$ we have the following: 
\begin{equation}\label{eqn_contraction_sdim}
    \# \{D' \in \mathcal{C}_{2^{-k}}: D' \subset D \} > 2^{(k-l)s}
\end{equation}
We construct a collection $\mathcal{C}' \in \Pi$ such that $\mathcal{C} \prec \mathcal{C}'$. We construct $\mathcal{C'}$ by removing all elements in $\{D' \in \mathcal{C}_{2^{-k}}: D' \subset D \}$ from $\mathcal{C}$ and replace this with $D$ in $\mathcal{C}'$. It is easy to see that $\mathcal{C} \prec \mathcal{C'}$. Moreover if $\mathcal{C'}$ satisfies condition (2) then $\mathcal{C} \prec \mathcal{C}'$.
Thus we show that 
\begin{equation*}
    \sum_{D \in \mathcal{C}'} r(D)^s \leq \sum_{D \in \mathcal{C}} r(D)^s \leq \epsilon
\end{equation*}
After cancelling like terms in the previous equation and using the condition in (\ref{eqn_contraction_sdim}), this is reduced to showing that 
\begin{equation*}
    r(D)^s \leq \sum_{D' \in \mathcal{C} : D' \subset D} r(D')^s \iff \sum 2^{-2ls} \leq 2^{(k-l)s}2^{-2ls} 
\end{equation*}
which is true since $l<k$.

Thus in order to prove this lemma it suffices to find such a maximal element. By Zorn's lemma, this is reduced to the task of showing that any totally ordered ascending chain has a maximal element. To that end let $(\mathcal{C}_j)_{j \in J}$ be such a chain. We claim that 
\begin{equation*}
    \mathcal{C} = \bigcap_{j \in J} \bigcup_{\mathcal{C}_{i} \succeq \mathcal{C}_{j}}  \mathcal{C}_{i}
\end{equation*}
is our required maximal element. First we note that $\mathcal{C}$ covers $X$. Indeed, if $x \in X$, let $D$ be the largest dyadic cube that contains $x$ in $\bigcup_{j \in J} \mathcal{C}_j$. If $D \in \mathcal{C}_k$ and if $\mathcal{C}_{k} \prec \mathcal{C}_{l}$, then since the dyadic square in $\mathcal{C}_{l}$ contains that dyadic square in $\mathcal{C}_k$ which contains $x$ (which is $D$). Thus $D \in \mathcal{C}$ since $D \in \bigcup_{\mathcal{C}_{i} \succeq \mathcal{C}_{j}}\mathcal{C}_i$ for all $j \in J$.

We note the following: For each $k \in \N$ we can choose $\mathcal{C}_{j}$ for some $j \in J$ such that $\mathcal{C}_{i} \cap \mathcal{D}_{2^{-k}} = \mathcal{C}_j \cap \mathcal{D}_{2^{-k}}$ whenever $\mathcal{C}_i \succeq \mathcal{C}_j$. Indeed suppose this is not true, then we can find a sequence $\{j_1, j_2,...\} \in J$ such that $\mathcal{C}_{j_{a}} \prec \mathcal{C}_{j_{b}}$ and $\mathcal{C}_{j_{a}} \cap \mathcal{D}_{2^{-k}} \subset \mathcal{C}_{j_{b}} \cap \mathcal{D}_{2^{-k}}$ whenever $a < b$ and where the containment $\subset$ is a \textit{strict} containment. However since $|\mathcal{D}_{2^{-k}}| < \infty$, it is impossible for these containments to always be strict and thus we can find $j \in J$ such that $\mathcal{C}_{i} \cap \mathcal{D}_{2^{-k}} = \mathcal{C}_j \cap \mathcal{D}_{2^{-k}}$ whenever $\mathcal{C}_i \succeq \mathcal{C}_j$. We can similarly show (as a consequence of the fact that $\{\mathcal{C}_{i}\}_{i \in J}$ is totally ordered that given any $K \in \N$ we can find $j \in J$ such that $\mathcal{C}_{i} \cap \mathcal{D}_{2^{-k}} = \mathcal{C}_j \cap \mathcal{D}_{2^{-k}}$ for $0 \leq k \leq K$ whenever $\mathcal{C}_i \succeq \mathcal{C}_j$. Fix this $j$. Then we have that (since $\mathcal{C} \subseteq \bigcup_{\mathcal{C}_{i} \succeq \mathcal{C}_{j}} \mathcal{C}_{i}$)
\begin{equation*}
    \sum_{k=0}^{K}\sum_{D \in \mathcal{C} \cap \mathcal{D}_{2^{-k}}} r(D)^s \leq \sum_{k=0}^{K}\sum_{D \in \mathcal{C}_j \cap \mathcal{D}_{2^{-k}}} r(D)^s \leq \epsilon
\end{equation*}

Sending $K \rightarrow \infty$ shows that $\mathcal{C}$ satisfies condition (3). We finally check that $\mathcal{C}_i \prec \mathcal{C}$ for $\mathcal{C}_i$. For each $x \in X$, notice that if the dyadic square in $\mathcal{C}$ that contains $x$ is the largest square containing $x$ in $\bigcup_j \mathcal{C}_j$, thus the dyadic square in $\mathcal{C}$ that contains $x$ must contain the dyadic square in $\mathcal{C}_i$ that contains $x$ for any $i$. This verifies condition (b). For condition (a) we proceed to prove this by contradiction. Suppose $\mathcal{C}$ and $\mathcal{C}_i$ are distinct collections. If we have that $\mathcal{C}_i \not\prec \mathcal{C}$, then we must have that $\mathcal{D}_{{2}^{-k}} \cap \mathcal{C}_i= \mathcal{D}_{2^{-k}} \cap \mathcal{C}$ for $k<k_0$ and $\mathcal{C}_i \cap \mathcal{D}_{2}^{-k_0} \not\subseteq \mathcal{C} \cap \mathcal{D}_{2}^{-k_0}$ for some $k_0>0$. Choose $D$ such that $D \in \mathcal{C}_i \cap \mathcal{D}_{2^{-k_0}}$ but $D \not\in \mathcal{C} \cap \mathcal{D}_{2^{-k_0}}$. Let $x \in X$ be such that $x \in D$. Then $x \in \Tilde{D}$ where $ \Tilde{D} \in \mathcal{C} \cap D_{2^{-k}}$ for some $k<k_0$ (by property (b) just established). However, this means that $\Tilde{D} \in \mathcal{C}$ (since the intersections of $\mathcal{C}$ and $\mathcal{C}'$ with dyadic squares upto scale $2^{-k_0}$ are the same.), which is impossible since $D \cap \Tilde{D} \neq \emptyset$.
\end{proof}
\textbf{Remark:} Lemma 3 in the paper requires a small extra condition to this lemma as presently stated, where we require our collection to be of the form $\{\mathcal{C}_{2^{-k}}\}_{k=k_0}^{\infty}$ for some fixed $k_0$ instead of $\{\mathcal{C}_{2^{-k}}\}_{k=1}^{\infty}$. This can clearly be done, by repeating the above procedure and noting that since $\dim(X) < s$ we have that $\mathcal{H}_{\delta}^{s}(X) = 0$ for arbitrarily small $\delta$ (where we choose $\delta=2^{-k_0}$).

\subsection{Discretization of Marstrand's Theorem}\label{subsecdiscmarstrand}
We first restate Marstrand's projection theorem.
\begin{thm}[Marstrand's projection theorem]\label{Mthm} For each $\theta\in [0,\pi]$, define $L_{\theta}=\{x\in \R^2\colon \textnormal{arg}(x)=\theta\}$ and let $p_{\theta}:\R^2\rightarrow L_{\theta}$ be the projection. Suppose $A\subset \R^2$ is a Borel set, then we have $\text{dim}(p_{\theta}(A))=\min\{1,\textnormal{dim}(A)\}$ for a.e. $\theta\in [0,\pi]$.
    
\end{thm}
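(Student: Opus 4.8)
The plan is to deduce Theorem \ref{Mthm} from the single-scale Proposition \ref{Mdiscrete} by a discretization (pigeonholing) argument, using Lemmas \ref{Lemma1paper} and \ref{Lemma2paper} to translate between Hausdorff-dimensional information and $(\delta,s)$-set information. First I would dispose of the trivial direction: since $p_\theta(A)\subseteq L_\theta$ sits in a line, $\dim p_\theta(A)\le 1$, and since orthogonal projections are Lipschitz, $\dim p_\theta(A)\le \dim A$; so only the lower bound in $\dim p_\theta(A)=\min\{1,\dim A\}$ requires proof. Writing $E_s:=\{\theta\in[0,\pi)\colon \dim p_\theta(A)<s\}$, the set of $\theta$ where the conclusion fails is $\bigcup_n E_{s_n}$ for any $s_n\uparrow \min\{1,\dim A\}$, so it suffices to show $\mathcal L^1(E_s)=0$ for each fixed $s<\min\{1,\dim A\}$ (and $E_s$ is measurable by a standard argument). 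In fact the scheme below yields the stronger Falconer-type bound $\dim E_s\le \max\{0,1+s-\dim A\}$.

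To set up the contradiction, assume $\mathcal L^1(E_s)>0$, fix $a$ with $s<a<\dim A$, and use Frostman's lemma to pick an $a$-Frostman probability measure $\mu$ supported on $A$ (WLOG $A\subseteq B^2(0,1/2)$); then for every $\delta$ the support of $\mu$ needs $\gtrsim \delta^{-a}$ balls of radius $\delta$ to cover, so $|A|_\delta\gtrsim \delta^{-a}$. From a positive-measure compact subset of $E_s$ I extract, for a sequence $\delta\to 0$, a $\delta$-separated set $\Theta\subseteq E_s$ with $\#\Theta\gtrsim \delta^{-1}$ (for the exceptional estimate, instead apply Lemma \ref{Lemma1paper} to a $\beta$-Frostman measure on $E_s$ with $\beta>1+s-\dim A$ to get a $(\delta,\beta)$-set with $\#\Theta\gtrsim\delta^{-\beta}$). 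For each $\theta\in E_s$, since $\dim p_\theta(A)<s$, Lemma \ref{Lemma2paper} (applied to $p_\theta(A)\subset L_\theta\cong\R$, with a tolerance $\epsilon$) furnishes a dyadic multiscale cover $\{\mathcal C^\theta_{2^{-k}}\}_k$ of $p_\theta(A)$ of total $s$-content $<\epsilon$ obeying the $s$-dimensional condition.

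The heart of the matter — and the step I expect to be the main obstacle — is to convert this data into a \emph{single-scale} configuration to which Proposition \ref{Mdiscrete} applies: arbitrarily small $\delta$, a $\delta$-separated $\Theta\subseteq E_s$ with $\#\Theta\gtrsim\delta^{-1}$, and a union $A_\delta$ of $\sim\delta^{-a}$ disjoint $\delta$-balls inside $N_\delta(A)$, such that $p_\theta(A_\delta)$ satisfies the $s$-dimensional condition \eqref{eqn_s_dim} at scale $\delta$ for \emph{every} $\theta\in\Theta$. The difficulty is genuine: ``$\dim p_\theta(A)<s$'' is an all-scales statement and does not make $p_\theta(A)$ (nor its $\delta$-discretization) a $(\delta,s)$-set at any prescribed scale $\delta$, so one must instead use the cover $\mathcal C^\theta$ to \emph{select}, scale by scale, which $\delta$-balls of $A$ to keep so that their $\theta$-projections spread out, and then pigeonhole — over the $O(\log(1/\delta))$ dyadic scales appearing in the covers, over the directions, and over the distribution of the $\mu$-mass — so that a \emph{common} choice $A_\delta$ works simultaneously for $\gtrsim\delta^{-1}$ directions. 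Several routine ingredients support this: Proposition \ref{prop_blurred} to pass freely between a set, a subset, and its $\delta$-neighbourhood; the fact that a subset of a $(\delta,s)$-set is again a $(\delta,s)$-set; and measurability/Fubini to legitimize the direction pigeonhole.

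Once such a configuration is produced, Proposition \ref{Mdiscrete} gives $\delta^{-1}\lesssim \#\Theta\lesssim_s \delta^{-1+a-s}$, i.e. $1\lesssim_s \delta^{a-s}$; since $a>s$, letting $\delta\to 0$ is a contradiction. (In the stronger formulation one gets $\delta^{-\beta}\lesssim_s\delta^{-1+a-s}$, hence $\beta\le 1+s-a$ for every $a<\dim A$, i.e. $\dim E_s\le \max\{0,1+s-\dim A\}$, which is $<1$ since $s<\dim A$, so again $\mathcal L^1(E_s)=0$.) Combined with the first paragraph this gives $\dim p_\theta(A)=\min\{1,\dim A\}$ for a.e. $\theta\in[0,\pi]$.
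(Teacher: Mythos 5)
You take essentially the same route as the paper: reduce Theorem \ref{Mthm} to Proposition \ref{Mdiscrete} via Frostman measures on $A$ and $E_s$, Lemma \ref{Lemma1paper}, Lemma \ref{Lemma2paper}, and pigeonholing over scales, directions, and $\mu$-mass; the paper's own text for Theorem \ref{Mthm} likewise defers the pigeonholing details, pointing to the parallel argument in Section \ref{discretethm1}. When filling in your acknowledged gap, make the order of operations explicit: the pigeonholing over scales and directions must first fix $\delta=2^{-j}$ and a subset $E_{s,j}\subset E_s$ where the Lemma~\ref{Lemma2paper} covers concentrate at generation $j$, and only then do you invoke Lemma \ref{Lemma1paper} on $E_{s,j}$ to produce $\Theta$ (extracting a $\delta$-separated $\Theta$ from all of $E_s$ upfront need not live at the scale the common generation yields), which is exactly how the paper's $\#\Theta\gtrapprox \delta^{-1+\eps}$ loss arises.
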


The following easy lemma will be useful for this and the next subsection.

\begin{lem} Let $A\subset \R^{n}$ and $p:A\rightarrow \R^m$ be a Lipschitz function. Then $\textnormal{dim}(p(A))\leq \textnormal{dim}(A)$.
\end{lem}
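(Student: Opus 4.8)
The plan is to work directly from the definition of Hausdorff measure and the characterization of Hausdorff dimension recalled earlier in this section, namely $\dim(X)=\inf\{s\ge 0:\mathcal H^s(X)=0\}$. The only input needed is that a Lipschitz map distorts diameters by at most a constant factor, and hence transports $\delta$-covers to $(L\delta)$-covers with controlled cost.

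Concretely, let $L$ be a Lipschitz constant for $p$, so that $|p(x)-p(y)|\le L|x-y|$ for all $x,y\in A$. Fix $s\ge 0$ and $\delta>0$, and let $\{U_i\}_{i=0}^\infty$ be any cover of $A$ by sets with $\operatorname{diam} U_i\le \delta$; we may assume each $U_i\subset A$ by replacing $U_i$ with $U_i\cap A$. Then $\{p(U_i)\}_i$ covers $p(A)$, and $\operatorname{diam} p(U_i)\le L\operatorname{diam} U_i\le L\delta$, so $\sum_i(\operatorname{diam} p(U_i))^s\le L^s\sum_i(\operatorname{diam} U_i)^s$. Taking the infimum over all such covers $\{U_i\}$ of $A$ gives $\mathcal H^s_{L\delta}(p(A))\le L^s\,\mathcal H^s_\delta(A)$. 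Letting $\delta\to 0$ (so that $L\delta\to 0$ as well) yields $\mathcal H^s(p(A))\le L^s\,\mathcal H^s(A)$.

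It follows that whenever $\mathcal H^s(A)=0$ we also have $\mathcal H^s(p(A))=0$; hence $\{s\ge 0:\mathcal H^s(A)=0\}\subseteq\{s\ge 0:\mathcal H^s(p(A))=0\}$, and taking infima gives $\dim(p(A))\le\dim(A)$. There is no real obstacle here: the entire content is the elementary observation about diameters, and the only minor point to state carefully is restricting the cover of $A$ to its trace on $A$ so that $p$ is actually defined on each piece (and the case $\mathcal H^s(A)=\infty$ is vacuous for the inclusion of zero-sets, so it causes no trouble). If $L=0$, i.e. $p$ is constant, then $p(A)$ is a point and the inequality is trivial, so one may assume $L>0$.
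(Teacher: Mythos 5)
Your proof is correct and takes essentially the same approach as the paper: both arguments rest on the elementary fact that a Lipschitz map shrinks diameters by at most a factor $L$, so $\delta$-covers of $A$ push forward to $(L\delta)$-covers of $p(A)$ with $s$-content inflated by at most $L^s$. The only cosmetic difference is that you establish the clean general inequality $\mathcal H^s(p(A))\le L^s\,\mathcal H^s(A)$ first and then read off the dimension bound, whereas the paper works directly with the vanishing of $\mathcal H^s_\delta(A)$ for $s>\dim(A)$; the substance is identical.
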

\begin{proof}
It enough to show that for any $s>\textnormal{dim}(A)$, one has $\mathcal{H}^{s}(p(A))=0$. Fix such an $s$. Since $\mathcal{H}^{s}(A)=0$ and $\mathcal{H}^{s}_{\delta}(A)$ is monotone decreasing in $\delta$ , one has $\mathcal{H}^{s}_{\delta}(A)=0$ for all $\delta$. Then for every $\eps>0$ and every $\delta>0$, there exists a cover $\{U_i\}_{i\in \N}$ for $A$ with $\sum_{i}(\textnormal{diam}(U_i))^s<\eps$, and $\textnormal{diam}(U_i)\leq \delta$. That gives a cover $p(A)\subset \cup_{i} p(U_i)$ satisfying that $\textnormal{diam}(p(U_i))\leq M\textnormal{diam}(U_i)$ and
$ \sum_{i}(\textnormal{diam}\,p(U_i))^{s}\leq M^s\eps$
where $M$ is the Lipchitz constant of $p$. That implies $\mathcal{H}_{M\delta}^{s}(p(A))\leq M^s\eps$. Since $\eps>0$ is arbitrary, $\mathcal{H}_{M\delta}^{s}(p(A))=0$. We are done after sending $\delta$ to $0$.

\end{proof}

From the lemma above and since $p_{\theta}(A)\subset L_{\theta}$, $\textnormal{dim}(L_{\theta})=1$, we know that for every $\theta\in[0,\pi]$, $\textnormal{dim}(p_{\theta}(A))\leq \min\{1,\textnormal{dim}(A)\}$.

In the sequel we explain how one can go about reducing Marstrand Theorem to the discretized version stated in Proposition \ref{Mdiscrete}.

\begin{proof}[Proof that Proposition \ref{Mdiscrete} implies 
 Theorem \ref{Mthm}]

Let $\alpha=\textnormal{dim}(A)$ and $s_{A}=\min\{1,\alpha\}$. The case $s_A=0$ is trivial, so we assume $s_A>0$.

Because for any $\theta\in [0,1]$ it holds that $\textnormal{dim}(p_{\theta}(A))\leq s_A$, if we show that the exceptional set 
\begin{equation}
    E_{s_A}=\{\theta\in [0,1]\colon \textnormal{dim}(p_{\theta}(A))<s_A\}
\end{equation}
    has zero Lebesgue measure, then $\textnormal{dim}(p_{\theta}(A))=s_A$ for a.e. $\theta\in [0,1]$, and we are done.

Moreover, since for $s_n:=s_A-s_A/n$,  $E_{s_A}=\cup_{n=1}^{\infty}E_{s_n}$, it is enough to show that 
$$\mathcal{L}^1(E_{s_n})=0,\text{ for all }n\in \N.$$
Assume by contradiction that $\mathcal{L}^{1}(E_{s_n})>0$, then $\mathcal L^1$ is an $1$-Frostman measure in $E_{s_n}$ and from Lemma \ref{Lemma1paper}, for any $\delta\in (0,1)$ there exists a $\delta$-separated $(\delta,1)$-set $P\subset E_{s_n}$ of cardinality $\#P\gtrsim \delta^{-1}$. For $a<s_A$, one can reduce things to the discretization as stated in Proposition \ref{Mdiscrete} with $\#\Theta\gtrsim_{\eps} \delta^{-1+\eps}$ by following the same argument in the later discretization (see Section \ref{discretethm1}). After the discretization, the application of Proposition \ref{Mdiscrete} will give for every $\eps_0>0$ that
$$\delta^{-1+\eps}\lesssim_{\eps,s} \delta^{-1+a-s_n}$$
for some $j$ with $\delta=2^{-j}\leq \eps_0$.
Send $\eps_0$ to zero to conclude that $a-s_n\leq \eps$, for any $\eps$. So, $a\leq s_n<s_A$ which gives a contradiction when we take $a$ sufficiently close to $s_A$ ($a>s_n$ for example).
\end{proof}

\subsection{Reduction of Theorem 1 to the discretized version in Theorem 4}\label{discretethm1}

Now we return to the discretization of the main Theorem \ref{firstmainthm}. We first recall the statement of Theorem 4 in \cite{GGGHMW2022}.

\begin{thm}[Theorem 4 in \cite{GGGHMW2022}]\label{thm4paper}
    Fix $0<s<2$. For each $\eps>0$, there exists $C_{s,\eps}>0$ so that the following holds. Let $\delta>0$. Let $H\sub B^3(0,1)$ be a union of $h$ many disjoint $\delta$-balls. Let $\Theta$ be a $\delta$-separated $(\delta,t)$-subset of $[0,1]$ of cardinality $\#\Theta\gtrapprox \delta^{-t}$, such that for each $\theta\in \Theta$, we have a collection of $\delta\times \delta\times 1$-tubes $\mathbb T_\theta$ pointing at the direction $\gamma(\theta)$ and satisfying the following conditions:
    \begin{enumerate}        
        \item $\mathbb T_\theta$ satisfies the $s$-dimensional condition: for any $r\in [\delta,1]$ and any ball $B_r\sub \R^3$ of radius $r$, we have
        \begin{equation}
            \#\{T\in \mathbb T_\theta:T\cap B_r\ne \varnothing\}\lesssim (r/\delta)^{s}.
        \end{equation}
        In particular, $\#\mathbb T_\theta\lesssim \delta^{-s}$.
        \item Each $\delta$-ball contained in $H$ intersects $\gtrapprox\#\Theta $ many tubes from $\cup_{\theta\in \Theta} \mathbb T_\theta$.
    \end{enumerate}
    Then
        \begin{equation}
            \delta^{-t}h\le C_{s,\eps}\delta^{-1-s-\eps}.
        \end{equation}
\end{thm}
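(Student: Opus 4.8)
The plan is to run the high-low argument of Proposition~\ref{Mdiscrete} in $\R^3$: assemble a function $f$ from wave packets adapted to the tubes, note that $f$ is pointwise $\gtrsim\#\Theta$ on $H$, split $f=f_{\mathrm{low}}+f_{\mathrm{high}}$ in frequency, discard $f_{\mathrm{low}}$ using the $s$-dimensional condition (now $s<2$ plays the role that $s<1$ played in the plane), and bound $\int_H|f_{\mathrm{high}}|^2$ from above. The one genuinely new ingredient is that in $\R^3$ the bounded-overlap Lemma~\ref{lem_high_orthogonality} is too weak — the relevant frequency slabs overlap heavily and plain $L^2$ orthogonality is lossy — so its role must be taken over by a decoupling inequality for the non-degenerate curve $\mathbf e_3=\gamma\times\gamma'$ (equivalently, for the cone it generates), which is non-degenerate by Lemma~\ref{lem_frame_derivatives}: there $\mathbf e_3'=-\uptau\,\mathbf e_2$ gives $\det(\mathbf e_3,\mathbf e_3',\mathbf e_3'')=\uptau^2\neq 0$.

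\textbf{Wave packets and the low part.} For $\theta\in\Theta$ and $T\in\mathbb T_\theta$ choose a Schwartz $\psi_T$ with $\widehat{\psi_T}\ge 0$ supported in the dual plank $\tau_\theta$ (a $\delta^{-1}\times\delta^{-1}\times1$ slab through the origin with short axis $\gamma(\theta)$), with $\psi_T\gtrsim 1$ on $T$ and rapid decay off $T$; put $f_\theta=\sum_{T\in\mathbb T_\theta}\psi_T$ and $f=\sum_{\theta\in\Theta}f_\theta$. By hypothesis~(2), $f\gtrsim\#\Theta$ on $H$, so $\int_H|f|^2\gtrsim(\#\Theta)^2|H|\sim(\#\Theta)^2\delta^3h$; taking the tubes in $\mathbb T_\theta$ essentially disjoint and using the weight estimates of Section~\ref{sec_weight}, $\int|f_\theta|^2\sim\delta^2\#\mathbb T_\theta\lesssim\delta^{2-s}$. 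Now let $\widehat{f_{\mathrm{low}}}=\widehat f\cdot\eta_l$ with $\eta_l$ supported in $\{|\xi|\le C_0\}$ for a large $C_0=C_0(s)$. Exactly as in Section~\ref{sec_step_4}, the $s$-dimensional condition at radius $r=1$ gives $\|f_{\mathrm{low}}\|_\infty\lesssim C_0^{3}\,\#\Theta\,\delta^{2-s}<\tfrac12\#\Theta$ once $\delta$ is small (for $\delta\ge\delta_0(s)$ the asserted inequality is trivial). Hence $|f_{\mathrm{high}}|\gtrsim\#\Theta$ on $H$ and $\int_H|f_{\mathrm{high}}|^2\gtrsim(\#\Theta)^2\delta^3h$.

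\textbf{The high part via decoupling.} Decompose $f_{\mathrm{high}}=\sum_\rho f_\rho$ over the $O(\log\tfrac1\delta)$ dyadic scales $C_0\le\rho\le\delta^{-1}$ and pigeonhole to one $\rho$ with $\int_H|f_\rho|^2\gtrapprox(\#\Theta)^2\delta^3h$. For $|\xi|\sim\rho$ the slab $\tau_\theta$ hugs, to angular width $\rho^{-1/2}$, the non-degenerate cone $\{r\,\mathbf e_3(\theta):r\in\R,\ \theta\in[0,1]\}$ (whose tangent plane along the ruling $\theta$ is exactly $\gamma(\theta)^\perp$). Decomposing at angular scale $\rho^{-1/2}$ into the canonical $\rho^{1/2}\times\rho\times1$ planks and applying the Bourgain--Demeter decoupling inequality for non-degenerate curves (stated in the appendix) to $f_\rho=\sum_\tau f_{\rho,\tau}$ gives, for the relevant exponent $p$,
\[
\|f_\rho\|_{L^p(\R^3)}\lesssim_\eps\delta^{-\eps}\Big(\sum_\tau\|f_{\rho,\tau}\|_{L^p(\R^3)}^2\Big)^{1/2}.
\]
Since $H$ is a union of $h$ disjoint $\delta$-balls, Hölder yields $\int_H|f_\rho|^2\lesssim|H|^{1-2/p}\|f_\rho\|_{L^p(\R^3)}^2$, and on the decoupled side the $s$-dimensional condition on each $\mathbb T_\theta$ together with the $(\delta,t)$-condition on $\Theta$ control the number of wave packets contributing to a single plank $\tau$, hence $\sum_\tau\|f_{\rho,\tau}\|_{L^p}^p$. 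Carrying through the exponents, this is engineered to produce $\int_H|f_\rho|^2\lesssim_\eps\delta^{-\eps}\,\#\Theta\,\delta^{2-s}$ (the same shape as $\#\Theta\cdot\sum_\theta\int|f_\theta|^2$, up to $\delta^{-\eps}$); comparing with the lower bound $(\#\Theta)^2\delta^3h$ gives $\#\Theta\cdot h\lesssim_\eps\delta^{-1-s-\eps}$, and since $\#\Theta\gtrapprox\delta^{-t}$ this is the asserted $\delta^{-t}h\le C_{s,\eps}\delta^{-1-s-\eps}$.

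\textbf{Main obstacle.} The high-low skeleton (wave packets, lower bound, discarding $f_{\mathrm{low}}$) is routine and mirrors Section~\ref{sec_weight}. The work is the decoupling step: (i) choosing the scale $\rho$ and the secondary decomposition of the slabs so that the resulting planks are genuinely the canonical planks of the cone over $\mathbf e_3$ — and justifying the required second-order Taylor expansion of $\mathbf e_3$, which is only $C^1$ since $\gamma$ is only $C^2$, via Proposition~\ref{prop_Taylor} rather than classical differentiation (this is exactly why that proposition was set up); and (ii) the refined-decoupling bookkeeping converting the $L^p$ gain into the $L^2(H)$ estimate, inserting the $(\delta,t)$-structure of $\Theta$ and the $s$-dimensional structure of the $\mathbb T_\theta$ at precisely the places needed so that only $\delta^{-\eps}$ is lost. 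Step (ii), an instance of refined/weighted decoupling, is where I expect the main difficulty.
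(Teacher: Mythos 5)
Your skeleton (wave packets, pointwise lower bound $f\gtrsim\#\Theta$ on $H$, discard $f_{\mathrm{low}}$, estimate $\int_H|f_{\mathrm{high}}|^2$) is the right one, and you have correctly identified the cone over $\mathbf e_3=\gamma\times\gamma'$ as the object to which decoupling applies. But the decomposition you then run is a Littlewood--Paley decomposition in $|\xi|$, followed by an angular partition at scale $\rho^{-1/2}$ treating \emph{every} resulting $\rho^{1/2}\times\rho\times1$ piece of the slab $\tau_\theta\cap\{|\xi|\sim\rho\}$ as a canonical cone plank. This is where the argument breaks: the slab $\tau_\theta$ lies along the tangent plane $\gamma(\theta)^\perp$, and at radius $|\xi|\sim\rho$ it is within $O(1)$ of the cone only where the tangential coordinate satisfies $|\xi_2|\lesssim\rho^{1/2}$. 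For $|\xi_2|=a\gg\rho^{1/2}$ the plank sits at distance $\sim a^2/\rho\gg 1$ from the cone and is \emph{not} a cone plank, so Bourgain--Demeter decoupling for the cone is simply not applicable to it. Since the slabs have full aperture $\sim\rho$ in $\xi_2$ while the near-cone zone has aperture only $\rho^{1/2}$, the bulk of the frequency content you are trying to decouple lies outside the scope of the inequality you invoke.

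The decomposition the paper actually uses is not by $|\xi|$ but by the tangential coordinate $\xi_2$ (Section~5.3): a ``high'' piece with $|\xi_2|\gtrsim K^{-1}\delta^{-1}$, a ``low'' piece, and dyadic ``$\lambda$-mixed'' pieces with $|\xi_2|\sim\lambda\delta^{-1}$, $\lambda\in[\delta^{1/2},K^{-1}]$. Three different tools are then needed. The truly far-from-cone high piece is handled by pure $L^2$ orthogonality: a short geometric lemma (Lemma~\ref{lem_high_orthogonality_3D}) shows those slabs are $O(K^2)$-overlapping, so no decoupling is used there at all. The $\delta^{1/2}$-mixed piece is the one where the planks are genuine canonical cone planks, and there Bourgain--Demeter applies directly (Section~5.6) — this is the one scale your argument correctly captures. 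For the intermediate $\lambda$-mixed pieces the planks have thickness $\delta$ but aperture $\lambda\delta^{-1}$, so they do not match the canonical cone scale; the paper handles them by a \emph{combination} of Bourgain--Demeter decoupling at scale $\lambda^2$ (grouping planks into $\lambda$-separated $\sigma$'s via Lemmas~\ref{lem_same_and_distinct_lambda}--\ref{lem_cover_lambda}) followed by flat decoupling (Theorem~\ref{thm_flat_decoupling}) inside each $\sigma$. Your proposal has no analogue of either the bounded-overlap step or the flat-decoupling step, and a decomposition by $|\xi|$ alone cannot sort the frequency content into these regimes. To repair the argument you would need to replace the $|\xi|$-pigeonholing by a decomposition in $|\xi_2|$ and supply the two missing ingredients. (Separately: your low cutoff at the constant scale $|\xi|\le C_0(s)$ is much smaller than the paper's cutoff at $|\xi|\lesssim K^{-1}\delta^{-1}$, which only forces even more of the function into the high part you cannot yet control; and because the lower bound $f\gtrapprox\#\Theta$ carries a hidden $|\log\delta|^{-O(1)}$, the cutoff parameter in the paper must be taken polylogarithmic in $\delta$, not a pure constant.)
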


Let us now discuss the details of how Theorem \ref{thm4paper} implies Theorem \ref{firstmainthm}.
Assume $A\subset B^3(0,1)$ and let $ 0<s<\min\{2,\alpha\}$ where $\alpha=\textnormal{dim}(A)$. 
Recall the definition of the exceptional set 
$$E_s=\{\theta\in[0,1]\colon \textnormal{dim}(\pi_{\theta}(A))<s\}.$$

Observe that can reduce ourselves to checking that Theorem \ref{thm4paper} implies Theorem \ref{firstmainthm} for  $s>\max\{0,\alpha-1\}$ because for the remaining case $\alpha>1$ and $s\leq \alpha-1$ what Theorem \ref{firstmainthm} claims is that $\text{dim}(E_{s})=0$, and that will follow from 
$\text{dim}(E_s)\leq \text{dim}(E_{\alpha-1+\eps})\leq \eps$, for all $\eps>0$.

Fix $a<\text{dim}(A)$, and $t<\text{dim}(E_s)$.
We need to check that
\begin{equation}
 a+t\leq 1+s   
\end{equation}
then sending $a\rightarrow \text{dim}(A)$ and $t\rightarrow \text{dim}(E_s)$ we get Theorem \ref{firstmainthm}.

Let $\nu_A$ be an $a$-Frostman measure in $A$ and $\nu_{E_s}$ a $t$-Frostman measure in $E_s$.

For any $\theta$ in the exceptional set $E_s$, Lemma \ref{Lemma2paper} guarantees one can find a nice cover $\mathbb{D}_{\theta}=\{D\}$ for $\pi_{\theta}(A)\subset \gamma(\theta)^{\perp}\cong \R^2$ comprised of dyadic cubes with small associated $s$-dimensional content, in the sense that $\sum_{D\in \mathbb{D}_{\theta}}r(D)^s<1$, and the extra $s$-dimensional condition for each generation. That is, for any $j\in\N$ and any ball $B_r^2\subset \gamma(\theta)^{\perp}$, with $2^{-j}\leq r\leq 1$,
\begin{equation}
    \#\{D\in \mathbb{D}_{\theta,j}\colon D\subset B_r^2\}\lesssim \left(\frac{r}{2^{-j}}\right)^{s}
\end{equation}
where $\mathbb{D}_{\theta,j}=\{D\in \mathbb{D}_{\theta}\colon r(D)=2^{-j}\}$.

Let $\mathbb{T}_{\theta}:=\{T=\pi_{\theta}^{-1}(D)\cap B^{3}(0,1)\colon D\in \mathbb{D}_{\theta}\}$. Split $\mathbb{T}_{\theta}$ in generations $\mathbb{T}_{\theta}=\cup_{j} \mathbb{T}_{\theta,j}$, where for each $j$, $\mathbb{T}_{\theta,j}$ corresponds to the pre-images by ${\pi}_{\theta}$ of dyadic cubes $D\in \mathbb{D}_{\theta,j}$.
Similar condition is then inherited by the tubes in the collection $\mathbb{T}_{\theta,j}$. Namely, 
\begin{equation}\label{sdimforthm4}
   \#\{T\in \mathbb{T}_{\theta,j}\colon T\cap B^{3}(x,r)\neq \emptyset\}\lesssim \left(\frac{r}{2^{-j}}\right)^{s}\text{ for }2^{-j}\leq r\leq 1. 
\end{equation}

Given $\eps_0>0$, as remarked right after Lemma \ref{Lemma2paper} we can assume that we only use sufficiently small cubes, that is, $2^{-j}< \eps_{0}$. Observe that for each $j>|\log_2(\eps_0)|$ the collection $\mathbb{D}_{\theta,j}=\{D\in \mathbb{D}_{\theta}\colon r(D)=2^{-j}\}$ is pairwise disjoint and the same is then true for the $2^{-j}\times 2^{-j}\times 1$ tubes in $\mathbb{T}_{\theta,j}$. Also, by construction the tubes in $\mathbb{T}_{\theta}$ point in the direction $\gamma(\theta)$ and $A\subset \cup_{j> |\log_2(\eps_0)|}\cup_{T\in \mathbb{T}_{\theta,j}}T$.

Here is an overview of the discretization argument:
Recall we fixed $\nu_A$ $a$-Frostman measure in $A$ and $\nu_{E_s}$ a $t$-Frostman measure in $E_s$.

The key will be to find a suitable generation $j$ (among the $j$ such that $2^{-j}< \eps_0$) for which there is a positive $a$-dimensional content worth of points in $A$, say $A_g\subset A$ with $\nu_{A}(A_g)\gtrsim \frac{1}{j^2}>0$, which are covered by $\cup_{T\in \mathbb{T}_{\theta,j}}T$ for a big fraction of the angles $\theta$ in a suitable $\delta$-separated set $\Theta$ of angles in $E_s$. $\Theta$ will show up by application of Lemma \ref{Lemma1paper} for a subset $E_{s,j}\subset E_s$ to be defined later satisfying $\nu_{E_s}(E_{s,j})\gtrsim \frac{1}{j^2}>0$, and we will know that $\#\Theta\gtrsim \frac{1}{j^2}2^{jt}$.
By applying Lemma \ref{Lemma1paper} to the set $A_{g}$ we get a $2^{-j}$-separated set of cardinality $h$ with $h\gtrsim \nu_{A}(A_g)(2^{-j})^{-a}\gtrsim \frac{1}{j^2}2^{aj}$, and we use these collection of $2^{-j}$-separated points as centers of the $2^{-j}$ balls that will comprise the $H$ that we need for application of Theorem \ref{thm4paper}. That theorem will then give
\begin{equation}\label{eqn_after_applying_Thm_4}
    (2^{-j})^{-t}h\leq C_{s,\eps} (2^{-j})^{-1-s-\eps}
\end{equation}
which will turn into 
$\frac{1}{j^2}2^{(a+t)j}\leq C_{s,\eps} 2^{(s+1+\eps)j}$.

Sending $\eps_0 $ to zero (so $j$ to infinity) we get $a+t\leq 1+s+\eps$, for all $\eps>0$. Therefore, $a+t\leq 1+s$.

We now give the details for the ingredients used in the overview above. For each $\theta$, since $A\subset \cup_{j>|\log_{2}\eps_0|} (\cup_{T\in\mathbb{T}_{\theta,j}}T)$, by pigeonholing, and by sub-additivity of the measure $\nu_A$, there exists a $j(\theta)>|\log_2(\eps_0)|$, such that 

\begin{equation}\label{goodj}
    \nu_A(A\cap (\cup_{T\in \mathbb{T}_{\theta,j(\theta)}}T))\geq \frac{1}{10j(\theta)^2}\nu_A(A)=\frac{1}{10 j(\theta)^2}.
\end{equation}

Define for each $j>|\log_2(\eps_0)|$
$$E_{s,j}=\{\theta\in E_s\colon j(\theta)=j\}.$$
This gives a partition $E_s=\sqcup_{j>|\log_2\eps_0|} E_{s,j}$. Again by pigeonholing there exists $j>|\log_2\eps_0|$ with 
$$\nu_{E_s}(E_{s,j})\gtrsim \frac{1}{j^2}.$$

Set $\delta:=2^{-j}$ for this suitable fixed $j$.
An application of Lemma \ref{Lemma1paper} guarantees the existence of a $\delta$-separated set of angles $\Theta\subset E_{s,j}$ such that $\Theta$ is a $(\delta,t)$-set and $\#\Theta\gtrsim (\log\delta^{-1})^{-2}\delta^{-t}$.

Consider
\begin{equation}
    S:=\{(x,\theta)\in A\times \Theta\colon x\in \cup_{T\in \mathbb{T}_{\theta,j}}T\}
\end{equation}
and the sections 
$$S_x=\{\theta\in \Theta\colon (x,\theta)\in S\},\,S_{\theta}=\{x\in A\colon (x,\theta)\in S\}.$$
In words, $S_x$ is comprised of all the angles $\theta\in \Theta$ such that $x$ is covered by $\cup_{T\in \mathbb{T}_{\theta,j}}T$, while $S_{\theta}$ is comprised of the points in $A$ covered by union of tubes in $\mathbb{T}_{\theta,j}$.

Let $\mu$ be the counting measure in $\Theta$. 

By inequality (\ref{goodj}) and since $\Theta\subset E_{s,j}$, we know that 
\begin{equation}
    \begin{split}
        (\nu_A\times \mu)(S)=&\int_{\Theta}\left(\int_{A}\chi_{S}(x,\theta)d\nu_A(x)\right) d\mu(\theta)\\
=&\int_{\Theta}\nu_A(A\cap \cup_{T\in \mathbb{T}_{\theta,j}} T) d\mu(\theta)\\
\geq & \int_{\Theta}\frac{1}{10j^2}d\mu(\theta)= \frac{1}{10j^2}\mu(\Theta).
    \end{split}
\end{equation}

Observe that by Fubini,
\begin{equation}
    \begin{split}
(\nu_A\times \mu)&(\{(x,\theta)\in S\colon \mu(S_x)\leq  \frac{1}{20j^2}\mu(\Theta)\})\\
        = &\int_{A}\chi_{\mu(S_x)\leq \frac{1}{20j^2}\mu(\Theta)}(x)\left(\int_{\Theta} \chi_{S}(x,\theta)d\mu(\theta)\right)d\nu_A(x)\\
        = &\int_{A}\chi_{\mu(S_x)\leq \frac{1}{20j^2}\mu(\Theta)}(x)\mu(S_x)d\nu_A(x)\\
        &\leq \frac{1}{20j^2}\nu_A(A)\mu(\Theta)=\frac{1}{20j^2}\mu(\Theta).
    \end{split}
\end{equation}
Therefore, 
\begin{equation}\label{goodproductmeasure}
    \begin{split}
(\nu_A\times \mu)&(\{(x,\theta)\in S\colon \mu(S_x)>  \frac{1}{20j^2}\mu(\Theta)\})\\
\geq&(\nu_A\times \mu)(S)-(\nu_A\times \mu)(\{(x,\theta)\in S\colon \mu(S_x)\leq  \frac{1}{20j^2}\mu(\Theta)\})\\
\geq & \left(\frac{1}{10j^2}-\frac{1}{20j^2}\right)\mu(\Theta)=\frac{1}{20j^2}\mu(\Theta).
    \end{split}
\end{equation}

Take $A_g=\{x\in A\colon \mu(S_x)> \frac{1}{20j^2}\mu(\Theta)\}$, and notice that 
\begin{align*}
     (\nu_A\times \mu)(\{(x,\theta)\in S\colon \mu(S_x)>  \frac{1}{20j^2}\mu(\Theta)\})=&(\nu_A\times \mu)(\{(x,\theta)\in S\colon x\in A_g\})\\
    \leq& \mu(\Theta)\nu_A(A_g).
\end{align*}
   Hence, inequality (\ref{goodproductmeasure}) implies 
$$\nu_A(A_g)\geq \frac{1}{20j^2}.$$
Then, since
 $$\nu_{A}(A_g)\gtrsim \frac{1}{j^2}=(\log\delta^{-1})^{-2},$$
an application of Lemma \ref{Lemma1paper} gives a $\delta$-separated set $C_g\subset A_g$ which is a $(\delta,a)$-set and cardinality $\gtrsim (\log\delta^{-1})^{-2}\delta^{-a}$.

Define $H$ as the disjoint union of the $\delta$ balls centered at the points in $C_g$.

Observe that $H$ and $\Theta$ will satisfy the hypothesis of Theorem \ref{thm4paper}. Indeed, the $s$-dimensional condition was obtained by construction in (\ref{sdimforthm4}) and the center of which $2^{-j}$-ball contained in $H$ lives in $A_g$, so it intersects at least $ \frac{1}{20j^2}\#\Theta$ many tubes in $\cup_{\theta\in \Theta}\mathbb{T}_{\theta,j}$. Applying Theorem \ref{thm4paper}, we thus arrived at the desired inequality \eqref{eqn_after_applying_Thm_4} stated before.

\section{Proof of Theorem 5}

\subsection{The decoupling machinery}

Let $\gamma$ be a non-degenerate curve as discussed in the introduction. We first state in full detail the main decoupling theorem we will be using here. For completeness, the proof will be given in Section \ref{sec_decoupling_C2_cone}.

\begin{thm}[Bourgain-Demeter decoupling for the cone]\label{thm_Bourgain-Demeter_decoupling}
    Let $\delta\in 4^{-\N}$ and $\Omega$ be the $\delta^{1/2}$-lattice in $[0,1]$. Given $C,K\ge 1$, consider the collection of planks $\{A_{\omega,\delta^{1/2}}:\omega\in \Omega\}$, where
    \begin{equation}\label{eqn_A_plank}
        A_{\omega,\delta^{1/2}}:=\left\{\sum_{i=1}^3 \xi_i \mathbf e_i(\omega):|\xi_1|\le C\delta,|\xi_2|\le C\delta^{1/2},C^{-1}K^{-1}\le \xi_3\le C\right\}.
    \end{equation}
    Then we have
    \begin{enumerate}
        \item \label{item_BD_decoupling_01} If $C$ is large enough (depending on $\gamma$ only), the collection $\{A_{\omega,\delta^{1/2}}:\theta\in \Omega\}$ covers the $\delta$-neighbourhood of the truncated cone
        \begin{equation}
            \Gamma_{K^{-1}}:=\{r\mathbf e_3(\omega):\omega\in [0,1],r\in [K^{-1},1]\}.
        \end{equation}
        Moreover, the collection $\{A_{\omega,\delta^{1/2}}:\omega\in \Omega\}$ is at most $C'K^{2}$-overlapping, where $C'$ depends on $\gamma,C$ but not $K,\delta$.
        \item \label{item_BD_decoupling_02} The following Fourier decoupling inequality holds: for each $2\le p\le 6$, every family of Schwartz functions $\{f_\omega:\omega\in \Omega\}$ where each $f_\omega$ has Fourier support in $A_{\omega,\delta^{1/2}}$, we have
        \begin{equation}
            \left\|\sum_{\omega\in \Omega}f_\omega\right\|_{L^p(\R^3)}\leq C_{\eps}K^{10}\delta^{-\eps}\norm{\norm{f_\omega}_{L^p(\R^3)}}_{\ell^2(\omega\in \Omega)},\quad \forall \eps>0,
        \end{equation}
        where the constant $C_\eps$ depends on $\eps,C,p$ but not $K,\delta$ or the test functions $f_\omega$.
    \end{enumerate}     
\end{thm}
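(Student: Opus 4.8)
The plan is to prove the two assertions in turn, getting the decoupling inequality in \eqref{item_BD_decoupling_02} by reducing the $C^2$-cone to a decoupling problem for a non-degenerate planar curve, which is the content of the appendix. For the geometric part \eqref{item_BD_decoupling_01} I would argue directly: given $\xi$ in the $\delta$-neighbourhood of $\Gamma_{K^{-1}}$, write $\xi=r\mathbf e_3(\theta)+v$ with $r\in[K^{-1},1]$, $|v|\le\delta$, pick $\omega\in\Omega$ with $|\theta-\omega|\le\delta^{1/2}$, and expand $\xi=\xi_1\mathbf e_1(\omega)+\xi_2\mathbf e_2(\omega)+\xi_3\mathbf e_3(\omega)$. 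Then $\xi_i=r\,\mathbf e_3(\theta)\cdot\mathbf e_i(\omega)+O(\delta)$, and Proposition \ref{prop_error_e_i} bounds these inner products by $1+O(\delta),\ O(\delta^{1/2}),\ O(\delta)$ for $i=3,2,1$ respectively; hence for $C$ large and $\delta$ small one gets $\xi_3\in[C^{-1}K^{-1},C]$, $|\xi_2|\lesssim\delta^{1/2}$, $|\xi_1|\lesssim\delta$, i.e. $\xi\in A_{\omega,\delta^{1/2}}$. For the overlap, a point $\xi\in A_{\omega,\delta^{1/2}}$ makes angle $\lesssim(\delta^{1/2}+\delta)/(C^{-1}K^{-1})\lesssim K\delta^{1/2}$ with the generator $\R_+\mathbf e_3(\omega)$; since $|\mathbf e_3'|=|\uptau|\sim1$, the curve $\omega\mapsto\mathbf e_3(\omega)$ is locally bi-Lipschitz with $\gamma$-dependent constants, so after subdividing $[0,1]$ into $O_\gamma(1)$ intervals of injectivity one finds $\sum_{\omega\in\Omega}\mathbb 1_{A_{\omega,\delta^{1/2}}}\lesssim_\gamma K$, which is in particular $\le C'K^2$.

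For the decoupling part \eqref{item_BD_decoupling_02} I would first cut $[0,1]$ into $O_\gamma(1)$ subintervals so short that on each the vectors $\mathbf e_3(\omega)$ stay within a small angle of a fixed direction; by the triangle inequality and Cauchy--Schwarz over these $O_\gamma(1)$ pieces it suffices to handle one subinterval, and after a rotation I may take the fixed direction to be $(0,0,1)$. Writing $\mathbf e_3(\omega)=(a_1(\omega),a_2(\omega),a_3(\omega))$ with $a_3\sim1$, the ray $\R_+\mathbf e_3(\omega)$ meets $\{\xi_3=1\}$ at $\kappa(\omega):=a_3(\omega)^{-1}(a_1(\omega),a_2(\omega))\in\R^2$, and the heart of the matter is to show that $\kappa$ is a non-degenerate $C^2$ curve with curvature $\sim1$ \emph{uniformly}. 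The curvature is governed by $\det(\mathbf e_3,\mathbf e_3',\mathbf e_3'')$; using $\mathbf e_3=\gamma\times\gamma'$ and $\mathbf e_3'=\gamma\times\gamma''=-\uptau\mathbf e_2$ (Lemma \ref{lem_frame_derivatives}), and writing $\mathbf e_3''$ formally as $\gamma'\times\gamma''+\gamma\times\gamma'''$, a short computation gives $\det(\mathbf e_3,\mathbf e_3',\mathbf e_3'')=\uptau\,(\mathbf e_3''\cdot\mathbf e_1)=\uptau\,(\gamma'\times\gamma'')\cdot\gamma=\uptau^2\sim1$, the putative $\gamma'''$-term $(\gamma\times\gamma''')\cdot\gamma$ vanishing identically. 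So the second-order data of $\kappa$ that feeds decoupling involves only $\gamma,\gamma',\gamma''$, and although $\mathbf e_3$ is merely $C^1$ the component of $\kappa(\omega)-\kappa(\omega_0)-(\omega-\omega_0)\kappa'(\omega_0)$ normal to $\kappa'(\omega_0)$ can be pinched between constant multiples of $(\omega-\omega_0)^2$ via Proposition \ref{prop_Taylor}, exactly as in the proof of Proposition \ref{prop_error_e_i}\eqref{item_03_error_curve}.

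With $\kappa$ so identified I would conclude as follows. Each plank $A_{\omega,\delta^{1/2}}$, of length $\lesssim1$ in the nearly-$\xi_3$ direction and $\delta\times\delta^{1/2}$ in the transverse plane, sits inside $R_\omega\times\{|\xi_3|\le C'\}$, where $R_\omega$ is a $\delta\times\delta^{1/2}$ rectangle in the $(\xi_1,\xi_2)$-plane tangent to $\kappa$ at $\kappa(\omega)$; by part \eqref{item_BD_decoupling_01} and the curvature bound, the $R_\omega$ are, up to affine normalization, precisely the $O(1)$-overlapping $\delta$-caps for the $\ell^2$ decoupling of a non-degenerate planar curve. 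Because the extra $\xi_3$-direction is flat, a cylindrical (Fubini) decoupling transfers the appendix's $\ell^2L^6$ decoupling for $\kappa$ back up to $\R^3$, producing \eqref{item_BD_decoupling_02} with the loss $C_\eps\delta^{-\eps}$; the remaining $K$-power loss comes from the tangent-box picture degenerating near the tip $r\sim K^{-1}$ — treated by splitting $r\in[K^{-1},1]$ dyadically into $O(\log K)$ annuli and parabolically rescaling — together with the overlap of part \eqref{item_BD_decoupling_01}, and a crude accounting keeps it below $K^{10}$.

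I expect the hard part to be exactly the clash of low regularity with the variable cone axis: since $\gamma\in C^2$ only, neither $\mathbf e_3$ nor $\kappa$ admits an honest second-order Taylor expansion, and the whole reduction hinges on the $\gamma'''$-free nature of the curvature $\uptau^2$, which is what makes Proposition \ref{prop_Taylor} applicable; extracting the uniform quantitative non-degeneracy of $\kappa$ (and checking that the planks really lie inside the boxes $R_\omega$) is the crux, whereas the ascent back to $\R^3$ and the bookkeeping of the $K$-losses are routine.
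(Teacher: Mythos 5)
Your Part \eqref{item_BD_decoupling_01} argument is essentially the paper's (covering from Proposition \ref{prop_error_e_i}, overlap via the Lipschitz/separation estimate for $\mathbf e_3$, and the $K$-reduction by dyadic splitting in $r$), and your curvature computation $\det(\mathbf e_3,\mathbf e_3',\mathbf e_3'')=\uptau^2$ is correct and indeed $\gamma'''$-free. The genuine gap is in Part \eqref{item_BD_decoupling_02}: the claim that each $A_{\omega,\delta^{1/2}}$ \emph{sits inside a cylinder} $R_\omega\times\{|\xi_3|\le C'\}$ with $R_\omega$ a $\delta\times\delta^{1/2}$ rectangle is false. The long ($\sim 1$) axis of $A_{\omega,\delta^{1/2}}$ points along $\mathbf e_3(\omega)$, which for the light cone makes a fixed angle $\pi/4$ with $(0,0,1)$ \emph{regardless} of how many $O_\gamma(1)$ subintervals you cut $[0,1]$ into. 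The shadow of $A_{\omega,\delta^{1/2}}$ in the $(\xi_1,\xi_2)$-plane therefore has dimensions $\sim 1\times\delta^{1/2}$, not $\delta\times\delta^{1/2}$, so the planks do not fit over the planar caps, and the cylindrical/Fubini transfer from 2D curve decoupling breaks down. This is not a fixable bookkeeping issue: it is precisely the obstruction that makes cone decoupling strictly harder than curve decoupling, and it is why the paper runs a Pramanik--Seeger iteration. The paper introduces an intermediate scale $\lambda=\delta^{(1-\eps)/2}$, applies the (yet-to-be-bounded) cone decoupling constant $D(\lambda^2)$ to pass to $\lambda$-planks $P_{\sigma,\lambda}$, and only \emph{within} a single $\sigma$-sector --- where the angular spread is $\lesssim\lambda$ and the cone deviates from a cylinder by $O(\lambda^2)=O(\delta^{1-\eps})$ (Lemma \ref{lem_PS_cylinder}) --- does it project onto $\mathbf e_3(\sigma)^\perp$ and invoke $C^2$-curve decoupling, before closing with a bootstrap $D(\delta)\le C_\eps\delta^{-\eps^2}D(\delta^{1-\eps})$.

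A secondary issue: you correctly flag the $C^1$-regularity of $\mathbf e_3$, but Proposition \ref{prop_Taylor} alone will not rescue the cylinder approximation at the scales involved. The paper's fix is a reparametrization $s(\theta)=\int_0^\theta -\uptau(t)^{-1}\,dt$ (see \eqref{eqn_reparametrization}--\eqref{eqn_frenet_frame_new}), which makes $\tilde{\mathbf e}_3'=\tilde{\mathbf e}_2\in C^1$, hence $\tilde{\mathbf e}_3\in C^2$, at the harmless cost of $\tilde{\mathbf e}_1$ losing a derivative; all the second-order Taylor expansions in Lemma \ref{lem_PS_cylinder} are then legitimate. Your proposal would need both this reparametrization and the multi-scale bootstrap to close.
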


We will also use the following flat decoupling inequality. To avoid technical issues, we just state a version that is sufficient for our use here. We include in the appendix a proof of this inequality.

\begin{thm}[Flat decoupling]\label{thm_flat_decoupling}
    In $\R^3$, let $\mathbb T$ be a finite collection of tubes with possibly different dimensions and orientations. Let $C\ge 1$ and assume the collection of $C$-concentrically enlarged tubes $\{CT:T\in \mathbb T\}$ is at most $K$-overlapping. Then we have the following flat decoupling inequality: for every $2\le p\le \infty$, every family of Schwartz functions $\{f_T:T\in \mathbb T\}$ where each $f_T$ has Fourier support in $T$, we have
    \begin{equation}
        \left\|\sum_{T\in \mathbb T}f_T\right\|_{L^p(\R^3)}\leq C'K(\#\mathbb T)^{\frac 1 2-\frac 1 p}\norm{\norm{f_T}_{L^p(\R^3)}}_{\ell^2(T\in \mathbb T)},
    \end{equation}
    where $C'$ depends on $C$ but not $K$, $\#\mathbb T$, the dimensions of the tubes, or the test functions $f_T$.
\end{thm}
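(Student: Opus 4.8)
The plan is to prove this with no genuine decoupling input at all, by checking the two endpoints $p=2$ and $p=\infty$ by hand and then interpolating. First I would record the harmless observation that, since $T\sub CT$ for every $T$, the hypothesis that $\{CT:T\in\mathbb T\}$ is $K$-overlapping already implies that $\{T:T\in\mathbb T\}$ itself is $K$-overlapping; this is the only feature of the hypothesis that the main argument uses. Write $N:=\#\mathbb T$ throughout.

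At $p=\infty$ the inequality is immediate: the triangle inequality followed by Cauchy--Schwarz over the $N$ tubes bounds $\norm{\sum_{T}f_T}_{L^\infty}$ by $N^{1/2}\norm{\norm{f_T}_{L^\infty}}_{\ell^2(T\in\mathbb T)}$, i.e.\ the claim with exponent $N^{1/2}=N^{1/2-1/\infty}$ and constant $1$. At $p=2$ I would use Plancherel: since $\widehat{f_T}$ is supported in $T$, at each frequency $\xi$ the sum $\sum_{T}\widehat{f_T}(\xi)$ has at most $K$ nonzero terms, so Cauchy--Schwarz in $\xi$ gives the pointwise bound $|\sum_{T}\widehat{f_T}(\xi)|^2\le K\sum_{T}|\widehat{f_T}(\xi)|^2$; integrating in $\xi$ and applying Plancherel on each side yields $\norm{\sum_{T}f_T}_{L^2}^2\le K\sum_{T}\norm{f_T}_{L^2}^2$, which is the claim at $p=2$, in fact with the better constant $K^{1/2}$ and the correct trivial exponent $N^{0}$.

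The last step is to interpolate the linear map $S:(f_T)_{T\in\mathbb T}\mapsto\sum_{T}f_T$, regarded as acting from the mixed-norm space $\ell^2_N(L^q(\R^3))$ into $L^q(\R^3)$, between $q=2$ and $q=\infty$. Complex interpolation of the scale $\ell^2_N(L^q)$ with the $\ell^2$ index held fixed gives $\ell^2_N(L^p)$ at the exponent $\frac1p=\frac{1-\theta}{2}$, hence $1-\theta=\frac2p$, so the operator norm is at most $(K^{1/2})^{1-\theta}(N^{1/2})^{\theta}=K^{1/p}N^{1/2-1/p}\le K\,N^{1/2-1/p}$ since $K\ge1$ and $1/p\le1$; this is exactly the asserted bound, with $C'=1$ in this sharply truncated formulation. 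I expect the only point that really needs care is this vector-valued interpolation identity; to stay fully elementary one can instead run a Stein analytic-family argument, inserting $|f_T|^{z}$-type factors into the sum and comparing boundary values on the strip $\operatorname{Re}z\in\{0,1\}$, which reduces everything to the two scalar endpoint estimates above. I would also point out that the concentric $C$-enlargement in the hypothesis becomes genuinely necessary only in the more robust version in which $\widehat{f_T}$ is merely concentrated on $T$ with Schwartz tails rather than compactly Fourier supported there: in that case the $L^2$ step uses the weights $w_T$ of Section~\ref{sec_weight} in place of $1_T$, and summing the resulting tails is precisely where one invokes the $K$-overlap of $\{CT\}$.
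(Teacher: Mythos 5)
You have the right endpoint estimates, but the interpolation step as stated has a genuine gap, and it is exactly the gap the paper flags explicitly (``it is not of the form of a general sublinear operator since we have extra Fourier support conditions on each $f_T$''). Your map $S:(f_T)\mapsto\sum_T f_T$ is a perfectly good linear operator from $\ell^2_N(L^q)$ to $L^q$, and at $p=\infty$ your bound $N^{1/2}$ holds for \emph{all} tuples. But your $p=2$ bound $K^{1/2}$ is \emph{not} an operator norm of $S$ on $\ell^2_N(L^2)$: it uses the Fourier support of $\widehat{f_T}$ in $T$ in an essential way, and if you drop that constraint it is false (take $f_T=f$ for all $T$, so that $\|\sum_T f_T\|_2 = N\|f\|_2$ while your claimed bound would give $K^{1/2}N^{1/2}\|f\|_2$). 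Riesz--Thorin/complex interpolation applies to an operator bounded at both endpoints on the \emph{whole} space; restricting to the subspace cut out by the Fourier-support constraint is not automatically compatible with the interpolation functor, and your proposed Stein analytic-family fix does not repair this, because replacing $f_T$ by $|f_T|^{z}$-type factors destroys the Fourier support of each $\widehat{f_T}$ and hence destroys the $p=2$ endpoint estimate on the boundary of the strip.

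The paper's route resolves precisely this issue by changing the operator rather than the input space: it picks Schwartz $\psi_T$ with $\widehat{\psi_T}$ supported in $CT$ and $\equiv1$ on $T$, and interpolates the genuine linear map $L:(f_T)\mapsto\sum_T f_T*\psi_T$ on all of $\ell^2_N(L^q)$ (after first reducing to $K=1$ by tiling, triangle, and Cauchy--Schwarz). At $p=2$ this uses Plancherel plus Young, at $p=\infty$ triangle plus Cauchy--Schwarz, and at the end one notes $f_T=f_T*\psi_T$ for the actual inputs. This also explains why the $C$-enlargement is needed already in the compactly-supported setting (not just the Schwartz-tails variant you describe): the bump $\widehat{\psi_T}$ must equal $1$ on $T$ while being compactly supported, so its support must be a strict enlargement $CT$, and the $K$-overlap hypothesis must be imposed on the $\{CT\}$ because those are the true Fourier supports of the terms in $L$. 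Your endpoint computations are correct and morally identical to the paper's; the fix is to route the $p=2$ estimate through the operator $L$ rather than through $S$.
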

{\it Remark.} Flat decoupling is sharp when $\mathbb T$ is a tiling of a larger tube by congruent parallel smaller tubes. See, for instance, Section \cite{Yang_thesis} for a proof.

Now we restate Theorem 5 in \cite{GGGHMW2022}, which is a rescaled version of Theorem \ref{thm4paper}, more convenient for the application of the Decoupling theorems later.
\begin{thm}[Theorem 5]\label{thm_thm5}
    Fix $0<s<2$. For each $\eps>0$, there exists $C_{s,\eps}>0$ so that the following holds. Let $\delta\in (0,1]$.
    
    Let $H\sub B^3(0,\delta^{-1})$ be a union of $\delta^{-a}$ many disjoint unit balls so that $H$ has measure $|H|\sim \delta^{-a}$. Let $\Theta$ be a $\delta$-separated $(\delta,t)$-subset of $[0,1]$ of cardinality $\#\Theta\gtrapprox \delta^{-t}$, such that for each $\theta\in \Theta$, we have a collection of $1\times 1\times \delta^{-1}$-tubes $\mathbb T_\theta$ pointing at the direction $\gamma(\theta)$ and satisfying the following conditions:
    \begin{enumerate}        
        \item $\mathbb T_\theta$ satisfies the $s$-dimensional condition: for any $r\in [1,\delta^{-1}]$ and any ball $B_r\sub \R^3$ of radius $r$, we have
        \begin{equation}\label{eqn_s_dim_3D}
            \#\{T\in \mathbb T_\theta:T\cap B_r\ne \varnothing\}\lesssim r^s.
        \end{equation}
        In particular, $\#\mathbb T_\theta\lesssim \delta^{-s}$.
        \item \label{item_2_thm5} Each unit ball contained in $H$ intersects $\gtrapprox\#\Theta $ many tubes from $\cup_\theta \mathbb T_\theta$.
    \end{enumerate}
    Then
        \begin{equation}
            \delta^{-t-a}\le C_{s,\eps}\delta^{-1-s-\eps}.
        \end{equation}
\end{thm}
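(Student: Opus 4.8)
The plan is to run the high--low scheme of the proof of Proposition~\ref{Mdiscrete}, now in $\R^3$, the decisive new ingredient being that the crude $L^2$-orthogonality of the high part (which in $\R^2$ was Lemma~\ref{lem_high_orthogonality}) must be replaced by the Bourgain--Demeter cone decoupling of Theorem~\ref{thm_Bourgain-Demeter_decoupling} together with the flat decoupling of Theorem~\ref{thm_flat_decoupling}. First I would build wave packets: for each $\theta\in\Theta$ and each $T\in\mathbb T_\theta$ pick a nonnegative bump $\psi_T$ with $\psi_T\gtrsim 1$ on $T$, rapidly decaying off $T$, and with $\widehat{\psi_T}$ supported in the dual slab $S_\theta$ --- the $\delta\times1\times1$ plank centred at the origin that is thin in the direction $\mathbf e_1(\theta)=\gamma(\theta)$ and of unit length along $\mathbf e_2(\theta),\mathbf e_3(\theta)$. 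Set $f_\theta=\sum_{T\in\mathbb T_\theta}\psi_T$ and $f=\sum_{\theta\in\Theta}f_\theta$. Taking the dyadic cubes in the discretization small enough the tubes of $\mathbb T_\theta$ are essentially disjoint, so $\int|f_\theta|^2\lesssim |T|\,\#\mathbb T_\theta\lesssim\delta^{-1-s}$ by the $s$-dimensional condition, while condition~\ref{item_2_thm5} (each unit ball of $H$ meets $\gtrsim\#\Theta$ tubes) together with $\psi_T\ge 0$ gives $f\gtrsim\#\Theta$ on $H$ with no cancellation, hence the lower bound $\int_H|f|^2\gtrsim(\#\Theta)^2|H|\sim(\#\Theta)^2\delta^{-a}$.

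Next comes the high--low split, at frequency scale $\delta^{1/2}$ --- the scale forced by the cone, since the $\delta$-neighbourhood of the envelope of the planes $\gamma(\theta)^\perp$ is exactly tiled by the planks $A_{\omega,\delta^{1/2}}$ of Theorem~\ref{thm_Bourgain-Demeter_decoupling}. Write $f=f_l+f_h$ with $\widehat{f_l}$ supported in $B^3(0,c\delta^{1/2})$. For the low part one uses the weight inequalities of Section~\ref{sec_weight} with the $s$-dimensional condition~\eqref{eqn_s_dim_3D} at radius $r=\delta^{-1/2}$: averaging $f_\theta$ over a ball of radius $\delta^{-1/2}$, which meets $\lesssim\delta^{-s/2}$ tubes of $\mathbb T_\theta$ each contributing $\lesssim\delta^{-1/2}$ to the intersection while the ball has volume $\delta^{-3/2}$, gives $|f_\theta*\eta_l^\vee(x)|\lesssim\delta^{-s/2}\cdot\delta^{-1/2}\cdot\delta^{3/2}=\delta^{1-s/2}$, hence $|f_l(x)|\lesssim\#\Theta\,\delta^{1-s/2}\ll\#\Theta$ on $H$ because $s<2$. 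Therefore the high part dominates on $H$ and $\int_H|f_h|^2\gtrsim(\#\Theta)^2\delta^{-a}$.

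The heart of the argument is the upper bound for the high part. The Fourier support of $f_h$ is the union of the thin pancakes $S_\theta$ restricted to $|\xi|\gtrsim\delta^{1/2}$. The key geometric observation is that the envelope of the family $\{\gamma(\theta)^\perp\}$ is precisely the cone $\{r\mathbf e_3(\omega):\omega\in[0,1],\ r>0\}$: indeed $\gamma(\theta)^\perp\cap\gamma(\theta')^\perp$ has direction $\gamma(\theta)\times\gamma(\theta')\to\mathbf e_3(\theta)$ as $\theta'\to\theta$. Near this cone the truncated pancakes $S_\theta$ organize into the planks $A_{\omega,\delta^{1/2}}$, and away from the cone they are boundedly overlapping. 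One then estimates $\|f_h\|_{L^6}$ --- $p=6$ being the sharp exponent for the cone in $\R^3$ --- by applying Theorem~\ref{thm_Bourgain-Demeter_decoupling} to decouple the near-cone part into the planks $A_{\omega,\delta^{1/2}}$ (losing only $\delta^{-\eps}$), and Theorem~\ref{thm_flat_decoupling} for the flat/transversal directions (whose $K$ factor is $\delta^{-\eps'}$-controlled since the relevant sub-pieces are boundedly overlapping), working on the ball $B^3(0,\delta^{-1})\supset H$, which is just large enough to resolve the planks. To turn this $L^6$ information into the required incidence count one uses condition~\ref{item_2_thm5} in refined form: a unit ball meeting $\gtrsim\#\Theta$ tubes must meet tubes from $\gtrsim\#\Theta$ distinct directions $\theta$, since by the $s$-dimensional condition at radius $1$ each $\mathbb T_\theta$ contributes $O(1)$ tubes through a given unit ball. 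Carefully packaging all the $\delta^{-\eps}$ and $K$ losses --- and, following \cite{GGGHMW2022}, bootstrapping over dyadic scales to shed them --- one should arrive at an essentially orthogonal bound, morally $\int_H|f_h|^2\lesssim\delta^{-\eps}\,\#\Theta\int|f_\theta|^2\lesssim\delta^{-\eps}\,\#\Theta\,\delta^{-1-s}$; the point is that naive Plancherel loses the full factor $\delta^{-1/2}$ coming from the overlap of the pancakes near the cone, and it is exactly the curvature of the cone, via Theorem~\ref{thm_Bourgain-Demeter_decoupling}, that replaces this by $\delta^{-\eps}$.

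Comparing the two bounds gives $(\#\Theta)^2\delta^{-a}\lesssim\delta^{-\eps}\,\#\Theta\,\delta^{-1-s}$, i.e.\ $\#\Theta\lesssim\delta^{-\eps}\delta^{-1-s+a}$, which together with $\#\Theta\gtrapprox\delta^{-t}$ is exactly $\delta^{-t-a}\le C_{s,\eps}\delta^{-1-s-\eps}$. I expect the main obstacle to be the penultimate paragraph: first the geometric bookkeeping needed to split $\supp\widehat{f_h}$ into pieces that are genuinely cone planks (handled by cone decoupling with $\delta^{-\eps}$ loss) or flat pieces of bounded overlap (handled cheaply by flat decoupling); and second --- the harder point --- the passage from the $L^6$ cone-decoupling estimate to the $L^2$-type incidence conclusion, since a direct H\"older passage from $L^6$ to $L^2$ on $H$ is far too lossy (it would only yield $a+3t\le 1+s+O(\eps)$, useless when $t$ is close to $1$, which is precisely the range needed for the Corollary). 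Overcoming this requires using the incidence hypothesis~\ref{item_2_thm5} and a refined/local form of the decoupling in tandem, and this is exactly where the induction on scales of \cite{GGGHMW2022} enters.
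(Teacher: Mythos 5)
Your overall framing — wave packets, high–low decomposition, Plancherel, weights, $L^6$ cone decoupling — is the right family of ideas, but both your decomposition and your plan for closing the argument differ from the paper's in ways that matter, and the last step (which you correctly flag as the crux) is not resolvable the way you guess.

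\textbf{The frequency cut at $\delta^{1/2}$ is the wrong place to cut.} You propose $f=f_l+f_h$ with $\widehat{f_l}$ supported in a ball of radius $\sim\delta^{1/2}$. If you then try Plancherel on $f_h$, the overlap of the slabs $P_\theta$ near the cone direction $\mathbf e_3$ (where $|\xi_2|\lesssim\delta^{1/2}$ but $|\xi_3|\sim 1$) is genuinely $\sim\delta^{-1/2}$-fold, so Plancherel loses a factor $\delta^{-1}$ in $\int|f_h|^2$; running the rest of the comparison then yields $a+t\le 2+s$, off by exactly $1$. In the paper, $K$ is chosen polylogarithmic (see \eqref{eqn_defn_K_3D}), the ``high'' part is cut off at $|\xi_2|\gtrsim K^{-1}$ and is only $O(K^2)$-overlapping (Lemma~\ref{lem_high_orthogonality_3D}), and the intermediate annulus $\delta^{1/2}\lesssim|\xi_2|\lesssim K^{-1}$ is handled by an additional family of dyadic ``$\lambda$-mixed'' pieces, absent from your plan. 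These $\lambda$-mixed pieces are exactly where the combination of cone decoupling and flat decoupling (Lemmas~\ref{lem_same_and_distinct_lambda}, \ref{lem_cover_lambda}) is deployed; the pure $\delta^{1/2}$-mixed piece uses only cone decoupling.

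\textbf{The $L^6\to L^2$ passage is not resolved by induction on scales.} You correctly observe that a naive H\"older descent from the $L^6$ decoupling estimate to an $L^2$ incidence bound is too lossy. But the paper's proof of Theorem~\ref{thm_thm5} uses \emph{no} induction on scales (that mechanism appears only later, in Theorem~7 and the refined Strichartz inequality). The actual resolution is to work in $L^p$ for $p>4$ (take $p=6$) throughout and to exploit the $L^\infty$ bound $|f_{\mathrm{high}}|\lesssim\#\Theta$ on $H$ (inherited from the incidence hypothesis, which forces $f\sim\#\Theta$ on $H$). This gives $\int_H|f_{\mathrm{high}}|^p\le(\#\Theta)^{p-2}\int|f_{\mathrm{high}}|^2$, and then Plancherel with $O(K^2)$ overlap yields \eqref{eqn_high_contribution_3D}, a bound of shape $(\#\Theta)^{p-1}K^2\delta^{-1-s}$. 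The mixed parts are bounded directly in $L^p$ via decoupling and come out with a weaker power $(\#\Theta)^{p/2}$ plus a $\lambda^{t(\frac12-\frac2p)}$ factor (see \eqref{eqn_final_mixed} and \eqref{eqn_f_lambda_upper_bound_thm7}); the comparison in Sections~\ref{sec_step_6_3D}--\ref{sec_step_7_3D} shows that for $p>4$ the high-part bound dominates, precisely because the extra powers of $\#\Theta$ are converted to powers of $\delta^{-t}$ via $\#\Theta\gtrapprox\delta^{-t}$. Matching this against the $L^p$ lower bound $\int_H|f|^p\gtrsim(\#\Theta)^p\delta^{-a}$ then yields $\delta^{-a-t}\lesssim_\eps\delta^{-1-s-\eps}$. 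Without this interplay between $L^\infty$, $L^2$, and $L^p$, and without keeping the true high part (bounded overlap) separate from the near-cone mixed parts (where decoupling is needed), the argument does not close.
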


The proof structure will be very similar to that of Proposition \ref{Mdiscrete}.

We remark that the decoupling theorem follows for every scale $\delta\in (0,1]$. For $\delta\notin 4^{-\N}$, we should replace the tiling of $[0,1]$ by $\delta^{1/2}$ intervals by a slightly more general subset, namely, a maximally $\delta^{1/2}$-separated subset of $[0,1]$. Once we can show decoupling for all $\delta\in 4^{-\N}$, the slightly more general result follows from a trivial tiling argument, triangle and Cauchy-Schwarz, where we lose at most an absolute constant.

    \subsection{Construction of wave packets}\label{sec_step_1_3D}    

    Fix $\theta\in \Theta$. We define a frequency slab $P_\theta$ centred at the origin that has dimensions $1\times 1\times \delta$, and its shortest direction is parallel to $\mathbf e_1(\theta)$. That is, $P_\theta$ is dual to every $T_\theta\in \mathbb T_\theta$. We may also assume without loss of generality that $\mathbb T_\theta$ is a subcollection of the lattice that tiles $[-10\delta^{-1},10\delta^{-1}]^3$. 
    
    For each $T_\theta\in \mathbb T_\theta$, define a Schwartz function $\psi_{T_\theta}:\R^3\to \R$ Fourier supported on $P_\theta$ such that $\psi_{T_\theta}\gtrsim 1$ on $T_\theta$ and decays rapidly outside $T_\theta$. With this, we define
    \begin{equation}
        f_\theta=\sum_{T_\theta\in \mathbb T_\theta}\psi_{T_\theta},\quad f=\sum_{\theta\in \Theta}f_\theta.
    \end{equation}
    Thus $f_\theta$ is also Fourier supported on $P_\theta$, but physically essentially supported on $\cup \mathbb T_\theta$. The function $f$ is Fourier supported on $\cup_{\theta\in \Theta}P_\theta$, which is a union of slabs of the same size centered at $0$, but with normal orientations prescribed by $\mathbf e_1(\theta)$.

    Thus by definition, for every $x\in H$ we have
    \begin{equation}\label{eqn_f_bound_3D}
       \#\Theta \lessapprox f(x)\lesssim \#\Theta.
    \end{equation}
    The first inequality is true by Hypothesis \ref{item_2_thm5} for $\mathbb T_\theta$ in Theorem \ref{thm_thm5} and the second inequality follows from $f_{\theta}\lesssim 1$ (essentially by the disjointedness of the tubes in $\mathbb{T}_{\theta}$). In particular, for every $p<\infty$,
    
    \begin{equation}\label{eqn_lower_bound_f_3D}
        \int_H |f(x)|^p\gtrapprox |H|(\#\Theta)^p\sim \delta^{-a}(\#\Theta)^p.
    \end{equation}
    \subsection{High-low decomposition}

    Similar to the proof of Proposition \ref{Mdiscrete}, the high-low decomposition is based on an elementary fact in Euclidean geometry, which requires the nondenegenacy condition on $\mathbf e_1(\theta)=\gamma(\theta)$. Before we state the geometric fact, let us first make the following decomposition. Let $K\in [1,\delta^{-1}]$ to be determined (see \eqref{eqn_defn_K_3D} below). Decompose $P_\theta$ into:
    \begin{enumerate}
    \item The high part:
        \begin{equation}            P_{\theta,\mathrm{high}}:=\left\{\sum_{i=1}^3 \xi_i \mathbf e_i(\theta):|\xi_1|\le \delta,K^{-1}\le |\xi_2|\le 1,|\xi_3|\le 1\right\}.
        \end{equation}        
        \item The low part:
        \begin{equation}            P_{\theta,\mathrm{low}}:=\left\{\sum_{i=1}^3 \xi_i \mathbf e_i(\theta):|\xi_1|\le \delta,|\xi_2|\le K^{-1},|\xi_3|\le K^{-1}\right\}.
        \end{equation}        
        \item The $\delta^{1/2}$-mixed part:
        \begin{equation}            P_{\theta,\delta^{1/2}}:=\left\{\sum_{i=1}^3 \xi_i \mathbf e_i(\theta):|\xi_1|\le \delta,|\xi_2|\le \delta^{1/2},K^{-1}\le|\xi_3|\le 1\right\}.
        \end{equation}
        \item For dyadic numbers $\lambda\in (\delta^{1/2},K^{-1}]$, the $\lambda$-mixed part:
        \begin{equation}            P_{\theta,\lambda}:=\left\{\sum_{i=1}^3 \xi_i \mathbf e_i(\theta):|\xi_1|\le \delta,\lambda/2\le |\xi_2|\le \lambda,K^{-1}\le|\xi_3|\le 1\right\}.
        \end{equation}       
    \end{enumerate}
    Thus for each $\theta$,
    \begin{equation}\label{eqn_P_theta_decomposition}
        P_\theta=P_{\theta,\mathrm{high}}\sqcup P_{\theta,\mathrm{low}}\sqcup (\sqcup_{\text{dyadic } \lambda\in [\delta^{1/2},K^{-1}]} P_{\theta,\lambda}).
    \end{equation}
    
    We choose a smooth partition of unity adapted to this partition which we denote by $\eta_{\theta,\mathrm{high}}$, $\eta_{\theta,\mathrm{low}}$, $\eta_{\theta,\mathrm{\lambda}}$, respectively, so that
    \begin{equation}
        \eta_{\theta,\mathrm{high}}+\eta_{\theta,\mathrm{low}}+\sum_{\text{dyadic } \lambda\in [\delta^{1/2},K^{-1}]}\eta_{\theta,\lambda}=1
    \end{equation}
    on $P_\theta$. Since $\mathrm{supp}\widehat {f_\theta}\sub P_\theta$, we also obtain a decomposition of $f_\theta$
    \begin{equation}
        f_\theta=f_{\theta,\mathrm{high}}+f_{\theta,\mathrm{low}}+\sum_{\text{dyadic } \lambda\in [\delta^{1/2},K^{-1}]}f_{\theta,\lambda},
    \end{equation}
    where $\widehat f_{\theta,\mathrm{high}}=\eta_{\theta,\mathrm{high}} \widehat {f_\theta}$, $\widehat f_{\theta,\mathrm{low}}=\eta_{\theta,\mathrm{low}} \widehat {f_\theta}$, and $\widehat f_{\theta,\lambda}=\eta_{\theta,\lambda} \widehat {f_\theta}$. Similarly, we have a decomposition of $f$
    \begin{equation}\label{eqn_f_decomposition}
        f=f_{\mathrm{high}}+f_{\mathrm{low}}+\sum_{\text{dyadic } \lambda\in [\delta^{1/2},K^{-1}]}f_{\lambda},
    \end{equation}
    where $f_{\mathrm{high}}=\sum_{\theta\in \Theta}f_{\theta,\mathrm{high}}$, $f_{\mathrm{low}}=\sum_{\theta\in \Theta}f_{\theta,\mathrm{low}}$ and $f_\lambda=\sum_{\theta\in \Theta} f_{\theta,\lambda}$. 

    Lastly, in view of \eqref{eqn_lower_bound_f_3D} and using triangle inequality, we have
    \begin{equation}
        \int_H |f_{\mathrm{high}}|^p+\int_H |f_{\mathrm{low}}|^p+\sum_{\text{dyadic } \lambda\in [\delta^{1/2},K^{-1}]}\int_H |f_{\lambda}|^p\gtrapprox \delta^{-a}(\#\Theta)^p.
    \end{equation}
    
    \subsection{Analysis of high part}

    The geometric fact that gives orthogonality for the high part is as follows.    \begin{lem}\label{lem_high_orthogonality_3D}
        The collection of all high parts $\{P_{\theta,high}\}_{\theta\in \Theta}$  is at most $O(K^{2})$-overlapping.
    \end{lem}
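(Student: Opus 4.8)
The plan is to reduce the overlap count to a statement about separation of the central axes of the slabs $P_{\theta,\mathrm{high}}$, and then to exploit the non-degeneracy condition \eqref{eqn_non_degenaracy} to extract quantitative angular separation. First I would fix a point $\xi\in\R^3$ and ask for which $\theta\in\Theta$ we have $\xi\in P_{\theta,\mathrm{high}}$. Writing $\xi=\sum_i\xi_i^{(\theta)}\mathbf e_i(\theta)$ in the Frenet frame at $\theta$, membership forces $|\xi\cdot\mathbf e_1(\theta)|\le\delta$ and $K^{-1}\le|\xi\cdot\mathbf e_2(\theta)|\le 1$, $|\xi\cdot\mathbf e_3(\theta)|\le 1$; in particular $\xi$ has size comparable to something between $K^{-1}$ and $1$, and it makes a small angle (of order $\delta$) with the plane spanned by $\mathbf e_2(\theta),\mathbf e_3(\theta)$, i.e. $\xi$ is nearly orthogonal to $\gamma(\theta)$.

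The key step is then the following rigidity statement: if $\xi\in P_{\theta,\mathrm{high}}\cap P_{\theta',\mathrm{high}}$, then $|\theta-\theta'|\lesssim K\delta^{1/2}$, wait — more precisely I would aim for $|\theta-\theta'|\lesssim \max\{\delta/|\xi|,\ (\delta/|\xi|)^{1/2}\}$ type control, but since $|\xi|\gtrsim K^{-1}$ on the high part, I expect to get $|\theta-\theta'|\lesssim K\delta$ from the $\xi_1$ conditions alone is too strong; the honest bound that the non-degeneracy buys is of order $K\delta^{1/2}$. Here is the mechanism. From $|\xi\cdot\mathbf e_1(\theta)|\le\delta$ and $|\xi\cdot\mathbf e_1(\theta')|\le\delta$ we get $|\xi\cdot(\mathbf e_1(\theta)-\mathbf e_1(\theta'))|\le 2\delta$. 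Since $\mathbf e_1'=\mathbf e_2$ by Lemma \ref{lem_frame_derivatives}, Taylor expansion gives $\mathbf e_1(\theta)-\mathbf e_1(\theta')=(\theta-\theta')\mathbf e_2(\theta')+O(|\theta-\theta'|^2)$, using Proposition \ref{prop_Taylor} to handle the $C^1$-regularity of the error term. Pairing against $\xi$ and using $|\xi\cdot\mathbf e_2(\theta')|\ge K^{-1}$ together with $|\xi\cdot\mathbf e_1(\theta')|\le\delta$ to absorb the second-order term, one obtains $|\theta-\theta'|\cdot K^{-1}\lesssim \delta + |\theta-\theta'|^2$, hence $|\theta-\theta'|\lesssim K\delta$ provided $|\theta-\theta'|$ is small (which we may assume since $\Theta\sub[0,1]$ can be partitioned into $O(1)$ pieces on which it is, or the quadratic term is negligible). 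This is better than needed; in any case it forces $|\theta-\theta'|\lesssim K\delta^{1/2}$, which is all we want.

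Given the rigidity statement, the conclusion is immediate: for a fixed $\xi$, all $\theta\in\Theta$ with $\xi\in P_{\theta,\mathrm{high}}$ lie in an interval of length $O(K\delta^{1/2})$, wait — I should double check whether I even need $\delta^{1/2}$ or whether $K\delta$ suffices; since $\Theta$ is only $\delta$-separated (not $\delta^{1/2}$-separated), an interval of length $O(K\delta)$ contains $O(K)$ points of $\Theta$, which would give $O(K)$-overlap, even better than claimed. But to be safe and match the statement I will present the bound that an interval of length $\lesssim K\delta^{1/2}$ — actually, reconsidering, the cleaner route that genuinely produces $K^2$ rather than $K$ is to use only the weaker but robust estimate $|\theta-\theta'|\lesssim K\delta^{1/2}$ coming from Proposition \ref{prop_error_e_i}: indeed $|\xi\cdot(\mathbf e_1(\theta)-\mathbf e_1(\theta'))|\le 2\delta$ combined with $|\mathbf e_1(\theta)\cdot\mathbf e_3(\theta')|\lesssim|\theta-\theta'|^2$ and $|\xi\cdot\mathbf e_3(\theta')|$ possibly as large as $1$ shows the $\mathbf e_3$-component of $\xi$ contributes an error of size $|\theta-\theta'|^2$, and balancing against $|\xi\cdot\mathbf e_2(\theta')|\ge K^{-1}$ times $|\theta-\theta'|$ yields $|\theta-\theta'|\lesssim K\delta^{1/2}$ in the worst case. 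Then a $\delta$-separated set meets an interval of length $K\delta^{1/2}$ in $O(K\delta^{-1/2})$... that is too many. So the correct accounting must use the $\delta$-linear bound. Let me restate the plan cleanly: prove $|\theta-\theta'|\lesssim K\delta$ for any two high parts sharing a point (via the $\mathbf e_1,\mathbf e_2$ argument above, with the quadratic error controlled because $|\xi\cdot\mathbf e_3(\theta')|\le 1$ forces $|\theta-\theta'|^2\cdot 1$ to be dominated once $|\theta-\theta'|\lesssim K^{-1}$, which one bootstraps), hence $\sum_\theta \mathbf 1_{P_{\theta,\mathrm{high}}}(\xi)\le \#(\Theta\cap I)$ for an interval $I$ of length $O(K\delta)$, and since $\Theta$ is $\delta$-separated this is $O(K)$; the extra factor making it $K^2$ in the lemma is simply slack, and one may state it as $O(K^2)$.

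The main obstacle I anticipate is the low regularity: $\mathbf e_3$ is only $C^1$, so the second-order Taylor expansion of $\mathbf e_1(\theta)-\mathbf e_1(\theta')$ must be justified through Proposition \ref{prop_Taylor} rather than a naive Taylor's theorem, and one must carefully track that the ``error'' genuinely has a factor $(\theta-\theta')^2$ with a constant depending only on $\|\uptau\|_\infty$ and $\|\gamma\|_{C^1}$-type quantities. A secondary point is the bootstrapping: the bound $|\theta-\theta'|\lesssim K\delta$ should first be derived under the a priori assumption $|\theta-\theta'|\le cK^{-1}$ for a small absolute constant $c$, and then one notes that $\Theta$ may be split into $O(K)$ sub-intervals each of length $cK^{-1}$ — but this reintroduces a factor, so instead one argues directly that any two high parts sharing a point with $|\theta-\theta'|>cK^{-1}$ is impossible because the $\mathbf e_2$-components would then be forced out of the window $[K^{-1},1]$. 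Handling this dichotomy cleanly is where the real care goes; everything else is routine.
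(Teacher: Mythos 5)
Your core mechanism --- pairing $\xi$ against the difference $\mathbf e_1(\theta)-\mathbf e_1(\theta')$, Taylor expanding, and using the lower bound $|\xi\cdot\mathbf e_2(\theta')|\ge K^{-1}$ from the high--frequency condition to force $|\theta-\theta'|\lesssim K\delta$ --- is exactly the paper's geometric insight. The paper phrases the same thing as a lower bound on $\mathrm{dist}(\xi,\Pi_{\theta'})=|\xi\cdot\gamma(\theta')|$: writing $\xi=a\mathbf e_2(\theta)+b\mathbf e_3(\theta)$ with $|a|\ge K^{-1}$ and Taylor expanding $\gamma(\theta+\Delta)$ gives $|\xi\cdot\gamma(\theta')|\gtrsim |a|\Delta-O(\Delta^2)$, which exceeds $\delta$ once $CK\delta\le\Delta\le C^{-1}K^{-1}$.

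There is, however, a genuine gap in your handling of $|\theta-\theta'|\gtrsim K^{-1}$. Your first-order argument needs $|\theta-\theta'|\lesssim K^{-1}$ to absorb the quadratic Taylor error into the linear term (you see this yourself). You then propose to dismiss larger separations by asserting that $|\theta-\theta'|>cK^{-1}$ together with a shared point $\xi$ is impossible, ``because the $\mathbf e_2$-components would be forced out of the window $[K^{-1},1]$.'' You offer no argument for this, and it is not true in general: a fixed $\xi$ can be nearly orthogonal to $\gamma(\theta)$ with $|\xi\cdot\gamma'(\theta)|\sim 1$ at several well-separated values of $\theta$. For the model light cone \eqref{eqn_light_cone} one has $\gamma(\theta+\pi/\sqrt 2)=\mathbf e_3(\theta)$, so $\xi=\mathbf e_2(\theta)$ satisfies $\xi\cdot\gamma=0$ and $|\xi\cdot\gamma'|=1$ at two values of $\theta$ separated by $\pi/\sqrt 2$; this particular example just exceeds the length of $[0,1]$, but a general non-degenerate $C^2$ curve offers no such guarantee, and nothing in the hypotheses rules out multiple far-apart near-tangencies. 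The robust fix is exactly the step you discarded as ``reintroducing a factor'': partition $\Theta$ into $O(K)$ sub-intervals of length $\ll K^{-1}$. Within each sub-interval, your mechanism bounds the overlap at any fixed $\xi$ by $O(K)$ (an interval of length $O(K\delta)$ of a $\delta$-separated set), and summing over the $O(K)$ sub-intervals gives $O(K^2)$. That factor of $K$ is not slack; it is precisely where the exponent $2$ in the lemma comes from, and it is exactly the ``preliminary reduction'' with which the paper's proof of Lemma \ref{lem_high_orthogonality_3D} opens.
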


    \begin{proof}[Proof of lemma]
    We first perform a preliminary reduction. Partition $\Theta$ into intervals of length $\ll K^{-1}$. By losing a factor of order $O(K)$, it suffices to assume $\Theta$ is contained in such an interval of length $\ll K^{-1}$.

        We may ignore the thin direction $\mathbf e_1$. More precisely, consider the plane
        \begin{equation}            \Pi_{\theta}:=\left\{\xi_2 \mathbf e_2(\theta)+\xi_3 \mathbf e_3(\theta):K^{-1}\le |\xi_2|\le 1,|\xi_3|\le 1\right\},
        \end{equation}
        whose $\delta$ neighbourhood contains $P_{\theta,\mathrm{high}}$. We claim the following: for any $\theta\in [0,1]$ and any $\theta'=\theta+\Delta\in [0,1]$ with $CK\delta\le \Delta\le C^{-1}K^{-1}$ 
        ($C$ to be determined at the end of the proof of this lemma), if $\xi\in \Pi_{\theta}$, then
        \begin{equation}\label{eqn_claim_separation}
            \mathrm{dist}(\xi,\Pi_{\theta'})>10\delta.
        \end{equation}

        Observe that if this holds then $P_{\theta,\mathrm{high}}\cap P_{\theta',\mathrm{high}}=\varnothing$ for $\Delta\in[CK\delta, C^{-1}K^{-1}]$. Indeed, if $\xi\in P_{\theta,\mathrm{high}}\cap P_{\theta',\mathrm{high}}$, then there exists $\xi_1\in \Pi_{\theta}$ and $\xi_2\in \Pi_{\theta'}$ such that $|\xi-\xi_1|\le \delta$, $|\xi-\xi_2|\le \delta$. Since $\xi_1\in \Pi_{\theta}$, we have $\mathrm{dist}(\xi_1,\Pi_{\theta'})>10\delta$. Thus $\mathrm{dist}(\xi,\Pi_{\theta'})>9\delta$. But $|\xi-\xi_2|\le \delta$, so $\mathrm{dist}(\xi_2,\Pi_{\theta'})>8\delta$, which contradicts the fact that $\xi_2\in \Pi_{\theta'}$. 
        
        Thus if $P_{\theta,\mathrm{high}}\cap P_{\theta',\mathrm{high}}\neq \varnothing$ then $\Delta\notin [CK\delta,C^{-1}K^{-1}]$. By the preliminary reduction at the beginning we see that if $P_{\theta,\mathrm{high}}$ and $P_{\theta+\Delta,\mathrm{high}}$ overlap then we must have $\Delta\le CK\delta$. Since $\Theta$ is $\delta$-separated, we see that $\{P_{\theta,\mathrm{high}}\}_{\theta\in \Theta}$ is at most $O(K)$ overlapping. Thus it remains to prove \eqref{eqn_claim_separation}.

        Let $\xi\in \Pi_{\theta}$. Write 
        \begin{equation}
            \xi=a\mathbf e_2(\theta)+b\mathbf e_3(\theta)=a\gamma'(\theta)+b\gamma(\theta)\times \gamma'(\theta),
        \end{equation}
        where $|a|\in [K^{-1},1]$ and $|b|\le 1$. Since the normal direction of $\Pi_{\theta+\Delta}$ is $\gamma(\theta+\Delta)$, we have $\mathrm{dist}(\xi,\Pi_{\theta'})=\xi\cdot \gamma(\theta')$, and so we need to show \begin{equation}\label{eqn_37_paper}
            \left|\gamma(\theta+\Delta)\cdot \left(a\gamma'(\theta)+b\gamma(\theta)\times \gamma'(\theta)\right)\right|> 10\delta.
        \end{equation}
        By Taylor expansion, we have $\gamma(\theta+\Delta)=\gamma(\theta)+\Delta \gamma'(\theta)+O(\Delta^2)$. Using $|a|\ge K^{-1}$, we see the left hand side of \eqref{eqn_37_paper} is 
        \begin{equation}
            |a|\Delta +O(\Delta^2)\ge (|a|-O(\Delta))\Delta\ge (|a|-O(C^{-1}K^{-1}))\Delta> 10\delta,
        \end{equation}
        if $C$ is large enough.
    \end{proof}
    Now we are ready to estimate $\int_H |f_{\mathrm{high}}|^p$. By Plancherel's identity and Lemma \ref{lem_high_orthogonality_3D}, we have
\begin{equation}
    \int |f_{\mathrm{high}}|^2=\int |\widehat f_{\mathrm{high}}|^2\lesssim K^2\sum_{\theta\in \Theta}\int |\widehat{f_\theta}|^2= K^2\sum_{\theta\in \Theta}\int |f_\theta|^2.
\end{equation}
By the rapid decay of $\psi_{T_\theta}$, we have 
\begin{equation}
    \int |f_\theta|^2\sim \delta^{-1}\#\mathbb T_\theta.
\end{equation}
Thus
\begin{equation}\label{eqn_high_upper_bound_3D}
    \int |f_{\mathrm{high}}|^2\lesssim K^2\delta^{-1}\sum_{\theta\in \Theta}\#\mathbb T_\theta.
\end{equation}
Thus, for $p\in [2,\infty)$, we have
\begin{equation}\label{eqn_high_contribution_3D_thm7}
    \int_H |f_{\mathrm{high}}|^p\le \int_H |f_{\mathrm{high}}|^2 (\#\Theta)^{p-2}\lesssim (\#\Theta)^{p-2}K^2 \delta^{-1}\sum_{\theta\in \Theta}\#\mathbb T_\theta.
\end{equation}
Using $\#\mathbb T_\theta\lesssim \delta^{-s}$, we have
\begin{equation}\label{eqn_high_contribution_3D}
    \int_H |f_{\mathrm{high}}|^p\lesssim (\#\Theta)^{p-1}K^2 \delta^{-1-s}.
\end{equation}

\subsection{Analysis of low part}\label{sec_step_4_3D}

For the low part, we do not have orthogonality to aid us. Instead, we use the $s$-dimensional condition \eqref{eqn_s_dim_3D}. Heuristically, for each $x\in H$, $\theta\in \Theta$ and $T_\theta\in \mathbb T_\theta$, if we ignore the tails of all Schwartz functions, then
\begin{equation}%\label{eqn_blur_scale_K_3D_each_tube}
    |\psi_{T_\theta}*\eta_{\theta,\mathrm{low}}^\vee(x)|\lesssim 
    \begin{cases}
        \delta K^{-2}\cdot\delta^{-1}, &\text{if }T_\theta\cap B^3(x,CK)\neq \varnothing,\\
        0,&\text{if }T_\theta\cap B^3(x,CK)= \varnothing.
    \end{cases}   
\end{equation}
As a result,
\begin{equation}\label{eqn_blur_scale_K_3D}
    |f_{\theta,\mathrm{low}}(x)|\lesssim  K^{-2}\#\{T_\theta\in\mathbb T_\theta: T_\theta\cap B^3(x,CK)\neq \varnothing\}.
\end{equation}
Intuitively, this follows from the uncertainty principle, since we now view the tubes $T_\theta$ blurred by scale $K$. For heuristic purposes, one can pretend $\check{\eta}_{\theta,low}=\frac{1}{\delta^{-1}K^2}\mathbb{1}_{\tilde{T}}$ where $\tilde{T}$ is a $\delta^{-1}\times K\times K$ tube with same orientation as $T_{\theta}$, and the convolution $\psi_{T_{\theta}}*\mathbb{1}_{\tilde{T}}(x)$ can capture at most the volume of a $\delta^{-1}\times 1\times 1$ tube. 
The rigorous proof is left as an exercise, since it is similar to the argument in Section \ref{sec_step_4}, except that the treatment of weights in 2D should be generalised to 3D in the natural way. (In particular, note that the $K^{s-2}$ in \eqref{eqn_low_part_upper_bound_3D} below would be similarly replaced by $K^{\frac {s-2}2}$ with a suitably chosen exponent $E=E(s)$ for the weights. This will also slightly affect our choice of $K$ in \eqref{eqn_defn_K_3D} below.)

Using \eqref{eqn_blur_scale_K_3D} and the $s$-dimensional condition \eqref{eqn_s_dim_3D}, for every $x\in H$ we have
\begin{equation}\label{eqn_low_part_upper_bound_3D}
    |f_{\mathrm{low}}(x)|\leq \sum_{\theta\in \Theta}|f_{\theta,\mathrm{low}}(x)|\lesssim \# \Theta K^{-2} K^s\sim \# \Theta K^{s-2}.
\end{equation}

\subsection{Choosing \texorpdfstring{$K$}{Lg} such that the low part is negligible}

Recall from \eqref{eqn_f_bound_3D} that $|f(x)|\gtrapprox \#\Theta$, that is, $|f(x)|\ge |\log \delta|^{-\alpha}\#\Theta$ for some $\alpha=O(1)$. We now choose 
\begin{equation}\label{eqn_defn_K_3D}
    K:=|\log \delta|^{\frac {\alpha+1} {2-s}},
\end{equation}
so that for $\delta$ small enough we have for every $x\in H$ that $|f_{\mathrm{low}}(x)|<|f(x)|/2$. Thus we need only consider the contributions from the high part and the mixed parts.

\subsection{Analysis of \texorpdfstring{$\delta^{1/2}$}{Lg}-mixed part}\label{sec_step_6_3D}

The main tool for the analysis of mixed parts is the Bourgain-Demeter $\ell^2(L^6)$ decoupling for the cone, stated as in Theorem \ref{thm_Bourgain-Demeter_decoupling}. We further divide this section into a few subsections.

\subsubsection{Construction of covering planks}

To use Theorem \ref{thm_Bourgain-Demeter_decoupling}, the idea is to construct a cover of the truncated cone
    \begin{equation}
        \Gamma_{K^{-1}}:=\{r\mathbf e_3(\theta):K^{-1}\le r\le 1,0\le \theta\le 1\}
    \end{equation}
    by essentially disjoint slabs $P^+_{\theta,\delta^{1/2}}$, where $P^+_{\theta,\delta^{1/2}}:=P_{\theta,\delta^{1/2}}\cap \{\xi_3>0\}$, and we define $P^-_{\theta,\delta^{1/2}}$ in a similar way. In the following we make this precise.   

    For each $C\ge 1$, denote the slightly enlarged plank
    \begin{equation}    CP^+_{\theta,\delta^{1/2}}:=\left\{\sum_{i=1}^3 \xi_i \mathbf{e}_i(\theta):|\xi_1|\le C\delta,\,\,|\xi_2|\le C\delta^{1/2},\,\,C^{-1}K^{-1}\le \xi_3\le C\right\}.
    \end{equation}

    {\it Remark on notation.} For simplicity, from now on we abuse notation and simply denote $CP^+_{\theta,\delta^{1/2}}$ by $CP_{\theta,\delta^{1/2}}$.

    Strictly speaking, the object we need to cover is not $\Gamma_{K^{-1}}$, but rather $\cup_{\theta\in \Theta}P_{\theta,\delta^{1/2}}$. For this purpose, we need the following lemma.

\begin{lem}\label{lem_same_and_distinct}
We have the following observations.
\begin{enumerate}   
    \item \label{item_1_same_and_distinct} 
If $|\theta-\theta'|\leq \delta^{1/2}$, then $P_{\theta,\delta^{1/2}}$ and $P_{\theta',\delta^{1/2}}$ are essentially the same in the sense that $P_{\theta,\delta^{1/2}}\sub C_1 P_{\theta',\delta^{1/2}}$ for some suitable constant $C_1$. 
    \item \label{item_2_same_and_distinct} If $C_2K\delta^{1/2}\le |\theta-\theta'|\le C_2^{-1}$ where $C_2=C_2(C_1)$ is large enough, then $C_1P_{\theta,\delta^{1/2}}$ and $C_1P_{\theta',\delta^{1/2}}$ are disjoint.
  
\end{enumerate}
\end{lem}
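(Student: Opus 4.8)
The plan is to prove both parts by an explicit change of coordinates between the Frenet frames at $\theta$ and $\theta'$: the near-diagonal part \eqref{item_1_same_and_distinct} follows from Proposition \ref{prop_error_e_i}, while the transversality part \eqref{item_2_same_and_distinct} uses the Frenet identity $\mathbf e_3'=-\uptau\mathbf e_2$ from Lemma \ref{lem_frame_derivatives}. Throughout, the constant $C_1$ is chosen first, depending only on $\gamma$, and then $C_2=C_2(C_1,\gamma)$ is taken sufficiently large; I also use freely that, by the choice of $K$ in \eqref{eqn_defn_K_3D}, $K$ is a fixed power of $|\log\delta|$, so $\delta\ll K^{-1}$ once $\delta$ is small.

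\textbf{Part \eqref{item_1_same_and_distinct}.} Let $\xi=\sum_{i=1}^3\xi_i\mathbf e_i(\theta)\in P_{\theta,\delta^{1/2}}$, so $|\xi_1|\le\delta$, $|\xi_2|\le\delta^{1/2}$ and $K^{-1}\le\xi_3\le 1$. Writing $\xi=\sum_{j=1}^3\xi_j'\mathbf e_j(\theta')$, the new coordinates are $\xi_j'=\xi\cdot\mathbf e_j(\theta')=\sum_i\xi_i\,(\mathbf e_i(\theta)\cdot\mathbf e_j(\theta'))$. Since $|\theta-\theta'|\le\delta^{1/2}$, Proposition \ref{prop_error_e_i} bounds each off-diagonal inner product $\mathbf e_i(\theta)\cdot\mathbf e_j(\theta')$ by $O(\delta^{1/2})$ (and by $O(\delta)$ when $\{i,j\}=\{1,3\}$), while the diagonal ones equal $1+O(\delta)$. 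Substituting these bounds gives $|\xi_1'|\lesssim\delta$, $|\xi_2'|\lesssim\delta^{1/2}$, and $\xi_3'=\xi_3+O(\delta)$; since $\xi_3\ge K^{-1}\gg\delta$, also $\xi_3'\in[\tfrac12 K^{-1},2]$ for $\delta$ small. Taking $C_1$ to be the maximum of the implicit constants that appeared (and at least $2$) yields $P_{\theta,\delta^{1/2}}\sub C_1P_{\theta',\delta^{1/2}}$.

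\textbf{Part \eqref{item_2_same_and_distinct}.} Suppose, for contradiction, that some $\xi\in C_1P_{\theta,\delta^{1/2}}\cap C_1P_{\theta',\delta^{1/2}}$. From $\xi\in C_1P_{\theta,\delta^{1/2}}$ we get $|\xi\cdot\mathbf e_2(\theta)|\le C_1\delta^{1/2}$. On the other hand, writing $\xi=\sum_j\xi_j'\mathbf e_j(\theta')$ with $|\xi_1'|\le C_1\delta$, $|\xi_2'|\le C_1\delta^{1/2}$, $|\xi_3'|\in[C_1^{-1}K^{-1},C_1]$,
$$\xi\cdot\mathbf e_2(\theta)=\xi_1'\,\mathbf e_1(\theta')\cdot\mathbf e_2(\theta)+\xi_2'\,\mathbf e_2(\theta')\cdot\mathbf e_2(\theta)+\xi_3'\,\mathbf e_3(\theta')\cdot\mathbf e_2(\theta),$$
whose first two terms are together at most $2C_1\delta^{1/2}$. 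For the third term, Lemma \ref{lem_frame_derivatives} and the fundamental theorem of calculus give
$$\mathbf e_3(\theta')\cdot\mathbf e_2(\theta)=\bigl(\mathbf e_3(\theta')-\mathbf e_3(\theta)\bigr)\cdot\mathbf e_2(\theta)=-\int_\theta^{\theta'}\uptau(u)\,\bigl(\mathbf e_2(u)\cdot\mathbf e_2(\theta)\bigr)\,du.$$
By the non-degeneracy condition and compactness of $[0,1]$, $\uptau$ has constant sign with $|\uptau|\sim 1$; and since $\mathbf e_2=\gamma'$ is Lipschitz, $\mathbf e_2(u)\cdot\mathbf e_2(\theta)\ge\tfrac12$ whenever $|u-\theta|\le C_2^{-1}$ with $C_2$ large. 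Hence the integrand has constant sign and modulus $\gtrsim 1$, so $|\mathbf e_3(\theta')\cdot\mathbf e_2(\theta)|\ge c_0|\theta-\theta'|$ for some $c_0=c_0(\gamma)>0$; combined with $|\xi_3'|\ge C_1^{-1}K^{-1}$ and $|\theta-\theta'|\ge C_2K\delta^{1/2}$, this forces $|\xi_3'\,\mathbf e_3(\theta')\cdot\mathbf e_2(\theta)|\ge c_0C_1^{-1}C_2\,\delta^{1/2}$. Therefore $|\xi\cdot\mathbf e_2(\theta)|\ge(c_0C_1^{-1}C_2-2C_1)\delta^{1/2}$, which exceeds $C_1\delta^{1/2}$ once $C_2>3C_1^2/c_0$, a contradiction. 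I expect the only real subtlety to be the transversality lower bound $|\mathbf e_3(\theta')\cdot\mathbf e_2(\theta)|\gtrsim|\theta-\theta'|$: because $\gamma$ is merely $C^2$ (so $\mathbf e_3$ is only $C^1$ and $\uptau$ only continuous) one must integrate the Frenet identity rather than Taylor-expand to second order, and one must be careful to fix $C_1$ before choosing $C_2$.
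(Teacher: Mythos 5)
Your proof is correct and follows essentially the same route as the paper: Part \ref{item_1_same_and_distinct} via Proposition \ref{prop_error_e_i}, and Part \ref{item_2_same_and_distinct} by contradiction using the transversality of $\mathbf e_3(\cdot)\cdot\mathbf e_2(\cdot)$ coming from the Frenet identity $\mathbf e_3'=-\uptau\mathbf e_2$. The one point where you diverge is a small technical refinement: the paper Taylor-expands $\mathbf e_3$ around $\theta'$ with a $o(\theta-\theta')$ error (which requires implicitly that the little-$o$ is uniform over the compact parameter interval), whereas you integrate the Frenet identity by the fundamental theorem of calculus and bound the integrand pointwise using the constant sign of $\uptau$ and the Lipschitz continuity of $\mathbf e_2$ — this is arguably cleaner since $\mathbf e_3$ is only $C^1$, but it yields the same lower bound $|\mathbf e_3(\theta')\cdot\mathbf e_2(\theta)|\gtrsim|\theta-\theta'|$ and the rest of the argument matches.
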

The proof is given in Section \ref{sec_covering_lemmas} after the main proof. 

Let $\Omega$ be the $\delta^{1/2}$-lattice contained in $[0,1]$. Lemma \ref{lem_same_and_distinct} immediately implies the following.
\begin{lem}\label{lem_cover}
     The enlarged planks $\{C_1P_{\omega,\delta^{1/2}}:\omega\in \Omega\}$ cover the union of planks $\cup_{\theta\in \Theta}P_{\theta,\delta^{1/2}}$. More precisely, for each $\theta\in \Theta$, take $\omega\in \Omega$ to be the closest to $\theta$ (so that $|\omega-\theta|\le \delta^{1/2}$). Then we have
    \begin{equation}
        P_{\theta,\delta^{1/2}}\sub C_1 P_{\omega,\delta^{1/2}}.
    \end{equation}
    Moreover, the planks $\{C_1P_{\omega,\delta^{1/2}}:\omega\in \Omega\}$ are $O(K^{2})$ overlapping.
\end{lem}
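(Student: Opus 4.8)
The plan is to deduce Lemma \ref{lem_cover} directly from Lemma \ref{lem_same_and_distinct} together with the definition of $\Omega$ as the $\delta^{1/2}$-lattice in $[0,1]$ and the Bourgain--Demeter covering/overlapping statement in Theorem \ref{thm_Bourgain-Demeter_decoupling}\eqref{item_BD_decoupling_01}. First I would establish the covering statement: for each $\theta\in\Theta$, since $\Omega$ is a $\delta^{1/2}$-lattice in $[0,1]$, there is some $\omega\in\Omega$ with $|\omega-\theta|\le\delta^{1/2}$ (taking the nearest lattice point; one may need to allow the spacing of $\Omega$ to be slightly less than $\delta^{1/2}$, or absorb a harmless constant into $C_1$). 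Then part \eqref{item_1_same_and_distinct} of Lemma \ref{lem_same_and_distinct} gives $P_{\theta,\delta^{1/2}}\subset C_1 P_{\omega,\delta^{1/2}}$ immediately, which is exactly the asserted inclusion; taking the union over $\theta\in\Theta$ shows $\{C_1 P_{\omega,\delta^{1/2}}:\omega\in\Omega\}$ covers $\cup_{\theta\in\Theta}P_{\theta,\delta^{1/2}}$.

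Next I would prove the $O(K^2)$-overlap bound for $\{C_1 P_{\omega,\delta^{1/2}}:\omega\in\Omega\}$. The natural approach is to fix a point $\xi\in\R^3$ and bound the number of $\omega\in\Omega$ with $\xi\in C_1 P_{\omega,\delta^{1/2}}$. Part \eqref{item_2_same_and_distinct} of Lemma \ref{lem_same_and_distinct} says that if $C_2 K\delta^{1/2}\le|\omega-\omega'|\le C_2^{-1}$ then $C_1 P_{\omega,\delta^{1/2}}$ and $C_1 P_{\omega',\delta^{1/2}}$ are disjoint; hence any two lattice points $\omega,\omega'$ contributing to the overlap at $\xi$ satisfy either $|\omega-\omega'|< C_2 K\delta^{1/2}$ or $|\omega-\omega'|>C_2^{-1}$. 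I would first handle the small-scale clustering: the contributing $\omega$'s that are pairwise within $C_2 K\delta^{1/2}$ of each other lie in an interval of length $O(K\delta^{1/2})$, and since $\Omega$ is $\delta^{1/2}$-separated, there are at most $O(K)$ of them in such an interval. For the large-scale separation, one observes that there can only be $O(C_2)=O(1)$ such clusters, because the planks at separation $>C_2^{-1}$ might still overlap, but there are at most $O(1)$ of them in $[0,1]$; more carefully, one partitions $[0,1]$ into $O(1)$ subintervals of length $\ll C_2^{-1}$, applies the preceding argument on each, and multiplies. This gives an overlap bound of $O(K)\cdot O(1)=O(K^2)$ — actually $O(K)$, but the statement only claims $O(K^2)$, so either suffices; alternatively one can simply cite Theorem \ref{thm_Bourgain-Demeter_decoupling}\eqref{item_BD_decoupling_01}, whose planks $A_{\omega,\delta^{1/2}}$ are comparable to $C_1 P_{\omega,\delta^{1/2}}$ up to the choice of the constant $C$, and which already records the $C'K^2$-overlapping bound.

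The cleanest writeup, in fact, is to observe that the enlarged planks $C_1 P_{\omega,\delta^{1/2}}$ are (for a suitable choice of the constant $C$ in \eqref{eqn_A_plank}) essentially the same as the planks $A_{\omega,\delta^{1/2}}$ of Theorem \ref{thm_Bourgain-Demeter_decoupling} — they differ only by absorbing $C_1$ into $C$ — so both conclusions of Lemma \ref{lem_cover} are immediate transcriptions: the covering of $\Gamma_{K^{-1}}$'s $\delta$-neighbourhood by the $A_{\omega,\delta^{1/2}}$ plus the inclusion $P_{\theta,\delta^{1/2}}\subset C_1 P_{\omega,\delta^{1/2}}$ from Lemma \ref{lem_same_and_distinct}\eqref{item_1_same_and_distinct} gives the first claim, and the $C'K^2$-overlapping bound of Theorem \ref{thm_Bourgain-Demeter_decoupling}\eqref{item_BD_decoupling_01} gives the second. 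I would present both routes but lead with this reduction, since it avoids redoing the overlap count.

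The main obstacle, if any, is purely bookkeeping: one must be careful that the constant $C_1$ produced by Lemma \ref{lem_same_and_distinct}\eqref{item_1_same_and_distinct} is absorbed consistently — i.e.\ that the ``$C$'' appearing in the definition \eqref{eqn_A_plank} of $A_{\omega,\delta^{1/2}}$ is taken large enough to dominate $C_1$ — and that the nearest-lattice-point estimate $|\omega-\theta|\le\delta^{1/2}$ genuinely holds for $\Omega$ the $\delta^{1/2}$-lattice (it does, with the distance bounded by half the lattice spacing, hence $\le\delta^{1/2}/2\le\delta^{1/2}$). Beyond tracking these constants there is no real difficulty; the geometric content is entirely contained in Lemma \ref{lem_same_and_distinct}, whose proof is deferred to Section \ref{sec_covering_lemmas}.
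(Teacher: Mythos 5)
Your proposal is correct and takes essentially the same route as the paper, which simply remarks that Lemma~\ref{lem_same_and_distinct} ``immediately implies'' Lemma~\ref{lem_cover}; you have spelled out the two ingredients (Part~\eqref{item_1_same_and_distinct} for the covering, Part~\eqref{item_2_same_and_distinct} plus $\delta^{1/2}$-separation of $\Omega$ and a partition of $[0,1]$ into $O(1)$ intervals of length $\ll C_2^{-1}$ for the overlap). Your overlap count actually yields the stronger bound $O(K)$ rather than the stated $O(K^2)$ --- the displayed arithmetic ``$O(K)\cdot O(1)=O(K^2)$'' is a slip that you immediately correct --- and your alternative of quoting Theorem~\ref{thm_Bourgain-Demeter_decoupling}\eqref{item_BD_decoupling_01} is consistent (not circular), since the paper's proof of that part also rests on Lemma~\ref{lem_same_and_distinct}\eqref{item_2_same_and_distinct} and explicitly identifies $A_{\omega,\delta^{1/2}}$ with $C_1P_{\omega,\delta^{1/2}}$.
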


\subsubsection{Applying decoupling inequality}

First, we may assume each $f_{\theta,\delta^{1/2}}$ has Fourier support on $P_{\theta,\delta^{1/2}}$ only. With a loss of a constant factor, we may assume without loss of generality that all $\Theta$ lie in an interval of length less than $C_2^{-1}$ where we recall the constant $C_2$ as in Lemma \ref{lem_same_and_distinct}. 

For $\omega\in \Omega$ and $\theta\in \Theta$, we introduce a notation: $\theta\prec \omega$ if $\theta-\omega\in (-\delta^{1/2}/2,\delta^{1/2}/2]$. We also refer to this as ``$\theta$ is attached to $\omega$". Each $\theta\in \Theta$ can be attached to a unique $\omega\in \Omega$. With this, we define
\begin{equation}
    f_\omega=\sum_{\theta\prec \omega}f_{\theta,\delta^{1/2}}.
\end{equation}
Thus $f_{\delta^{1/2}}=\sum_{\omega\in \Omega}f_\omega$. For each $\omega\in \Omega$, the Fourier support of $f_\omega$ is equal to $\cup_{\theta\prec \omega}P_{\theta,\delta^{1/2}}$, which is contained in $C_1 P_{\omega,\delta^{1/2}}$ by Lemma \ref{lem_cover}. Since $C_1 P_{\omega,\delta^{1/2}}$ are $K^{O(1)}$ overlapping and $K\lesssim_\eps \delta^{-\eps}$ by \eqref{eqn_defn_K_3D}, we may 
apply Bourgain-Demeter's decoupling Theorem \ref{thm_Bourgain-Demeter_decoupling}\footnote{The case $p=6$ is sharp for decoupling, but we shall apply this argument to a general $2\le p\le 6$, which allows us to see what would happen if we did not have sharp decoupling.} to get
\begin{equation}
    \norm{f_{\delta^{1/2}}}_p\lesssim_\eps \delta^{-\eps}\norm{\norm{f_\omega}_p}_{\ell^2(\omega\in \Omega)}.
\end{equation}
We actually only need a slightly weaker $\ell^p(L^p)$ decoupling inequality, which follows from the above via H\"older's inequality:
\begin{equation}\label{eqn_lp_decoupling}
    \norm{f_{\delta^{1/2}}}_p\lesssim_\eps \delta^{-\eps}(\#\{\omega\in \Omega:f_\omega\neq 0\})^{1/2-1/p}\norm{\norm{f_\omega}_p}_{\ell^p(\omega\in \Omega)}.
\end{equation}

Note that a priori, each $\omega$ may have a different number of $\theta\in \Theta$ for which $\theta\prec \omega$. However, after a standard process which we call dyadic pigeonholing, we may assume there is a number $N$ such that for each $\omega$, the number of $\theta$'s for which $\theta\prec \omega$ is between $N$ and $2N$. In this process, we only pay the price of a logarithmic factor $O(|\log \delta|)$. The details are given in Section \ref{sec_dyadic_pigeonholing} below. 

For simplicity, we will denote $\#\omega=\#\{\omega\in \Omega:f_\omega\neq 0\}$, and so we have 
\begin{equation}\label{eqn_after_dyadic_pigeonholing}
    \#\Theta\sim N\#\omega.
\end{equation}

\subsubsection{Analysis of each \texorpdfstring{$f_\omega$}{Lg}}

Fix $\omega$. By the triangle and H\"older's inequalities, we have
\begin{equation}
    \norm {f_\omega}_p\le \sum_{\theta\in \Theta:\theta\prec \omega}\norm {f_{\theta,\delta^{1/2}}}_p\leq \left(\sum_{\theta\in \Theta:\theta\prec \omega}\norm {f_{\theta,\delta^{1/2}}}^p_p\right)^{1/p}N^{1-1/p}.
\end{equation}
We remark that this is the best we can do in general, since all planks $P_{\theta,\delta^{1/2}}$ where $\theta\prec \omega$ are essentially the same, in the sense similar to Lemma \ref{lem_same_and_distinct}. Thus, combined with \eqref{eqn_lp_decoupling} and \eqref{eqn_after_dyadic_pigeonholing}, we arrive at
\begin{equation}\label{deltahalfcontrol}
    \begin{split}
        \norm{f_{\delta^{1/2}}}_p
    &\lesssim_\eps \delta^{-\eps}(\#\omega)^{\frac 1 2-\frac 1 p}\norm{\norm{f_{\theta,\delta^{1/2}}}_p}_{\ell^p(\theta\in \Theta)}N^{1-\frac 1 p}\\
    &\sim \delta^{-\eps} (\#\Theta)^{\frac 1 2-\frac 1 p}\norm{\norm{f_{\theta,\delta^{1/2}}}_p}_{\ell^p(\theta\in \Theta)}N^{\frac 1 2}.
    \end{split}
\end{equation}

Now we use the assumption that $\Theta$ is a $\delta$-separated $(\delta,t)$-set to get

\begin{equation}
    N\lesssim \#\left\{\theta\in \Theta\colon \theta\in \left(\omega-\frac{\delta^{1/2}}{2},\omega+\frac{\delta^{1/2}}{2}\right]\right\}\lesssim  \delta^{-t/2}.
\end{equation}
Also, for each $f_{\theta,\delta^{1/2}}$, by the elementary property of Fourier cutoff and rapid decay of $\psi_{T_\theta}$, we have
\begin{equation}
    \norm {f_{\theta,\delta^{1/2}}}_p\lesssim \norm{f_\theta}_p\sim ((\#\mathbb T_\theta)\delta^{-1})^{1/p},
\end{equation}
and by plugging these back in inequality (\ref{deltahalfcontrol}),
\begin{equation}\label{eqn_final_mixed_thm7}
    \norm {f_{\delta^{1/2}}}_p\lesssim_\eps \delta^{-\eps-\frac t 4}(\#\Theta)^{\frac{1}{2}-\frac{1}{p}}\left(\sum_{\theta\in \Theta}(\#\mathbb{T}_{\theta})\delta^{-1}\right)^{\frac 1 p}.
\end{equation}
Using $\#\mathbb T_\theta\lesssim \delta^{-s}$, we have
\begin{equation}\label{eqn_final_mixed}
    \norm {f_{\delta^{1/2}}}_p\lesssim_\eps \delta^{-\eps-\frac {s+1}{p}-\frac t 4}(\#\Theta)^{\frac 1 2}.
\end{equation}

\subsubsection{Comparison with the high part}

We now make a comparison between the bounds $\norm {f_{\mathrm{high}}}_{L^p(H)}$ given by \eqref{eqn_high_contribution_3D} and $\norm {f_{\delta^{1/2}}}_p$. If $p>4$, using the lower bound $\#\Theta\gtrapprox \delta^{-t}$, we see that for small enough $\delta$,
\begin{equation}
    (\#\Theta)^{\frac {p-1}{p}}K^{\frac 2 p} \delta^{-\frac {s+1}p}\ge \delta^{-\eps-\frac {s+1}{p}-\frac t 4}(\#\Theta)^{\frac 1 2},
\end{equation}
which means that the upper bound for the $L^p$ norm of the $\delta^{1/2}$-mixed part is dominated by the upper bound for the $L^p$ norm of the high part of $f$ in $H$.

\subsection{Analysis of \texorpdfstring{$\lambda$}{Lg}-mixed parts}\label{sec_step_7_3D}
The main tool for this part is still decoupling, but more complicated than the previous step. Fix a dyadic scale $\lambda\in [\delta^{1/2},K^{-1}]$.

For technical reasons that arise in the proof of Lemma \ref{lem_same_and_distinct_lambda} in Section \ref{sec_covering_lemmas} below, we first perform a preliminary reduction. Let $C_0$ be a large dyadic constant to be determined. If $\lambda \in [\delta^{1/2},C_0\delta^{1/2}]$, then the same method of Section \ref{sec_step_6_3D} can be used. Thus we assume $\lambda>C_0\delta^{1/2}$ throughout the rest of this subsection.

\subsubsection{Construction of covering planks}

First, since $P_{\theta,\lambda}$ consists of four similar parts, we may assume without loss of generality that each $f_{\theta,\lambda}$ is Fourier supported on one part:
\begin{equation}    
Q_{\theta,\lambda}:=\left\{\sum_{i=1}^3 \xi_i \mathbf e_i(\theta):|\xi_1|\le \delta,\lambda/2\le \xi_2\le \lambda,K^{-1}\le\xi_3\le 1\right\}.
\end{equation}
Similarly, for $C\ge 1$, we define the slightly enlarged planks
\begin{equation}    
CQ_{\theta,\lambda}:=\left\{\sum_{i=1}^3 \xi_i \mathbf e_i(\theta):|\xi_1|\le C\delta,(2C)^{-1}\lambda\le \xi_2\le C\lambda,C^{-1}K^{-1}\le \xi_3\le C\right\}.
\end{equation}
Note that this is the same as the $S_\tau$ defined on Page 15 of \cite{GGGHMW2022}.

Define $\Sigma$ to be the $\lambda$-lattice contained in $[0,1]$. For each $\sigma\in \Sigma$, consider the following plank at scale $\lambda$:
\begin{equation}
    P_{\sigma,\lambda}=\left\{\sum_{i=1}^3 \xi_i \mathbf e_i(\sigma):|\xi_1|\le \lambda^2,|\xi_2|\le \lambda,K^{-1}\le \xi_3\le 1 \right\}.
\end{equation}
Also recall that for $C\ge 1$,
\begin{equation}\label{eqn_CP_sigma_lambda}
    CP_{\sigma,\lambda}=\left\{\sum_{i=1}^3 \xi_i \mathbf e_i(\sigma):|\xi_1|\le C\lambda^2,|\xi_2|\le C\lambda,C^{-1}K^{-1}\le \xi_3\le C \right\}.
\end{equation}

Since we have an extra intermediate scale $\lambda$, we will need the following lemma, which is rephrased from Lemma 3 of \cite{GGGHMW2022}.
\begin{lem}\label{lem_same_and_distinct_lambda}
    By choosing $C_1$ and then $C_2$ to be large enough, we have
    \begin{enumerate}
        \item \label{item_1_same_and_distinct_lambda} If $|\theta-\theta'|\le \lambda^{-1}\delta$, then $Q_{\theta,\lambda}$ and $Q_{\theta',\lambda}$ are essentially the same in the sense that $Q_{\theta,\lambda}\sub C_1Q_{\theta',\lambda}$.
        \item \label{item_2_same_and_distinct_lambda} If $C_2\lambda^{-1}\delta\le |\theta-\theta'|\le C_2^{-1}\lambda$, then $C_1Q_{\theta,\lambda}$ and $C_1Q_{\theta',\lambda}$ are disjoint.
    \end{enumerate}
\end{lem}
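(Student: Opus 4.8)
The plan is to reduce the whole lemma to estimates on the ``change of frame'' coefficients $\mathbf e_i(\theta)\cdot\mathbf e_j(\theta')$, $i,j\in\{1,2,3\}$, and then do careful bookkeeping with the three scales, recalling that $\delta\le\lambda^2$ (since $\lambda\ge\delta^{1/2}$), $\lambda\le K^{-1}$, and that $\Delta:=|\theta-\theta'|$ is what is being constrained. Writing a point of $Q_{\theta,\lambda}$ as $\xi=\sum_i\xi_i\mathbf e_i(\theta)$ with $|\xi_1|\le\delta$, $\lambda/2\le\xi_2\le\lambda$, $K^{-1}\le\xi_3\le1$, its coordinates in the $\theta'$-frame are $\xi_j'=\xi\cdot\mathbf e_j(\theta')=\sum_i\xi_i\,\mathbf e_i(\theta)\cdot\mathbf e_j(\theta')$, so both claims are statements about how far the coordinates move under the change of frame. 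The coefficient estimates I will use are: $|\mathbf e_i(\theta)\cdot\mathbf e_i(\theta')-1|\lesssim\Delta^2$ for all $i$; $|\mathbf e_2(\theta)\cdot\mathbf e_j(\theta')|\lesssim\Delta$ for $j=1,3$; $|\mathbf e_1(\theta)\cdot\mathbf e_3(\theta')|\lesssim\Delta^2$; and, crucially for the separation statement, the \emph{signed} expansion $\mathbf e_2(\theta)\cdot\mathbf e_1(\theta')=-(\theta-\theta')+O(\Delta^2)$. The first three are exactly Proposition~\ref{prop_error_e_i} with its scale parameter taken to be $\Delta^2$ (legitimate in part~(1) since there $\Delta\le\lambda^{-1}\delta\le\delta^{1/2}$), together with the symmetry of the inner product. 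The signed expansion, and a version of the estimates valid for all $\Delta$ up to a small absolute constant (needed in part~(2), where $\Delta$ may be as large as $C_2^{-1}$), come from Taylor expanding the $C^2$ function $\mathbf e_1$: $\mathbf e_1(\theta')=\mathbf e_1(\theta)+(\theta'-\theta)\mathbf e_2(\theta)+\tfrac12(\theta'-\theta)^2\mathbf e_2'(\xi)$ with $\|\mathbf e_2'\|_\infty=\|{-\mathbf e_1+\uptau\mathbf e_3}\|_\infty\lesssim1$, dotted against $\mathbf e_2(\theta)$ and $\mathbf e_3(\theta)$.

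\textbf{Part (1).} Here $\Delta\le\lambda^{-1}\delta$, hence $\Delta\le\delta^{1/2}$, so the coefficient estimates apply. Plugging in: $|\xi_1'|\lesssim\delta\cdot 1+\lambda\cdot\Delta+1\cdot\Delta^2\lesssim\delta$, using $\lambda\Delta\le\delta$ and $\Delta^2\le\lambda^{-2}\delta^2\le\delta$; next $\xi_3'\ge K^{-1}(1-O(\Delta^2))-O(\delta\Delta^2)-O(\lambda\Delta)\ge K^{-1}-O(\delta)$, which exceeds $\tfrac12K^{-1}$ for $\delta$ small (the regime of interest has $K$ at most a power of $|\log\delta|$, so $\delta\ll K^{-1}$), and likewise $\xi_3'\le 2$; finally $\xi_2'=\xi_2(1+O(\Delta^2))+O(|\xi_1|\Delta)+O(\xi_3\Delta)=\xi_2+O(\lambda^{-1}\delta)$. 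This last error term is precisely what forces the preliminary reduction $\lambda>C_0\delta^{1/2}$: it gives $\lambda^{-1}\delta<C_0^{-2}\lambda$, so $\xi_2'\in[\lambda/2-O(C_0^{-2}\lambda),\,\lambda+O(C_0^{-2}\lambda)]\subset[\lambda/4,\,2\lambda]$ once $C_0$ is large. Choosing $C_1$ large enough (depending only on $\gamma$, and $\ge2$) then gives $\xi\in C_1Q_{\theta',\lambda}$.

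\textbf{Part (2).} Fix $C_1$ from part~(1); we pick $C_2$ afterwards. Suppose toward a contradiction that $\xi\in C_1Q_{\theta,\lambda}\cap C_1Q_{\theta',\lambda}$. Membership in $C_1Q_{\theta',\lambda}$ gives $|\xi\cdot\mathbf e_1(\theta')|=|\xi_1'|\le C_1\delta$. On the other hand, writing $\xi=\sum_i\xi_i\mathbf e_i(\theta)$ with $|\xi_1|\le C_1\delta$, $(2C_1)^{-1}\lambda\le\xi_2\le C_1\lambda$, $C_1^{-1}K^{-1}\le\xi_3\le C_1$, and using $\mathbf e_2(\theta)\cdot\mathbf e_1(\theta')=-(\theta-\theta')+O(\Delta^2)$, $|\mathbf e_1(\theta)\cdot\mathbf e_1(\theta')|\le1$, and $|\mathbf e_3(\theta)\cdot\mathbf e_1(\theta')|\lesssim\Delta^2$, the triangle inequality yields $|\xi\cdot\mathbf e_1(\theta')|\ge \xi_2\Delta-O(C_1\Delta^2)-C_1\delta\ge (2C_1)^{-1}\lambda\Delta-O(C_1\Delta^2)-C_1\delta$. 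Now the lower constraint $\Delta\ge C_2\lambda^{-1}\delta$ gives $\lambda\Delta\ge C_2\delta$, so the main term is $\ge(2C_1)^{-1}C_2\delta$; the upper constraint $\Delta\le C_2^{-1}\lambda$ (with $\lambda\le1$) gives $\Delta^2\le C_2^{-1}\lambda\Delta$, so $O(C_1\Delta^2)\le O(C_1C_2^{-1}\lambda\Delta)$. Hence, once $C_2$ is large enough in terms of $C_1$ and the implicit curve constant, $|\xi\cdot\mathbf e_1(\theta')|\ge\tfrac12(2C_1)^{-1}\lambda\Delta\ge(4C_1)^{-1}C_2\delta>C_1\delta$, contradicting the bound from $C_1Q_{\theta',\lambda}$. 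Therefore $C_1Q_{\theta,\lambda}\cap C_1Q_{\theta',\lambda}=\varnothing$.

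\textbf{Main obstacle.} There is no single deep step; the real work is keeping the three-scale bookkeeping straight and recognising \emph{why} each hypothesis is exactly what is needed: the reduction $\lambda>C_0\delta^{1/2}$ is dictated by the $\xi_2'$-error in~(1); the lower bound $\Delta\gtrsim\lambda^{-1}\delta$ is exactly what lets the displacement of the $\mathbf e_1(\theta')$-coordinate beat the $\delta$-thickness of the plank; and the upper bound $\Delta\lesssim\lambda$ is exactly what lets the first-order term $\xi_2\Delta$ dominate the second-order term $\xi_3\Delta^2$ coming from the ``flat'' $\mathbf e_1$--$\mathbf e_3$ pairing. The only genuinely technical nuisance is that $\mathbf e_2,\mathbf e_3$ are merely $C^1$; as in the proof of Proposition~\ref{prop_Taylor} and Proposition~\ref{prop_error_e_i}, this is dealt with by differentiating $t\mapsto\mathbf e_i(\theta)\cdot\mathbf e_j(t)$ using Lemma~\ref{lem_frame_derivatives} and integrating, rather than by a literal second-order Taylor expansion of $\mathbf e_2$ or $\mathbf e_3$.
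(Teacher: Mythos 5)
Your proof is correct and follows essentially the same route as the paper: Part~(1) is Proposition~\ref{prop_error_e_i} applied at scale $\lambda^{-2}\delta^2$ (which you write directly as $\Delta^2$), and Part~(2) is the same Taylor expansion of the $C^2$ function $\mathbf e_1$ about $\theta$, isolating the $\xi_2(\theta'-\theta)$ term and showing it overwhelms both the $\delta$-thickness and the $\Delta^2$ error when $C_2\lambda^{-1}\delta\le\Delta\le C_2^{-1}\lambda$. The only cosmetic difference is that the paper packages Part~(2) as ``$\lambda\Delta=O(\delta+\Delta^2)$, hence $\Delta\lesssim\lambda^{-1}\delta$ or $\Delta\gtrsim\lambda$,'' while you run the contradiction directly to the inequality $|\xi\cdot\mathbf e_1(\theta')|>C_1\delta$; you also make explicit why the preliminary reduction $\lambda>C_0\delta^{1/2}$ is forced by the $O(\lambda^{-1}\delta)$ error in the $\xi_2'$ coordinate, which the paper leaves implicit.
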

The proof is slightly different from Lemma \ref{lem_same_and_distinct}, so we include it in Section \ref{sec_covering_lemmas}. The following lemma is almost identical to Lemma \ref{lem_same_and_distinct}, so we omit the proof.

\begin{lem}\label{lem_cover_lambda}
The following statements are true.
\begin{enumerate}
\item \label{item_1_cover_lambda} If $|\theta-\theta'|\le \lambda$, then $C_1P_{\theta,\lambda}$ and $C_1P_{\theta',\lambda}$ are essentially the same in the sense that $C_1P_{\theta,\lambda}\sub C_3P_{\theta',\lambda}$ where $C_3=C_3(C_1)$ is large enough.
\item If \label{item_2_cover_lambda} $C_4K\lambda\le |\theta-\theta'|\le C_4^{-1} K^{-1}$ where $C_4=C_4(C_3)$ is large enough, then $C_3P_{\theta,\lambda}$ and $C_3P_{\theta',\lambda}$ are disjoint.
\end{enumerate}       
\end{lem}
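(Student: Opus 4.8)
The statement is an exact parallel of Lemma \ref{lem_same_and_distinct}, but now at the intermediate scale $\lambda$ rather than $\delta^{1/2}$: we are comparing the planks $P_{\theta,\lambda}$, which are $\lambda^2\times\lambda\times 1$ boxes (with thin direction $\mathbf e_1(\theta)$, medium direction $\mathbf e_2(\theta)$, long direction $\mathbf e_3(\theta)$), with the corresponding $P_{\theta',\lambda}$. The key point is that $\lambda$ plays for $P_{\sigma,\lambda}$ exactly the role that $\delta^{1/2}$ plays for $A_{\omega,\delta^{1/2}}$: the box has dimensions (thin)$\,=\,$(medium)$^2$ and (medium)$\,=\,\lambda$, so the scaling relations in the proof of Lemma \ref{lem_same_and_distinct} go through verbatim with $\delta^{1/2}$ replaced by $\lambda$ (and hence $\delta$ replaced by $\lambda^2$). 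So the plan is simply to transcribe the proof of Lemma \ref{lem_same_and_distinct}, checking that each estimate survives the substitution.

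For part \eqref{item_1_cover_lambda}: assume $|\theta-\theta'|\le\lambda$ and take a point $\xi=\sum_i\xi_i\mathbf e_i(\theta)\in C_1P_{\theta,\lambda}$, so $|\xi_1|\le C_1\lambda^2$, $|\xi_2|\le C_1\lambda$, $C_1^{-1}K^{-1}\le\xi_3\le C_1$. Re-expand $\xi$ in the frame at $\theta'$, i.e.\ compute $\xi\cdot\mathbf e_j(\theta')$ for $j=1,2,3$, using Proposition \ref{prop_error_e_i} with $\delta$ there taken to be $\lambda^2$ (legitimate since $|\theta-\theta'|\le\lambda=(\lambda^2)^{1/2}$). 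The cross terms are controlled: $|\mathbf e_1(\theta)\cdot\mathbf e_3(\theta')|\lesssim\lambda^2$, $|\mathbf e_2(\theta)\cdot\mathbf e_j(\theta')|\lesssim\lambda$ for $j=1,3$, and the diagonal terms are $1+O(\lambda^2)$. Plugging in the size bounds on $\xi_1,\xi_2,\xi_3$ one checks $|\xi\cdot\mathbf e_1(\theta')|\lesssim\lambda^2$ (the dangerous contribution is $\xi_3\cdot\mathbf e_3(\theta')\cdot\mathbf e_1(\theta')$, of size $\lesssim\lambda^2$), $|\xi\cdot\mathbf e_2(\theta')|\lesssim\lambda$, and $\xi\cdot\mathbf e_3(\theta')\in[C_3^{-1}K^{-1},C_3]$, all for a suitably large $C_3=C_3(C_1)$. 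Hence $\xi\in C_3P_{\theta',\lambda}$.

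For part \eqref{item_2_cover_lambda}: here we show disjointness when $C_4K\lambda\le|\theta-\theta'|\le C_4^{-1}K^{-1}$. As in Lemma \ref{lem_high_orthogonality_3D}, we may ignore the thin direction and argue with the planes/slabs spanned by $\mathbf e_2,\mathbf e_3$; the normal to $C_3P_{\theta',\lambda}$ is (essentially) $\mathbf e_1(\theta')=\gamma(\theta')$. If $\xi=\xi_2\mathbf e_2(\theta)+\xi_3\mathbf e_3(\theta)\in C_3P_{\theta,\lambda}$ with $\xi_3\ge C_3^{-1}K^{-1}$, then by Taylor expansion $\gamma(\theta')=\gamma(\theta)+\Delta\gamma'(\theta)+O(\Delta^2)$ with $\Delta=\theta'-\theta$, and using $\mathbf e_3(\theta)\cdot\gamma(\theta)=0$, $\mathbf e_3(\theta)\cdot\gamma'(\theta)=0$ together with $\mathbf e_3'=-\uptau\mathbf e_2$ (Lemma \ref{lem_frame_derivatives}), the inner product $|\xi\cdot\gamma(\theta')|$ is bounded below by $|\xi_3|\,\Delta^2|\uptau|/2-O(|\xi_3|\Delta^3)-O(|\xi_2|\Delta)\gtrsim K^{-1}\Delta^2 - \lambda\Delta$; since $\Delta\ge C_4K\lambda$ this is $\gtrsim K^{-1}\Delta^2$, which exceeds the thickness $\sim C_3\lambda^2$ of the slab in the $\gamma(\theta')$-direction once $C_4$ is large enough (as $\Delta\ge C_4 K\lambda$ forces $K^{-1}\Delta^2\ge C_4^2 K\lambda^2\gg\lambda^2$). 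Restoring the thin direction changes these estimates by $O(\lambda^2)$ only, so $C_3P_{\theta,\lambda}\cap C_3P_{\theta',\lambda}=\varnothing$.

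The main obstacle, such as it is, is purely bookkeeping: one must track the various enlargement constants in the right order ($C_1$ fixed first from Lemma \ref{lem_same_and_distinct_lambda}, then $C_3=C_3(C_1)$, then $C_4=C_4(C_3)$) and be careful that the Taylor remainder $O(\Delta^2)$ in part \eqref{item_2_cover_lambda} is genuinely smaller than the main term $\gtrsim K^{-1}\Delta^2$ — this is exactly why the upper bound $|\theta-\theta'|\le C_4^{-1}K^{-1}$ is imposed, mirroring the constraint in Lemma \ref{lem_high_orthogonality_3D}. Since all of this is a transparent rescaling of the already-established Lemma \ref{lem_same_and_distinct}, I would in fact just say ``the proof is almost identical to that of Lemma \ref{lem_same_and_distinct}, with $\delta^{1/2}$ replaced by $\lambda$ and $\delta$ by $\lambda^2$; we omit the details,'' which is precisely what the excerpt does.
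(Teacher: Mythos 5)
Your Part~\eqref{item_1_cover_lambda} argument is exactly the intended one: transcribe the proof of Part~\eqref{item_1_same_and_distinct} of Lemma~\ref{lem_same_and_distinct} with $\delta^{1/2}\mapsto\lambda$ and $\delta\mapsto\lambda^2$, invoking Proposition~\ref{prop_error_e_i} at the scale $\lambda^2$ (valid since $|\theta-\theta'|\le\lambda=(\lambda^2)^{1/2}$); this is what the paper means by ``almost identical.''

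For Part~\eqref{item_2_cover_lambda}, however, the argument you actually wrote is \emph{not} a rescaling of the paper's proof of Part~\eqref{item_2_same_and_distinct} of Lemma~\ref{lem_same_and_distinct}; it is an adaptation of the Lemma~\ref{lem_high_orthogonality_3D} argument. The paper's Lemma~\ref{lem_same_and_distinct} route examines the \emph{tangential} coordinate $\xi_2'=\xi\cdot\mathbf e_2(\theta')$: from $|\xi_2'|\lesssim\lambda$, $|\xi_1|\lesssim\lambda^2$, $|\xi_2|\lesssim\lambda$ and $\xi_3\gtrsim K^{-1}$ one deduces $|\mathbf e_3(\theta)\cdot\mathbf e_2(\theta')|\lesssim K\lambda$, and a \emph{first-order} Taylor expansion of $\mathbf e_3$ (using $\mathbf e_3'=-\uptau\mathbf e_2$) then gives $|\theta-\theta'|\lesssim K\lambda$, a contradiction. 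That route needs only $|\theta-\theta'|\le c$ for an absolute constant $c$ and only $C^1$ regularity of $\mathbf e_3$. You instead look at the \emph{normal} coordinate $\xi\cdot\gamma(\theta')$ and extract the second-order term $\xi_3\Delta^2\uptau/2$; this works, but two things are worth noting. First, your displayed expansion $\gamma(\theta')=\gamma(\theta)+\Delta\gamma'(\theta)+O(\Delta^2)$ is first-order with a crude remainder, inconsistent with your subsequent extraction of the explicit coefficient $\uptau/2$; you need the genuine second-order expansion $\gamma(\theta')=\gamma(\theta)+\Delta\gamma'(\theta)+\tfrac{\Delta^2}{2}\gamma''(\theta)+o(\Delta^2)$ together with $\xi\cdot\gamma''=\uptau\xi_3+O(\lambda\Delta+\Delta^2)$. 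Second, your route genuinely uses the upper bound $\Delta\le C_4^{-1}K^{-1}$: the leftover error of order $\Delta^4$ (and $\lambda\Delta^3$) must be beaten by the main term $\gtrsim K^{-1}\Delta^2$, which forces $\Delta^2\ll K^{-1}$, whereas the paper's route imposes no $K$-dependence on the upper bound. So the closing claim that ``all of this is a transparent rescaling of Lemma~\ref{lem_same_and_distinct}'' does not match the Part~\eqref{item_2_cover_lambda} argument you actually gave. Both routes prove the lemma; the paper's is a bit lighter, which is why it is the one being ``omitted.''
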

The key difference from the $\delta^{1/2}$-mixed case is that the natural separation in this case is $\lambda^{-1}\delta$ which is smaller than $\lambda$, the aperture of $Q_{\theta,\lambda}$. Moreover, in terms of the canonical scale of decoupling, the thickness $\delta$ of $Q_{\theta,\lambda}$ does not match its aperture $\lambda$ which was supposed to be $\delta^{1/2}$. When $\lambda=\delta^{1/2}$ as in the previous case, we had $\lambda^{-1}\delta=\delta^{1/2}=\lambda$ which made the scenario simpler.

Motivated by Lemma \ref{lem_same_and_distinct_lambda}, we define $\mathcal T$ to be the $\lambda^{-1}\delta$-lattice in $[0,1]$. Now we have three subsets of $[0,1]$:
\begin{equation}\label{eqn_intermediate}
    \Theta\sub \delta \Z\cap [0,1],\quad \mathcal T=(\lambda^{-1}\delta)\Z\cap [0,1],\quad \Sigma=\lambda \Z\cap [0,1].
\end{equation}
We will define relationships between their elements. For each $\theta\in \Theta$, we attach it to a $\tau\in \mathcal T$ if $\theta-\tau\in (\lambda^{-1}\delta/2,\lambda^{-1}\delta/2]$, which we denote by $\theta\prec \tau$. Thus each $\theta$ is attached to a unique $\tau$. With this, we define
\begin{equation}
    f_\tau=\sum_{\theta\prec \tau}f_{\theta,\lambda}.
\end{equation}
Similarly, for each $\tau\in \mathcal T$, we attach it to a $\sigma\in \Sigma$ if $\tau-\sigma\in (\lambda/2,\lambda/2]$, which we denote by $\tau\prec \sigma$. Thus each $\tau$ is attached to a unique $\sigma$. With this, we define
\begin{equation}
    f_\sigma=\sum_{ \tau\prec \sigma}f_{\tau}.
\end{equation}
We also write $\theta\prec \sigma$ if there is a $\tau$ such that $\theta\prec \tau$ and $\tau\prec \sigma$. Thus each $\theta$ is attached to a unique $\sigma$. Thus we have
\begin{equation}
    f_\lambda=\sum_{\sigma\in \Sigma}f_\sigma=\sum_{\sigma\in \Sigma}\sum_{ \tau\prec \sigma}f_\tau=\sum_{\sigma\in \Sigma}\sum_{ \tau\prec \sigma}\sum_{\theta\prec \tau}f_{\theta,\lambda}.
\end{equation}
\subsubsection{Applying decoupling inequalities}

Before estimating $\int |f_\lambda|^p$, we may first apply a similar preliminary reduction to that of the $\delta^{1/2}$-mixed part to reduce to the case that $\mathrm{diam}(\Theta)\le C_4^{-1}K^{-1}$. Next, we may apply a dyadic pigeonholing argument to subsets of $\Theta,\mathcal T,\Sigma$ (still denoted by $\Theta,\mathcal T,\Sigma$ by abuse of notation), so that there are numbers $M,N$ such that 
\begin{equation}\label{eqn_after_pigeonholing_mixed}
    \#\{\theta\in \Theta:\theta\prec \tau\}\sim M,\quad \#\{\tau\in \mathcal T:\tau\prec \sigma\}\sim N.
\end{equation}
We only pay the price of logarithmic losses here; the detail is similar to the proof in Section \ref{sec_dyadic_pigeonholing}, which we ask the reader to verify.

We first apply Theorem \ref{thm_Bourgain-Demeter_decoupling} to decouple $f_\lambda$ into different $f_\sigma$'s. The detail is as follows. First, the Fourier support of each $f_\tau$ is equal to $\cup_{\theta\prec \tau}Q_{\theta,\lambda}$ %, which is contained in $\cup_{\theta\prec \tau}P_{\theta,\lambda}$
and in turn contained in $C_1Q_{\tau,\lambda}$ by Part \ref{item_1_same_and_distinct_lambda} of Lemma \ref{lem_same_and_distinct_lambda}. Thus the Fourier support of each $f_\sigma$ is contained in $\cup_{\tau\prec \sigma}C_1 Q_{\tau,\lambda}$, which is contained in $C_3 P_{\sigma,\lambda}$ by Part \ref{item_1_cover_lambda} of Lemma \ref{lem_cover_lambda} and the trivial fact that $C_1Q_{\tau,\lambda}\sub C_1P_{\tau,\lambda}$. Also, since $\text{diam}(\Theta)\leq C_4^{-1}K^{-1}$, by Part \ref{item_2_cover_lambda} of Lemma \ref{lem_cover_lambda}, $\{C_3 P_{\sigma,\lambda}:\sigma\in \Sigma\}$ has bounded overlap.  

We thus apply Theorem \ref{thm_Bourgain-Demeter_decoupling} with respect to the planks $C_3P_{\sigma,\lambda}$ at scale $\lambda^{2}$ in place of $\delta$, followed by H\"older's inequality to get
\begin{equation}\label{eqn_decoupling_to_sigma}
    \norm{f_{\lambda}}_p\lesssim_\eps \lambda^{-\eps}\norm{\norm{f_\sigma}_p}_{\ell^2(\sigma\in \Sigma)}\leq \lambda^{-\eps}(\#\sigma)^{\frac 1 2-\frac 1 p}\norm{\norm{f_\sigma}_p}_{\ell^p(\sigma\in \Sigma)},
\end{equation}
where, for simplicity, we denoted $\#\sigma:=\#\{\sigma\in \Sigma:f_\sigma\ne 0\}$.

Now we need to analyse each $f_\sigma$, whose Fourier transform has support contained in $\cup_{\tau\prec \sigma}C_1 Q_{\tau,\lambda}$. By Part \ref{item_2_same_and_distinct_lambda} of Lemma \ref{lem_same_and_distinct_lambda}, the collection $\{C_1 Q_{\tau,\lambda}:\tau\prec\sigma\}$ has bounded overlap. Thus we can apply the flat decoupling Theorem \ref{thm_flat_decoupling} to get (recall \eqref{eqn_after_pigeonholing_mixed}):
\begin{equation}\label{eqn_decoupling_to_tau}
\norm{f_{\sigma}}_p\lessapprox N^{1-\frac 2 p}\norm{\norm{f_\tau}_p}_{\ell^p(\tau\prec \sigma)}.
\end{equation}

\subsubsection{Analysis of each \texorpdfstring{$f_\tau$}{Lg}}

Fix $\tau$. We use triangle and H\"older's inequalities (recall \eqref{eqn_after_pigeonholing_mixed}):
\begin{equation}\label{eqn_to_each_theta}
    \norm{f_{\tau}}_p\lesssim M^{1-\frac 1 p}\norm{\norm{f_{\theta,\lambda}}_p}_{\ell^p(\theta\prec \tau)}.
\end{equation}
This is the best we can do in general since for each $\theta\prec \tau$, the planks $Q_{\theta,\lambda}$ are essentially the same.

Combining the above inequalities \eqref{eqn_decoupling_to_sigma} through \eqref{eqn_to_each_theta} gives
\begin{equation}\label{eqn_before_each_f_theta_lambda}    \norm{f_{\lambda}}_p\lesssim_\eps \delta^{-\eps}(\#\sigma)^{\frac 1 2-\frac 1 p}M^{1-\frac 1 p}N^{1-\frac 2 p}\norm{\norm{f_{\theta,\lambda}}_p}_{\ell^p(\theta\in \Theta)}.
\end{equation}
Now for each $\theta$, by the elementary property of Fourier cutoff and rapid decay of each $\psi_{T_\theta}$, we have 
\begin{equation}\label{eqn_each_f_theta_lambda}    \norm{f_{\theta,\lambda}}_p\lesssim \norm{f_\theta}_p\sim (\#\mathbb T_\theta \delta^{-1})^{1/p}.
\end{equation}
Now by bounded overlap, we observe that
\begin{equation}\label{eqn_numbers_relation}
    MN\sim \#\{\theta\in \Theta:\theta\prec \sigma\}\lesssim (\lambda/\delta)^t,\quad \#\Theta\sim (\#\sigma) MN.
\end{equation}
where we have used the fact that $\Theta$ is a $\delta$-separated $(\delta,t)$-set in the first relation. We also have $M\lesssim \lambda^{-t}$ by the $(\delta,t)$-set condition. Using this, noting $p\ge 2$ and \eqref{eqn_before_each_f_theta_lambda}, \eqref{eqn_each_f_theta_lambda} and \eqref{eqn_numbers_relation}, we thus have
\begin{align}
    \norm{f_{\lambda}}_p
    &\lesssim_\eps \delta^{-\eps}(\#\Theta)^{\frac 1 2-\frac 1 p}(MN)^{\frac 1 2-\frac{1}{p}}M^{\frac 1 p}\delta^{-\frac {1}p}\left(\sum_{\theta\in \Theta}\#\mathbb T_\theta\right)^{\frac 1 p}\nonumber\\
    &\lesssim\delta^{-\eps-\frac {1}p}\left(\sum_{\theta\in \Theta}\#\mathbb T_\theta\right)^{\frac 1 p}(\#\Theta)^{\frac 1 2-\frac 1 p}(\lambda/\delta)^{t(\frac 1 2-\frac 1 p)}\lambda^{-\frac t p}\nonumber\\
    &=\delta^{-\eps-\frac {1}p-t(\frac 1 2-\frac 1 p)}\left(\sum_{\theta\in \Theta}\#\mathbb T_\theta\right)^{\frac 1 p}(\#\Theta)^{\frac 1 2-\frac 1 p}\lambda^{t(\frac 1 2-\frac 2 p)}.\label{eqn_f_lambda_upper_bound_thm7}
\end{align}
We remark that when $\lambda\sim\delta^{1/2}$, this reduces to the same expression in \eqref{eqn_final_mixed} in the $\delta^{1/2}$-mixed part. Using $\#\mathbb T_\theta\lesssim \delta^{-s}$, we have
\begin{equation}
\norm{f_{\lambda}}_p\lesssim_\eps \delta^{-\eps-\frac {s+1}p-t(\frac 1 2-\frac 1 p)}(\#\Theta)^{\frac 1 2}\lambda^{t(\frac 1 2-\frac 2 p)}.
\end{equation}

\subsubsection{Comparison with the high part}

We now make a comparison between the bounds to $\norm {f_{\mathrm{high}}}_p^p$ given by \eqref{eqn_high_contribution_3D} and the bounds for  $$\sum_{\lambda \text{ dyadic};\lambda\in (\delta^{1/2},K^{-1}]}\norm {f_{\lambda}}_p^{p}$$
obtained from the previous step.

If $p>4$, using the lower bound $\#\Theta\gtrapprox \delta^{-t}$, we see that for small enough $\delta$,
\begin{equation}
    (\#\Theta)^{\frac {p-1}{p}}K^{\frac{2}{p}} \delta^{-\frac {s+1}p}\ge \delta^{-\eps-\frac {s+1}p-t(\frac 1 2-\frac 1 p)}(\#\Theta)^{\frac 1 2}\lambda^{t(\frac 1 2-\frac 2 p)},
\end{equation}
which means that the upper bound for the $\lambda$-mixed part is dominated by the upper bound of the high part.

\subsection{Conclusion}

Since Theorem \ref{thm_Bourgain-Demeter_decoupling} holds at the sharp exponent $p=6$, by the discussion at the end of Sections \ref{sec_step_6_3D} and \ref{sec_step_7_3D}, we know that the high part dominates over both the low part and the mixed parts. Since the number of dyadic $\lambda\in [\delta^{1/2},K^{-1}]$ is $O(|\log \delta|)$, by \eqref{eqn_high_upper_bound_3D}, we have
\begin{equation}
    \int_H |f|^p\lesssim |\log \delta|(\#\Theta)^{p-1}K^2 \delta^{-1-s}.
\end{equation}
On the other hand, since $f(x)\gtrsim \# \Theta$ on $H$, we trivially have the lower bound
\begin{equation}
    \int_H |f|^p\gtrsim (\# \Theta)^p |H|\sim (\# \Theta)^p\delta^{-a}.
\end{equation}
Recalling \eqref{eqn_defn_K_3D} and $\#\Theta\gtrapprox \delta^{-t}$, we have $\delta^{-a-t}\lesssim_\eps \delta^{-1-s-\eps}$, as required.

\subsection{Proof of dyadic pigeonholing}\label{sec_dyadic_pigeonholing}
We now prove the statement leading to \eqref{eqn_after_dyadic_pigeonholing}. Recall that by $\Omega$ we denote the $\delta^{1/2}$-lattice contained in $[0,1]$. For each $\omega\in \Omega$, by the $\delta$-separation condition of $\Theta$, we have the trivial bound $\#\{\theta\prec \omega\}\leq \delta^{-1/2}$. 

Let $J=\log_2 (\delta^{-1/2})$. For each integer $0\le j\le J$, we denote the set
\begin{equation}
    \Omega_j:=\{\omega\in \Omega:\#\{\theta\in \Theta:\theta\prec \omega\}\in [2^j,2^{j+1})\}.
\end{equation}
 Then it is easy to see that
\begin{equation}   
\sum_{j=1}^J 2^j\#\Omega_j\le \#\Theta\le \sum_{j=1}^J 2^{j+1}\#\Omega_j
\end{equation}
since we only consider those $\omega$ such that $\{\theta\in \Theta:\theta\prec \omega\}\ne \varnothing$.

Now by the pigeonholing principle, there exists some $i\in [1,J]$ such that 
\begin{equation}
   \sum_{j=1}^J 2^j\#\Omega_j\le J2^{i}\#\Omega_{i}.
\end{equation}
This gives
\begin{equation}
    2^{i}\#\Omega_i\leq\#\Theta\le J2^{i+1}\#\Omega_i,
\end{equation}
which is as required once we take $N=2^i$, since $J\sim |\log \delta|$.

\subsection{Proof of covering lemmas}\label{sec_covering_lemmas}
We now give proofs of the covering Lemmas \ref{lem_same_and_distinct} and \ref{lem_same_and_distinct_lambda}.

\begin{proof}[Proof of Lemma \ref{lem_same_and_distinct}]

We first prove Part \ref{item_1_same_and_distinct}. Let $\xi=\sum_{i=1}^3 \xi_i \mathbf e_i(\theta)\in P_{\theta,\delta^{1/2}}$. Then using Proposition \ref{prop_error_e_i},
\begin{align*}
    |\xi_1'|:&=|\xi\cdot \mathbf e_1(\theta')|\lesssim \delta\cdot 1+\delta^{1/2}\cdot \delta^{1/2}+1\cdot \delta\sim \delta,\\
    |\xi_2'|:&=|\xi\cdot \mathbf e_2(\theta')|\lesssim \delta\cdot \delta^{1/2}+\delta^{1/2}\cdot 1+1\cdot \delta^{1/2}\sim \delta^{1/2},\\
    \xi_3':&=\xi\cdot \mathbf e_3(\theta')=O(\delta^2)+O(\delta^{1/2}\cdot \delta^{1/2})+\xi_3 (1+O(\delta))\\
    &\in [K^{-1}/2,2].
\end{align*}

Next, we prove Part \ref{item_2_same_and_distinct} by contradiction. Let $\xi=\sum_{i=1}^3 \xi_i \mathbf e_i(\theta)\in C_1P_{\theta,\delta^{1/2}}$ and also $\xi=\sum_{i=1}^3 \xi'_i \mathbf e_i(\theta')\in C_1P_{\theta',\delta^{1/2}}$. 
It turns out that we only need to study $|\xi'_2|=|\xi\cdot \mathbf e_2(\theta')|\leq C_1\delta^{1/2}$. First, since $\xi\in C_1P_{\theta,\delta^{1/2}}$, we have $\xi_1=O(\delta)$ and $\xi_2=O(\delta^{1/2})$, and so 
\begin{equation}
    \xi_2'=\xi\cdot \mathbf e_2(\theta')=\xi_3 \mathbf e_3(\theta)\cdot  \mathbf e_2(\theta')+O(\delta^{1/2}),
\end{equation}
and so by $\xi_3\gtrsim K^{-1}$ we have
\begin{equation}
    |\mathbf e_3(\theta)\cdot  \mathbf e_2(\theta')|\lesssim K\delta^{1/2}.
\end{equation}

By Taylor expansion for $\mathbf{e}_3$ around $\theta'$, we get
\begin{equation}
\mathbf{e}_3(\theta)=\mathbf{e}_3(\theta')+\mathbf{e}_3'(\theta')(\theta-\theta')+o(\theta-\theta')    
\end{equation}
where $\frac{|o(\theta-\theta')|}{|\theta-\theta'|}\rightarrow 0$ as $\theta\rightarrow \theta'$. Then
\begin{equation*}
    \begin{split}
        K\delta^{1/2}\gtrsim &|\mathbf{e}_3(\theta)\cdot \mathbf{e}_2(\theta')|\\
=&|\{\mathbf{e}_3(\theta')+\mathbf{e}_3'(\theta')(\theta-\theta')+o(\theta-\theta')\}\cdot \mathbf{e}_2(\theta')|\\
=&|\{-\uptau(\theta')\mathbf{e}_2(\theta')(\theta-\theta')+o(\theta-\theta')\}\cdot \mathbf{e}_2(\theta')|\\
\gtrsim &|\uptau(\theta')||\theta-\theta'|-|o(\theta-\theta')|
    \end{split}
\end{equation*}
By the condition $\frac{|o(\theta-\theta')|}{|\theta-\theta'|}\rightarrow 0$, combined with $\uptau\sim 1$, we can get 
\begin{equation}
    \begin{split}
        K\delta^{1/2}\gtrsim |\theta-\theta'|
    \end{split}
\end{equation}
which leads to the contradiction by the assumption on $|\theta-\theta'|$.

\end{proof}

\begin{proof}[Proof of Lemma \ref{lem_same_and_distinct_lambda}]
    For the first part, let $\xi=\sum_{i=1}^3 \xi_i \mathbf e_i(\theta)\in Q_{\theta,\lambda}$. Then using Proposition \ref{prop_error_e_i} with $\lambda^{-2}\delta^{2}$ in place of $\delta$,

\begin{align*}
    |\xi_1'|:&=|\xi\cdot \mathbf e_1(\theta')|\lesssim \delta\cdot 1+\lambda \cdot \lambda^{-1}\delta+1\cdot \lambda^{-2}\delta^2\sim \delta,\\
    \xi_2':&=\xi\cdot \mathbf e_2(\theta')= O(\delta)+\xi_2(1+O(\lambda^{-2}\delta^2))+O(\lambda^{-1}\delta)\in [\lambda/4,4\lambda],\\
    &\text{if $C_0$ chosen at the beginning of Section \ref{sec_step_7_3D} is large enough,}\\
    \xi_3':&=\xi\cdot \mathbf e_3(\theta')=O(\lambda^{-2}\delta^{3})+O(\delta)+\xi_3 (1+O(\lambda^{-2}\delta^2))\\
    &\in [K^{-1}/2,2].
\end{align*}

Next, we prove the disjointedness by contradiction. Let $\xi=\sum_{i=1}^3 \xi_i \mathbf e_i(\theta)\in C_1Q_{\theta,\lambda}$ and also $\xi=\sum_{i=1}^3 \xi'_i \mathbf e_i(\theta')\in C_1Q_{\theta',\lambda}$.

It turns out that we only need to study $|\xi'_1|=|\xi\cdot \mathbf e_1(\theta')|\le C_1\delta$. Using Taylor expansion for $\mathbf{e}_{1}(\theta')$ around $\theta$, Lemma \ref{lem_frame_derivatives} and the assumption that $\xi\in C_1Q_{\theta,\lambda}$, this implies
\begin{equation}
    |\xi_2(\theta'-\theta)+\xi_2 O(|\theta'-\theta|^2)+\xi_3O(\theta'-\theta)^2|=O(\delta),
\end{equation}
and so using $|\xi_3|\lesssim 1$, $|\theta-\theta'|\le C_2^{-1}\ll 1$ and $\xi_2\sim \lambda$, we get
\begin{equation}
    \lambda|\theta'-\theta|=O(\delta+|\theta'-\theta|^2).
\end{equation}
Thus we have either $|\theta'-\theta|\lesssim \lambda^{-1}\delta$, or $|\theta'-\theta|\gtrsim \lambda$, but both contradict the assumption if $C_2$ is large enough.

\end{proof}

\section{Proof of Theorem 7}

The goal of this section is to prove Theorem 7 in \cite{GGGHMW2022} and its corollaries 2 and 3. Corollary 3 was a necessary ingredient for the proof of Theorem 8. One can keep in mind the following implications where the theorem numbers correspond to the ones in \cite{GGGHMW2022}:
\begin{center}
    Theorem 7 $\Rightarrow $ Corollary 2 $\Rightarrow $ Corollary 3 $\Rightarrow $ Theorem 8 $\Rightarrow $ Theorem 2. 
\end{center}

\begin{thm}[Theorem 7 in \cite{GGGHMW2022}]\label{thm_thm7}
    Fix $0\le a\le 2$. For each $\eps>0$ and each $0<c\le 1$, there exists a constant $b(\eps,a,c)>0$ %decreasing in $\eps$ \textcolor{red}{and $c$}, 
    so that the following holds. Let $\delta>0$. Let $\mu$ be a finite nonzero Borel measure supported in the unit ball in $\R^3$ with $\rho_a(\mu)<\infty$ (recall \eqref{eqn_Frostman_mu_intro}).
    
    Let $\Theta$ be a maximally $\delta$-separated subset of $[0,1]$. For each $\theta\in \Theta$, let $\mathbb T_\theta$ be a collection of disjoint $\delta\times \delta\times 1$ tubes pointing in the direction $\gamma(\theta)$, and let $\mathbb T=\cup_{\theta\in \Theta} \mathbb T_\theta$. Suppose that
    \begin{equation}\label{eqn_lower_bound_supp_mu}
        \sum_{T\in \mathbb T}1_T(x)\ge c\delta^{\eps-1},\quad \forall x\in \mathrm{supp}(\mu).
    \end{equation}
    Then
    \begin{equation}\label{eqn_thm7_main}
        \#\mathbb T\ge b(\eps,a,c)\mu(\R^3)\rho_a(\mu)^{-1}\delta^{-1-a+100\sqrt \eps}.
    \end{equation}    
\end{thm}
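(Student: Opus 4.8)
The plan is to imitate, almost line for line, the high--low plus decoupling proof of Theorem \ref{thm_thm5} (equivalently of Theorem \ref{thm4paper}), carrying the Frostman measure $\mu$ through the argument in place of the test set $H$, and to run the whole thing as an induction on the scale $\delta$. After the harmless normalizations $\mu(\R^3)=1$ and $\supp\mu\sub B^3(0,1/2)$ (so that $\rho_a(\mu)^{-1}\mu(\R^3)\le 1$ and, for $\delta$ above a threshold $\delta_0(\eps,a,c)$, the claim is trivial since $\#\mathbb T\ge 1$), one builds wave packets: for each $\theta$ and $T_\theta\in\mathbb T_\theta$ a Schwartz $\psi_{T_\theta}$ Fourier-supported on the dual slab $P_\theta$ (a $\delta^{-1}\times\delta^{-1}\times 1$ plate with thin direction $\gamma(\theta)$) with $\psi_{T_\theta}\gtrsim 1$ on $T_\theta$ and rapid decay, and sets $f=\sum_\theta\sum_{T_\theta}\psi_{T_\theta}$. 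By \eqref{eqn_lower_bound_supp_mu}, $f(x)\gtrsim\sum_{T}1_T(x)\ge c\delta^{\eps-1}$ on $\supp\mu$. One then performs the high--low--mixed decomposition $f=f_{\mathrm{high}}+f_{\mathrm{low}}+\sum_\lambda f_\lambda$ exactly as in Section \ref{sec_step_1_3D} and the subsections following it, with the high-frequency cutoff placed at a scale $K=\delta^{-\eta}$, $\eta$ a small multiple of $\sqrt\eps$ to be fixed. The one new general device: each of $f,f_{\mathrm{high}},f_{\mathrm{low}},f_\lambda$ has Fourier support in a ball of radius $O(\delta^{-1})$, hence is essentially constant at scale $\delta$, so the Frostman bound $\rho_a(\mu)<\infty$ gives $\int|g|^p\,d\mu\lesssim\rho_a(\mu)\,\delta^{a-3}\int|g|^p\,dx$ for any such $g$; this is what converts every Lebesgue estimate from the proof of Theorem \ref{thm_thm5} into a $\mu$-weighted one.

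\textbf{High and mixed parts.} Here nothing essential changes. Lemma \ref{lem_high_orthogonality_3D} (the $O(K^2)$-overlap of the $P_{\theta,\mathrm{high}}$'s, where the non-degeneracy of $\gamma$ enters) gives $\int|f_{\mathrm{high}}|^2\,dx\lesssim K^2\sum_\theta\int|f_\theta|^2\,dx\lesssim K^2\delta^2\#\mathbb T$, and together with the routine bound $\|f_{\mathrm{high}}\|_\infty\lessapprox\#\Theta\sim\delta^{-1}$ and the conversion above this yields $\int|f_{\mathrm{high}}|^p\,d\mu\lessapprox\rho_a(\mu)\,K^2\,\delta^{a+1-p}\#\mathbb T$. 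For the $\delta^{1/2}$-mixed and the $\lambda$-mixed parts one repeats Sections \ref{sec_step_6_3D}--\ref{sec_step_7_3D} verbatim --- Bourgain--Demeter decoupling for the cone (Theorem \ref{thm_Bourgain-Demeter_decoupling}) and flat decoupling (Theorem \ref{thm_flat_decoupling}), using that $\Theta$ is trivially a $(\delta,1)$-set --- but keeping $\sum_\theta\#\mathbb T_\theta=\#\mathbb T$ as the unknown rather than bounding it by $\delta^{-1-s}$, and then passing to $\mu$-weights. At the sharp exponent $p=6$ the ``comparison with the high part'' computations go through exactly as there and show that, for $\eps$ small, all the mixed contributions are dominated by $\int|f_{\mathrm{high}}|^p\,d\mu$. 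Consequently, whenever a $(\log\tfrac1\delta)^{-O(1)}$ fraction of $\int_{\supp\mu}|f|^p\,d\mu\gtrsim(c\delta^{\eps-1})^p$ comes from the high or mixed parts, rearranging with $K=\delta^{-\eta}$ gives $\#\mathbb T\gtrapprox\rho_a(\mu)^{-1}\mu(\R^3)\,\delta^{-1-a+p\eps+2\eta}$, which is at least the claimed bound since $p\eps+2\eta\le 100\sqrt\eps$ once $\eta$ is chosen small enough.

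\textbf{The low part: the obstacle, and the induction.} This is the one genuinely new difficulty. Unlike in Theorem \ref{thm_thm5} there is no $s$-dimensional hypothesis on the individual $\mathbb T_\theta$, and the disjointness of the tubes forces only $s=2$, which does \emph{not} suffice to make $|f_{\mathrm{low}}|<|f|/2$ pointwise (with $s=2$ one only gets $|f_{\mathrm{low}}|\lesssim\#\Theta\sim\delta^{-1}$, which may exceed $c\delta^{\eps-1}$ by a factor $\delta^{-\eps}$); so $f_{\mathrm{low}}$ cannot simply be discarded. Instead, from the weighted-convolution estimate of Section \ref{sec_step_4_3D} carried to $3$D, $|f_{\theta,\mathrm{low}}(x)|\lesssim K^{-2}\#\{T_\theta:T_\theta\cap B^3(x,CK\delta)\ne\varnothing\}$, one obtains $|f_{\mathrm{low}}(x)|\lesssim K\sum_{T'\in\mathbb T'}1_{CT'}(x)$, where $\mathbb T'$ is the family of \emph{distinct} $\sim CK\delta$-neighbourhoods of the tubes of $\mathbb T$ --- tubes at the coarser scale $\delta':=CK\delta=\delta^{1-\eta}$, naturally grouped along the $\delta'$-separated set of directions. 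On the subset of $\supp\mu$ where $f_{\mathrm{low}}$ carries the mass, one therefore gets $\sum_{T'\in\mathbb T'}1_{CT'}\gtrsim c\,(\delta')^{\,\eps'-1}$ with $\eps'=\eps/(1-\eta)$, for the Frostman measure $\mu$ restricted to that subset; applying the inductive hypothesis (Theorem \ref{thm_thm7} at scale $\delta'$ and exponent $\eps'$) to $\mathbb T'$ and using $\#\mathbb T\ge\#\mathbb T'$ gives a lower bound for $\#\mathbb T$. The delicate point --- and what I expect to be the real work --- is the bookkeeping: one must verify that the exponent $-1-a+100\sqrt{\eps'}$ at scale $\delta'$ reproduces (rather than degrades) $-1-a+100\sqrt\eps$ at scale $\delta$, which amounts to a constraint of the shape $100\sqrt{\eps'}(1-\eta)+\eta(1+a)\le 100\sqrt\eps$ and dictates how small $\eta$ must be taken; and one must track that, across the $O(\log\log\tfrac1\delta)$ iterations, $\eps$ grows only mildly, the multiplicity constant $c$ and the measure's mass lose only $(\log\tfrac1\delta)^{-O(1)}$ per step, and the cumulative $K^{O(1)}$, $\delta^{-\eps}$-decoupling and pigeonholing losses all stay inside the deliberately generous $\delta^{-100\sqrt\eps}$ budget for $\delta\le\delta_0(\eps,a,c)$ --- which is exactly why that budget is stated with the factor $100$. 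Chasing those losses through the induction is the main obstacle; everything else is the machinery already developed for Theorem \ref{thm_thm5}.
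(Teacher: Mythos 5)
Your overall strategy — high--low decomposition, Frostman conversion of Lebesgue estimates to $\mu$-weighted ones, decoupling at $p=6$ for the high and mixed parts, and an induction on scales driven by passing the low part to thickened tubes at the coarser scale $K\delta$ — is exactly the architecture the paper uses, and you correctly identify the low part as the genuinely new difficulty. But the step you gloss over as bookkeeping is where the argument actually breaks, and the fix is a specific extra idea you are missing.

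The gap: at the end of the low-part analysis you propose to apply the inductive hypothesis to the family $\mathbb T'$ of thickened tubes at scale $\delta'=K\delta=\delta^{1-\eta}$ and then pass back via ``$\#\mathbb T\ge\#\mathbb T'$''. This is far too lossy. Comparing the conclusions at the two scales, the induction closes only if
\[
\eta(1+a)\;+\;100\sqrt{\eps'}\,(1-\eta)\;\le\;100\sqrt\eps,
\qquad \eps'=\tfrac{\eps}{1-\eta},
\]
which is the constraint you write. But $\sqrt{\eps'}\,(1-\eta)=\sqrt{\eps(1-\eta)}$, so the constraint reduces (dividing through by $\eta$ and expanding for small $\eta$) to $1+a\lesssim 50\sqrt\eps$, which is \emph{false} for $a$ near $2$ and $\eps$ small, \emph{independently of the choice of $\eta$}. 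So ``take $\eta$ small enough'' cannot work: the naive passage $\#\mathbb T\ge\#\mathbb T'$ loses a factor $K^{1+a}$ per iteration, and with $O(\eps^{-1/2})$ iterations the losses blow the $\delta^{-100\sqrt\eps}$ budget.

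What the paper does differently at this exact point is a heavy/light pigeonholing of the thickened tubes. After selecting a maximally incomparable family $\tilde{\mathbb T}$ of $1\times K\delta\times K\delta$ tubes, each $10^4\tilde T$ can contain up to $O(K^3)$ tubes from $\mathbb T$ (Observation 5), and each $x$ lies in at most $O(K^{-1}\delta^{-1})$ of them (Observation 6). One declares $\tilde T$ \emph{heavy} if $10^4\tilde T$ contains at least $c'K^{3-\sqrt\eps}$ original tubes, and shows the contribution of the light $\tilde T$'s to $|f_{\mathrm{low}}|$ is $\lesssim c'\delta^{\eps-1}$, hence negligible; so on a large-mass subset of $\supp\mu$ the heavy $\tilde T$'s alone provide the multiplicity needed to invoke the inductive hypothesis at scale $\sim K\delta$. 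Then, crucially, $\#\mathbb T\gtrsim K^{3-\sqrt\eps}\,\#\tilde{\mathbb T}_{\mathrm{heavy}}$. Since $3-\sqrt\eps-(1+a)\ge-\sqrt\eps$ for $a\le 2$, the gain $K^{3-\sqrt\eps}$ offsets the rescaling loss $(K\delta)^{-(1+a)}/\delta^{-(1+a)}=K^{-(1+a)}$ up to a factor $K^{-\sqrt\eps}=\delta^{\eps}$, giving the per-step bootstrap loss $\delta^{-2\eps}$ (rather than your $\delta^{-\eta(1+a)}$); over the $N\lesssim\eps^{-1/2}$ iterations this sums to $\delta^{-O(\sqrt\eps)}$, which is why the $100\sqrt\eps$ budget closes. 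Without the heavy/light split (and the resulting factor $K^{3-\sqrt\eps}$), the induction genuinely cannot be closed for $a$ up to $2$.
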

The main argument is a high-low decomposition, combined with an induction on scales argument. Decoupling is also used in the analysis of high part. Note that we have rescaled the physical space back to the unit ball, and so the frequency scale will be in $B^3(0,\delta^{-1})$.

\subsection{A bootstrap inequality}

To facilitate the induction, we first prove the following bootstrap inequality.

\begin{lem}\label{lem_bootstrap}
    With the statement of Theorem \ref{thm_thm7}, let $M(\delta,\eps,c)=M_a(\delta,\eps,c)$ be the smallest constant for which
    \begin{equation}\label{eqn_defn_M_delta}
        M(\delta,\eps,c)\#\mathbb T\ge \mu(\R^3)\rho_a(\mu)^{-1}\delta^{-1-a}.
    \end{equation}
    Then there exists a small $\delta_0=\delta_0(\eps,a,c)\in (0,1)$ such that for all $\delta\in (0,\delta_0)$ we have
    \begin{equation}\label{eqn_bootstrap_thm7}
        M(\delta,\eps,c)\leq \delta^{-2\eps}M\left(\delta^{1-\sqrt \eps},\frac\eps{1-\sqrt \eps},c_0 c\right)+c^{-4}A(\eps) \delta^{-10\sqrt \eps},
    \end{equation}
    where $c_0\in (0,1)$ is an absolute constant and $A(\eps)$ is decreasing in $\eps$.
\end{lem}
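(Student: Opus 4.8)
The proof of the bootstrap inequality \eqref{eqn_bootstrap_thm7} follows the high-low scheme used for Theorem \ref{thm_thm5}, but now we must feed the low part back into the same statement at a coarser scale, which is what produces the recursive term $M(\delta^{1-\sqrt\eps},\dots)$. Fix $\delta$ small, a measure $\mu$ with $\rho_a(\mu)<\infty$, and a tube system $\mathbb T=\cup_\theta\mathbb T_\theta$ satisfying \eqref{eqn_lower_bound_supp_mu}. As in Section \ref{sec_step_1_3D}, build wave packets: for each $T\in\mathbb T_\theta$ choose a Schwartz bump $\psi_T$ Fourier supported in the dual slab $P_\theta$ (dimensions $1\times1\times\delta$, short direction $\mathbf e_1(\theta)$), with $\psi_T\gtrsim 1$ on $T$, and set $f_\theta=\sum_{T\in\mathbb T_\theta}\psi_T$, $f=\sum_\theta f_\theta$. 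By \eqref{eqn_lower_bound_supp_mu} we get the pointwise lower bound $f(x)\gtrsim c\delta^{\eps-1}$ on $\supp(\mu)$, hence $\int |f|^p\,d\mu\gtrsim c^p\delta^{p(\eps-1)}\mu(\R^3)$ for a suitable even integer $p$ (taking $p=6$, the sharp decoupling exponent). Here I would work with the measure $\mu$ in place of Lebesgue measure on $H$; the needed substitute for ``$\int_H$'' is that $\mu$ of a unit ball is $\lesssim\rho_a(\mu)$, which converts $L^p(\mu)$ estimates on frequency-localized pieces into the counting bounds.

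\textbf{Decomposition and the three pieces.} Choose the intermediate scale $\delta_1=\delta^{1-\sqrt\eps}$ and decompose each slab $P_\theta$ into a high part (where $|\xi_2|\gtrsim$ some threshold comparable to $\delta_1^{1/2}=\delta^{(1-\sqrt\eps)/2}$), a genuinely low part, and dyadic mixed parts, exactly as in Section \ref{sec_step_1_3D} but with $K$ replaced by a power of $\delta^{-\sqrt\eps}$ dictated by the choice of $\delta_1$. Correspondingly $f=f_{\mathrm{high}}+f_{\mathrm{low}}+\sum_\lambda f_\lambda$. For the high part, Lemma \ref{lem_high_orthogonality_3D} gives $O(\delta^{-2\sqrt\eps})$-overlap of the high slabs (the analogue of $K^2$), so Plancherel plus $\int|f_\theta|^2\sim\delta^{-1}\#\mathbb T_\theta$ yields $\int|f_{\mathrm{high}}|^2\lesssim \delta^{-2\sqrt\eps}\delta^{-1}\#\mathbb T$, and then the Bourgain–Demeter cone decoupling Theorem \ref{thm_Bourgain-Demeter_decoupling} (applied at the appropriate subscales to the $\delta^{1/2}$-mixed and $\lambda$-mixed pieces, with flat decoupling Theorem \ref{thm_flat_decoupling} for the finer splitting, as in Sections \ref{sec_step_6_3D}–\ref{sec_step_7_3D}) controls $\|f_{\mathrm{high}}\|_{L^p(\mu)}^p+\sum_\lambda\|f_\lambda\|_{L^p(\mu)}^p$ by $\delta^{-O(\sqrt\eps)}\delta^{-1-a}\#\mathbb T\cdot(\text{lower-order factors})$ — it is here that the power $\delta^{-1-a}$ and the factor $\mu(\R^3)\rho_a(\mu)^{-1}$ enter, using the $s$-dimensional bound $\#\{T:T\cap B_r\ne\varnothing\}\lesssim (r/\delta)^a$ which holds automatically because the $\delta\times\delta\times1$ tubes in each $\mathbb T_\theta$ are disjoint and $\mu$ is $a$-Frostman (this is the analogue of the $(\delta,s)$-condition; one derives it from $\rho_a(\mu)<\infty$).

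\textbf{The recursive step — the low part.} This is the heart of the matter and the step I expect to be the main obstacle. On the low part, the function $f_{\mathrm{low}}$ is essentially constant at scale $\delta_1^{-1}=\delta^{-(1-\sqrt\eps)}$, so up to a negligible error (uncertainty principle, handled by the weight inequalities of Section \ref{sec_weight} adapted to 3D) we have $f_{\mathrm{low}}\approx$ a sum of wave packets adapted to $\delta_1\times\delta_1\times1$ tubes; equivalently, after rescaling the physical space by $\delta^{\sqrt\eps}$ we are looking at a configuration at scale $\delta_1$. One pushes forward $\mu$ under this rescaling to get a new measure $\mu'$ which is $a$-Frostman with a comparable (up to absolute constants) Frostman constant, the tube families become $\delta_1\times\delta_1\times1$ tubes in the directions $\gamma(\theta)$ with $\theta$ ranging over a maximally $\delta_1$-separated set, and the covering hypothesis \eqref{eqn_lower_bound_supp_mu} degrades from constant $c$ to $c_0c$ for an absolute $c_0$ (the loss being exactly the constant lost when passing from the fine wave packets to the coarse ones and from $\delta^{\eps-1}$ to $\delta_1^{\eps'-1}$ with $\eps'=\eps/(1-\sqrt\eps)$). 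Applying the defining property \eqref{eqn_defn_M_delta} of $M$ at scale $\delta_1$ with parameters $(\eps',c_0c)$ bounds the number of coarse tubes, hence $\|f_{\mathrm{low}}\|_{L^p(\mu)}^p$, by $M(\delta_1,\eps',c_0c)\,\mu'(\R^3)\rho_a(\mu')^{-1}\delta_1^{-1-a}$; unwinding the rescaling contributes the factor $\delta^{-2\eps}$ in front. Finally one compares: either the high-plus-mixed contribution dominates the lower bound $c^p\delta^{p(\eps-1)}\mu(\R^3)$, giving $\#\mathbb T\gtrsim c^{?}\mu(\R^3)\rho_a(\mu)^{-1}\delta^{-1-a}\delta^{-10\sqrt\eps}$ — i.e. the additive term $c^{-4}A(\eps)\delta^{-10\sqrt\eps}$ in \eqref{eqn_bootstrap_thm7} after tracking the powers of $c$ and the $\eps$-dependent decoupling constants into $A(\eps)$ — or the low part dominates, which by the previous sentence gives the recursive term $\delta^{-2\eps}M(\delta_1,\eps',c_0c)$. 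Taking the worse of the two cases and recalling $M$ is the \emph{smallest} constant for which \eqref{eqn_defn_M_delta} holds yields \eqref{eqn_bootstrap_thm7}. The delicate points are: (i) verifying the Frostman constant of $\mu'$ is comparable to that of $\mu$ under the anisotropic rescaling — this uses that the rescaling is by the scalar $\delta^{\sqrt\eps}$ in all three physical directions after the frequency side has localized, so it is genuinely isotropic in physical space; (ii) bookkeeping the $c$-losses so the additive term carries exactly $c^{-4}$ and the recursion carries $c_0c$; and (iii) ensuring all the $\delta^{-\eps}$-type decoupling losses and the $O(\sqrt\eps)$-overlap losses can be absorbed into $\delta^{-10\sqrt\eps}$, which forces the specific choice of intermediate scale $\delta^{1-\sqrt\eps}$ and the threshold $100\sqrt\eps$ appearing in Theorem \ref{thm_thm7}.
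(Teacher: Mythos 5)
Your overall high-low strategy and the choice of intermediate scale $\delta^{1-\sqrt\eps}$ are correct, but there is a genuine gap in the low-part mechanism, and it is the crux of the lemma. The low case is not handled by an $L^p$ estimate: the defining inequality \eqref{eqn_defn_M_delta} is a \emph{lower} bound on $\#\mathbb T$, not an upper bound on anything, so the step ``applying the defining property $\ldots$ bounds the number of coarse tubes, hence $\|f_{\mathrm{low}}\|_{L^p(\mu)}^p$'' has the logic inverted. What the paper actually does in the low case is purely combinatorial. Since $f_{\mathrm{low}}$ is essentially constant at scale $K\delta$ (with $K\sim\delta^{-\sqrt\eps}$), one passes to a maximally incomparable family $\tilde{\mathbb T}$ of thickened tubes of dimensions $K\delta\times K\delta\times 1$ and shows that the lower bound $\sum_T 1_T\geq c\delta^{\eps-1}$ on $\supp(\mu)$ propagates to $\sum_{\tilde T}1_{10^4\tilde T}\gtrsim cK^{-1}\delta^{\eps-1}$, which is exactly the hypothesis of Theorem \ref{thm_thm7} at scale $10^4K\delta$ with constant $c_0c$; one never returns to $L^p(\mu)$. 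The missing step that actually produces the shape of \eqref{eqn_bootstrap_thm7} is the pigeonholing of $\tilde{\mathbb T}$ into \emph{heavy} tubes (whose $10^4$-dilation contains $\geq c'K^{3-\sqrt\eps}$ fine tubes) and \emph{light} tubes, with the light ones contributing at most $c'\delta^{-1+\eps}$ to $f_{\mathrm{low}}$. Applying $M$ at scale $10^4K\delta$ gives a lower bound on $\#\tilde{\mathbb T}_{\mathrm{heavy}}$, and the heaviness counting $\#\mathbb T\gtrsim K^{3-\sqrt\eps}\#\tilde{\mathbb T}_{\mathrm{heavy}}$ converts this into the required lower bound on $\#\mathbb T$; the gain $K^{3-\sqrt\eps}$ against the loss $(K\delta)^{-1-a}=K^{-1-a}\delta^{-1-a}$ with $a\le 2$ and $K\sim\delta^{-\sqrt\eps}$ is precisely what yields the $\delta^{-2\eps}$ in front of the recursive term. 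Your proposed mechanism, ``unwinding the rescaling contributes the factor $\delta^{-2\eps}$,'' does not capture this, and indeed there is no physical-space rescaling in the paper's argument: $\mu$ and the unit ball stay fixed, and $\rho_a(\mu)$ is scale-uniform, so the Frostman constant needs no re-verification.

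A secondary inaccuracy: the claim that the $s$-dimensional condition $\#\{T\in\mathbb T_\theta:T\cap B_r\neq\varnothing\}\lesssim (r/\delta)^a$ ``holds automatically because the tubes are disjoint and $\mu$ is $a$-Frostman'' is false --- disjointness of tubes plus a Frostman measure on $\supp(\mu)$ gives no control whatsoever on how many tubes of $\mathbb T_\theta$ meet a fixed $r$-ball. Fortunately this is also unnecessary: in the high and mixed case the paper applies the estimates \eqref{eqn_high_contribution_3D_thm7}, \eqref{eqn_final_mixed_thm7}, \eqref{eqn_f_lambda_upper_bound_thm7} in the form that retains $\sum_\theta\#\mathbb T_\theta=\#\mathbb T$, so no $s$-dimensional hypothesis on $\mathbb T_\theta$ is ever invoked; this is one of the places where Theorem \ref{thm_thm7} deliberately differs from Theorem \ref{thm_thm5}.
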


{\it Remark.} By \eqref{eqn_lower_bound_supp_mu} we see that $c\delta^{\eps-1}\leq \#\Theta$ so 
\begin{equation}
    M(\delta,\eps,c)\le c^{-1}\mu(\R^3)\rho_a(\mu)^{-1}\delta^{-a-\eps},
\end{equation}
which is in particular less than $\delta^{-3}$ for $\delta$ small enough.

\begin{proof}[Proof of Theorem \ref{thm_thm7} assuming Lemma \ref{lem_bootstrap}]
    The base cases when $\delta\ge \delta_0$ or $\eps>1/2$ are trivial, so it suffices to assume $\delta<\delta_0$ and $\eps<1/2$ and iterate the bootstrap inequality \eqref{eqn_bootstrap_thm7} in Lemma \ref{lem_bootstrap}.

    Let $\delta<\delta_0$. We define three sequences $\delta_n,\eps_n,c_n$, $n\in \N$ as follows:
    \begin{gather}
        \delta_1:=\delta,\quad \eps_1:=\eps,\,c_1=c\\
        \eps_{n+1}:=\frac {\eps_n}{1-\sqrt{\eps_n}},\\
        \delta_{n+1}:=\delta_{n}^{1-\sqrt {\eps_n}},\\
        c_{n+1}:=c_0^{n}c.
    \end{gather}
    Then for each $N\in \N$ such that $\delta_{N}<\delta_0$ and $\eps_N<1/2$, we have, by direct computation
    \begin{equation}\label{eqn_iteration_02}
        M(\delta,\eps,c)
        \le \delta^{-2N\eps} M(\delta_{N+1},\eps_{N+1},c_{N+1})+\sum_{n=1}^N c_n^{-4}A(\eps_n)\delta^{-2(n-1)\eps}\delta_n^{-10{\sqrt{\eps_n}}}.
    \end{equation}

It is easy to see that $\eps_n$ is increasing and bounded from below by $\eps$. Also, it diverges to $\infty$. By the definition of the $\delta_n$, we have
$$\delta_{n+1}=\delta_n^{\eps_n/\eps_{n+1}}=(\delta_{n-1})^{\frac{\eps_{n-1}}{\eps_n}\frac{\eps_n}{\eps_{n+1}}}=\delta_{n-1}^{\frac{\eps_{n-1}}{\eps_{n+1}}},$$ and by iterating this a few times
$\delta_{n}=\delta^{\frac{\eps}{\eps_{n}}}$.
Therefore 
\begin{equation}\label{deltanbound}
    \delta_n^{-10\sqrt{\eps_n}}= \delta^{-10\frac{\eps}{\sqrt{\eps_n}}}\leq \delta^{-10\sqrt{\eps}}.
\end{equation}
If we let $N$ be the smallest integer such that $\eps_{N+1}\ge 1/2>\eps_N$, then we have the following claim:
\begin{equation}\label{eqn_decrease_rate}
    N\le 5\eps^{-1/2}.
\end{equation}

Assuming this, using $M(\delta_{N+1},\eps_{N+1},c_{N+1})\lesssim 1$ and inequality \eqref{deltanbound}, we can estimate from \eqref{eqn_iteration_02} that
\begin{align*}
    M(\delta,\eps,c)
    &\lesssim \delta^{-10\sqrt \eps}\left(1+\sum_{n=1}^N (c_0^{n-1}c)^{-4}A(\eps_n)\delta^{-10\sqrt \eps }\right)\\
    &\le \delta^{-10\sqrt \eps}\left(1+N(c_0^{N-1}c)^{-4}A(\eps)\delta^{-10\sqrt\eps}\right)\\
    &\lesssim_{\eps,c} \delta^{-100\sqrt\eps},
\end{align*}
which proves Theorem \ref{thm_thm7}. It remains to prove \eqref{eqn_decrease_rate}. Write $x_n=\eps_n^{-1}$, so we have the iterative formula
\begin{equation}\label{eqn_iterative_x_n}
    x_{n+1}=x_n-\sqrt {x_n}.
\end{equation}
Since $\eps_{N+1}=\frac{\eps_N}{1-\sqrt{\eps_N}}<\frac{\sqrt{2}}{2(\sqrt{2}-1)}$, one has $x_{N+1}=\eps_{N+1}^{-1}> 2-\sqrt{2}>1$.

Thus $2< x_n\le \eps^{-1}$ for every $1\le n\le N$, $x_{N+1}\in (2-\sqrt 2,2]$, and $(x_n)$ is decreasing from $1$ to $N+1$. The function $f(t)=t-\sqrt t$ is strictly increasing for $t>1$, and so \eqref{eqn_iterative_x_n} implies
\begin{equation}
    x_n=x_{n+1}+\frac 1 2+\frac {\sqrt {4x_{n+1}+1}} 2:=f^{-1}(x_{n+1}).
\end{equation}
In reverse order, we write $y_1=x_{N+1}$ and $y_{n+1}=f^{-1}(y_n)$, so $y_{N+1}= \eps^{-1}$.

The heuristic idea is as follows.
Since $f^{-1}(y)>y+\sqrt y$, if we view $y_n$ as a continuous function in $n$, then the iterative formula roughly says $\frac {dy}{dn}>y^{1/2}$, which gives $y_n>n^2/4$. 

The detail is as follows. We actually prove that $y_n>\frac {n^2y_1}{14}$. The base case $n=1$ is trivial and the case $n=2$ follows from $y_2=x_N>2$ and $y_1=x_{N+1}\leq 2$. Assume it holds for some $n\ge 2$, then we have
\begin{align*}
    y_{n+1}-\frac {(n+1)^2y_1}{14}
    &=f^{-1}(y_n)-\frac {(n+1)^2y_1}{14}\\
    &> y_n+\sqrt {y_n}-\frac {(n+1)^2y_1}{14}\\
    &\ge\frac {n^2y_1}{14}+\sqrt{\frac {n^2y_1}{14}}-\frac {(n+1)^2y_1}{14}\\
    &=\frac{\sqrt {y_1}}{14}((\sqrt {14}-2\sqrt {y_1})n-\sqrt {y_1}),
\end{align*}
which is positive since $y_1=x_{N+1}\leq2$ and $n\geq 2$. Using $y_N>N^2y_1/14$, we thus have
\begin{equation}\label{controlofN}
    N<\sqrt{\frac{14y_N}{y_1}}\le\sqrt {\frac{14y_N}{2-\sqrt 2}}\le 5\sqrt {y_N}\leq5\eps^{-1/2}.
\end{equation}

\end{proof}

The remaining of this section is devoted to the proof of Lemma \ref{lem_bootstrap}.

\subsection{Construction of wave packets} 

This step is the same as Section \ref{sec_step_1_3D} in the proof of Theorem \ref{thm_thm5} except rescaling.

Fix $\theta\in \Theta$. We define a frequency slab $P_\theta$ centred at the origin that has dimensions $\delta^{-1}\times \delta^{-1}\times 1$, and its shortest direction is parallel to $\mathbf e_1(\theta)$. That is, $P_\theta$ is dual to every $T_\theta\in \mathbb T_\theta$. We may also assume without loss of generality that $\mathbb T_\theta$ is a subcollection of the lattice that tiles $[-10,10]^3$. 
    
    For each $T_\theta\in \mathbb T_\theta$, define a Schwartz function $\psi_{T_\theta}:\R^3\to \R$ Fourier supported on $P_\theta$ such that $\psi_{T_\theta}\gtrsim 1$ on $T_\theta$ and decays rapidly outside $T_\theta$. With this, we define
    \begin{equation}
        f_\theta=\sum_{T_\theta\in \mathbb T_\theta}\psi_{T_\theta},\quad f=\sum_{\theta\in \Theta}f_\theta.
    \end{equation}
    Thus $f_\theta$ is also Fourier supported on $P_\theta$, but physically essentially supported on $\cup \mathbb T_\theta$. The function $f$ is Fourier supported on $\cup_{\theta\in \Theta}P_\theta$, which is a union of $\delta^{-1}\times \delta^{-1}\times 1$ slabs of the same size centered at $0$, but with normal orientations prescribed by $\mathbf e_1(\theta)$.

    Thus by definition, for every $x\in \mathrm{supp}(\mu)$ we have
    \begin{equation}\label{eqn_f_bound_thm7}
       c\delta^{\eps-1} \leq f(x).%\leq \#\mathbb T.
    \end{equation}

    In particular, for every $p<\infty$,
    \begin{equation}\label{eqn_lower_bound_f_thm7}
        \mu(\R^3)c^p \delta^{p(\eps-1)}\le\int |f|^p d\mu.%\le \mu(\R^3)(\#\mathbb T)^p.
    \end{equation}

\subsection{High-low decomposition} 
For each $\delta>0$, $\eps>0$, let
\begin{equation}\label{eqn_defn_K_thm7}
        K=K(\delta,\eps)=10^{-4}\delta^{-\sqrt \eps}.
\end{equation}
With this, decompose $P_\theta$ into:
    \begin{enumerate}
    \item The high part:
        \begin{equation}            P_{\theta,\mathrm{high}}:=\left\{\sum_{i=1}^3 \xi_i \mathbf e_i(\theta):|\xi_1|\le 1,K^{-1}\delta^{-1}\le |\xi_2|\le \delta^{-1},|\xi_3|\le \delta^{-1}\right\}.
        \end{equation}        
        \item The low part:
        \begin{equation}            P_{\theta,\mathrm{low}}:=\left\{\sum_{i=1}^3 \xi_i \mathbf e_i(\theta):|\xi_1|\le 1,|\xi_2|\le K^{-1}\delta^{-1},|\xi_3|\le K^{-1}\delta^{-1}\right\}.
        \end{equation}        
        \item The $\delta^{1/2}$-mixed part:
        \begin{equation}            P_{\theta,\delta^{1/2}}:=\left\{\sum_{i=1}^3 \xi_i \mathbf e_i(\theta):|\xi_1|\le 1,|\xi_2|\le \delta^{-1/2},K^{-1}\delta^{-1}\le|\xi_3|\le \delta^{-1}\right\}.
        \end{equation}
        \item For dyadic numbers $\lambda\in (\delta^{1/2},K^{-1}]$, the $\lambda$-mixed part:
        \begin{equation}            P_{\theta,\lambda}:=\left\{\sum_{i=1}^3 \xi_i \mathbf e_i(\theta):|\xi_1|\le 1,\frac\lambda 2\delta^{-1}\le |\xi_2|\le \lambda \delta^{-1},K^{-1}\delta^{-1}\le|\xi_3|\le \delta^{-1}\right\}.
        \end{equation}       
    \end{enumerate}
    We define decompositions as in  \eqref{eqn_P_theta_decomposition} through \eqref{eqn_f_decomposition}. With this, we have the following pigeonholing argument:

    \begin{claim}\label{twopossiblecases}
        Either one of the following is correct:
        \begin{itemize}
            \item High case: there exists a Borel set $F$ with $\mu(F)\ge \mu(\R^3)/2$, such that
            \begin{equation}
                \left|f_{\mathrm{high}}(x)+\sum_{\text{dyadic } \lambda\in [\delta^{1/2},K^{-1}]}f_\lambda (x)\right|\ge \frac{|f(x)|}2\ge   \frac{c\delta^{\eps-1}}2,\quad \forall x\in F.
            \end{equation}
            \item Low case: there exists a Borel set $F$ with $\mu(F)\ge \mu(\R^3)/2$, such that
            \begin{equation}\label{eqn_low_case_thm7}
                \left|f_{\mathrm{low}}(x)\right|\ge \frac{|f(x)|}2\ge \frac{c \delta^{\eps-1}}2,\quad \forall x\in F.
            \end{equation}
        \end{itemize}
    \end{claim}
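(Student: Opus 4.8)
The claim is an elementary pigeonholing statement: from the pointwise lower bound $|f(x)|\ge c\delta^{\eps-1}$ on $\supp(\mu)$ and the triangle-inequality decomposition $f = f_{\mathrm{high}} + f_{\mathrm{low}} + \sum_{\lambda} f_\lambda$, at each point $x$ one of the two halves -- the ``low'' piece $f_{\mathrm{low}}(x)$, or the ``everything else'' piece $f_{\mathrm{high}}(x) + \sum_\lambda f_\lambda(x)$ -- must carry at least half the total. The point is to promote this \emph{pointwise} dichotomy to a \emph{single global} dichotomy valid on a set of at least half the $\mu$-mass, which is exactly what the subsequent induction needs.

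First I would define the two Borel sets
\[
    F_{\mathrm{low}} := \Big\{ x\in\supp(\mu) : |f_{\mathrm{low}}(x)| \ge \tfrac{|f(x)|}{2} \Big\},
    \qquad
    F_{\mathrm{high}} := \supp(\mu)\setminus F_{\mathrm{low}}.
\]
These are Borel since $f_{\mathrm{low}}$ and $f$ are continuous (they are Schwartz, being finite sums of Fourier-truncated Schwartz functions). On $F_{\mathrm{high}}$, by the triangle inequality applied to the decomposition \eqref{eqn_f_decomposition}, we have
\[
    \Big|f_{\mathrm{high}}(x) + \sum_{\text{dyadic }\lambda\in[\delta^{1/2},K^{-1}]} f_\lambda(x)\Big|
    \ge |f(x)| - |f_{\mathrm{low}}(x)| > \tfrac{|f(x)|}{2},
\]
and on $F_{\mathrm{low}}$ the defining inequality gives $|f_{\mathrm{low}}(x)| \ge |f(x)|/2$ directly. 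In both cases the lower bound $|f(x)|/2 \ge c\delta^{\eps-1}/2$ follows from \eqref{eqn_f_bound_thm7}. Since $\supp(\mu) = F_{\mathrm{low}} \sqcup F_{\mathrm{high}}$ and $\mu$ is supported in the unit ball with $\mu(\supp(\mu)) = \mu(\R^3)$, we have $\mu(F_{\mathrm{low}}) + \mu(F_{\mathrm{high}}) = \mu(\R^3)$, so at least one of the two has $\mu$-measure $\ge \mu(\R^3)/2$. Taking $F = F_{\mathrm{low}}$ in the first situation puts us in the Low case, and $F = F_{\mathrm{high}}$ in the second puts us in the High case.

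There is essentially no obstacle here -- the only points requiring a word are measurability of the sets involved (handled by continuity of $f$ and $f_{\mathrm{low}}$) and the bookkeeping that $\mu(\R^3) = \mu(\supp(\mu))$ since $\mu$ is a finite Borel measure whose support is contained in the unit ball. The claim then follows by splitting into the two cases according to which of $F_{\mathrm{low}}, F_{\mathrm{high}}$ has the larger mass.
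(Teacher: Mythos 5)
Your proof is correct and follows essentially the same pigeonholing argument as the paper: define $F_{\mathrm{low}}=\{x\in\supp(\mu):|f_{\mathrm{low}}(x)|\ge |f(x)|/2\}$, note that on its complement in $\supp(\mu)$ the triangle inequality forces the high-plus-mixed part to be at least $|f(x)|/2$, and conclude by observing that one of the two sets carries at least half the mass of $\mu$. The only difference is that you spell out the measurability and mass-bookkeeping details that the paper leaves implicit.
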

    \begin{proof}
        Define $F=\{x\in \mathrm{supp}(\mu):|f_{\mathrm{low}}(x)|\ge |f(x)|/2\}$. If $\mu(F)\ge \mu(\R^3)/2$, then we are in the latter case. Otherwise, $\mu(\supp(\mu)\backslash F)\ge \mu(\R^3)/2$, and for $x\in \supp(\mu)\backslash F$, we are in the former case with $\supp(\mu)\backslash F$ in place of $F$.
    \end{proof}

    \subsection{Analysis of high and mixed parts}

    Assume that we are in the high case of Claim \ref{twopossiblecases}. Note that we have incorporated the mixed parts into the high part which will make our argument much simpler.

    By the lower bound in \eqref{eqn_f_bound_thm7} and the triangle inequality with the fact that we only have $O(|\log \delta|)$ many dyadic scales $\lambda$, we have
    \begin{equation}\label{eqn_60}
        \int |f_{\mathrm{high}}|^pd\mu+\sum_{\text{dyadic } \lambda\in [\delta^{1/2},K^{-1}]}|f_\lambda|^p d\mu\gtrapprox c^p\mu(\R^3)\delta^{p(\eps-1)}.
    \end{equation}
    One good thing about the high case and the mixed cases is that we can use the uncertainty principle. More precisely, we have the following lemma.
    \begin{lem}
        Let $f$ be a bounded integrable function and $\eta$ be an $L^1$-normalised Schwartz function Fourier supported on a ball $B\sub \R^d$ of radius $\delta^{-1}$. Let $a\in (0,d]$ and $\mu$ be a finite Borel measure such that $\rho_a(\mu)<\infty$. Then for each $p\in [1,\infty)$ we have
        \begin{equation}\label{eqn_locally_constant}
            \int |f*\eta(x)|^p d\mu(x)\lesssim \rho_a(\mu)\delta^{a-d}\int |f*\tilde\eta(x)|^p dx,
        \end{equation}
    where $\tilde \eta$ is a Schwartz function whose Fourier transform is supported on $2B$ and equals $1$ on $B$.
    \end{lem}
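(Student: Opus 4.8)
The plan is to exploit the "locally constant" philosophy: since $f*\eta$ has Fourier support in the ball $B$ of radius $\delta^{-1}$, it is essentially constant on dual balls of radius $\delta$, and a finite measure $\mu$ with $\rho_a(\mu)<\infty$ cannot concentrate more than $\rho_a(\mu)\delta^a$ mass on any such ball. The key technical device is to write $f*\eta$ on a $\delta$-ball in terms of an average of $|f*\tilde\eta|^p$ over a neighbourhood, where $\tilde\eta$ is the fattened multiplier that reproduces $\eta$ (so $\eta = \eta * \tilde\eta$, or at least $\widehat{\tilde\eta}=1$ on $B\supseteq\operatorname{supp}\widehat\eta$, whence $f*\eta = f*\eta*\tilde\eta$).

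First I would fix a tiling $\{Q\}$ of $\R^d$ by cubes of side length $\sim\delta$ and, for each $Q$ with centre $c_Q$, use the pointwise bound $|f*\eta(x)|\lesssim \big(|f*\eta*\tilde\eta|*w_{Q'}\big)(c_Q)$ valid for all $x\in Q$, where $w_{Q'}$ is an $L^1$-normalised weight adapted to a slightly dilated cube $Q'$ (this is the rigorous form of "$f*\eta$ varies slowly at scale $\delta$", obtained exactly as in the weight inequalities of Section~\ref{sec_weight}, using that $\tilde\eta^\vee$ is an $L^1$-normalised Schwartz function at scale $\delta$ — here one absorbs the Schwartz tails into the weight $w$). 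Then
\begin{equation}
\int |f*\eta(x)|^p\,d\mu(x)=\sum_Q\int_Q |f*\eta|^p\,d\mu\lesssim \sum_Q \mu(Q)\,\big(|f*\tilde\eta|^p * w_{Q'}\big)(c_Q),
\end{equation}
where I used $|g*w|^p\le |g|^p*w$ (Jensen, since $\int w\sim 1$) with $g=f*\eta*\tilde\eta=f*\tilde\eta$ — here I am free to replace $\eta*\tilde\eta$ by $\tilde\eta$ since $\widehat{\tilde\eta}=1$ on the support of $\widehat\eta$ and $\widehat\eta$ itself may be harmlessly folded into the statement's $\tilde\eta$.

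Next I would invoke the Frostman bound $\mu(Q)\le \rho_a(\mu)(\operatorname{diam} Q)^a\lesssim \rho_a(\mu)\delta^a$ for every $\delta$-cube $Q$, giving
\begin{equation}
\int |f*\eta(x)|^p\,d\mu(x)\lesssim \rho_a(\mu)\,\delta^a\sum_Q \big(|f*\tilde\eta|^p * w_{Q'}\big)(c_Q).
\end{equation}
Finally, interchanging sum and integral, $\sum_Q w_{Q'}(c_Q - y)\lesssim \delta^{-d}$ uniformly in $y$ (the weights at scale $\delta$ summed over a $\delta$-tiling have total mass $\sim\delta^{-d}$, as in Corollary~\ref{cor_sum_weights} after rescaling), so
\begin{equation}
\sum_Q \big(|f*\tilde\eta|^p * w_{Q'}\big)(c_Q)=\int |f*\tilde\eta(y)|^p\Big(\sum_Q w_{Q'}(c_Q-y)\Big)dy\lesssim \delta^{-d}\int |f*\tilde\eta(y)|^p\,dy,
\end{equation}
which combines with the previous display to give $\int |f*\eta|^p\,d\mu\lesssim \rho_a(\mu)\delta^{a-d}\int |f*\tilde\eta|^p\,dx$, as claimed.

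The main obstacle is making the "locally constant" step \eqref{eqn_locally_constant} rigorous with honest Schwartz tails rather than compactly supported bumps: one must choose the weight exponent $E$ large enough (depending only on $d$ and $p$) so that the convolution with $\tilde\eta^\vee$ is genuinely dominated by $w_{Q'}$ and so that the summed-weight bound $\sum_Q w_{Q'}(c_Q-y)\lesssim \delta^{-d}$ holds — this is precisely the kind of bookkeeping carried out in Section~\ref{sec_weight}, and I would either cite it directly or reproduce the one-line rescaled version. Everything else is Fubini, Jensen, and the defining property of $\rho_a(\mu)$.
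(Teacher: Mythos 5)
Your proposal is correct and reaches the same conclusion, but it is organized differently from the paper. The paper's proof first replaces $f$ by $f*\tilde\eta$ (using $f*\tilde\eta*\eta=f*\eta$), then applies H\"older/Jensen to reduce to $p=1$, then Fubini/Minkowski to reduce to the clean translation-invariant estimate $\int|\eta|\,d\mu\lesssim\rho_a(\mu)\delta^{a-d}$, which is proved by decomposing into dyadic annuli $|y|\sim\delta 2^i$ and invoking the rapid decay of $\eta_0$. You instead tile $\R^d$ into $\delta$-cubes, run a locally-constant estimate plus Jensen on each cube, apply the Frostman bound $\mu(Q)\lesssim\rho_a(\mu)\delta^a$ per cube, and sum the weights. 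Both routes rest on the same two facts --- rapid decay of a Schwartz function at the uncertainty scale and the Frostman mass bound --- but the paper's reduction to $p=1$ and a single one-dimensional integral avoids the cube tiling and the $\sum_Q w_{Q'}(c_Q-y)\lesssim\delta^{-d}$ bookkeeping you carry out explicitly, while your route makes the locally-constant heuristic manifest and ties in nicely with the weight machinery of Section~\ref{sec_weight}. One small slip worth fixing: you write $g=f*\eta*\tilde\eta=f*\tilde\eta$, but since $\widehat{\tilde\eta}\equiv1$ on $\supp\widehat\eta$ we have $f*\eta*\tilde\eta=f*\eta$, not $f*\tilde\eta$; the function that should enter your Jensen step is simply $g=f*\tilde\eta$, coming from the identity $f*\eta=(f*\tilde\eta)*\eta$ together with the rapid decay of $\eta$ at scale $\delta$.
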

    \begin{proof}[Proof of lemma]
        Since $f*\tilde\eta*\eta=f* \eta$,  it suffices to prove \eqref{eqn_locally_constant} with the integral on the right hand side replaced by $\int |f|^p dx$, and then replace $f$ by $f*\tilde \eta$. Next, by H\"older's inequality we have $$|f*\eta(x)|^p\lesssim |f|^p*|\eta|(x)\cdot \|\eta\|_{L^1}^{p-1}\lesssim |f|^p*|\eta|(x),$$ and so we may assume $p=1$. By Minkowski's inequality, it then suffices to prove
\begin{equation}\label{fractalintegral}
            \int |\eta(y)|d\mu(y)\lesssim \rho_a(\mu)\delta^{a-d}.
        \end{equation}
        To check that, use that from the support assumption on $\eta$ one can write $\eta(y)=\delta^{-d}\eta_0(\delta^{-1}y)$ where $\eta_0$ is Fourier supported in a ball of radius $1$. Then 
        $$\int |\eta(y)|d\mu(y)\lesssim \delta^{-d}\left\{\int_{|y|\leq \delta} \|\eta_0\|_{\infty}d\mu(y)+\sum_{i\geq 0}\int_{|y|\sim \delta 2^i} |\eta(\delta^{-1} y)|d\mu(y)\right\}$$
        so (\ref{fractalintegral}) follows by the rapid decay of $\eta_0$ and the definition of $\rho_a(\mu)$.
    \end{proof}
    Using the lemma to bound the left hand side of \eqref{eqn_60} (strictly speaking, $f_{\mathrm{high}}$ and $f_\lambda$ should be replaced by functions Fourier supported on a slightly larger slab, but that does not affect the estimates), we have
    \begin{equation}\label{eqn_high_lower_bound_thm_7}
        \int |f_{\mathrm{high}}|^pdx+\sum_{\text{dyadic } \lambda\in [\delta^{1/2},K^{-1}]}|f_\lambda|^p dx\gtrapprox c^p\mu(\R^3)\rho_a(\mu)^{-1}\delta^{p(\eps-1)+3-a}.
    \end{equation}
    Suppose $p\ge 4$. We can just use \eqref{eqn_high_contribution_3D_thm7}, \eqref{eqn_final_mixed_thm7} and \eqref{eqn_f_lambda_upper_bound_thm7} with $t=1$ and $\#\Theta=\delta^{-1}$, noting there is a scaling difference, to get
    \begin{align*}
        &\int |f_{\mathrm{high}}|^pdx+\sum_{\text{dyadic } \lambda\in [\delta^{1/2},K^{-1}]}|f_\lambda|^p dx\\
        &\lesssim_\eps \delta^3\left(K^2 \delta^{-p+1}\left(\sum_{\theta\in \Theta}\#\mathbb T_\theta\right)+\sum_{\text{dyadic } \lambda\in [\delta^{1/2},K^{-1}]}\delta^{-\eps-p+1}\left(\sum_{\theta\in \Theta}\#\mathbb T_\theta\right)\lambda^{\frac p 2-2}\right)\\
        &\lesssim \delta^{-2\sqrt\eps-p+4}\left(\sum_{\theta\in \Theta}\#\mathbb T_\theta\right)\\
        &= \delta^{-2\sqrt\eps-p+4}\#\mathbb T,
    \end{align*}
    where in the second to last line we used \eqref{eqn_defn_K_thm7}.
    
    We may take, say, $p=4$. Using this and \eqref{eqn_high_lower_bound_thm_7}, we thus have, for some large constant $A(\eps)$ independent of $\delta,c$,
    \begin{equation}
        \#\mathbb T\ge A(\eps)^{-1}c^4\mu(\R^3)\rho_a(\mu)^{-1}\delta^{10\sqrt \eps-1-a}.
    \end{equation}
    We have just absorbed the term $\delta^{-4\eps-2\sqrt{\eps}}$ and the logarithmic loss into the term $\delta^{-10\sqrt \eps}$, using $\delta<\delta_0$ if $\delta_0$ is small enough. This gives in the high case
    \begin{equation}
        M(\delta,\eps,c)\leq A(\eps) c^{-4}\delta^{-10\sqrt \eps}.
    \end{equation}    
    
    \subsection{Analysis of low part}
    
    \subsubsection{Maximally incomparable collection of thickened tubes}
    
    For each $T_\theta\in \mathbb T_\theta$, we can view the essential support of $\psi_{T_\theta}*\eta_{\theta,\mathrm{low}}^\vee$ as a thickened tube $\tilde T_\theta$ with dimensions $1\times K\delta\times K\delta$. Let $\tilde {\mathbb T}_\theta$ and thus $\tilde {\mathbb T}$ denote the collection of these thickened tubes. 
    
    With this, we say two tubes $\tilde T$, $\tilde T'$ are $C$-{\it comparable} if one is contained in the $C$-time dilation of the other.
    We then choose from $\tilde {\mathbb T}$ a maximally $1000$-incomparable subcollection; we still denote this by $\tilde {\mathbb T}$ slightly abusing notation. 

    We also perform a preliminary reduction here: for each $\tilde T=\tilde T_\theta\in \tilde {\mathbb T}$, we replace it by some concentric $\tilde T_{\theta'}$ where $\theta'\in K\delta \Z\cap [0,1]$ is the closest lattice point to $\theta$. Note that $\tilde T_\theta$ and $\tilde T_{\theta'}$ are $10$-comparable since $|\gamma'|=1$. Also, the collection of slightly distorted tubes $\{\tilde T_{\theta'}\}$ is $10$-incomparable, since the original collection is $1000$-incomparable. 

    Abusing notation again, we may thus assume without loss of generality that $\tilde {\mathbb T}=\cup_{\theta\in K\delta \Z\cap [0,1]}\tilde {\mathbb T}_\theta$. Also, by losing an absolute constant we may assume $\mathrm{diam}(\Theta)\ll_\gamma 1$, so that we have the relation $|\gamma(\theta)-\gamma(\theta')|\sim |\theta-\theta'|$ for all $\theta,\theta'\in \Theta$.
    
    We have the following observations.
    \begin{enumerate}
        \item \label{obs_1} If two tubes $\tilde T_\theta$, $\tilde T'_\theta$ are from one single $\tilde {\mathbb T}_\theta$, then they are $\ge K\delta$ apart.
        \item \label{obs_2} If two tubes $\tilde T_\theta$, $\tilde T_{\theta'}$ intersect, then $|\theta-\theta'|\ge|\gamma(\theta)-\gamma(\theta')|\ge K\delta$.
        \item \label{obs_3} Every $T\in \mathbb T$ is contained in the 10000-time dilation of some $\tilde T\in \tilde {\mathbb T}$, with possibly different orientations (with difference less than $O(K\delta)$). 
        \item \label{obs_4} As a result of Observations \ref{obs_2} and \ref{obs_3}, each $T\in \mathbb T$ is contained in at most $O(1)$ thickened tubes of the form $10000\tilde T$.
        \item \label{obs_5} The 10000-dilation of every $\tilde T\in \tilde {\mathbb T}$ contains at most $O(K^3)$ tubes $T\in \mathbb T$. Indeed, each $\tilde T$ contains at most $O(K^2)$ tubes $T$ from each single direction. Meanwhile, since $\Theta$ is $\delta$-separated, we have $\{\gamma(\theta):\theta\in \Theta\}$ is also $\delta$-separated. Thus by Observation \ref{obs_3}, each $\tilde T$ could only contain tubes $T\in \mathbb T$ from at most $O(K)$ directions, from which the result follows. 
        \item \label{obs_6} For each $x\in \R^3$, it is contained in at most $O(K^{-1}\delta^{-1})$ thickened tubes from $\tilde {\mathbb T}$. Indeed, for each direction $\theta\in \Theta$, Observation \ref{obs_1} implies that $x$ is contained in at most $1$ thickened tube from $\tilde {\mathbb T}_\theta$. Observation \ref{obs_2} implies that if $|\theta-\theta'|< K\delta$, i.e. $|\gamma(\theta)-\gamma(\theta')|< K\delta$, then $x$ cannot be contained in both $\tilde T_\theta$ and $\tilde T_{\theta'}$. Thus there are at most $O(K^{-1}\delta^{-1})$ directions $\theta$ for which there is a thickened tube in that direction that contains $x$.
    \end{enumerate} 

    \subsubsection{A pigeonholing argument}

    Consider Observation \ref{obs_3} above. A priori, each $10000\tilde T$ may contain different numbers of $T\in \mathbb T$. We now use a pigeonholing argument to reduce to the case that each $10000\tilde T$ contains at least a large portion of tubes in $T\in \mathbb T$.

    Write $\tilde {\mathbb T}$ as a disjoint union
    \begin{equation}
        \tilde {\mathbb T}=\tilde {\mathbb T}_{\mathrm{heavy}}\cup \tilde {\mathbb T}_{\mathrm{light}},
    \end{equation}
where $\tilde {\mathbb T}_{\mathrm{light}}$ is the collection of thickened tubes $\tilde {\mathbb T}$ whose 10000-dilation contains $\le c'K^{3-\sqrt \eps}$ tubes from $\mathbb T$. Here $c'$ is a small constant to be chosen right below. Note that, using Observation \ref{obs_6}, 
\begin{align*}
    \norm{\sum_{\tilde T\in \tilde {\mathbb T}_{\mathrm{light}}}\sum_{T\in \mathbb T:T\sub 10000\tilde T}\psi_{T}*\eta^\vee_{\mathrm{low}}}_\infty
    &\le \sum_{\tilde T\in \tilde {\mathbb T}_{\mathrm{light}}}\sum_{T\in \mathbb T:T\sub 10000\tilde T}\norm{\psi_{T}*\eta^\vee_{\mathrm{low}}}_\infty\\
    &\lesssim (\delta^{-1}K^{-1})(c'K^{3-\sqrt \eps})K^{-2}\\
    &=c'\delta^{-1+\eps}.
\end{align*}
By the rapid decay of $\psi_T*\eta^\vee_{\mathrm{low}}$ and Observations \ref{obs_3} and \ref{obs_4} above, we have
\begin{align*}
    \left|\sum_{\tilde T\in \tilde {\mathbb T}}\sum_{T\in \mathbb T:T\sub 10000\tilde T}\psi_{T}*\eta^\vee_{\mathrm{low}}(x)\right|
    &\sim \sum_{\tilde T\in \tilde {\mathbb T}}\sum_{T\in \mathbb T:T\sub 10000\tilde T}\left|\psi_{T}*\eta^\vee_{\mathrm{low}(x)}\right|\\
    &\sim \sum_{T\in \mathbb T}\left|\psi_{T}*\eta^\vee_{\mathrm{low}}(x)\right|\\
    &\sim \left|\sum_{T\in \mathbb T}\psi_{T}*\eta^\vee_{\mathrm{low}}(x)\right|\\    &=\left|f_{\mathrm{low}}(x)\right|\gtrsim c\delta^{\eps-1},
\end{align*}
for all $x\in F$, using \eqref{eqn_low_case_thm7}. Thus the triangle inequality gives
\begin{equation}
    \left|\sum_{\tilde T\in \tilde {\mathbb T}_{\mathrm{heavy}}}\sum_{T\in \mathbb T:T\sub 10000\tilde T}\psi_{T}*\eta^\vee_{\mathrm{low}}(x)\right|\gtrsim c\delta^{\eps-1},\quad \forall x\in F,
\end{equation}
if $c'$ is chosen to be small enough. Again by rapid decay of $\psi_T*\eta^\vee_{\mathrm{low}}$, we have
\begin{equation}
    \sum_{\tilde T\in \tilde {\mathbb T}_{\mathrm{heavy}}}\sum_{T\in \mathbb T:T\sub 10000\tilde T}1_T(x)\gtrsim cK^2\delta^{\eps-1},\quad \forall x\in F.
\end{equation}
Using Observation \ref{obs_5} above, this implies that 
\begin{equation}\label{eqn_10_tilde_T_lower_bound}
    \sum_{\tilde T\in \tilde {\mathbb T}_{\mathrm{heavy}}}1_{10000\tilde T}(x)\gtrsim cK^{-1}\delta^{\eps-1},\quad \forall x\in F.
\end{equation}

\subsubsection{Analysis at new scale}

Our goal in this step is to let $\tilde {\mathbb T}$ satisfy the hypothesis of Theorem \ref{thm_thm7} at a larger scale $10000K\delta=\delta^{1-\sqrt \eps}$.

Let $\Theta'$ denote the lattice $K\delta \Z\cap [0,1]$. Recall that by our choice, $\tilde {\mathbb T}_{\mathrm{heavy}}$ can be written as $\cup_{\theta'\in \Theta'}\tilde {\mathbb T}_{\theta'}$, where each $\tilde {\mathbb T}_{\theta'}$ is a collection of disjoint $K\delta\times K\delta\times 1$ tubes pointing in the direction $\gamma(\theta')$. By a trivial tiling argument and losing an absolute constant, we can further assume $\Theta'=10^4 K\delta\Z \cap [0,1]$.  

Write
\begin{equation}
    \delta^{-1+\eps}(10^4K)^{-1}=(10^4 K\delta)^{-1+\tilde \eps},\quad \tilde \eps:=\frac \eps {1-\sqrt \eps}.
\end{equation}
Then we have, for some absolute constant $c_0$,
\begin{equation}
    \sum_{\tilde T\in \tilde {\mathbb T}_{\mathrm{heavy}}}1_{10^4\tilde T}(x)\geq c_0c (10^4 K\delta)^{-1+\tilde \eps},\quad \forall x\in \supp(\mu).
\end{equation}
Now we can apply the definition of $M(10^4 K\delta,\tilde \eps,c_0c)$ in \eqref{eqn_defn_M_delta} to get that
\begin{equation}
    \#\tilde {\mathbb T}_{\mathrm{heavy}}\ge M(10^4 K\delta,\tilde \eps,c_0c)^{-1}\mu(\R^3)\rho_a(\mu)^{-1}(10^4 K\delta)^{-1-a}.
\end{equation}
By the definition of $\tilde {\mathbb T}_{\mathrm{heavy}}$, we have
\begin{align*}
    \# \mathbb T
    &\ge c(10^4 K)^{3-\sqrt \eps}M(10^4 K\delta,\tilde \eps,c_0c)^{-1}\mu(\R^3)\rho_a(\mu)^{-1}(10^4 K\delta)^{-1-a}\\
    &\ge c (10^4 K)^{2-a-\sqrt \eps}M(10^4 K\delta,\tilde \eps,c_0c)^{-1}\mu(\R^3)\rho_a(\mu)^{-1}\delta^{-1-a}.
\end{align*} 
Thus in the low case, we have
\begin{align*}
    M(\delta,\eps,c)
    &\le M(10^4 K\delta,\tilde \eps,c_0c) c^{-1}(10^4 K)^{a-2+\sqrt \eps}\\
    &\le M(10^4 K\delta,\tilde \eps,c_0c)c^{-1}\delta^{-\eps}\\
    &\le M(\delta^{1-\sqrt{\eps}},\tilde \eps,c_0c)\delta^{-2\eps},
\end{align*}
where we have used $a-2\le 0$ and used $\delta<\delta_0$ taking $\delta_0$ to be small enough. Thus the result follows.

Combining the low case and the high and mixed case, we are done.

\subsection{Corollary 2}
We now restate and prove Corollary 2 in \cite{GGGHMW2022}.

\begin{cor}\label{cor_cor_2}
    Let $a\in (0,3]$ and $a_1\in [0,\min\{2,a\})$. Let $\mu$ be a finite non-zero Borel measure supported on the unit ball in $\R^3$ with $\rho_a(\mu)\le 1$. Let $\delta\in (0,1)$ and denote by $\Theta$ the $\delta$-lattice in $[0,1]$. For each $\theta\in \Theta$, let $\mathbb S_\theta$ be a disjoint collection of at most $c\mu(\R^3)\delta^{-a_1}$ squares of side length $\delta$ in $\pi_\theta(\R^3)$. Then there exists $\eps\in (0,1)$, depending only on $\gamma,c,a,a_1$, such that
    \begin{equation}\label{eqn_70}
        \delta \sum_{\theta\in \Theta}\pi_{\theta\#} \mu\left(\bigcup \mathbb S_\theta\right)\le C(a,a_1)\mu(\R^3)\delta^\eps.
    \end{equation}
    
\end{cor}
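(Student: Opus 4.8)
The strategy is to reformulate the left-hand side in terms of a family of $\delta$-tubes and then apply Theorem~\ref{thm_thm7}: its conclusion (``many tubes'') will contradict the scarcity of tubes forced by the hypothesis $\#\mathbb S_\theta\le c\mu(\R^3)\delta^{-a_1}$ unless the set on which those tubes pile up is small. For each $\theta\in\Theta$ and $S\in\mathbb S_\theta$ the slab $\pi_\theta^{-1}(S)$ meets $B^3(0,1)$ in a tube of dimensions $\delta\times\delta\times O(1)$ in direction $\gamma(\theta)$; after trivially cutting it into at most two tubes of length $1$ we collect these into $\mathbb T_\theta$, put $\mathbb T:=\bigcup_{\theta}\mathbb T_\theta$ and $g:=\sum_{T\in\mathbb T}1_T$. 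Since $\supp\mu\subseteq B^3(0,1)$ and the squares in $\mathbb S_\theta$ are disjoint, $\pi_{\theta\#}\mu(\bigcup\mathbb S_\theta)=\mu(\bigcup_{T\in\mathbb T_\theta}T)$, hence
\begin{equation*}
\delta\sum_{\theta\in\Theta}\pi_{\theta\#}\mu\Big(\bigcup\mathbb S_\theta\Big)\ \le\ \delta\int_{\R^3}g\,d\mu,
\end{equation*}
whereas $\#\mathbb T\lesssim c\,\mu(\R^3)\,\delta^{-1-a_1}$ and $g\lesssim\#\Theta\lesssim\delta^{-1}$ pointwise. It also costs nothing to replace $a$ by $\bar a:=\min\{2,a\}$: as $\mu$ lives in the unit ball and $\bar a\le a$ one still has $\rho_{\bar a}(\mu)\le\rho_a(\mu)\le1$, and now $0\le\bar a\le2$ with $a_1<\bar a$, so Theorem~\ref{thm_thm7} is available with exponent $\bar a$.

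Next I would fix a small $\eps_0>0$ (chosen at the very end in terms of $\bar a-a_1$ only) and split $\supp\mu$ into $\{g<\delta^{\eps_0-1}\}$ and $\{g\ge\delta^{\eps_0-1}\}$. On the first piece one bounds directly
\begin{equation*}
\delta\int_{\{g<\delta^{\eps_0-1}\}}g\,d\mu\ \le\ \delta\cdot\delta^{\eps_0-1}\mu(\R^3)\ =\ \delta^{\eps_0}\mu(\R^3).
\end{equation*}
For the second piece let $F:=\{x\in\supp\mu:g(x)\ge\delta^{\eps_0-1}\}$; since each $1_T$ (with $T$ closed) is upper semicontinuous, $F$ is closed, hence $\supp(\mu|_F)\subseteq F$ and $\sum_{T\in\mathbb T}1_T(x)\ge\delta^{\eps_0-1}$ for every $x\in\supp(\mu|_F)$. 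Assuming $\mu(F)>0$ (otherwise this piece contributes nothing), I would apply Theorem~\ref{thm_thm7} to $\mu|_F$, the tubes $\mathbb T$, exponent $\bar a$, with $c=1$ and $\eps=\eps_0$, using $\rho_{\bar a}(\mu|_F)\le\rho_{\bar a}(\mu)\le1$, to obtain $\#\mathbb T\ge b(\eps_0,\bar a,1)\,\mu(F)\,\delta^{-1-\bar a+100\sqrt{\eps_0}}$. Comparing with $\#\mathbb T\lesssim c\mu(\R^3)\delta^{-1-a_1}$ gives $\mu(F)\lesssim c\,b(\eps_0,\bar a,1)^{-1}\mu(\R^3)\,\delta^{\bar a-a_1-100\sqrt{\eps_0}}$, so, using $g\lesssim\delta^{-1}$ on $F$,
\begin{equation*}
\delta\int_F g\,d\mu\ \lesssim\ \mu(F)\ \lesssim\ c\,b(\eps_0,\bar a,1)^{-1}\,\mu(\R^3)\,\delta^{\bar a-a_1-100\sqrt{\eps_0}}.
\end{equation*}

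Finally, choosing $\eps_0$ so small that $100\sqrt{\eps_0}\le(\bar a-a_1)/2$ (possible since $a_1<\bar a$) and setting $\eps:=\min\{\eps_0,(\bar a-a_1)/2\}\in(0,1)$, which depends only on $\gamma,c,a,a_1$, adding the two estimates would give
\begin{equation*}
\delta\sum_{\theta\in\Theta}\pi_{\theta\#}\mu\Big(\bigcup\mathbb S_\theta\Big)\ \lesssim\ \mu(\R^3)\big(\delta^{\eps_0}+\delta^{\bar a-a_1-100\sqrt{\eps_0}}\big)\ \lesssim\ \mu(\R^3)\,\delta^{\eps},
\end{equation*}
as claimed (the constant depending on $\gamma$ as usual, and on $c$ through $b(\eps_0,\bar a,1)$; to make it depend on $a,a_1$ only one further shrinks $\eps$ so as to absorb the residual constant into a power of $\delta$). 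The essential input — that a set on which many of the $\delta$-tubes concentrate must be small when there are few tubes overall — is exactly Theorem~\ref{thm_thm7}, and everything else is bookkeeping. The only mildly delicate points I anticipate are the passage from $a$ to $\min\{2,a\}$ together with the verification that $\rho_{\bar a}(\mu|_F)\le1$, and the semicontinuity observation guaranteeing that the pointwise hypothesis of Theorem~\ref{thm_thm7} holds on all of $\supp(\mu|_F)$ rather than merely on $F$.
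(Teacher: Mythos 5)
Your proof is correct and takes essentially the same approach as the paper: convert the squares to $\delta$-tubes, isolate the set $F$ of points covered by $\gtrsim\delta^{\eps-1}$ tubes, and apply Theorem~\ref{thm_thm7} to $\mu|_F$ to bound $\mu(F)$, then sum the heavy and light contributions. The only difference is organizational --- the paper argues by contradiction (assuming \eqref{eqn_70} fails for a large free constant $C$, deducing $\mu(F)\gtrsim C\delta^{\eps}\mu(\R^3)$ by pigeonholing, and then contradicting the upper bound on $\#\mathbb T$), while you argue directly by splitting the integral and bounding each piece, which is marginally cleaner; and your observation that $F$ is closed by upper semicontinuity of $\sum_T 1_T$, so that $\mathrm{supp}(\mu|_F)\subseteq F$, is a genuine technical point that the paper's proof glosses over when it asserts the pointwise lower bound on $\mathrm{supp}(\nu)$.
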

\begin{proof}
    We argue by contradiction. Suppose that there exist $a,a_1$ such that for every $\eps\in (0,1)$ and every large constant $C>10$, there exist $\delta>0$ and a disjoint collection $\mathbb S_\theta$ of at most $\mu(\R^3)\delta^{-a_1}$ squares of radius $\delta$ in $\pi_\theta(\R^3)$ for each $\theta\in \Theta$, such that 
    \begin{equation}\label{eqn_70_contradiction}
        \delta \sum_{\theta\in \Theta}\pi_{\theta\#} \mu\left(\bigcup \mathbb S_\theta\right)>C\mu(\R^3)\delta^\eps.
    \end{equation}

    Note that for each $S_\theta\in \mathbb S_\theta$, $\pi_\theta^{-1}(S_\theta)\cap B^3(0,1)$ is a $\delta$-tube. We denote these tubes by
    \begin{equation}
        \mathbb T_\theta:=\{\pi_\theta^{-1}(S_\theta)\cap B^3(0,1):S_\theta\in \mathbb S_\theta\},
    \end{equation}
    and write $\mathbb T:=\cup_{\theta\in \Theta}\mathbb T_\theta$. Thus \eqref{eqn_70_contradiction} becomes
    \begin{equation}\label{eqn_70_contradiction_2}
        \delta\int \sum_{T\in \mathbb T}1_T(x)d\mu(x)=\delta \sum_{T\in \mathbb T}\mu(T)=\delta \sum_{\theta\in \Theta} \mu\left(\bigcup_{T_\theta\in \mathbb T_\theta}T_\theta\right)>C\mu(\R^3)\delta^\eps.
    \end{equation}    
    We perform a pigeonholing argument here. Denote $S(x):=\delta \sum_{T\in \mathbb T}1_T(x)$, which is bounded above by $\delta \#\Theta=1$. Define
    \begin{equation}
        F:=\left\{x\in \mathrm{supp}(\mu):\delta \sum_{T\in \mathbb T}1_T(x)\ge \frac{C\delta^\eps}2\right\}.
    \end{equation}
    We thus have
    \begin{align*}
        \int S(x)d\mu(x)
        &=\int_F S(x)d\mu(x)+\int_{F^c}S(x)d\mu(x)\\
        &\le \int_F 1 d\mu(x)+\int_{F^c}\frac {C\delta^{\eps}}2d\mu(x)\\
        &\le \mu(F)+\frac {C\delta^{\eps}}2\mu(F^c).
    \end{align*}
    Using \eqref{eqn_70_contradiction_2}, this implies that 
    \begin{equation}
        \mu(F)\ge \frac {C\delta^{\eps}}2\mu(\R^3).
    \end{equation}
    Now we define a new measure $\nu$ by
    \begin{equation}
        \nu(A):=\mu(A\cap F),
    \end{equation}
    supported on $F\sub B^3(0,1)$, with $\rho_{\min\{2,a\}}(\nu)\le\rho_a(\nu)\le \rho_a(\mu)\le 1$. Also, the definition of $F$ means that
    \begin{equation}
        \sum_{T\in \mathbb T}1_T(x)\ge \frac{C}2\delta^{\eps-1},\quad \forall x\in \mathrm{supp}(\nu).
    \end{equation}
    Now we may apply Theorem \ref{thm_thm7} with $\nu$ in place of $\mu$ and $\min\{2,a\}$ in place of $a$ to obtain
    \begin{equation}\label{eqn_number_T_lower_bound_cor_2}
        \#\mathbb T\gtrsim_{\eps,a}\nu(\R^3)\delta^{-1-\min\{2,a\}+100\sqrt \eps}\ge \frac {C\delta^{\eps}}2\mu(\R^3)\delta^{-1-\min\{2,a\}+100\sqrt \eps}.
    \end{equation}
    However, since each $\#\mathbb S_\theta\le c\mu(\R^3)\delta^{-a_1}$ for each $\theta\in \Theta$, we have on the other hand
    \begin{equation}
        \#\mathbb T\le c\mu(\R^3)\delta^{-1-a_1}.
    \end{equation}
    Compared with \eqref{eqn_number_T_lower_bound_cor_2}, we have there is a constant $C_{\eps,c,a}$ such that
    \begin{equation}
        C\le C_{\eps,c,a}\delta^{\min\{2,a\}-a_1-\eps-100\sqrt \eps}.
    \end{equation}
    Choosing $\eps>0$ small enough such that $\min\{2,a\}-a_1-\eps-100\sqrt \eps>0$ and then $C>C_{\eps,c,a}$, we arrive at a contradiction.
\end{proof}

\subsection{Corollary 3}
We now restate and prove Corollary 3 in \cite{GGGHMW2022}.
\begin{cor}\label{cor_cor_3}
    Let $a\in [2,3]$ and $a_1\in [0,2)$. Let $\mu$ be a finite non-zero Borel measure supported on the unit ball in $\R^3$ with $\rho_a(\mu)<\infty$. Let $\delta\in (0,1)$ and denote by $\Delta$ the $\delta^{1/2}$-lattice in $[0,1]$. For each $\theta\in \Delta$, let $\mathbb P_\theta$ be a disjoint collection of at most $\mu(\R^3)\delta^{-\frac {a_1+1}2}$ rectangles of dimensions $\delta\times \delta^{1/2}$ in $\pi_\theta(\R^3)$ whose longer sides point in the $\gamma'(\theta)=\mathbf e_2(\theta)$ direction. Then there exists $\eps\in (0,1)$, depending only on $\gamma,a,a_1$, such that
    \begin{equation}\label{eqn_78}
        \delta^{1/2} \sum_{\theta\in \Delta}\pi_{\theta\#} \mu\left(\bigcup \mathbb P_\theta\right)\le C(a,a_1)\rho_a(\mu)\mu(\R^3)\delta^\eps.
    \end{equation}
\end{cor}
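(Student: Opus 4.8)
The plan is to reduce Corollary~\ref{cor_cor_3} to Corollary~\ref{cor_cor_2} by a rescaling (parabolic dilation) argument that converts the $\delta\times\delta^{1/2}$ rectangles at scale $\delta^{1/2}$-separation into $\delta^{1/2}$-squares at scale $\delta^{1/2}$-separation, so that Corollary~\ref{cor_cor_2} applies with the substitution $\delta\mapsto\delta^{1/2}$. First I would normalize: by homogeneity we may assume $\rho_a(\mu)\le 1$ (dividing $\mu$ by $\rho_a(\mu)$ costs exactly the factor $\rho_a(\mu)$ on the right-hand side of \eqref{eqn_78}, which is why it appears there but not in \eqref{eqn_70}). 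The key geometric observation is that each rectangle $P_\theta\in\mathbb P_\theta$, having dimensions $\delta\times\delta^{1/2}$ with the long side along $\mathbf e_2(\theta)=\gamma'(\theta)$, sits inside a $\delta^{1/2}$-square $S_\theta\supseteq P_\theta$ in $\pi_\theta(\R^3)$; hence $\pi_{\theta\#}\mu(\bigcup\mathbb P_\theta)\le\pi_{\theta\#}\mu(\bigcup\mathbb S_\theta)$ where $\mathbb S_\theta:=\{S_\theta:P_\theta\in\mathbb P_\theta\}$ is a collection of at most $\mu(\R^3)\delta^{-(a_1+1)/2}$ squares of side $\delta^{1/2}$. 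Writing $\delta':=\delta^{1/2}$, this says $\#\mathbb S_\theta\le\mu(\R^3)(\delta')^{-(a_1+1)}$, so setting $a_1':=a_1+1$ we are exactly in the situation of Corollary~\ref{cor_cor_2} at scale $\delta'$, provided $a_1'<\min\{2,a\}$, i.e. $a_1<\min\{1,a-1\}$ — which holds since $a_1<2$ would need tightening; here is the first place I need to check the hypotheses line up, since Corollary~\ref{cor_cor_2} requires $a_1'\in[0,\min\{2,a\})$ and we have $a\ge2$, so we need $a_1+1<2$, i.e. $a_1<1$. If $a_1\in[1,2)$ the squares $\mathbb S_\theta$ are too numerous and a direct appeal fails; in that case I would instead not pass to $\delta^{1/2}$-squares but work directly with the $\delta\times\delta^{1/2}$ rectangles, observing that their $\pi_\theta^{-1}$-preimages intersected with $B^3(0,1)$ are $\delta\times\delta^{1/2}\times1$ slabs pointing along $\gamma(\theta)$, which is precisely the geometry handled by the $\delta^{1/2}$-mixed part analysis and by Theorem~\ref{thm_thm7} applied with the intermediate scale $\lambda=\delta^{1/2}$; so the cleanest route is to mimic the contradiction argument in the proof of Corollary~\ref{cor_cor_2} with the tube collection $\mathbb T_\theta:=\{\pi_\theta^{-1}(P_\theta)\cap B^3(0,1)\}$ (a family of $\delta\times\delta^{1/2}\times1$ planks), rescale parabolically by $\delta^{-1/2}$ in the two transverse directions so that planks become $\delta^{1/2}\times1\times\delta^{-1/2}$... .

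Let me commit to the rescaling route since it is the one suggested by the statement's normalization. Assume $\rho_a(\mu)\le1$. Argue by contradiction: suppose for every $\eps\in(0,1)$ and every large $C$ there is $\delta$ and collections $\mathbb P_\theta$ with $\delta^{1/2}\sum_{\theta\in\Delta}\pi_{\theta\#}\mu(\bigcup\mathbb P_\theta)>C\mu(\R^3)\delta^\eps$. Let $\mathbb T_\theta:=\{\pi_\theta^{-1}(P_\theta)\cap B^3(0,1):P_\theta\in\mathbb P_\theta\}$, a disjoint family of $\le\mu(\R^3)\delta^{-(a_1+1)/2}$ many $\delta\times\delta^{1/2}\times1$-planks pointing along $\gamma(\theta)$, with long transverse side along $\gamma'(\theta)$, and $\mathbb T:=\cup_\theta\mathbb T_\theta$. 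Then exactly as in \eqref{eqn_70_contradiction_2}, $\delta^{1/2}\sum_{T\in\mathbb T}\mu(T)>C\mu(\R^3)\delta^\eps$, and since $S(x):=\delta^{1/2}\sum_{T}1_T(x)\le\delta^{1/2}\#\Delta=1$, the same pigeonholing as in Corollary~\ref{cor_cor_2} produces a set $F$ with $\mu(F)\ge\frac C2\delta^\eps\mu(\R^3)$ and $\sum_{T\in\mathbb T}1_T(x)\ge\frac C2\delta^{\eps-1/2}$ for $x\in\supp\nu$, where $\nu(A):=\mu(A\cap F)$, $\rho_a(\nu)\le1$. The hypothesis \eqref{eqn_lower_bound_supp_mu} of Theorem~\ref{thm_thm7} asks for a lower bound $\gtrsim\delta'^{\eps'-1}$ with planks of dimensions $\delta'\times\delta'\times1$; our planks are $\delta\times\delta^{1/2}\times1$, so I would first group the $\delta^{1/2}$-direction into $\delta$-scale: each $\delta\times\delta^{1/2}\times1$ plank is a union of $\sim\delta^{-1/2}$ disjoint $\delta\times\delta\times1$-tubes along $\gamma(\theta)$, giving a collection $\mathbb T'$ of $\delta\times\delta\times1$-tubes with $\sum_{T'\in\mathbb T'}1_{T'}(x)\ge\frac C2\delta^{\eps-1/2}$ on $\supp\nu$ and $\#\mathbb T'\le\delta^{-1/2}\cdot\#\Delta\cdot\mu(\R^3)\delta^{-(a_1+1)/2}=\mu(\R^3)\delta^{-1-a_1/2-1}$; hmm, this overcounts directions. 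The cleaner fix: don't subdivide — instead apply the \emph{rescaled} Theorem~\ref{thm_thm5}-type estimate at the $\lambda=\delta^{1/2}$ mixed scale directly, or better, invoke Theorem~\ref{thm_thm7} after a parabolic rescaling $x\mapsto(\delta^{-1/2}x_1,\delta^{-1/2}x_2,x_3)$ in Frenet coordinates which turns $\delta\times\delta^{1/2}\times1$ planks into $\delta^{1/2}\times1\times1$ objects and $\gamma$ into a dilated curve still satisfying a (rescaled) nondegeneracy — but this distorts the curve family in a way that needs care.

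Given these friction points, I expect the main obstacle to be matching the plank geometry to the hypotheses of the tools cited: Corollary~\ref{cor_cor_2} and Theorem~\ref{thm_thm7} are phrased for \emph{isotropic} $\delta$-tubes/squares, whereas Corollary~\ref{cor_cor_3} genuinely concerns \emph{anisotropic} $\delta\times\delta^{1/2}$ rectangles, and the correct bridge is the two-scale structure ($\delta$ and $\delta^{1/2}$) that the $\delta^{1/2}$-mixed-part analysis of Section~\ref{sec_step_6_3D} was built to exploit. Therefore the route I would actually carry out is: (1) normalize $\rho_a(\mu)\le1$; (2) enlarge each $\delta\times\delta^{1/2}$ rectangle to the $\delta^{1/2}$-square containing it and observe $\#\mathbb S_\theta\le\mu(\R^3)\delta^{-(a_1+1)/2}=\mu(\R^3)(\delta^{1/2})^{-(a_1+1)}$; (3) when $a_1+1<\min\{2,a\}$ (always true here since $a\ge2$ forces only $a_1+1<2$, i.e. the corollary as stated with $a_1<2$ must implicitly mean the regime where this works, or one treats the boundary case separately), apply Corollary~\ref{cor_cor_2} with scale $\delta^{1/2}$ and exponents $(a,a_1+1)$ to get $\delta^{1/2}\sum_\theta\pi_{\theta\#}\mu(\bigcup\mathbb S_\theta)\le C(a,a_1+1)\mu(\R^3)\delta^{\eps/2}$; (4) since $\pi_{\theta\#}\mu(\bigcup\mathbb P_\theta)\le\pi_{\theta\#}\mu(\bigcup\mathbb S_\theta)$ this gives \eqref{eqn_78} with the $\rho_a(\mu)$ factor restored by undoing the normalization and with a fresh $\eps$. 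The one genuinely delicate point, which I would flag explicitly, is whether $a_1+1$ stays below $\min\{2,a\}$; if not, the statement must be using the sharper mixed-scale input rather than Corollary~\ref{cor_cor_2}, and the honest thing is to cite Theorem~\ref{thm_thm7} directly with the plank collection $\mathbb T_\theta$ and the lower bound $\delta^{\eps-1/2}$, reading off the conclusion at the anisotropic scale — but the logic (contradiction + pigeonhole + apply the big theorem + compare with the trivial upper bound on $\#\mathbb T$) is identical to the proof of Corollary~\ref{cor_cor_2} and I would present it in that parallel form.
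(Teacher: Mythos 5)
You correctly diagnose the central obstruction: enlarging the $\delta\times\delta^{1/2}$ rectangles to $\delta^{1/2}$-squares and applying Corollary~\ref{cor_cor_2} at scale $\delta'=\delta^{1/2}$ with exponent $a_1'=a_1+1$ only works when $a_1+1<2$, i.e.\ $a_1<1$, so it does not cover the stated range $a_1\in[0,2)$. You also hit on the right corrective idea --- subdividing each $\delta\times\delta^{1/2}\times 1$ plank $U_\theta=\pi_\theta^{-1}(P_\theta)\cap B^3(0,1)$ into $\sim\delta^{-1/2}$ many $\delta\times\delta\times1$-tubes --- but then you stall at the direction bookkeeping (``this overcounts directions'') and never resolve it.

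The missing step is a relabeling over the full $\delta$-lattice $\Theta$, not the $\delta^{1/2}$-lattice $\Delta$. For each $\theta'\in\Theta$ take the nearest $\theta\in\Delta$ (so $|\theta'-\theta|<\delta^{1/2}$) and set $\mathbb S_{\theta'}:=\{\pi_{\theta'}(T_\theta):T_\theta\in\mathbb T_\theta\}$, where $\mathbb T_\theta$ is the tiling of (a dilate of) the planks $U_\theta$ into $\delta$-tubes in direction $\gamma(\theta)$. Each $\pi_{\theta'}(T_\theta)$ is, up to $O(1)$-overlap, a $\delta$-square in $\pi_{\theta'}(\R^3)$, and $\#\mathbb S_{\theta'}\sim\delta^{-1/2}\#\mathbb P_\theta\lesssim\mu(\R^3)\delta^{-(a_1+2)/2}$. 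The point is that this is now a bona fide collection indexed by the $\delta$-lattice with exponent $\frac{a_1+2}{2}$, and $\frac{a_1+2}{2}<2=\min\{2,a\}$ precisely because $a_1<2$ and $a\ge2$. Corollary~\ref{cor_cor_2} therefore applies at scale $\delta$ for the whole range $a_1\in[0,2)$, giving $\delta\sum_{\theta'\in\Theta}\pi_{\theta'\#}\mu(\cup\mathbb S_{\theta'})\lesssim\mu(\R^3)\delta^\eps$. The factor discrepancy between $\delta\sum_{\theta'\in\Theta}$ and $\delta^{1/2}\sum_{\theta\in\Delta}$ is then absorbed exactly by the $\sim\delta^{-1/2}$ many $\theta'$ attached to each $\theta$, together with the geometric observation that $\mu\bigl(\pi_{\theta'}^{-1}(\pi_{\theta'}(\cup\,100U_\theta))\bigr)\ge\mu\bigl(\pi_{\theta}^{-1}(\pi_{\theta}(\cup\,U_\theta))\bigr)$ for $|\theta'-\theta|<\delta^{1/2}$. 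Without this reindexing, you only have $\#\Delta\sim\delta^{-1/2}$ directions feeding into Corollary~\ref{cor_cor_2}, which loses a factor $\delta^{1/2}$ and does not close; so your proposal, while pointing at the right ingredients, does not yet constitute a proof for $a_1\in[1,2)$.
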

\begin{proof}
    We may assume $\rho_a(\mu)= 1$ without loss of generality. To apply Corollary \ref{cor_cor_2}, we need to construct for each $\theta'\in \Theta=\delta \Z\cap [0,1]$ a collection of squares $\mathbb S_{\theta'}$ of radius $\delta$ in $\pi_{\theta'}(\R^3)$. We proceed as follows. For each $\theta'\in \Theta$, we choose $\theta\in \Delta$ to be the closest to $\theta'$, so that $|\theta'-\theta|<\delta^{1/2}$. We denote this relation by $\theta'\prec \theta$. We then consider the set of planks
    \begin{equation}
      \mathbb{U}_{\theta}:=  \{U_\theta=\pi_{\theta}^{-1}(P_{\theta})\cap B^3(0,1):P_{\theta}\in \mathbb P_\theta\}.
    \end{equation}
    We partition each $100U_\theta$ into $(\delta/2)\times (\delta/2)\times 1$ tubes $T_\theta$ with direction $\gamma(\theta)$ and denote the collection of all these tubes by $\mathbb T_\theta$. Then we define
    \begin{equation}
        \mathbb S_{\theta'}:=\{\pi_{\theta'}(T_\theta):T_\theta\in \mathbb T_\theta\}.
    \end{equation}
    Strictly speaking, each element of $\mathbb S_{\theta'}$ is a possibly slightly distorted parallelogram, but we can slightly enlarge it to become a square of side length $\delta$. The slightly enlarged collection of squares will be at most $100$-overlapping since $|\gamma(\theta')-\gamma(\theta)|\le |\theta'-\theta|<\delta^{1/2}$; by abuse of notation we will still denote it by $\mathbb S_{\theta'}$. We also have
    \begin{equation}
        \# \mathbb S_{\theta'}=\# \mathbb T_\theta\sim \delta^{-1/2}\#\mathbb P_\theta\lesssim \mu(\R^3)\delta^{-\frac {a_1+2}2}.
    \end{equation}     
    Since $\frac{a_1+2}2<2=\min\{2,a\}$, we may apply Corollary \ref{cor_cor_2} to obtain
    \begin{equation}
        \delta \sum_{\theta'\in \Theta}\pi_{\theta'\#} \mu\left(\bigcup \mathbb S_{\theta'}\right)\lesssim \mu(\R^3)\delta^\eps,
    \end{equation}
    and so

    \begin{align*}
        \mu(\R^3)\delta^\eps
        &\gtrsim \delta \sum_{\theta\in \Delta}\sum_{\theta'\prec \theta}\mu\left(\pi_{\theta'}^{-1}(\bigcup_{T_\theta\in \mathbb T_\theta}\pi_{\theta'}(T_\theta))\right)\\
        &=\delta \sum_{\theta\in \Delta}\sum_{\theta'\prec \theta}\mu\left(\pi_{\theta'}^{-1}(\pi_{\theta'}(\cup_{U_{\theta}\in \mathbb{U}_{\theta}} 100U_\theta))\right)\\
        &\geq \delta \sum_{\theta\in \Delta}\sum_{\theta'\prec \theta}\mu\left(\pi_{\theta}^{-1}(\pi_{\theta}(\cup_{U_{\theta}\in \mathbb{U}_{\theta}}U_\theta))\right)\\
        &\sim \delta^{1/2}\sum_{\theta\in \Delta}\mu\left(\pi_{\theta}^{-1}(\cup_{P_{\theta}\in \mathbb{P}_{\theta}}P_\theta)\right)\\
        &=\delta^{1/2} \sum_{\theta\in \Delta}\pi_{\theta\#} \mu\left(\bigcup \mathbb P_\theta\right),
    \end{align*}    
    where the third line follows from a simple geometric observation.
\end{proof}

\section{Proof of Theorem 8}\label{sec_proof_thm8}
We now prove Theorem \ref{thm_thm8}. 

To avoid technical issues and to make the presentation slightly cleaner, we only restrict ourselves to the case of a standard light cone. More precisely, we consider $I_0:=[0,\sqrt 2\pi]$ and $\mathbf e_1=\gamma: I_0\to \mathbb S^2$ given by
\begin{equation}\label{lightcone1}
    \gamma(\theta)=\frac 1 {\sqrt 2}\left(\cos \sqrt 2\theta,\sin \sqrt 2 \theta,1\right).
\end{equation}
In this case, it is easy to see that
\begin{equation}\label{lightcone2}
    \begin{split}
        \mathbf e_2(\theta)&=\left(-\sin \sqrt 2\theta,\cos \sqrt 2 \theta,0\right),\\
    \mathbf e_3(\theta)&=\frac 1 {\sqrt 2}\left(-\cos \sqrt 2\theta,-\sin \sqrt 2 \theta,1\right),\\
    \uptau(\theta)&=1.
    \end{split}
\end{equation}

The main idea is to decompose the frequency space into ``good" and ``bad" parts according to the cone $\Gamma=\{r\mathbf e_3(\theta):r\in \R,\theta\in I_0\}$, and estimate them separately. 

We record some important geometric facts about the light cone. 

\begin{prop}\label{prop_light_cone}
    Let $\xi=(x_1,x_2,x_3)\in \R^3$. Then there exists a $\xi':=r\mathbf e_3(\omega)\in \Gamma$ such that $|\xi-\xi'|=\mathrm{dist}(\xi,\Gamma)$, and also $\xi-\xi'$ is parallel to $\mathbf e_1(\omega)$. As a result, $\mathrm{dist}(\xi,\Gamma)=|\xi\cdot \mathbf e_1(\omega)|$. 
    
    Meanwhile, we also have
    $\mathrm{dist}(\xi,\Gamma)=2^{-1/2}|x_3-\sqrt{x_1^2+x_2^2}|$.
    \begin{proof}
        This is easily seen using the cylindrical coordinates in $\R^3$.
    \end{proof}

 %   \textcolor{red}{Maybe add a picture later}
\end{prop}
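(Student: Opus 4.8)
The plan is to exploit the rotational symmetry of $\Gamma$ about the $x_3$-axis and reduce everything to an elementary computation in a two-dimensional meridian half-plane. First I would record that, by the formulas \eqref{lightcone1}--\eqref{lightcone2}, the set $\Gamma$ is precisely the full light cone $\{(x_1,x_2,x_3):x_3^2=x_1^2+x_2^2\}$, and that it is invariant under the reflection $x_3\mapsto -x_3$ (replace $\omega$ by $\omega+\pi/\sqrt 2$ and $r$ by $-r$); hence we may assume $x_3\ge 0$, and then the stated identity reads $\mathrm{dist}(\xi,\Gamma)=2^{-1/2}|x_3-\sqrt{x_1^2+x_2^2}|$ (the general case following by reflection). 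If $\xi=0$ there is nothing to prove, so assume $\xi\ne 0$ and write $(x_1,x_2)=\rho_0(\cos\phi,\sin\phi)$ with $\rho_0=\sqrt{x_1^2+x_2^2}\ge 0$ (if $\rho_0=0$ the argument below goes through with $\phi$ chosen arbitrarily).

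Next I would pass to the meridian half-plane $H_\phi=\{(\rho\cos\phi,\rho\sin\phi,z):\rho\ge 0,\ z\in\R\}$ containing $\xi$. By rotational symmetry a nearest point of $\Gamma$ to $\xi$ lies in $H_\phi$, and inside $H_\phi$ (in the regime $x_3\ge 0$) the cone $\Gamma$ is the ray $R_+=\{z=\rho\ge 0\}$. Computing the foot of the perpendicular from $(\rho_0,x_3)$ to the line $\{z=\rho\}$ gives the point with $\rho$-coordinate $\rho^*=(\rho_0+x_3)/2\ge 0$, so the foot genuinely lies on $R_+$, not past the apex. Choosing $\omega\in I_0$ with $(\cos\sqrt 2\omega,\sin\sqrt 2\omega)=-(\cos\phi,\sin\phi)$ and $r=\sqrt 2\,\rho^*=(\rho_0+x_3)/\sqrt 2\ge 0$, one checks from \eqref{lightcone2} that $\xi':=r\mathbf e_3(\omega)=\frac{r}{\sqrt 2}(\cos\phi,\sin\phi,1)$ is exactly this foot, whence $|\xi-\xi'|=\mathrm{dist}(\xi,\Gamma)$, and the meridian-plane computation gives $|\xi-\xi'|=\frac{1}{\sqrt 2}|x_3-\rho_0|=2^{-1/2}|x_3-\sqrt{x_1^2+x_2^2}|$.

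Finally I would verify the two remaining claims. The displacement $\xi-\xi'$ is by construction perpendicular, inside $H_\phi$, to the ray direction; since $\mathbf e_1(\omega)=\gamma(\omega)=\frac{1}{\sqrt 2}(-\cos\phi,-\sin\phi,1)$ and $\mathbf e_3(\omega)=\frac{1}{\sqrt 2}(\cos\phi,\sin\phi,1)$ together span $H_\phi$ as a linear subspace, with $\mathbf e_1(\omega)\perp\mathbf e_3(\omega)$, it follows that $\xi-\xi'$ is parallel to $\mathbf e_1(\omega)$; explicitly $\xi-\xi'=\frac{x_3-\rho_0}{\sqrt 2}\mathbf e_1(\omega)$. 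Then, using $\xi'=r\mathbf e_3(\omega)\perp\mathbf e_1(\omega)$ and $|\mathbf e_1(\omega)|=1$, we obtain $|\xi\cdot\mathbf e_1(\omega)|=|(\xi-\xi')\cdot\mathbf e_1(\omega)|=|\xi-\xi'|=\mathrm{dist}(\xi,\Gamma)$, which is the asserted formula. The whole argument is routine; the only points that need any care are checking that the foot of the perpendicular stays on the correct sheet of the cone (precisely what the reduction to $x_3\ge 0$ secures) and identifying the abstract frame vectors $\mathbf e_1(\omega),\mathbf e_3(\omega)$ with the concrete meridian-plane directions, both of which are immediate from \eqref{lightcone1}--\eqref{lightcone2}.
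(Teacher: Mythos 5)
Your approach is the same as the paper's one-line sketch (cylindrical coordinates and rotational symmetry), and your meridian-plane computation is correct and gives a useful amount of detail that the paper omits. Two points deserve extra care.

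First, after passing to the meridian half-plane $H_\phi$, the cone $\Gamma$ meets $H_\phi$ in \emph{two} rays $R_\pm=\{(\rho,z):z=\pm\rho,\ \rho\ge 0\}$, not just $R_+$. When you write that ``inside $H_\phi$ (in the regime $x_3\ge 0$) the cone $\Gamma$ is the ray $R_+$'' you are tacitly asserting that the nearest cone point lies on $R_+$ rather than $R_-$. This is true for $\rho_0,x_3\ge 0$, but it needs a one-line check: when the $R_-$-foot stays on the ray one has $|\rho_0-x_3|\le \rho_0+x_3$, and when it falls past the apex one has $|\rho_0-x_3|/\sqrt 2\le\sqrt{\rho_0^2+x_3^2}$.

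Second, and more importantly, the parenthetical ``the general case following by reflection'' is wrong for the displayed formula. Reflection $x_3\mapsto-x_3$ preserves $\mathrm{dist}(\xi,\Gamma)$ but not $2^{-1/2}\,|x_3-\sqrt{x_1^2+x_2^2}|$; what you actually obtain for general $\xi$ is $\mathrm{dist}(\xi,\Gamma)=2^{-1/2}\,\big||x_3|-\sqrt{x_1^2+x_2^2}\big|$. Indeed $\xi=(1,0,-1)$ lies on $\Gamma$, yet the stated formula evaluates to $\sqrt 2$, so the formula as written in the proposition is only valid for $x_3\ge 0$. This does not affect the paper, since its only use is in Lemma \ref{lem_distance_planks} where $\xi\in P_{\theta,j,k}$ forces $x_3=(\xi_1+\xi_3)/\sqrt 2>0$, but you should state that restriction rather than claim the general case follows by reflection. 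Note in contrast that the \emph{first} assertion of the proposition---the existence of $\xi'=r\mathbf e_3(\omega)$ with $\xi-\xi'\parallel\mathbf e_1(\omega)$ and $\mathrm{dist}(\xi,\Gamma)=|\xi\cdot\mathbf e_1(\omega)|$---does hold for all $\xi$ and does follow from your $x_3\ge 0$ construction by reflection, because $|\xi\cdot\mathbf e_1(\omega)|$ is reflection-invariant once one tracks $\omega\mapsto\omega+\pi/\sqrt 2$.
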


\subsection{Construction of covering planks}

Our goal is to construct planks in three steps. We give the main idea first. Given $j\ge k\ge 0$, Step 1 is to construct a canonical family $\Lambda_{j,k}$ of planks $P_{\theta,j,k}$ that cover the set
\begin{equation}
    \{\xi\in \R^3:|\xi|\sim 2^j,\mathrm{dist}(\xi,\Gamma)\sim 2^{j-k}\},
\end{equation}
except for $k=j$, in which case we want to cover 
$$ \{\xi\in \R^3:|\xi|\sim 2^j,\mathrm{dist}(\xi,\Gamma)\lesssim 1\}.$$
Each plank $P_{\theta,j,k}$ essentially has normal, aperture and radial lengths $2^{j-k}$, $2^{j-k/2}$, $2^{j}$, respectively.

In Step 2, we define $\Lambda_j=\cup_{k\le j}\Lambda_{j,k}$, so that $\Lambda_j$ forms a canonical family of planks that cover the $2^j$ neighbourhood of $\{\xi\in \R^3:|\xi|\sim 2^j\}$.

In Step 3, we define $\Lambda=\cup_{j\in \N} \Lambda_j$, so that $\Lambda$ forms a covering of the entire $\R^3$.%\textcolor{red}{Tai: You mean a cover for the whole $\R^3_{\xi}$?}

We will then define wave packets adapted to the planks above, and perform subsequent Fourier analysis.

Now we come to the details and define the planks $P_{\theta,j,k}$. Fix $\theta\in I_0$. For $0\le k<j$, we define

\begin{align}
    P_{\theta,j,k}
    &:=\Bigg\{\sum_{i=1}^3\xi_i \mathbf e_i(\theta): 2^{j-k-10}\le |\xi_1|\le  2^{j-k+10}, \nonumber\\
    &|\xi_2|\le  2^{j-\frac k 2-100},  2^{j-10}\le |\xi_3|\le 2^{j+10} \Bigg\}.
\end{align}
For $0\le k=j$, we define
\begin{align}
    P_{\theta,j,j}&=\Bigg\{\sum_{i=1}^3\xi_i \mathbf e_i(\theta):|\xi_1|\le 2^{10}, |\xi_2|\le 2^{\frac j 2-100}, 2^{j-10}\le|\xi_3|\le 2^{j+10} \Bigg\}.
\end{align}
Strictly speaking, each $P_{\theta,j,k}$ for $k<j$ is a union of four planks. However, by symmetry we may assume $\xi_1>0$ and $\xi_3>0$, and abusing notation we still denote this by $P_{\theta,j,k}$. Similar for $P_{\theta,j,j}$.

Heuristically, $\xi_1\sim 2^{j-k}$ (or $|\xi_1|\lesssim 1$ when $j=k$) is the distance between the plank $P_{\theta,j,k}$ and $\Gamma$, $|\xi_2|\lesssim 2^{j-k/2}$ is the angular aperture, and $\xi_3\sim 2^j$ is the distance from the origin. The precise statement is as follows.
\begin{lem}\label{lem_distance_planks}
    Fix $\theta\in I_0$. Let $0\le k\le j$ and $\xi\in P_{\theta,j,k}$. Then the distance between $\xi$ and the cone $\Gamma$ is at most $2^{j-k+10}$. Furthermore, if $j>k$, then the distance between $\xi$ and the cone $\Gamma$ is at least $2^{j-k-40}$.
   
\end{lem}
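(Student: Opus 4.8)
The plan is to work directly with the explicit formulas \eqref{lightcone1}, \eqref{lightcone2} and the clean characterization of distance to the cone from Proposition \ref{prop_light_cone}, namely $\mathrm{dist}(\xi,\Gamma)=|\xi\cdot\mathbf e_1(\omega)|$ where $\mathbf e_3(\omega)$ is (a positive multiple of) the nearest point on $\Gamma$, together with $\mathrm{dist}(\xi,\Gamma)=2^{-1/2}|x_3-\sqrt{x_1^2+x_2^2}|$. Let $\xi=\sum_{i=1}^3\xi_i\mathbf e_i(\theta)\in P_{\theta,j,k}$. The first step is the upper bound. Since $\xi-\xi_1\mathbf e_1(\theta)=\xi_2\mathbf e_2(\theta)+\xi_3\mathbf e_3(\theta)$ lies in the plane spanned by $\mathbf e_2(\theta),\mathbf e_3(\theta)$, I would produce an explicit competitor point on $\Gamma$ near $\xi$: for $k<j$ the point $\xi_3\mathbf e_3(\theta)\in\Gamma$ (recall $\xi_3\sim 2^j$, so $\xi_3\in[2^{j-10},2^{j+10}]$ and in particular $\xi_3\in\mathbb R$ is a legitimate radial parameter), and for $k=j$ the point $2^{j}\mathbf e_3(\theta)$ or simply the origin-adjacent point $\xi_3\mathbf e_3(\theta)$ again. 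Then $\mathrm{dist}(\xi,\Gamma)\le|\xi-\xi_3\mathbf e_3(\theta)|=|\xi_1\mathbf e_1(\theta)+\xi_2\mathbf e_2(\theta)|\le|\xi_1|+|\xi_2|$. When $k<j$ this is $\le 2^{j-k+10}+2^{j-k/2-100}\le 2^{j-k+10}$ since $2^{j-k/2-100}\le 2^{j-k-90}$ is much smaller than $2^{j-k+10}$ (using $k\le j$, so $k/2\le j-k/2$, hence $2^{-k/2}\le 2^{-k}\cdot 2^{k/2}\le 2^{-k}2^{j/2}$... more simply: $j-k/2-100 \le j-k+10$ iff $k/2\le 110$; since that may fail I would instead note $2^{j-k/2-100}\le 2^{j-k-100+k/2}$ and handle it via $|\xi_2|\le 2^{j-k/2-100}$ contributing a negligible lower-order term, or just bound $|\xi_1|+|\xi_2|$ by reorganizing constants). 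For $k=j$ one gets $\mathrm{dist}(\xi,\Gamma)\le 2^{10}+2^{j/2-100}\lesssim 2^{j/2}\cdot\text{(small)}$, certainly $\le 2^{j-k+10}=2^{10}$ up to adjusting the exponent $10$; here I should be a bit careful and perhaps state the $k=j$ bound as $\mathrm{dist}(\xi,\Gamma)\lesssim 2^{j/2}$, which is the natural "$\lesssim 1$ after rescaling at scale $2^{j/2}$" statement — but the lemma as stated only asserts $\le 2^{j-k+10}=2^{10}$ for $k=j$, so I need to double-check the intended normalization; I will follow whatever makes the constants consistent with the $\xi_2$ aperture $2^{j/2-100}$, likely reading the $k=j$ claim as $\mathrm{dist}(\xi,\Gamma)\le 2^{j/2-90}$ or similar.

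For the lower bound (the case $j>k$), I would use $\mathrm{dist}(\xi,\Gamma)=|\xi\cdot\mathbf e_1(\omega)|$ where $\omega$ realizes the nearest point, and show $\omega$ must be close to $\theta$. The key geometric input: if $\mathrm{dist}(\xi,\Gamma)$ were small, then $\xi$ is close to $r\mathbf e_3(\omega)$ for that $r$, and since $\xi_3=\xi\cdot\mathbf e_3(\theta)\sim 2^j$ while $\xi_1=\xi\cdot\mathbf e_1(\theta)\in[2^{j-k-10},2^{j-k+10}]$ and $\xi_2=\xi\cdot\mathbf e_2(\theta)$ is of size at most $2^{j-k/2-100}$, the angle between $\mathbf e_3(\omega)$ and $\mathbf e_3(\theta)$ is controlled. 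Quantitatively, I'd estimate $|\xi-r\mathbf e_3(\omega)|\ge |\xi_1\mathbf e_1(\theta)+\xi_2\mathbf e_2(\theta)+(\xi_3-r\mathbf e_3(\omega)\cdot\mathbf e_3(\theta))\mathbf e_3(\theta)|$ expanded in the $\theta$-frame, and in particular its component along $\mathbf e_1(\theta)$ is $\xi_1 - r\,\mathbf e_3(\omega)\cdot\mathbf e_1(\theta)$. By Proposition \ref{prop_error_e_i} (applied with $\delta^{1/2}=|\omega-\theta|$), $|\mathbf e_3(\omega)\cdot\mathbf e_1(\theta)|\lesssim|\omega-\theta|^2$ and $r\sim 2^j$, while $\xi_1\ge 2^{j-k-10}$. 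So either $|\omega-\theta|$ is large enough that $\mathbf e_3(\omega)$, $\mathbf e_3(\theta)$ already differ substantially (forcing $\mathrm{dist}(\xi,\Gamma)$ to be large because the $\mathbf e_2$-component $\xi_2$ is too small to connect them), or $|\omega-\theta|$ is small and then $r\mathbf e_3(\omega)\cdot\mathbf e_1(\theta)$ is much smaller than $\xi_1$, forcing the $\mathbf e_1(\theta)$-component of $\xi-r\mathbf e_3(\omega)$ to be $\ge \tfrac12|\xi_1|\ge 2^{j-k-11}$, hence $\mathrm{dist}(\xi,\Gamma)\ge 2^{j-k-40}$. Concretely: the $\mathbf e_2(\theta)$-component of $r\mathbf e_3(\omega)$ is $r\,\mathbf e_3(\omega)\cdot\mathbf e_2(\theta)$, and by Lemma \ref{lem_frame_derivatives} (using $\mathbf e_3'=-\uptau\mathbf e_2$, $\uptau=1$) this is $\approx -r(\omega-\theta)+O(r|\omega-\theta|^2)$, so to match $\xi_2$ with $|\xi_2|\le 2^{j-k/2-100}$ and $r\sim 2^j$ we'd need $|\omega-\theta|\lesssim 2^{-k/2-90}$; plugging that back, $|r\,\mathbf e_3(\omega)\cdot\mathbf e_1(\theta)|\lesssim 2^j\cdot 2^{-k-180}=2^{j-k-180}\ll 2^{j-k-10}\le|\xi_1|$, so the $\mathbf e_1(\theta)$-component of $\xi-r\mathbf e_3(\omega)$ has absolute value $\ge |\xi_1|-2^{j-k-180}\ge 2^{j-k-11}\ge 2^{j-k-40}$, giving the claimed lower bound. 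And if $|\omega-\theta|\gtrsim 2^{-k/2-90}$, then the $\mathbf e_2(\theta)$-component of $\xi-r\mathbf e_3(\omega)$ is $\xi_2 - r\,\mathbf e_3(\omega)\cdot\mathbf e_2(\theta)$, and since $|\xi_2|\le 2^{j-k/2-100}$ while $|r\,\mathbf e_3(\omega)\cdot\mathbf e_2(\theta)|\gtrsim 2^j\cdot 2^{-k/2-90}=2^{j-k/2-90}$, this component is $\gtrsim 2^{j-k/2-91}\ge 2^{j-k-40}$ (using $k/2\le k$), again giving the bound.

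The main obstacle I anticipate is bookkeeping of the absolute constants (the $10$, $40$, $100$ in the exponents) so that the two regimes of $|\omega-\theta|$ interlock cleanly and cover all possibilities, and making sure the "nearest point" $\xi'=r\mathbf e_3(\omega)$ from Proposition \ref{prop_light_cone} genuinely has $r$ of the expected size $\sim 2^j$ — this needs $\xi_3=\xi\cdot\mathbf e_3(\theta)$ to be comparable to $|\xi|$, which follows since $\xi_1,\xi_2$ are lower-order compared to $\xi_3$ when $k<j$, but should be stated. I would first verify $|\xi|\sim \xi_3\sim 2^j$ and $r\sim 2^j$ as a preliminary, then run the dichotomy on $|\omega-\theta|$ above; the parameter $\Delta=|\omega-\theta|$ threshold is $\sim 2^{-k/2}$ (up to the fixed constants), which is exactly the reciprocal of the aperture-to-radius ratio $2^{j-k/2}/2^j$, so the geometry is forced. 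Throughout I would invoke Proposition \ref{prop_error_e_i} and Lemma \ref{lem_frame_derivatives} for the frame estimates rather than re-deriving them, and Taylor expand $\mathbf e_3(\omega)$ around $\theta$ to first order with a controlled remainder via Proposition \ref{prop_Taylor}.
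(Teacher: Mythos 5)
Your strategy differs from the paper's, which exploits the specific structure of the light cone: using Proposition \ref{prop_light_cone} one computes the distance exactly, as a rational function of the frame coordinates $(\xi_1,\xi_2,\xi_3)$,
\begin{equation*}
    \mathrm{dist}(\xi,\Gamma)=\frac{1}{2}\left|\xi_1+\xi_3-\sqrt{2\xi_2^2+(\xi_3-\xi_1)^2}\right|=\frac{|2\xi_1\xi_3-\xi_2^2|}{\xi_1+\xi_3+\sqrt{2\xi_2^2+(\xi_3-\xi_1)^2}},
\end{equation*}
and both bounds fall out immediately from the plank dimensions: for $j>k$ the numerator is $\sim 2^{2j-k}$ since $\xi_2^2\lesssim 2^{2j-k-200}$ is negligible against $2\xi_1\xi_3\sim 2^{2j-k}$, and the denominator is $\sim 2^j$. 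Your approach, via competitor points and the frame estimates of Lemma \ref{lem_frame_derivatives} and Proposition \ref{prop_error_e_i}, is a more general geometric argument that would extend to an arbitrary nondegenerate $C^2$ curve, at the cost of a more delicate case analysis.

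The genuine gap is in your upper bound. The competitor $\xi_3\mathbf e_3(\theta)$ gives only $\mathrm{dist}(\xi,\Gamma)\le|\xi_1|+|\xi_2|$, and since $|\xi_2|$ may be as large as $2^{j-k/2-100}$, this exceeds the claimed $2^{j-k+10}$ as soon as $k>220$; in particular for $k=j$ it degenerates to $\sim 2^{j/2}$ rather than the correct $O(1)$. Your suspicion that the lemma is misstated is wrong --- the lemma is correct, and the point you are missing is that displacement in the $\mathbf e_2$ direction is \emph{tangential} to the cone, hence only a second-order perturbation of the distance. Concretely, repair the competitor by taking $\xi'=r\mathbf e_3(\omega)$ with $\omega-\theta\approx-\xi_2/\xi_3$, chosen so that $(\xi-\xi')\cdot\mathbf e_2(\theta)$ vanishes to first order; then $(\xi-\xi')\cdot\mathbf e_1(\theta)=\xi_1+O(\xi_3(\omega-\theta)^2)=\xi_1+O(\xi_2^2/\xi_3)$, and the correction $\xi_2^2/\xi_3\lesssim 2^{2j-k-200}/2^{j-10}=2^{j-k-190}$ is far below $2^{j-k}$. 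The aperture $2^{j-k/2}$ in the definition of $P_{\theta,j,k}$ is precisely tuned so that this quadratic correction stays at scale $2^{j-k}$. A secondary issue: in your lower-bound dichotomy, the threshold $2^{-k/2-90}$ makes Case 2 yield only $\gtrsim 2^{j-k/2-91}$, which beats $2^{j-k-40}$ only when $k\ge 102$; use a smaller offset, e.g.\ $2^{-k/2-20}$, so that both cases cover all $k\ge 0$ (or simply adopt the closed-form identity above, where no dichotomy is needed).
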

\begin{proof}

Fix $\xi\in P_{\theta,j,k}$ and write $\xi=\sum_{i=1}^3 \xi_i \mathbf e_i(\theta)=(x_1,x_2,x_3)$ where $x_i$ can be found using (\ref{lightcone1}) and (\ref{lightcone2}). Then Proposition \ref{prop_light_cone} allow us to compute directly
\begin{align*}
\text{dist}(\xi,\Gamma)=
    &\frac{1}{2}\bigg|((\xi_1+\xi_3)-\bigg((\xi_1\cos \sqrt 2\theta-\sqrt 2\xi_2 \sin \sqrt 2\theta-\xi_3 \cos \sqrt 2\theta)^2\\
    &+((\xi_1\sin \sqrt 2\theta+\sqrt 2\xi_2 \cos \sqrt 2\theta-\xi_3 \sin \sqrt 2\theta)^2\bigg)^{1/2}\bigg|\\
    &=\frac{1}{2}\left|\xi_1+\xi_3-\sqrt{2\xi_2^2+(\xi_3-\xi_1)^2}\right|\\
    &=|2\xi_1\xi_3-\xi_2^2|\left|\xi_1+\xi_3+\sqrt{2\xi_2^2+(\xi_3-\xi_1)^2}\right|^{-1}.
\end{align*}
In particular, the term inside the absolute value is positive for $\xi\in P_{\theta,j,k}$, and is a increasing function in $|\xi_2|$. For $j>k$, using the upper and lower bounds of $|\xi_i|$, we thus have 
\begin{equation}
    \mathrm{dist}(\xi,\Gamma)\in [2^{j-k-40},2^{j-k+10}],
\end{equation}
where the upper bound was obtained by considering $\xi_2=0$. The case $j=k$ is similar.
 
\end{proof}

Now we are ready to define the covering planks. Let $c=100^{-100}$. For each $k\ge 0$, define the lattice 
\begin{equation}\label{eqn_defn_Theta_k}
    \Theta_k:=c2^{-k/2}\N\cap I_0.
\end{equation}
Define
\begin{equation}
    \Lambda_{j,k}:=\{P_{\theta,j,k}:\theta\in \Theta_k\},
\end{equation}
and then
\begin{equation}\label{eqn_defn_Lambda}
    \Lambda_j:=\bigcup_{k=0}^j \Lambda_{j,k},\quad \Lambda:=\bigcup_{j=0}^\infty \Lambda_j. 
\end{equation}
Note that we always have
\begin{equation}
    \mathrm{dist}(\xi,\Gamma)\le 2^{-1/2}|\xi|.
\end{equation}

Indeed, write $\xi=(x_1,x_2,x_3)$. By radial symmetry we may assume $x_2=0$, and by dilation symmetry we can assume $x_1^2+x_3^2=1$. Then the distance is maximized exactly when $x_1=0$ or $x_3=1$, and the distance is $2^{-1/2}$.

For $C\ge 1$, we also define for $j>k\ge 0$
\begin{align}
    C\Gamma_{j,k}
    &:=\{\xi\in \R^3:C^{-1}2^{j-1}\le |\xi|\le C 2^{j},\nonumber\\
    &\mathrm{dist}(\xi,\Gamma)\in [C^{-1}2^{-k-3/2}|\xi|,C  2^{-k-1/2}|\xi|]\},
\end{align}
and for $j=k$
\begin{equation}
    C \Gamma_{j,j}:=\{\xi\in \R^3:C^{-1} 2^{j-1}\le |\xi|\le C 2^{j},\mathrm{dist}(\xi,\Gamma)\in [0,C2^{-j-1/2}|\xi|]\}.
\end{equation}

When $C=1$, we simply denote $\Gamma_{j,k}$. We also denote 
$$C\Gamma_j=\{\xi\in \R^3\colon C^{-1}2^{j-1}\leq |\xi| \leq C2^{j} \}.$$

\begin{lem}\label{lem_planks_j_k}
For all $j\ge k\ge 0$, we have
    \begin{equation}
        \Gamma_{j,k}\sub \bigcup_{\theta\in \Theta_k} P_{\theta,j,k}\sub A\Gamma_{j,k},
    \end{equation}    
    for some absolute constant $A$. Moreover, the family $\Lambda_{j,k}$ has $O(1)$-overlap.
\end{lem}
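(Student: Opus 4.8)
The plan is to prove the two inclusions separately, then the overlap bound. For the second inclusion $\bigcup_{\theta\in\Theta_k}P_{\theta,j,k}\sub A\Gamma_{j,k}$: fix $\theta\in\Theta_k$ and $\xi\in P_{\theta,j,k}$. From the definition of $P_{\theta,j,k}$ we read off that $|\xi|\sim 2^j$ (the dominant coordinate is $\xi_3\sim 2^j$, and when $k<j$ the $\xi_1$-coordinate is at most $2^{j-k+10}\le 2^{j+10}$, the $\xi_2$-coordinate at most $2^{j-k/2-100}$, so these do not spoil $|\xi|\sim 2^j$). Then Lemma \ref{lem_distance_planks} gives $\mathrm{dist}(\xi,\Gamma)\in[2^{j-k-40},2^{j-k+10}]$ when $k<j$ (resp. $\mathrm{dist}(\xi,\Gamma)\lesssim 1\sim 2^{j-k+O(1)}$ when $k=j$), and combining with $|\xi|\sim 2^j$ converts these absolute bounds into the relative bounds $\mathrm{dist}(\xi,\Gamma)\in[C^{-1}2^{-k-3/2}|\xi|,C2^{-k-1/2}|\xi|]$ appearing in the definition of $A\Gamma_{j,k}$, for a suitable absolute $A$. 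This direction is essentially bookkeeping, reading off ranges and using the already-proved Lemma \ref{lem_distance_planks}.

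For the first inclusion $\Gamma_{j,k}\sub\bigcup_{\theta\in\Theta_k}P_{\theta,j,k}$, which is the heart of the lemma: take $\xi\in\Gamma_{j,k}$. By Proposition \ref{prop_light_cone} there is a nearest point $\xi'=r\mathbf e_3(\omega)$ on $\Gamma$ with $\xi-\xi'\parallel\mathbf e_1(\omega)$, so $\xi=\xi_1\mathbf e_1(\omega)+r\mathbf e_3(\omega)$ where $|\xi_1|=\mathrm{dist}(\xi,\Gamma)\sim 2^{j-k}$ (or $\lesssim 1$ if $k=j$) and $r\sim |\xi|\sim 2^j$. Now choose $\theta\in\Theta_k$ closest to $\omega$, so $|\theta-\omega|\le c2^{-k/2}$; I must show $\xi\in P_{\theta,j,k}$, i.e. estimate $\xi_i(\theta):=\xi\cdot\mathbf e_i(\theta)$. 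This is a perturbation computation: using the explicit light-cone formulas \eqref{lightcone1}, \eqref{lightcone2} (or the frame derivative identities in Lemma \ref{lem_frame_derivatives} plus Taylor expansion à la Proposition \ref{prop_error_e_i}, with the scale $\delta^{1/2}$ there replaced by $2^{-k/2}$), one finds $\mathbf e_3(\omega)\cdot\mathbf e_1(\theta)=O(|\theta-\omega|^2)=O(2^{-k})$, $\mathbf e_3(\omega)\cdot\mathbf e_2(\theta)=O(2^{-k/2})$, $\mathbf e_3(\omega)\cdot\mathbf e_3(\theta)=1+O(2^{-k})$, and $\mathbf e_1(\omega)\cdot\mathbf e_1(\theta)=1+O(2^{-k})$, $\mathbf e_1(\omega)\cdot\mathbf e_2(\theta)=O(2^{-k/2})$, $\mathbf e_1(\omega)\cdot\mathbf e_3(\theta)=O(2^{-k})$. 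Plugging in: $\xi_1(\theta)=\xi_1(1+O(2^{-k}))+r\cdot O(2^{-k})$, and since $|\xi_1|\sim 2^{j-k}$ and $r\sim 2^j$ the second term is $O(2^{j-k})$, so $\xi_1(\theta)\sim 2^{j-k}$, landing in $[2^{j-k-10},2^{j-k+10}]$ once $c$ is small; $\xi_2(\theta)=\xi_1\cdot O(2^{-k/2})+r\cdot O(2^{-k/2})=O(2^{j-k/2})$, so $|\xi_2(\theta)|\le 2^{j-k/2-100}$ once $c$ is small enough; $\xi_3(\theta)=\xi_1\cdot O(2^{-k})+r(1+O(2^{-k}))\sim 2^j$, landing in $[2^{j-10},2^{j+10}]$. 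The case $k=j$ is the same with $|\xi_1|\lesssim 1$. This is exactly the step I expect to be the main obstacle — not because any single estimate is hard, but because one must track the interplay between the two scales $2^{j-k}$ (distance) and $2^j$ (radial size) and verify that the cross terms $r\cdot O(2^{-k})$ in $\xi_1(\theta)$ and $r\cdot O(2^{-k/2})$ in $\xi_2(\theta)$ are genuinely absorbed, which forces the constant $c=100^{-100}$ in \eqref{eqn_defn_Theta_k} to be taken very small and the slack factors $2^{\pm 10}$, $2^{-100}$ in the definition of $P_{\theta,j,k}$ to be large enough.

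For the $O(1)$-overlap of $\Lambda_{j,k}$: suppose $\xi\in P_{\theta,j,k}\cap P_{\theta',j,k}$ with $\theta,\theta'\in\Theta_k$. By the second inclusion, $\mathrm{dist}(\xi,\Gamma)\sim 2^{j-k}$, so (Proposition \ref{prop_light_cone}) its nearest cone point is $r\mathbf e_3(\omega)$ with $r\sim 2^j$ and $\xi=\xi_1\mathbf e_1(\omega)+r\mathbf e_3(\omega)$. Membership in $P_{\theta,j,k}$ forces $|\xi\cdot\mathbf e_1(\theta)|\sim 2^{j-k}$; writing $\xi\cdot\mathbf e_1(\theta)=\xi_1(\mathbf e_1(\omega)\cdot\mathbf e_1(\theta))+r(\mathbf e_3(\omega)\cdot\mathbf e_1(\theta))$ and noting $\mathbf e_3(\omega)\cdot\mathbf e_1(\theta)\sim (\theta-\omega)^2$ up to sign (by \eqref{lightcone2} for the light cone, using $\mathbf e_3'=-\mathbf e_2$, $\mathbf e_2'=-\mathbf e_1+\mathbf e_3$ from Lemma \ref{lem_frame_derivatives}), one gets $r(\theta-\omega)^2\lesssim 2^{j-k}$, hence $|\theta-\omega|\lesssim 2^{-k/2}$; similarly $|\theta'-\omega|\lesssim 2^{-k/2}$, so $|\theta-\theta'|\lesssim 2^{-k/2}$. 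Since $\Theta_k$ is a $c2^{-k/2}$-separated lattice, only $O(1)$ choices of $\theta$ are possible, giving the overlap bound. I would write this last computation in the light-cone-specific form using \eqref{lightcone1}–\eqref{lightcone2}, so that the relation $\mathbf e_3(\omega)\cdot\mathbf e_1(\theta)=\tfrac12(1-\cos(\sqrt2(\theta-\omega)))\sim (\theta-\omega)^2$ is exact rather than merely a Taylor estimate.
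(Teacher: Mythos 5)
Your proof is correct and follows the paper's argument essentially verbatim: the second inclusion via Lemma \ref{lem_distance_planks} and the relation $|\xi|\sim 2^j$, the first inclusion by passing to the nearest cone angle $\omega$, choosing $\theta\in\Theta_k$ closest to $\omega$, and Taylor-expanding the frame (the paper expands $\mathbf e_i(\theta)$ about $\omega$ and dots with $\xi$; you instead record the dot products $\mathbf e_a(\omega)\cdot\mathbf e_b(\theta)$ from Proposition \ref{prop_error_e_i} and multiply out, which is the same computation), and the $O(1)$-overlap by a separation estimate — here the paper defers to the $\mathbf e_2(\theta')$-based argument of Part \ref{item_2_same_and_distinct} of Lemma \ref{lem_same_and_distinct}, whereas you argue through $\mathbf e_3(\omega)\cdot\mathbf e_1(\theta)\sim (\theta-\omega)^2$ at the nearest cone angle $\omega$ of the common point, an equivalent route. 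One small sharpening to make explicit, which you already flag: in the first inclusion the cross term in $\xi\cdot\mathbf e_1(\theta)$ must be written as $r\cdot O(|\theta-\omega|^2)=r\cdot O(c^2 2^{-k})$ rather than the weaker $r\cdot O(2^{-k})$, since the latter alone would not yield the lower bound $|\xi\cdot\mathbf e_1(\theta)|\ge 2^{j-k-10}$; the smallness of $c=100^{-100}$ is precisely what lets the cross term be absorbed, and the same remark applies to the $r\cdot O(c\,2^{-k/2})$ term in $\xi\cdot\mathbf e_2(\theta)$.
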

\begin{proof}
    The second inclusion follows directly from Lemma \ref{lem_distance_planks} if $A$ is large enough. The $O(1)$-overlap follows from essentially the same proof of that of Part \ref{item_2_same_and_distinct} of Lemma \ref{lem_same_and_distinct} in Section \ref{sec_covering_lemmas}.
    
    To prove the first inclusion, let $\xi\in \Gamma_{j,k}$. Take $\xi':=r\mathrm e_3(\omega)\in \Gamma$ that attains $\mathrm{dist}(\xi,\Gamma)$. Take $\theta\in \Theta_k$ be closest to $\omega$, so $|\theta-\omega|\le c 2^{-k/2}$. It suffices to prove that $\xi\in P_{\theta,j,k}$.

   We first consider the normal direction. For $j>k$, by Proposition \ref{prop_light_cone}, we have
   \begin{equation}
       |\xi\cdot \mathbf e_1(\omega)|=|(\xi-\xi')\cdot \mathbf e_1(\omega)|=\mathrm{dist}(\xi,\Gamma)\ge 2^{-k-3/2}|\xi|\ge 2^{j-k-5/2}.
   \end{equation}
   Also,
   \begin{equation}
       |\xi\cdot \mathbf e_2(\omega)|=|(\xi-\xi')\cdot \mathbf e_2(\omega)|\le |\xi-\xi'|\leq 2^{-k-1/2}|\xi|.
   \end{equation}   
   Thus by Taylor expansion,
    \begin{align*}
        |\xi\cdot \mathbf e_1(\theta)|
        &\ge |\xi\cdot \mathbf e_1(\omega)|-|\xi\cdot \mathbf e_2(\omega)||\omega-\theta|-2|\xi||\omega-\theta|^2\\
        &\ge 2^{j-k-5/2}-c 2^{j-3k/2-1/2}-2c^2 2^{j-k}\\
        &\ge 2^{j-k-10}.
    \end{align*}
    Similarly, for $j\ge k$, we also get an upper bound $|\xi\cdot \mathbf e_1(\theta)|\le 2^{j-k+10}$.  
    
    Now we come to the tangential direction. By Proposition \ref{prop_light_cone}, we have
    \begin{equation}
        \xi\cdot \mathbf e_2(\omega)
        =(\xi-\xi')\cdot \mathbf e_2(\omega)+\xi'\cdot \mathbf e_2(\omega)=0.
    \end{equation}
    By Taylor expansion,
    \begin{equation}
        |\xi\cdot \mathbf e_2(\theta)|\le 2|\xi| |\theta-\omega|\le 2c 2^{j-k/2}\le 2^{-100}2^{j-k/2}.
    \end{equation}
    Lastly, we consider the radial direction. By Proposition \ref{prop_light_cone}, we have 
    \begin{equation}
        |\xi\cdot \mathbf e_3(\omega)|=|\xi'\cdot \mathbf e_3(\omega)|=|\xi'|.
    \end{equation}
    Also, recalling that $\mathrm{dist} (\xi,\Gamma)\le 2^{-1/2}|\xi|$, we have
    \begin{equation}
        |\xi'|\ge |\xi|-|\xi'-\xi|\ge (1-2^{-1/2})|\xi|\ge 2^{j-5}.
    \end{equation}
    Thus by Taylor expansion,
    \begin{equation}
        |\xi\cdot \mathbf e_3(\theta)|
        \ge |\xi\cdot \mathbf e_3(\omega)|-|\xi| |\theta-\omega|
        \ge 2^{j-5}-c2^{j-k/2}\ge 2^{j-10}.
    \end{equation}
    Similarly, we can get an upper bound $|\xi\cdot \mathbf e_3(\theta)|\le 2^{j+10}$.
\end{proof}

\begin{lem}\label{lem_planks_j}
    We have
    \begin{equation}
        \Gamma_j\sub \bigcup \Lambda_j \sub A\Gamma_j.
    \end{equation}
    Moreover, the family $\Lambda_j$ has $O(1)$-overlap.
\end{lem}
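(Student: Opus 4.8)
The plan is to deduce Lemma \ref{lem_planks_j} from Lemma \ref{lem_planks_j_k} by sorting the points of $\Gamma_j$ according to the dyadic size of their distance to the cone $\Gamma$; no new geometry is required beyond the already-established bound $\mathrm{dist}(\xi,\Gamma)\le 2^{-1/2}|\xi|$.

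The inclusion $\bigcup\Lambda_j\sub A\Gamma_j$ is immediate: by Lemma \ref{lem_planks_j_k} we have $\bigcup\Lambda_{j,k}\sub A\Gamma_{j,k}$ for every $0\le k\le j$, and since the norm constraint built into $A\Gamma_{j,k}$ is identical to the one defining $A\Gamma_j$, it follows that $\bigcup\Lambda_j=\bigcup_{k=0}^j\bigcup\Lambda_{j,k}\sub\bigcup_{k=0}^j A\Gamma_{j,k}\sub A\Gamma_j$.

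For the reverse inclusion $\Gamma_j\sub\bigcup\Lambda_j$, I would fix $\xi$ with $2^{j-1}\le|\xi|\le 2^j$ and locate the scale $k$ at which it sits. Since $\mathrm{dist}(\xi,\Gamma)\le 2^{-1/2}|\xi|$, the closed dyadic intervals $[2^{-k-3/2}|\xi|,2^{-k-1/2}|\xi|]$, $k\ge 0$, cover $(0,2^{-1/2}|\xi|]$. If $\mathrm{dist}(\xi,\Gamma)>0$, let $k_0$ be the smallest index whose interval contains it: if $k_0<j$, then the defining conditions of $\Gamma_{j,k_0}$ (the norm constraint $2^{j-1}\le|\xi|\le 2^j$ and the distance constraint) both hold, so $\xi\in\Gamma_{j,k_0}$; if $k_0\ge j$, or if $\mathrm{dist}(\xi,\Gamma)=0$, then $\mathrm{dist}(\xi,\Gamma)\le 2^{-j-1/2}|\xi|$, so $\xi\in\Gamma_{j,j}$. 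In either case Lemma \ref{lem_planks_j_k} gives $\xi\in\Gamma_{j,k'}\sub\bigcup\Lambda_{j,k'}\sub\bigcup\Lambda_j$ for the relevant $k'\in\{k_0,j\}$, which proves $\Gamma_j\sub\bigcup\Lambda_j$.

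For the $O(1)$-overlap of $\Lambda_j$, I would note that at a fixed $\xi\in\R^3$ only boundedly many scales $k$ can contribute: if a plank of $\Lambda_{j,k}$ contains $\xi$, then $\xi\in A\Gamma_{j,k}$ by Lemma \ref{lem_planks_j_k}, which forces $\mathrm{dist}(\xi,\Gamma)/|\xi|$ into $[A^{-1}2^{-k-3/2},A2^{-k-1/2}]$ when $k<j$ and into $[0,A2^{-j-1/2}]$ when $k=j$; since $A$ is an absolute constant, the admissible values of $k$ form a set of cardinality $O(\log A)=O(1)$. For each such $k$, Lemma \ref{lem_planks_j_k} bounds by $O(1)$ the number of planks of $\Lambda_{j,k}$ through $\xi$, and multiplying the two $O(1)$ bounds gives the claim. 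I do not expect a genuine obstacle in this lemma; it is essentially bookkeeping on top of Lemma \ref{lem_planks_j_k}, the only mild subtlety being to handle the bottom scale $k=j$ (whose planks have normal length $\sim 1$ rather than $2^{j-k}$, and which must absorb all points with $\mathrm{dist}(\xi,\Gamma)$ arbitrarily small, including $0$) consistently with the generic dyadic buckets $k<j$.
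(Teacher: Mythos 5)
Your proposal is correct and follows essentially the same route as the paper's (brief) proof: the inclusions come from taking the union over $k$ in Lemma \ref{lem_planks_j_k}, and the bounded overlap combines the $O(1)$-overlap of each $\Lambda_{j,k}$ with the $O(1)$-overlap of the family $\{A\Gamma_{j,k}:0\le k\le j\}$, which is exactly the dyadic-distance bucketing you carry out. Your write-up merely makes explicit the details the paper leaves implicit, including the careful treatment of the $k=j$ bucket.
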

\begin{proof}
The inclusion follows by Lemma \ref{lem_planks_j_k} and taking union $\cup_{k=0}^j$. The bounded overlap of $\Lambda_j$ follows readily from the bounded overlap of each $\Lambda_{j,k}$ established in Lemma \ref{lem_planks_j_k} and the bounded overlap of $\{A\Gamma_{j,k}:0\le k\le j\}$.
\end{proof}

\begin{lem}
    The family of planks $\Lambda$ covers $\R^3\backslash B^3(0,1/2)$ and has $O(1)$-overlap.
\end{lem}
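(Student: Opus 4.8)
The plan is to combine the two previously established facts about $\Lambda_j$, namely $\Gamma_j \subset \bigcup \Lambda_j \subset A\Gamma_j$ together with the $O(1)$-overlap of $\Lambda_j$ (Lemma \ref{lem_planks_j}), and to sum over $j \in \N$. First I would verify the covering statement: since $\Gamma_j = \{\xi : 2^{j-1} \le |\xi| \le 2^j\}$, we have $\bigcup_{j=0}^\infty \Gamma_j = \{\xi : |\xi| \ge 2^{-1}\} = \R^3 \setminus B^3(0,1/2)$, so from $\Gamma_j \subset \bigcup \Lambda_j$ we immediately get $\R^3 \setminus B^3(0,1/2) \subset \bigcup_{j} \bigcup \Lambda_j = \bigcup \Lambda$.

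For the overlap, the key point is that a given $\xi$ can only lie in planks from $\Lambda_j$ for boundedly many values of $j$. Indeed, by the inclusion $\bigcup \Lambda_j \subset A\Gamma_j$, if $\xi \in P$ for some $P \in \Lambda_j$, then $\xi \in A\Gamma_j$, which forces $A^{-1}2^{j-1} \le |\xi| \le A2^j$, i.e. $j \in [\log_2 |\xi| - \log_2(2A), \log_2|\xi| + \log_2 A]$, an interval of length $O(1)$. For each such admissible $j$, the $O(1)$-overlap of $\Lambda_j$ bounds the number of planks $P \in \Lambda_j$ with $\xi \in P$ by an absolute constant. Summing the $O(1)$ contributions over the $O(1)$ admissible values of $j$ gives $\sum_{P \in \Lambda} 1_P(\xi) = O(1)$, uniformly in $\xi$.

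I do not expect any genuine obstacle here; the statement is a routine bookkeeping consequence of Lemma \ref{lem_planks_j}. The only mild subtlety is to make sure the constant $A$ appearing in $\bigcup \Lambda_j \subset A\Gamma_j$ is the same absolute constant across all $j$ (which it is, by the proof of Lemma \ref{lem_planks_j}), so that the range of admissible $j$ has length bounded independently of $|\xi|$. One could optionally also record that $\Lambda_j$ and $\Lambda_{j'}$ with $|j - j'|$ large are genuinely disjoint since $A\Gamma_j \cap A\Gamma_{j'} = \varnothing$ once $2^{j'-1} > A^2 2^{j}$, but this is not needed for the $O(1)$-overlap claim.
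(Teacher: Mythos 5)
Your proof is correct and follows essentially the same strategy as the paper's. For the covering, you invoke the first inclusion $\Gamma_j\sub \bigcup\Lambda_j$ from Lemma \ref{lem_planks_j} and note $\bigcup_{j\ge 0}\Gamma_j=\R^3\setminus B^3(0,1/2)$ directly, which is a slightly cleaner phrasing than the paper's (the paper unpacks this back to the $\Gamma_{j,k}$ level and redoes the case split on $\mathrm{dist}(\xi,\Gamma)$, work already contained in Lemma \ref{lem_planks_j}); the bounded-overlap argument — an $O(1)$ range of admissible $j$ from $\bigcup\Lambda_j\sub A\Gamma_j$, combined with the $O(1)$-overlap of each $\Lambda_j$ — is exactly what the paper does.
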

\begin{proof}
The bounded overlap of $\Lambda$ follows from the bounded overlap of each $A\Lambda_j$ and the bounded overlap of $\{\Gamma_j:j\ge 0\}$.

To prove the covering, by Lemma \ref{lem_planks_j}, it suffices to prove that
\begin{equation}
    \bigcup_{j=0}^\infty \bigcup_{k=0}^j \Gamma_{j,k}\supseteq \R^3\backslash B^3(0,1/2).
\end{equation}
To see this, given any $\xi\in \R^3\backslash B^3(0,1/2)$. Take $j\ge 0$ such that $|\xi|\in [2^{j-1},2^{j}]$. Also, since $\mathrm{dist}(\xi,\Gamma)\le 2^{-1/2}|\xi|$, take $k'\ge 0$ such that 
\begin{equation}
    \mathrm{dist}(\xi,\Gamma)\in [2^{-k'-3/2}|\xi|,2^{-k'-1/2}|\xi|].
\end{equation}
If $j\ge k'$, then we just take $k=k'$. If $j<k'$, then we take $k=j$.
\end{proof}

\subsection{Construction of wave packets}
Consider the collection $\Lambda$ of (infinitely many) planks we constructed in \eqref{eqn_defn_Lambda}. 

We first define a smooth partition of unity adapted to the family $\Lambda$: for each $P\in \Lambda$, let $\psi_P$ be a bump function that $\sim 1$ on $P$ and is supported in $2P$. Also, let $\psi_0$ be a bump function adapted to the ball $B^3(0,1)$. Moreover, they can be chosen in a way that 
\begin{equation}
    \psi_0+\sum_{P\in \Lambda}\psi_P=1.
\end{equation}
For technical issues, we need to introduce a tiny exponent $\beta>0$ to be determined.

For each $P\in \Lambda$, let $\mathbb T_P$ be a tiling of $\R^3$ by planks $T$ nearly dual to $P=P_{\theta,j,k}$, that is, having dimensions
\begin{equation}\label{eqn_wave_packet_T}
    2^{k-j+k\beta},\quad 2^{k/2-j+k\beta}, \quad 2^{-j+k\beta}
\end{equation}
in the normal, tangential and radial directions, respectively. Strictly speaking, they are not duals due to the tiny factor $\beta$. This is intended so that we can use repeated integration by parts argument to prove Lemma \ref{lem_almost_orthogonality} below.

For each $T\in \mathbb T_P$, we choose a Schwartz function $\eta_T$ which $\sim 1$ on $T$ and is Fourier supported on the plank centred at $0$, with sides parallel to $P$ but with dimensions $2^{j-k-k\beta}$, $2^{j-k/2-k\beta}$ and $2^{j-k\beta}$, respectively, such that we also have
\begin{equation}
    \sum_{T\in \mathbb T_P}\eta_T=1.
\end{equation}
Now for each $P\in \Lambda$ and each $T\in \mathbb T_P$, we define the wave packet
\begin{equation}
    M_T\mu(x):=\eta_T(x)(\mu*\psi_P^\vee)(x)=\eta_T(x)\int \psi_P^\vee(x-y)d\mu(y).
\end{equation}
We record some estimates on the wave packets.
\begin{lem}\label{lem_rapid_decay_plank}
    For $P\in \Lambda_{j,k}$, $T\in \mathbb T_P$ and any $\theta\in I_0$, we have
    \begin{equation}
    \begin{split}
        \norm {\pi_{\theta\#}M_T\mu}_{L^1(\mathcal H^2)}&\lesssim_{N,\beta}\int w_{T,N}(y)d\mu(y)\\
        &\lesssim_{N,\beta} 2^{-kN}\mu(\R^3)+ \mu(T),\quad \forall N\ge 1,
    \end{split}
    \end{equation}
    where $w_{T,N}(x)$ is any weight function adapted to $T$ (refer to \eqref{eqn_weight_T_2D}).
\end{lem}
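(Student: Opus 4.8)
The plan is to reduce everything to a pointwise bound on $M_T\mu$ in physical space, then integrate. First I would observe that projection is measure-non-increasing in the sense that $\norm{\pi_{\theta\#}g}_{L^1(\mathcal H^2)} \le \norm{g}_{L^1(\R^3)}$ for any integrable $g$ (this is just the definition of push-forward together with Fubini: $\int |\pi_{\theta\#}g|\,d\mathcal H^2 \le \int |g|$, since $\pi_\theta$ is a linear surjection onto a $2$-plane and fibers are lines). Hence it suffices to prove
\begin{equation*}
    \int_{\R^3}|M_T\mu(x)|\,dx \lesssim_{N,\beta}\int_{\R^3} w_{T,N}(y)\,d\mu(y).
\end{equation*}
Writing $M_T\mu(x)=\eta_T(x)\int \psi_P^\vee(x-y)\,d\mu(y)$ and using $|\eta_T|\lesssim w_{T,1}\le 1$ together with Minkowski's integral inequality, this follows once we show
\begin{equation*}
    \int_{\R^3}|\eta_T(x)|\,|\psi_P^\vee(x-y)|\,dx \lesssim_{N,\beta} w_{T,N}(y),\qquad \forall y\in\R^3.
\end{equation*}

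The heart of the matter is this last kernel estimate, and it is where the tiny exponent $\beta$ does its work. The function $\psi_P^\vee$ is the inverse Fourier transform of a bump adapted to the plank $2P$, which has dimensions $\sim 2^{j-k}\times 2^{j-k/2}\times 2^j$ in the $\mathbf e_1(\theta),\mathbf e_2(\theta),\mathbf e_3(\theta)$ directions; therefore $\psi_P^\vee$ is an $L^1$-normalized (up to the volume factor $2^{3j-3k/2}$) bump that decays rapidly off the dual box $T^*$ of dimensions $2^{k-j}\times 2^{k/2-j}\times 2^{-j}$ centered at the origin. Meanwhile $\eta_T$ is essentially $1_T$ with rapid decay, and $T$ has dimensions $2^{k-j+k\beta}\times 2^{k/2-j+k\beta}\times 2^{-j+k\beta}$ — strictly fatter than $T^*$ by a factor $2^{k\beta}$ in each direction. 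Convolving a bump concentrated on the small box $T^*$ against a bump concentrated on the slightly larger parallel box $T$ (translated to be centered at $y$) produces a function of $y$ that is $\lesssim 1$ when $y\in CT$ and decays like $w_{T,N}(y)$ outside; the mismatch factor $2^{k\beta}$ guarantees we never lose a power of $2^k$ from the convolution of the two profiles (one can make this rigorous by repeated integration by parts, exactly as advertised in the paragraph preceding the lemma, or more cheaply by the weight-convolution inequalities in the style of Proposition \ref{prop_weights_convolution} applied direction-by-direction after rotating $P$ to be axis-parallel). This gives the first displayed inequality of the lemma.

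For the second inequality, split the integral $\int w_{T,N}\,d\mu$ into the contribution from $CT$ (where $w_{T,N}\lesssim 1$, giving $\mu(CT)\lesssim\mu(T)$ after noting $CT$ is covered by $O_C(1)$ translates of $T$ and $\mu$ is, say, re-summed trivially — or simply absorb the constant) and the contribution from the dyadic annuli $2^m T\setminus 2^{m-1}T$, $m\ge 1$, on each of which $w_{T,N}\lesssim 2^{-mN'}$ for $N'$ as large as we like. On the $m$-th annulus we bound $\mu(2^mT)$ by covering $2^mT$ with $\lesssim 2^{2m}$ balls of radius comparable to $\mathrm{diam}(T)\le 2^{-j+k\beta}\le 1$ (using $k\le j$ and taking $\beta$ small, or just $\le C$), so $\mu(2^mT)\lesssim 2^{2m}\mu(\R^3)$ — actually it is cleanest to bound $\mu(2^mT)\le\mu(\R^3)$ outright since $\mu$ is finite. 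Summing the geometric series $\sum_{m\ge 1}2^{-mN'}\mu(\R^3)\lesssim 2^{-N'}\mu(\R^3)$, and choosing $N'$ so that $2^{-N'}\le 2^{-kN}$ when $k$ is large — which we may arrange because the rapid-decay exponent $N'$ in the kernel estimate is at our disposal while $N$ is fixed — we land on $2^{-kN}\mu(\R^3)+\mu(T)$. (For bounded $k$ the term $2^{-kN}\mu(\R^3)$ is just $\sim\mu(\R^3)\ge$ everything, so there is nothing to prove.) The main obstacle is the kernel estimate in the middle paragraph: getting the convolution of the two non-axis-parallel plank profiles to behave like a single clean weight $w_{T,N}$ without leaking a factor of $2^k$, which is precisely the reason the $2^{k\beta}$ fattening was built into \eqref{eqn_wave_packet_T} in the first place.
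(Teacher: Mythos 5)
Your reduction of the $L^1$ norm to a kernel convolution estimate is the same step the paper takes (the paper writes $\pi_{\theta\#}M_T\mu(x)=\int_{\R}M_T\mu(x+t\gamma(\theta))\,dt$ and applies Fubini; your Minkowski-type bound is the same observation), and the first displayed inequality is fine.

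The second inequality is where your argument breaks, and it breaks in two places. First, $\mu(CT)\lesssim\mu(T)$ is simply false for a general Borel measure: covering $CT$ by finitely many translates $T_1,\dots,T_m$ of $T$ gives $\mu(CT)\le\sum_i\mu(T_i)$, but nothing relates $\mu(T_i)$ to $\mu(T)$ — $\mu$ could put all its mass on a single translate disjoint from $T$. Second, you write that the geometric series is $\lesssim 2^{-N'}\mu(\R^3)$ and then want to ``choose $N'$ so that $2^{-N'}\le 2^{-kN}$ when $k$ is large.'' That forces $N'\gtrsim kN$, i.e.\ $N'$ growing with $k$. But the kernel estimate $|\psi_P^\vee\ast\eta_T|\lesssim_{N'}w_{T,N'}$ carries an implicit constant depending on $N'$; once $N'$ depends on $k$, that constant depends on $k$ and the alleged $\lesssim_{N,\beta}$ bound is vacuous. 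The mismatch scale $2^{k\beta}$ built into \eqref{eqn_wave_packet_T} (which you correctly invoke for the first inequality) does \emph{not} rescue this: the kernel $|\eta_T|\ast|\psi_P^\vee|$ still decays at the $T$-scale, because $\eta_T$ is the slowly decaying factor, so $|\eta_T|\ast|\psi_P^\vee|(y)$ is genuinely $\sim 1$ for $y$ on the boundary of $2T$, not $\lesssim 2^{-kN}$. The correct splitting uses a $k$-dependent dilation such as $C=2^{k\beta}$ — the outside piece then really does give $2^{-k\beta N'}\mu(\R^3)=2^{-kN}\mu(\R^3)$ with $N'=N/\beta$ fixed — but the inside piece is $\mu(2^{k\beta}T)$, not $\mu(T)$, and that discrepancy cannot be waved away as you do. (The paper's own proof is terse at exactly this step, saying only ``Thus the result follows''; what is actually needed downstream in Section \ref{sec_near_part} tolerates a dilation $\mu(CT)$ with bounded overlap, but your argument should have surfaced rather than papered over this point.)
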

\begin{proof}
Given a continuous function $f:\R^3\to \C$ of compact support and $\theta\in I_0$, we may compute, using the definition of push-forward measure via Riesz representation theorem
\begin{equation}
    \pi_{\theta\#}f(x)=\int_{\R}f(x+t\gamma(\theta))dt,\quad \forall x\in \pi_{\theta\#}(\R^3).
\end{equation}
Thus
\begin{align*}
    \pi_{\theta\#}M_T\mu(x)
    &=\int_{\R}M_T\mu(x+t\gamma(\theta))dt\\
    &=\int_{\R}\int \eta_T (x+t\gamma(\theta))\psi_P^\vee (x+t\gamma(\theta)-y)d\mu(y)dt.
\end{align*}
By Fubini's theorem and writing $z=x+t\gamma(\theta)$, we have
    \begin{equation}
        \int |\pi_{\theta\#}M_T\mu(x)|d\mathcal H^2(x)= \int \int_{\R^3} \eta_T(z)\psi_P^\vee(z-y)dz d\mu(y).
    \end{equation}
    Since $\psi_P^\vee$ is an $L^1$-normalized Schwartz function essentially supported on a slightly smaller rectangle than $T$ (due to the $\beta$ factor), an application of Proposition \ref{prop_weights_convolution} (generalized to $\R^3$ in an apparent way) gives $|\psi_P^\vee*\eta_T(y)|\lesssim_N w_{T,N}(y)$ for any choice of $N$. Thus the result follows.

\end{proof}

The next is an almost orthogonality lemma.
\begin{lem}\label{lem_almost_orthogonality}
    For each $P\in \Lambda_{j,k}$, denote by $\theta_P$ the angle in $I_0$ such that $P=P_{\theta_P,j,k}$. Then for any $\theta\in I_0$ such that $|\theta-\theta_P|\ge C2^{-k/2}$ (here $C$ is a large enough absolute constant) and any $x\in \pi_\theta(B^3(0,1))$, we have
    \begin{equation}
        \sum_{T\in \mathbb{T}_P}|\pi_{\theta\#}M_T \mu(x)|\lesssim_N 2^{-kN}|P|\mu(\R^3),\quad \forall N\ge 1.
    \end{equation}    
\end{lem}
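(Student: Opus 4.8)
The plan is to exploit the mismatch between the Fourier support of $M_T\mu$ (a plank dual to $P=P_{\theta_P,j,k}$) and the geometry of the projection $\pi_\theta$ when $\theta$ is angularly far from $\theta_P$. Concretely, $\pi_{\theta\#}M_T\mu(x)=\int_{\R}M_T\mu(x+t\gamma(\theta))\,dt$, so morally $\pi_{\theta\#}M_T\mu$ is the restriction to the line $\gamma(\theta)^\perp$ of a function whose Fourier transform lives on the slice of $\widehat{M_T\mu}$'s support perpendicular to $\gamma(\theta)$; the key point is that $P=P_{\theta_P,j,k}$ sits at distance $\sim 2^{j-k}$ from the cone $\Gamma$ in the $\mathbf e_1(\theta_P)$ direction, and since $|\theta-\theta_P|\gtrsim 2^{-k/2}$, the direction $\gamma(\theta)=\mathbf e_1(\theta)$ is far (in the relevant rescaled sense) from lying in the plane containing $P$. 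Thus integrating $\psi_P^\vee$ along the direction $\gamma(\theta)$ — equivalently, evaluating $\widehat{\psi_P^\vee}$ at frequencies forced to be multiples of $\gamma(\theta)$ — picks up rapid decay, because $\psi_P$ is supported on a plank that is ``thin'' transverse to $\gamma(\theta_P)$ but the integration direction $\gamma(\theta)$ makes a large angle with the plank.

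First I would make the preliminary reductions: by the $O(1)$-overlap and dyadic structure we may assume $\mathrm{diam}(I_0)$ is small so that $|\gamma(\theta)-\gamma(\theta_P)|\sim|\theta-\theta_P|$, and by symmetry restrict to one of the four components of $P_{\theta_P,j,k}$. Then I would pass to the single-tube estimate $\|\pi_{\theta\#}M_T\mu\|_{L^1(\mathcal H^2)}\lesssim \int|\psi_P^\vee*\eta_T(y)|\,d\mu(y)$ as in Lemma \ref{lem_rapid_decay_plank}, so the whole lemma reduces to showing that, \emph{after integrating along $\gamma(\theta)$}, the kernel $\psi_P^\vee$ gains a factor $2^{-kN}$. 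The cleanest way to see this is on the Fourier side: $\int_\R \psi_P^\vee(z+t\gamma(\theta))\,dt$ has Fourier transform (in the variables of $\gamma(\theta)^\perp$) equal to $\widehat{\psi_P^\vee}=\psi_P$ restricted to $\{\xi:\xi\cdot\gamma(\theta)=0\}$. So I need the quantitative geometric claim: if $\xi\in 2P_{\theta_P,j,k}$ and $\xi\cdot\gamma(\theta)=0$ with $|\theta-\theta_P|\ge C2^{-k/2}$, then this forces $\xi$ to be much smaller than a generic point of $2P_{\theta_P,j,k}$, so only a $2^{-\Omega(k)}$-fraction of the plank contributes — and because $\psi_P$ is genuinely smooth (supported in $2P$, $\equiv 1$ on $P$, with derivatives of the expected size, and with the $\beta$-slack built into the dual tube dimensions), repeated integration by parts in $t$ converts this transversality into the claimed rapid decay $2^{-kN}$. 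Summing over the $\sim 1$ tubes $T\in\mathbb T_P$ meeting any fixed unit neighborhood (using that $\eta_T$ partition unity and $T$'s tile $\R^3$), together with $\mu(\R^3)<\infty$ and $|P|\sim 2^{3j-3k/2}$, gives the stated bound.

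The computation at the heart is the transversality estimate: writing $\xi=\xi_1\mathbf e_1(\theta_P)+\xi_2\mathbf e_2(\theta_P)+\xi_3\mathbf e_3(\theta_P)$ with $|\xi_1|\sim 2^{j-k}$, $|\xi_2|\lesssim 2^{j-k/2}$, $|\xi_3|\sim 2^j$, and expanding $\gamma(\theta)=\gamma(\theta_P)+(\theta-\theta_P)\gamma'(\theta_P)+\tfrac12(\theta-\theta_P)^2\gamma''(\theta_P)+\cdots$, the condition $\xi\cdot\gamma(\theta)=0$ becomes, using Lemma \ref{lem_frame_derivatives} and $\mathbf e_1\cdot\mathbf e_3=O((\theta-\theta_P)^2)$ etc., an identity of the shape
\begin{equation*}
\xi_1 + \xi_2(\theta-\theta_P) + \tfrac12\xi_3(\theta-\theta_P)^2 + (\text{higher order}) = 0 .
\end{equation*}
Since $|\theta-\theta_P|\ge C2^{-k/2}$, the term $\tfrac12\xi_3(\theta-\theta_P)^2$ has size $\gtrsim 2^{j-k}C^2$, which dominates $|\xi_1|\lesssim 2^{j-k+10}$ once $C$ is a large enough absolute constant — contradiction for $\xi$ ranging over $2P$ unless we instead think of this as: the hyperplane $\{\xi\cdot\gamma(\theta)=0\}$ is \emph{transverse} to $P$ at scale making the integration-by-parts gain genuinely $2^{-k}$ per step. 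I expect this transversality/integration-by-parts bookkeeping — keeping careful track of how the derivative sizes of $\psi_P$ in each of the three plank directions interact with the direction $\gamma(\theta)$, and where exactly the $2^{-k/2}$ threshold and the $\beta$-slack enter — to be the main obstacle; the rest (Fubini, the reduction to a single tube, summation over $T$) is routine given Lemma \ref{lem_rapid_decay_plank} and Proposition \ref{prop_weights_convolution}.
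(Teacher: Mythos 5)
Your proposal captures exactly the two ideas the paper's proof is built on: the transversality claim that for $|\theta-\theta_P|\ge C2^{-k/2}$ one has $|\xi\cdot\gamma(\theta)|\gtrsim 2^j(\theta-\theta_P)^2$ for all $\xi\in 2P$ (the paper's Claim \ref{claim_exponent_rapid_decay}), and repeated integration by parts in the $t$-variable of $\int\eta_T(x+t\gamma(\theta))e^{it\gamma(\theta)\cdot\xi}\,dt$. Your Taylor-expansion bookkeeping, in which the $\xi_3(\theta-\theta_P)^2$ term dominates both $\xi_1$ and $\xi_2(\theta-\theta_P)$ once $C$ is large, is precisely the paper's computation, so this is the same proof.

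A few points to tighten. First, the reduction you write down — the $L^1(\mathcal H^2)$ bound $\|\pi_{\theta\#}M_T\mu\|_{L^1}\lesssim\int|\psi_P^\vee*\eta_T|\,d\mu$ ``as in Lemma \ref{lem_rapid_decay_plank}'' — is the wrong starting point for this lemma: that reduction comes from Fubini together with taking absolute values inside, which \emph{erases} the $\theta$-dependence, and the conclusion here is a pointwise bound in $x$, not an $L^1$ bound. The paper instead writes $\pi_{\theta\#}M_T\mu(x)$ via the inverse Fourier transform of $\widehat\mu\,\psi_P$ and bounds it pointwise using $\|\widehat\mu\|_\infty\le\mu(\R^3)$, together with a polynomial-in-$|x-c_T|$ decay factor (coming from the same integration by parts) that makes the sum over $T\in\mathbb T_P$ converge. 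Second, the naive Fourier-slicing heuristic for $\int\psi_P^\vee(z+t\gamma(\theta))\,dt$ would in fact give exactly zero, since the hyperplane $\{\xi\cdot\gamma(\theta)=0\}$ misses $2P$ entirely; the reason the quantity is merely \emph{small}, not zero, is the cutoff $\eta_T$ sitting inside the $t$-integral, and that is what makes integration by parts unavoidable rather than a convenience. Relatedly, it is the derivatives of $\eta_T$ (not of $\psi_P$ or $\psi_P^\vee$) that one tracks in the $t$-integration-by-parts; using the dot-product estimates $|\mathbf e_i(\theta_P)\cdot\mathbf e_1(\theta)|$ one finds $|\tfrac{d}{dt}\eta_T(x+t\gamma(\theta))|\lesssim 2^{j-k\beta}|\theta-\theta_P|^2$, so each integration by parts gains a factor $\lesssim 2^{-k\beta}$ — not $2^{-k}$ as you write — but since $N$ is arbitrary this still yields $2^{-kN}$ after $O(N/\beta)$ steps.
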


\begin{proof}
It suffices to prove that for any rectangle $T\in \mathbb T_{P}$ centred at $c_T$ and any $x\in \pi_\theta(B^3(0,1))$, we have
\begin{equation}
        |\pi_{\theta\#}M_T \mu(x)|\lesssim_{\beta,N}2^{-kN}(1+|x-c_T|2^{j-k-k\beta})^{-N}|P|\mu(\R^3),\quad \forall N\ge 1.
    \end{equation}
    Using the inverse Fourier transform formula, we compute directly that
    \begin{align*}
        \pi_{\theta\#}M_T \mu(x)
        &=\int \int \eta_T(x+t\gamma(\theta)) \widehat \mu(\xi)\psi_P(\xi) e^{i(x+t\gamma(\theta))\cdot \xi}  dt d\xi\\
        &\le\int |\widehat \mu(\xi)\psi_P(\xi)|  \left|\int \eta_T(x+t\gamma(\theta)e^{it\gamma(\theta))\cdot \xi}dt\right| d\xi.
    \end{align*}
    Using $\|\hat \mu\|_\infty\le \mu(\R^3)$, it suffices to prove that
    \begin{equation}\label{eqn_rapid_decay_lem_5}
        \left|\int \eta_T(x+t\gamma(\theta))e^{it\gamma(\theta)\cdot \xi}dt\right|\lesssim_{\beta,N}2^{-kN}(1+|x-c_T|2^{j-k-k\beta})^{-N},\quad \forall N\ge 1.
    \end{equation}
    To prove this, we first prove the following claim.
    \begin{claim}\label{claim_exponent_rapid_decay}
        Let $\theta\in I_0$ and $|\theta-\theta_P|\ge C2^{-k/2}$. If $C$ is large enough, then we have \begin{equation}
        |\xi\cdot \gamma(\theta)|\gtrsim  (\theta-\theta_P)^2 2^{j},\quad \forall \xi\in P.
    \end{equation}
    \end{claim}
    \begin{proof}[Proof of claim]
        Write $\xi=\sum_{i=1}^3 \xi_i \mathbf e_i(\theta_P)\in P$. For $i=3$, by Taylor expansion at the centre $\theta_P$,
    \begin{align*}
        \mathbf e_3(\theta_P)\cdot \gamma(\theta)
        &=\mathbf e_3(\theta_P)\cdot \bigg(\mathbf e_1(\theta_P)+\mathbf e_2(\theta_P)(\theta-\theta_P)\\
        &+(-\mathbf e_1(\theta_P)+\uptau(\theta_P)\mathbf e_3(\theta_P))(\theta-\theta_P)^2\bigg)+o((\theta-\theta_P)^2)\\
        &\sim (\theta-\theta_P)^2.
    \end{align*}
    Since $\xi_3\sim 2^{j}$, we have 
    \begin{equation}
        |\xi_3\mathbf e_3(\theta_P)\cdot \gamma(\theta)|\gtrsim (\theta-\theta_P)^2 2^{j}.
    \end{equation}
    We also have 
    \begin{equation}
        |\xi_1\mathbf e_1(\theta_P)\cdot \gamma(\theta)|\lesssim 2^{j-k},
    \end{equation}
    and
    \begin{equation}
        |\xi_2\mathbf e_2(\theta_P)\cdot \gamma(\theta)|\lesssim  2^{j-k/2}|\theta-\theta_P|. 
    \end{equation}
    If $C$ is large enough, then
    \begin{equation}
        |\xi\cdot \gamma(\theta)|\sim |\xi_3\mathbf e_3(\theta_P)\cdot \gamma(\theta)|\gtrsim (\theta-\theta_P)^2 2^{j}.
    \end{equation}
    \end{proof}
    We also need to estimate the derivative of $\eta_T(x+t\gamma(\theta))$ with respect to $t$. Without loss of generality we may write
    \begin{align*}
        \eta_T(x)
        &=a_1(2^{j-k-k\beta}(x-c_T)\cdot \mathbf e_1(\theta_P))a_2(2^{j-k/2-k\beta}(x-c_T)\cdot \mathbf e_2(\theta_P))\\
        &\cdot a_3(2^{j-k\beta}(x-c_T)\cdot \mathbf e_3(\theta_P)),
    \end{align*}      
    for some suitable bump functions $a_1,a_2,a_3$ supported on $(-1,1)$. Thus for $x\in B^3(c_T,2^{k-j+k\beta})$, we have
    \begin{align*}
        &\left|\frac {d}{dt}\eta_T(x+t\gamma(\theta))\right|\\
        &\lesssim  2^{j-k-k\beta }|\mathbf e_1(\theta_P)\cdot \mathbf e_1(\theta)|+ 2^{j-k/2-k\beta }|\mathbf e_2(\theta_P)\cdot \mathbf e_1(\theta)|\\
        &+2^{j-k\beta }|\mathbf e_3(\theta_P)\cdot \mathbf e_1(\theta)|\\
        &\lesssim 2^{j-k-k\beta}+ 2^{j-k/2-k\beta}|\theta-\theta_P|+2^{j-k\beta} |\theta-\theta_P|^2\\
        &\lesssim 2^{j-k\beta} |\theta-\theta_P|^2,
    \end{align*}
    and for $x\notin B^3(c_T,2^{k-j+k\beta})$ we have rapid decay in $x$ since $\eta$ is Schwartz.
    
    With Claim \ref{claim_exponent_rapid_decay}, we can apply integration by parts repeatedly to estimate the left hand side of \eqref{eqn_rapid_decay_lem_5}. Each time we integrate by parts, we gain a factor $\lesssim 2^{-k\beta}$, from which the result follows.

\end{proof}

\subsection{Good part and bad part}
The main idea is to divide the wave packets into two parts, called the good part and the bad part. We will prove an $L^1$ estimate for the bad part and an $L^2$ estimate for the good part.

Let $a>2$ be as in Theorem \ref{thm_thm8}. Let $\eps,a_1$ be small numbers to be determined later.

For each $P\in \Lambda$, define the set of heavy planks in $\mathbb{T}_{P}$
\begin{equation}
    \mathbb T_{P,b}:=\left\{T\in \mathbb T_P: \mu(T)\ge 2^{-k\frac{a_1+1}2-a(j-k)}\right\},\quad \mathbb T_{P,g}:=\mathbb T_{P}\backslash \mathbb T_{P,b}.
\end{equation}
Decompose $\mu=\mu_b+\mu_g:=\mu_b+\mu_0+\mu_{g1}+\mu_{g_2}$, where
\begin{align}\label{eqn_defn_good_bad}
    \mu_0&:=\mu*\psi_0^\vee,\\
    \mu_b
    &:=\sum_{j\ge 0}\sum_{k\in [j\eps,j]}\sum_{P\in \Lambda_{j,k}}\sum_{T\in \mathbb T_{P,b}}M_T\mu,\\
    \mu_{g_1}&:=\sum_{j\ge 0}\sum_{k\in [j\eps,j]}\sum_{P\in \Lambda_{j,k}}\sum_{T\in \mathbb T_{P,g}}M_T\mu,\\    
    \mu_{g_2}&:=\sum_{j\ge 0}\sum_{k\in [0,j\eps)}\sum_{P\in \Lambda_{j,k}}\sum_{T\in \mathbb T_{P}}M_T\mu.
\end{align}
We remark that the wave packets of $\mu_b$ are those that have heavy mass and are not too far away from the cone $\Gamma$ (as $k$ is not too small). 

Note that each $M_T\mu$ is a function but $\mu_{g},\mu_b$ are in general measures. We also remark that $\mu_g=\mu_{g,\beta,a,a_1,\eps}$ and $\mu_b=\mu_{b,a,\beta,a,a_1,\eps}$ depend on parameters $\beta,a,a_1,\eps$, but for simplicity we just omit them.

We now state the main estimates we want to prove. 
\begin{lem}[Lemma 6 of \cite{GGGHMW2022}]\label{lem_lem6}
    Let $a>2$, $\eps\ll 1$ and $a_1<2$. For all Borel measures $\mu$ supported on the unit ball in $\R^3$ with $\rho_a(\mu)\le 1$, it holds that
    \begin{equation}
        \sum_{j\ge 0}\sum_{k\in [j\eps,j]}\sum_{P\in \Lambda_{j,k}}\sum_{T\in \mathbb T_{P,b}}\int \int |\pi_{\theta \#}M_T\mu |d \mathcal H^2 d\theta\lesssim 1.
    \end{equation}
\end{lem}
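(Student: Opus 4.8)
The plan is to bound the total mass of $\mu_b$ by a geometric sum, where each level $(j,k)$ contributes a negligible amount once $k \geq j\eps$. The key observation is that the definition of a heavy plank forces a lower bound on $\mu(T)$, and since $\mu$ has $\rho_a(\mu)\le 1$ there cannot be too many heavy planks in any given $\mathbb T_P$: indeed, each $T\in\mathbb T_P$ is contained in a ball of radius $\sim 2^{k-j+k\beta}$, so $\mu(T)\lesssim (2^{k-j+k\beta})^a$, but more importantly, by summing over disjoint tubes the total mass constraint gives $\#\mathbb T_{P,b}\le \mu(\R^3)\cdot 2^{k\frac{a_1+1}2 + a(j-k)}$. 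Combining this count with the pointwise estimate from Lemma \ref{lem_rapid_decay_plank}, namely $\int\int |\pi_{\theta\#}M_T\mu|\,d\mathcal H^2\,d\theta \lesssim_N 2^{-kN}\mu(\R^3) + \mu(T)$, summed over $T\in\mathbb T_{P,b}$, yields a bound on the inner double sum over $T$ of roughly $\mu\left(\bigcup_{T\in\mathbb T_{P,b}}T\right) + (\#\mathbb T_{P,b})2^{-kN}\mu(\R^3)$, where the first term is at most $\pi_{\theta\#}\mu$-type mass and the second is harmless for $N$ large.

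Next I would sum over $P\in\Lambda_{j,k}$. Here one uses that $\Lambda_{j,k}$ has $O(1)$-overlap (Lemma \ref{lem_planks_j_k}), so the frequency-localized pieces $\mu*\psi_P^\vee$ have controlled overlap; summing $\mu(\bigcup_{T\in\mathbb T_{P,b}} T)$ over $P$ gives at most $O(1)\mu(\R^3)$ up to the overlap constant, but this crude bound is not summable in $j,k$. The correct route is to retain the heaviness threshold: since $\mu(T)\ge 2^{-k\frac{a_1+1}2 - a(j-k)}$ for $T\in\mathbb T_{P,b}$ and the tubes in $\mathbb T_{P,b}$ (across all $P\in\Lambda_{j,k}$, with bounded overlap) are essentially disjoint, we get $\sum_{P\in\Lambda_{j,k}}\#\mathbb T_{P,b}\lesssim \mu(\R^3)\cdot 2^{k\frac{a_1+1}2 + a(j-k)}$. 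Then Lemma \ref{lem_rapid_decay_plank} bounds each term's $\theta$-integral by $\lesssim_N 2^{-kN}\mu(\R^3)+\mu(T)$; summing the $\mu(T)$ terms over the disjoint heavy tubes gives $\lesssim \mu(\R^3)$ again — so the real gain must come from somewhere sharper. I expect the actual argument uses the $L^1$ bound $\int |\pi_{\theta\#}M_T\mu| \lesssim \mu(T)$ together with the fact that a fixed tube $T$ of these dimensions can be heavy for only $O(1)$ choices of $(P,T)$, but then trades the crude count $\#\mathbb T_{P,b}\cdot(\text{threshold}) \le \mu$ against a \emph{fractional} estimate: because $\rho_a(\mu)\le 1$ and $a>2$, a single tube of volume $\sim 2^{a(k-j)}\cdot(\text{aperture stuff})$ has mass $\le 2^{a(k-j-k\beta)}\cdots$ which, compared to the heaviness threshold $2^{-k(a_1+1)/2 - a(j-k)}$, forces $2^{-k(a_1+1)/2} \lesssim 2^{-2k + O(k\beta)}$-type inequalities; the surplus exponent $2 - (a_1+1)/2 > 0$ (valid since $a_1 < 2$, hence $(a_1+1)/2 < 3/2 < 2$) is what makes the double sum over $j\ge 0$, $k\in[j\eps,j]$ converge geometrically.

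So the order of operations is: (i) use Lemma \ref{lem_rapid_decay_plank} to reduce the $\theta$-integral of each wave packet to $2^{-kN}\mu(\R^3)+\mu(T)$; (ii) bound $\#\mathbb T_{P,b}$ and, using bounded overlap of $\Lambda_{j,k}$, bound $\sum_{P\in\Lambda_{j,k}}\#\mathbb T_{P,b}$ via the heaviness threshold together with the Frostman bound $\mu(T)\lesssim (\text{radius of }T)^a$ — this is where the key inequality $\mu(T)\le (2^{k-j})^a \cdot 2^{k\beta a}$ enters and where one extracts a power of $2$ with a strictly positive exponent in $k$ (after using $k\ge j\eps$ and $\beta$ tiny); (iii) sum the resulting bound over $k\in[j\eps,j]$ and then over $j\ge 0$ as a convergent geometric series, choosing $\eps, \beta$ small and $N$ large depending on $a, a_1$. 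The main obstacle I anticipate is step (ii): one must be careful that the ``heavy" threshold is calibrated precisely so that the number of heavy tubes times the $L^1$ contribution $\mu(T)$ of each does not simply collapse to $O(\mu(\R^3))$ (which is not summable), but instead that the Frostman upper bound on $\mu(T)$ combined with the lower bound defining heaviness produces genuine decay in $2^{-(j-k)}$ and $2^{-k}$ simultaneously; tracking the $\beta$ error terms through the dual-plank dimensions in \eqref{eqn_wave_packet_T} and verifying they are absorbed by the positive exponent $2-(a_1+1)/2$ is the delicate bookkeeping. Everything else is routine summation once the exponent arithmetic is pinned down.
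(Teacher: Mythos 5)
The structure of your first few steps is right, and you correctly diagnose the danger (the naive bound $\sum_T \mu(T)\lesssim\mu(\R^3)$ is not summable over $j,k$). But the ``fix'' you propose in step (ii) — extracting decay by playing the Frostman upper bound $\mu(T)\lesssim 2^{a(k-j+k\beta)}$ against the heaviness threshold $\mu(T)\geq 2^{-k\frac{a_1+1}{2}-a(j-k)}$ — does not actually produce any decay. Comparing the two just gives $-k\frac{a_1+1}{2}-a(j-k)\le a(k-j)+ak\beta$, i.e.\ $-\frac{a_1+1}{2}\le a\beta$, a vacuous parameter constraint rather than a power of $2^{-k}$. And if instead you multiply the count $\#\mathbb T_{P,b}\lesssim 2^{k\frac{a_1+1}{2}+a(j-k)}\mu(\R^3)$ by the Frostman bound $\mu(T)\lesssim 2^{a(k-j)+ak\beta}$, you get $2^{k\frac{a_1+1}{2}+ak\beta}\mu(\R^3)$, which \emph{grows} in $k$. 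The ``surplus exponent $2-(a_1+1)/2$'' you invoke does not emerge from any of the computations you sketch. Your own warning — ``the real gain must come from somewhere sharper'' — is the correct instinct, and the arithmetic afterward does not overcome it.

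The missing ingredient is Corollary \ref{cor_cor_3} (hence Theorem \ref{thm_thm7}): this is where the genuine gain $\delta^{\eps'}\sim 2^{-k\eps'}$ comes from, and it is a non-trivial incidence estimate proved via high-low decomposition, decoupling and the refined Strichartz inequality — it is emphatically not obtainable from the Frostman and heaviness conditions alone. Concretely, the paper's proof: (i) splits the $\theta$-integral into far ($|\theta_P-\theta|\ge C2^{-k/2}$, handled by the almost-orthogonality Lemma \ref{lem_almost_orthogonality}) and near regimes; (ii) in the near regime, integrates in $\theta$ and discretizes to a sum over $P\in\Lambda_{j,k}$; (iii) rescales by $2^{j-k+k\beta}$ and localizes to small balls $B_l$, manufacturing the hypotheses of Corollary \ref{cor_cor_3} with $\delta^{1/2}\sim 2^{-k/2}$ — the heaviness threshold is calibrated precisely to yield the cardinality bound $\#\tilde{\mathbb T}_{P,l,b}\lesssim 2^{k\frac{a_1+1}{2}}\mu_{j,k,l}(\R^3)$ required there; (iv) Corollary \ref{cor_cor_3} then returns the gain $2^{-k\eps'}$, which (after choosing $\beta=\eps'/100$) is summable over $k\in[j\eps,j]$ and then $j$. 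Without the input from Corollary \ref{cor_cor_3}, the sum simply does not close; any self-contained argument would have to reprove the content of Theorem \ref{thm_thm7}.
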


Let $\{\phi_{n}:n\ge 1\}$ be an approximation to the identity in $\R^3$, and then define $\mu^{(n)}_{g}:=\mu_g*\phi_{n}$ to be a mollified version of $\mu_g$, such that each $\mu^{(n)}_{g}$ is a function.
\begin{lem}[Lemma 7 of \cite{GGGHMW2022}]\label{lem_lem7}
    Let $a>2$. Then for $\eps>0$ and $2-a_1>0$ small enough depending on $a$, all Borel measures $\mu$ supported on the unit ball in $\R^3$ with $\rho_a(\mu)\le 1$, it holds that
    \begin{equation}
        \int \int |\pi_{\theta \#}\mu^{(n)}_{g}|^2 d \mathcal H^2 d\theta\lesssim 1,
    \end{equation}
    where the constant is independent of $n$.
\end{lem}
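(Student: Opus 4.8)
The plan is to estimate the $L^2$ norm of $\pi_{\theta\#}\mu_g^{(n)}$ by passing to the Fourier side and exploiting the almost-orthogonality of the covering planks $\Lambda$, reducing everything to a sum over the plank families $\Lambda_{j,k}$, and then within each family to the decoupling and $L^1$--mass information. First I would fix $\theta$ and write $\pi_{\theta\#}\mu_g^{(n)}$ as a function on the plane $\gamma(\theta)^\perp$; by Plancherel on that plane, $\|\pi_{\theta\#}\mu_g^{(n)}\|_{L^2(\mathcal H^2)}^2$ equals the $L^2$ mass of the Fourier transform restricted to the line $\{\xi:\xi\cdot\gamma(\theta)=0\}$, pushed forward appropriately. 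The key geometric point, coming from Lemma \ref{lem_almost_orthogonality}, is that a plank $P=P_{\theta_P,j,k}$ only contributes meaningfully to directions $\theta$ with $|\theta-\theta_P|\lesssim 2^{-k/2}$ (up to rapidly decaying tails), so after integrating in $\theta$ the cross terms between planks at distinct angles at the same scale $(j,k)$ are negligible, and the overlap bound in Lemma \ref{lem_planks_j} handles the interaction across different $(j,k)$. This gives a bound of the shape
\begin{equation}
    \int\int |\pi_{\theta\#}\mu_g^{(n)}|^2\,d\mathcal H^2\,d\theta \lesssim \sum_{j\ge 0}\sum_{k\le j}\sum_{P\in\Lambda_{j,k}} 2^{-k/2}\int |\pi_{\theta_P\#}(\text{piece of }\mu_g \text{ on }P)|^2\,d\mathcal H^2 + (\text{rapidly decaying tails}).
\end{equation}

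Next I would handle the diagonal term for a fixed scale $(j,k)$. Here the two subcases $\mu_{g_1}$ (planks with $k\ge j\eps$, with only light tubes $\mathbb T_{P,g}$) and $\mu_{g_2}$ (planks with $k<j\eps$, all tubes) are treated slightly differently. For $\mu_{g_2}$, $k$ is so small that the crude bound using $\rho_a(\mu)\le 1$ — each $\mu(T)\lesssim 2^{-a(j-k)+\text{(something)}}$ from the Frostman condition on a plank of the stated dimensions — combined with the trivial count $\#\mathbb T_P\lesssim 2^{k/2}\cdot(\text{stuff})$ already suffices, since the loss is controlled by $2^{j\eps}$-type factors which are summable once $\eps$ is small relative to the gap $a-2$. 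For $\mu_{g_1}$, the light-tube restriction $\mu(T)<2^{-k(a_1+1)/2-a(j-k)}$ is exactly the input designed so that Corollary \ref{cor_cor_3} (with $\delta\sim 2^{-k}$, after rescaling the plank $P_{\theta,j,k}$ to the canonical $1\times\delta^{1/2}\times\delta^{-1}$-type configuration) applies: summing $\pi_{\theta\#}\mu$ of the tube-shadows over $\theta$ gives a gain of $\delta^\eps=2^{-k\eps}$, which after being fed back through the almost-orthogonal decomposition yields the required bound. I would need to be careful that the rescaling maps the planks $P_{\theta,j,k}$ and the wave-packet tubes $T\in\mathbb T_P$ to exactly the configuration of rectangles $\mathbb P_\theta$ in Corollary \ref{cor_cor_3}, with the $\delta\times\delta^{1/2}$ rectangles pointing in the $\mathbf e_2$ direction, and that the count $\#\mathbb P_\theta\lesssim\mu(\R^3)\delta^{-(a_1+1)/2}$ is met — this is where the definition of $\mathbb T_{P,g}$ via the threshold $2^{-k(a_1+1)/2-a(j-k)}$ is used.

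The final step is to sum the geometric series in $j$ and $k$. After the above, the contribution of scale $(j,k)$ is bounded by something like $2^{-\eta k}\cdot 2^{-\eta'(j-k)}$ for positive exponents $\eta,\eta'$ depending on $a-2$, $a-a_1$, and $\eps$; choosing $\eps$ and $2-a_1$ small enough (depending on $a$) makes all exponents strictly positive, and the double sum converges to an absolute constant independent of $n$ because every estimate used (Lemma \ref{lem_almost_orthogonality}, Corollary \ref{cor_cor_3}, the Frostman bound) is uniform in the mollification parameter. I expect the main obstacle to be the bookkeeping in the first step: making the almost-orthogonality across the full infinite family $\Lambda$ rigorous — in particular controlling the Schwartz tails from Lemma \ref{lem_almost_orthogonality} when summing over infinitely many $(j,k)$, and verifying that the $2^{-k/2}$ angular-measure factor combines correctly with Plancherel on the varying planes $\gamma(\theta)^\perp$. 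A secondary subtlety is ensuring the rescaling in the $\mu_{g_1}$ case genuinely lands in the hypotheses of Corollary \ref{cor_cor_3} rather than merely Corollary \ref{cor_cor_2}, since it is the $\delta^{1/2}$-rectangle version that captures the cone geometry at scale $2^{-k}$.
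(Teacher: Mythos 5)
Your first step (Plancherel on the planes $\gamma(\theta)^\perp$, then exploiting angular localization of each plank) is the right opening move and is close to the paper's argument, which achieves this via the identity $(\pi_{\theta\#}\mu_g)^\wedge(\eta_2,\eta_3)=\widehat{\mu_g}(\eta_2\mathbf e_2(\theta)+\eta_3\mathbf e_3(\theta))$ and the change of variables $\xi=\eta_2\mathbf e_2(\theta)+\eta_3\mathbf e_3(\theta)$ with Jacobian $|\eta_2|$, giving the cleaner formula $\int_{\R^3}|\widehat{\mu_g}(\xi)|^2|\eta_2|^{-1}\,d\xi$. Your heuristic bound with the $2^{-k/2}$ angular-measure factor and tails controlled by Lemma \ref{lem_almost_orthogonality} is morally what the Jacobian computation and Lemma \ref{lem_claim1} accomplish. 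Your treatment of $\mu_{g_2}$ is also in the right spirit: for $k\le j\eps$ the paper shows $|\eta_2|\gtrsim|\xi|^{1-\eps/2}$ and then bounds $\int|\widehat\mu(\xi)|^2|\xi|^{\eps/2-1}\,d\xi$ by the finite $a'$-energy of $\mu$ for $a'<a$, which is the precise form of your ``crude Frostman bound.''

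The treatment of $\mu_{g_1}$ is where the proposal breaks down, and the error is not a minor one. You propose feeding the light-tube condition $\mu(T)<2^{-k(a_1+1)/2-a(j-k)}$ into Corollary \ref{cor_cor_3}, claiming that this is ``exactly the input designed so that Corollary \ref{cor_cor_3} applies'' and ``this is where the definition of $\mathbb T_{P,g}$ via the threshold is used.'' But Corollary \ref{cor_cor_3} requires as hypothesis a \emph{cardinality} bound $\#\mathbb P_\theta\lesssim\mu(\R^3)\delta^{-(a_1+1)/2}$, and such a bound is obtained by Markov's inequality from a \emph{heavy}-mass lower bound $\mu(T)\ge 2^{-k(a_1+1)/2-a(j-k)}$ — this is precisely the definition of $\mathbb T_{P,b}$, the \emph{bad} tubes. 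Lightness gives no upper bound whatsoever on the number of tubes, so Corollary \ref{cor_cor_3} simply does not apply to the good part. You have swapped the roles of the good and bad parts: Corollary \ref{cor_cor_3} is the engine for Lemma \ref{lem_lem6} (the $L^1$ estimate for $\mu_b$), not for Lemma \ref{lem_lem7}. The paper's proof of Lemma \ref{lem_lem7} for $\mu_{g_1}$ instead uses the refined Strichartz inequality, Theorem \ref{thm_Strichartz}, as the central tool: after reducing via orthogonality to $\sum_P\sum_{T\in\mathbb T_{P,g}}\int|M_T\mu|^2$ and setting up a duality/pigeonholing scheme with the functions $f_T$ of \eqref{eqn_defn_f_T}, one applies the refined Strichartz bound together with the light-mass condition, which enters through the bound on $\int_Y(\nu_l*\zeta_j)^{p/(p-2)}$ rather than through any plank count. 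Without this ingredient (or a genuine substitute for it), the argument you describe for $\mu_{g_1}$ does not close, and in particular there is no mechanism for the parameter $p>4$ to enter — which is what ultimately makes the $k$-sum converge.
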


We now prove Theorem \ref{thm_thm8} assuming Lemmas \ref{lem_lem6} and \ref{lem_lem7}.
\begin{proof}[Proof of Theorem \ref{thm_thm8}]
    Take $\eps,2-a_1$ sufficiently small such that both Lemmas \ref{lem_lem6} and \ref{lem_lem7} hold. 

    By Lemma \ref{lem_lem6}, in particular, for a.e. $\theta$ we have
    \begin{equation}
        \sum_{j\ge 0}\sum_{k\in [j\eps,j]}\sum_{P\in \Lambda_{j,k}}\sum_{T\in \mathbb T_{P,b}}\int|\pi_{\theta \#}M_T\mu|d\mathcal H^2<\infty,
    \end{equation}
    which implies that $\pi_{\theta \#}\mu_b$ is absolutely continuous with respect to $\mathcal H^2$.
    
    By Lemma \ref{lem_lem7}, the sequence of functions $\{\pi_{\theta \#}\mu^{(n)}_{g}\}$ is bounded in $L^2(\mathcal H^2\times I_0)$, and so by weak compactness, there exists an $L^2(\mathcal H^2\times I_0)$ function $h$ such that $\pi_{\theta \#}\mu^{(n)}_{g}\to h$ weakly in $L^2(\mathcal H^2\times I_0)$ passing to a subsequence. However, by properties of approximate identity we know that $\mu^{(n)}_{g}\to \mu_g$ weakly in $L^2(\mathcal H^2)$, whence $\pi_{\theta \#}\mu^{(n)}_{g}\to \pi_{\theta \#}\mu_g$ weakly in $L^2(\mathcal H^2\times I_0)$. By uniqueness of limits, we have $h=\pi_{\theta \#}\mu_g$, and so $\pi_{\theta \#}\mu_g$ is actually a function in $L^2(\mathcal H^2\times I_0)$. Since $\mathcal H^2|_{B^2(0,1)}\times I_0$ is a finite measure space, H\"older's inequality implies that 
    \begin{equation}
        \int \int |\pi_{\theta \#}\mu_g| d \mathcal H^2 d\theta\lesssim 1.
    \end{equation}
    In particular, for a.e. $\theta\in I_0$ we have 
    \begin{equation}
        \int |\pi_{\theta \#}\mu| d \mathcal H^2<\infty,
    \end{equation}
    and so $\pi_{\theta \#}\mu_g$ is absolutely continuous with respect to $\mathcal H^2$. Hence $\pi_{\theta \#}\mu$ is absolutely continuous with respect to $\mathcal H^2$.
\end{proof}

\subsection{Controlling bad part}
We now prove Lemma \ref{lem_lem6}. The main ingredient is Corollary \ref{cor_cor_3}. We have
\begin{align}
    &\int \sum_{j\ge 0}\sum_{k\in [j\eps,j]}\sum_{P\in \Lambda_{j,k}}\sum_{T\in \mathbb T_{P,b}}\int |\pi_{\theta\#} M_T\mu|d\mathcal H^2d\theta\nonumber\\
    &=\sum_{j\ge 0}\sum_{k\in [j\eps,j]} \int\int \sum_{ \stackrel{P\in \Lambda_{j,k}}{|\theta_P-\theta|<C2^{-k/2}} }\sum_{T\in \mathbb T_{P,b}}|\pi_{\theta\#} M_T\mu| d\mathcal H^2d\theta\label{eqn_proof_lem6_01}\\
    &+\sum_{j\ge 0}\sum_{k\in [j\eps,j]} \int\int \sum_{ \stackrel{P\in \Lambda_{j,k}}{|\theta_P-\theta|\ge C2^{-k/2}} }\sum_{T\in \mathbb T_{P,b}}|\pi_{\theta\#} M_T\mu| d\mathcal H^2d\theta\label{eqn_proof_lem6_02}.
\end{align}
\subsubsection{Far part}
To bound \eqref{eqn_proof_lem6_02}, we use Lemma \ref{lem_almost_orthogonality} to get
\begin{align*}
    &\sum_{ \stackrel{P\in \Lambda_{j,k}}{|\theta_P-\theta|\ge C2^{-k/2}} }\sum_{T\in \mathbb T_{P}}|\pi_{\theta\#} M_T\mu(x)|\\
    &\lesssim_{N,\beta} (\#\Lambda_{j,k})2^{-kN+3j}\mu(\R^3)\\
    &\lesssim_{N,\beta} 2^{-kN+3j},
\end{align*}
where we have used $\#\Lambda_{j,k}\lesssim 2^{k/2}$ and $\mu(\R^3)\lesssim 1$ since $\rho_a(\mu)\leq 1$.

Thus we have
\begin{align*}
    \eqref{eqn_proof_lem6_02}
    &\lesssim_{N,\beta}\sum_{j\ge 0}\sum_{k\in [j\eps,j]} \int \int 2^{-kN+3j}d\mathcal H^2d\theta\\
    &\lesssim \sum_{j\ge 0}2^{3j-3j\eps N}\\
    &\lesssim 1,
\end{align*}
if we choose $N>10\eps^{-1}$. 
\subsubsection{Near part}\label{sec_near_part}
To estimate \eqref{eqn_proof_lem6_01}, we apply Lemma \ref{lem_rapid_decay_plank} and get
\begin{equation}\label{eqn_Sep_22_02}
    \sum_{ \stackrel{P\in \Lambda_{j,k}}{|\theta_P-\theta|\le C2^{-k/2}} }\sum_{T\in \mathbb T_{P,b}}\int|\pi_{\theta\#} M_T\mu| d\mathcal H^2\lesssim_{\beta,N} \sum_{ \stackrel{P\in \Lambda_{j,k}}{|\theta_P-\theta|\le C2^{-k/2}} }\sum_{T\in \mathbb T_{P,b}}2^{-kN}+ \mu(T).
\end{equation}
The rapidly decaying term $2^{-kN}$ can be dealt with easily, since we also have
\begin{equation}
    \# \mathbb T_{P,b}\leq 2^{k\frac {a_1+1} 2+a(j-k)}
\end{equation}
by the heavy mass in our definition of $\mathbb T_{P,b}$. The remaining argument is divided into the following steps.

\underline{Step 1: Discretization.}

To estimate the other term, we define
\begin{equation}\label{eqn_Sep_22_03}
    F(\theta,C):=\sum_{ \stackrel{P\in \Lambda_{j,k}}{|\theta_P-\theta|\le C2^{-k/2}} }\sum_{T\in \mathbb T_{P,b}}\mu(T)=\sum_{ \stackrel{P\in \Lambda_{j,k}}{|\theta_P-\theta|\le C2^{-k/2}} }\mu(\cup\mathbb T_{P,b}).
\end{equation}
It is easy to see that for $|\theta-\theta'|\le C2^{-k/2}$, we have
\begin{equation}
    F(\theta,C)\le F(\theta',2C).
\end{equation}
Using this and denoting $\Theta'_k$ to be the $C2^{-k}$ lattice on $[0,1]$, we have
\begin{align}
    \int F(\theta,C)d \theta
    &=\sum_{\theta'\in \Theta'_k}\int_{\theta'-C2^{-k-1}}^{\theta'+C2^{-k-1}}F(\theta,C)d\theta\nonumber\\
    &\leq \sum_{\theta'\in \Theta'_k}\int_{\theta'-C2^{-k-1}}^{\theta'+C2^{-k-1}}F(\theta',2C)d\theta\nonumber\\
    &\sim\sum_{\theta'\in \Theta'_k} 2^{-k}F(\theta',2C)\nonumber\\
    &\sim 2^{-k} \sum_{P\in \Lambda_{j,k}}\mu(\cup\mathbb T_{P,b}),\label{eqn_Sep_22_04}
\end{align}
where in the last line we have used that $\Theta_k$ is a $c2^{-k/2}$ lattice.

\underline{Step 2: Rescaling.}

Now we seek to apply Corollary \ref{cor_cor_3} with $\delta^{1/2}\sim 2^{-k/2}$, where we take $\Delta=\Theta_k$. 

Let $B_l$ be finitely overlapping balls of radius $2^{k-j+k\beta}$ that cover $B^3(0,1)$. For each $\theta$ and $l$ we have
\begin{equation}\label{eqn_Sep_22_01}
    \mu(\cup \mathbb T_{P,b})\le \sum_{B_l}\mu(\cup \mathbb T_{P,l,b}),
\end{equation}
where
\begin{equation}
    \mathbb T_{P,l,b}:=\{T\in\mathbb T_{P,b}:T\cap B_l\ne \varnothing\}.
\end{equation}
It is easy to see that $\cup \mathbb T_{P,l,b}\sub 2^{1000}B_l$ where $2^{1000}B_l$ denotes the concentric dilation of $B_l$; also $2^{1000}B_l$'s have finite overlap.

Now we perform a rescaling. Let $\lambda(x):=2^{-j+k-k\beta}x$ and $\mu_{j,k}$ be the measure on $\R^3$ defined by
\begin{equation}
    \mu_{j,k}(A):=\mu(\lambda (A)).
\end{equation}
Also, for each $l$, define $\tilde B_l=\lambda(2^{1000}B_l)$ and
\begin{equation}
    \mu_{j,k,l}:=2^{a(j-k)}\mu_{j,k}1_{\tilde B_l},\quad \tilde {\mathbb T}_{P,l,b}:=\{\lambda(T):T\in \mathbb T_{P,l,b}\}.
\end{equation}
We can check that the set $\cup \tilde{\mathbb T}_{P,l,b}$ is a disjoint union of planks of dimensions $2^{-k}\times 2^{-k/2}\times 1$ pointing in the directions $\mathbf e_i(\theta_P)$, $i=1,2,3$ respectively, and it is contained in $\tilde B_l$. If we project every plank along the direction $\mathbf e_3(\theta_P)$, then we obtain a disjoint family of planar rectangles of dimensions $2^{-k}\times 2^{-k/2}$ pointing in the directions $\mathbf e_i(\theta_P)$, $i=1,2$ respectively.

Meanwhile, in view of the heavy mass as in the definition $\mathbb T_{P,b}$, we have $\mu_{j,k,l}(\lambda (T))\gtrsim 2^{-k\frac {a_1+1}2}$, and so we have 
\begin{equation}
    \# (\tilde{\mathbb T}_{P,l,b}) \lesssim 2^{k\frac {a+1}2}\mu_{j,k,l}(\R^3).
\end{equation}
It is also direct to see that $\rho_{a}(\mu_{j,k,l})\lesssim 2^{ka\beta}$. 

\underline{Step 3: Applying Corollary \ref{cor_cor_3}.}

Now we use Corollary \ref{cor_cor_3} with $\delta^{1/2}=c2^{-k/2}$ to get some small exponent $\eps'>0$, depending only on $a,a_1$, such that
\begin{equation}
    \sum_{P\in \Lambda_{j,k}}\pi_{\theta_P \#}\mu_{j,k,l}\left(\bigcup\pi_{\theta_P \#} \tilde{\mathbb T}_{P,l,b}\right)\lesssim 2^{-\frac k 2-k\eps'+ka\beta}\mu_{j,k,l}(\R^3).
\end{equation}
But by direct computation,
\begin{equation}
    \pi_{\theta_P \#}\mu_{j,k,l}\left(\bigcup\pi_{\theta_P \#} \tilde{\mathbb T}_{P,l,b}\right)=\mu_{j,k,l}\left(\bigcup \tilde{\mathbb T}_{P,l,b}\right)\gtrsim 2^{a(j-k)}\mu(\cup \mathbb T_{P,l,b}).
\end{equation}
Summing over $B_l$ and using \eqref{eqn_Sep_22_01}, we thus have
\begin{equation}
    \sum_{P\in \Lambda_{j,k}}\mu(\cup \mathbb T_{P,b})\le \sum_{B_l}\mu(\cup \mathbb T_{P,l,b})\lesssim 2^{-\frac k 2-k\eps'}\mu(\R^3)\le 2^{-\frac k 2-k\eps'+ak\beta}.
\end{equation}
Continuing estimating from \eqref{eqn_Sep_22_02} and using \eqref{eqn_Sep_22_03} and \eqref{eqn_Sep_22_04}, we have
\begin{equation}
    \sum_{ \stackrel{P\in \Lambda_{j,k}}{|\theta_P-\theta|\le C2^{-k/2}} }\sum_{T\in \mathbb T_{P,b}} \mu(T)
    \lesssim 2^{ak\beta-k\eps'}.
\end{equation}
Now it suffices to choose $\beta=\eps'/100$ and sum over $k\in [j\eps,j]$ and then $j\in \N$. This concludes the $L^1$ estimate of the bad part.

\subsection{Controlling good part}
We now prove Lemma \ref{lem_lem7}. By the functional analytic proof of Theorem \ref{thm_thm8}, we may abuse notation and treat $\mu_g$ as a function as well. 

Fix $\theta$ and $x\in \pi_{\theta\#}(\R^3)$. We have
\begin{equation}
    \pi_{\theta\#}\mu_g(x)=\int \mu_g (x+t\gamma(\theta))dt.
\end{equation}
With the coordinate system $\{\mathbf e_i(\theta):i=1,2,3\}$ and slightly abusing notation, any $x\in \pi_{\theta\#}(\R^3)$ can be written as $(0,x_2,x_3)$. Thus the above equality can be rewritten as
\begin{equation}
    \pi_{\theta\#}\mu_g(x_2,x_3)=\int\mu_g(t,x_2,x_3)dt.
\end{equation}
Doing the Fourier transform in the $(\mathbf e_2,\mathbf e_3)$-plane, we have
\begin{equation}
    (\pi_{\theta\#}\mu_g)^\wedge(\eta_2,\eta_3)=\widehat {\mu_g}(0,\eta_2,\eta_3)=\widehat {\mu_g}(\eta_2 \mathbf e_2(\theta)+\eta_3 \mathbf e_3(\theta)).
\end{equation}
By Plancherel's theorem,
\begin{equation}\label{eqn_good2_after_Plancherel}
    \int\int_{\R^2} |\pi_{\theta\#}\mu_g|^2 d\mathcal H^2 d\theta=\int\iint|\widehat {\mu_g}(\eta_2 \mathbf e_2(\theta)+\eta_3 \mathbf e_3(\theta))|^2 d\eta d\theta.
\end{equation}
By the triangle inequality, we will estimate $\mu_0$, $\mu_{g1}$ and $\mu_{g2}$ respectively.

We first settle the easiest case, namely, for $\mu_0$. We have $\widehat{\mu_0}=\widehat \mu \psi_0$. Since $\sup|\widehat \mu|\leq \mu(\R^3)<\infty$, the integral over $|\eta|\lesssim 1$ and $\theta\in I_0$ is clearly finite.

\subsubsection{Preparation}
To estimate the other two terms, we first perform a change of variables
\begin{equation}
    \xi=(\xi_1,\xi_2,\xi_3)=\eta_2 \mathbf e_2(\theta)+\eta_3 \mathbf e_3(\theta), 
\end{equation}
which has Jacobian 
\begin{equation}
    |\det (\mathbf e_2,\mathbf e_3,\eta_2 (-\mathbf e_1+\uptau \mathbf e_3)-\eta_3 \uptau \mathbf e_2)|=|\eta_2\det (\mathbf e_1,\mathbf e_2,\mathbf e_3)|= |\eta_2|.
\end{equation}
Thus we have
\begin{equation}\label{eqn_lem7_change_variables}
    \int \iint |\widehat {\mu_g}(\eta_2 \mathbf e_2(\theta)+\eta_3 \mathbf e_3(\theta) )|^2 d\eta d\theta= \int_{\R^3} |\widehat {\mu_g}(\xi)|^2 |\eta_2|^{-1}d\xi.
\end{equation}
We also need the following lemma.
\begin{lem}\label{lem_claim1}
    Let $P\in \Lambda_{j,k}$. If there exist $\theta\in I_0$ and $(\eta_2,\eta_3)$ satisfying
    \begin{equation}
        \eta_2 \mathbf e_2(\theta)+\eta_3 \mathbf e_3(\theta)\in \supp (\psi_P),
    \end{equation}
    as well as $|\theta-\theta_P|\ll 1$, then it holds that 
    \begin{equation}
        |\theta-\theta_P|\lesssim 2^{-\frac{k}2},\quad |\eta_2|\lesssim 2^{j-\frac k 2}, \quad |\eta_3|\sim 2^{j}.
    \end{equation}
    Furthermore, if $j>k$, then it holds that 
    \begin{equation}
        |\theta-\theta_P|\sim 2^{-\frac{k}2},\quad |\eta_2|\sim 2^{j-\frac k 2}, \quad |\eta_3|\sim 2^{j}.
    \end{equation}
\end{lem}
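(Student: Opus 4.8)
\textbf{Proof proposal for Lemma \ref{lem_claim1}.}

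The plan is to exploit the structure of the light cone, in particular the explicit formulas \eqref{lightcone1}, \eqref{lightcone2} and the geometric fact recorded in Proposition \ref{prop_light_cone}, together with the defining inequalities of $P_{\theta,j,k}$. First I would set $\xi:=\eta_2\mathbf e_2(\theta)+\eta_3\mathbf e_3(\theta)$, so that by construction $\xi\cdot\mathbf e_1(\theta)=0$; hence $\mathrm{dist}(\xi,\Gamma)\le|\xi\cdot\mathbf e_1(\theta)|=0$, i.e.\ $\xi\in\Gamma$. On the other hand $\xi\in\supp(\psi_P)\sub 2P_{\theta_P,j,k}$, so by Lemma \ref{lem_distance_planks} applied to the enlarged plank we have $\mathrm{dist}(\xi,\Gamma)\lesssim 2^{j-k}$ always, and $\mathrm{dist}(\xi,\Gamma)\gtrsim 2^{j-k}$ when $j>k$. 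Combining these, $\xi\in\Gamma$ forces $2^{j-k}\gtrsim 0$, which is vacuous for the upper bounds but, when $j>k$, the lower bound $\mathrm{dist}(\xi,\Gamma)\gtrsim 2^{j-k}>0$ together with $\mathrm{dist}(\xi,\Gamma)=0$ would be a contradiction --- so the correct reading is that we must instead estimate $|\xi\cdot\mathbf e_1(\theta_P)|$, the $\xi_1$-coordinate in the $\theta_P$-frame, which by membership in $2P$ satisfies $|\xi_1|\sim 2^{j-k}$ (resp.\ $|\xi_1|\lesssim 1$ if $j=k$). Since $\xi\in\Gamma$, Proposition \ref{prop_light_cone} gives $\mathrm{dist}(\xi,\Gamma)=0=|\xi\cdot\mathbf e_1(\omega)|$ for the foot point $\omega$; Taylor expanding $\mathbf e_1(\omega)$ around $\theta_P$ and using $|\xi_1|\sim 2^{j-k}$, $|\xi_2|\lesssim 2^{j-k/2}$, $|\xi_3|\sim 2^j$ then yields $2^{j-k}\sim |\xi\cdot\mathbf e_1(\theta_P)|\lesssim |\xi_2||\omega-\theta_P|+|\xi_3||\omega-\theta_P|^2\lesssim 2^{j-k/2}|\omega-\theta_P|+2^j|\omega-\theta_P|^2$, which forces $|\omega-\theta_P|\gtrsim 2^{-k/2}$ when $j>k$; combined with the standing hypothesis $|\theta-\theta_P|\ll1$ and $|\theta-\omega|\lesssim|\xi-\xi'|/|\xi|\lesssim 2^{-k/2}$ (again Proposition \ref{prop_light_cone}) this gives $|\theta-\theta_P|\sim 2^{-k/2}$.

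Next I would read off the $\eta_3$ estimate. Write $\xi=\eta_2\mathbf e_2(\theta)+\eta_3\mathbf e_3(\theta)$ and dot with $\mathbf e_3(\theta_P)$: by Proposition \ref{prop_error_e_i} (with $\delta^{1/2}\sim 2^{-k/2}$, valid since $|\theta-\theta_P|\lesssim 2^{-k/2}$) we have $\mathbf e_3(\theta)\cdot\mathbf e_3(\theta_P)=1+O(2^{-k})$ and $\mathbf e_2(\theta)\cdot\mathbf e_3(\theta_P)=O(2^{-k/2})$, so $\xi\cdot\mathbf e_3(\theta_P)=\eta_3(1+O(2^{-k}))+\eta_2 O(2^{-k/2})$. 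Since $\xi\in 2P$ gives $|\xi\cdot\mathbf e_3(\theta_P)|\sim 2^j$, and (as I will have shown) $|\eta_2|\lesssim 2^{j-k/2}$, the cross term is $O(2^{j-k})$ which is negligible, hence $|\eta_3|\sim 2^j$. For the $\eta_2$ bound: dot $\xi$ with $\mathbf e_2(\theta_P)$ to get $\xi\cdot\mathbf e_2(\theta_P)=\eta_2\,\mathbf e_2(\theta)\cdot\mathbf e_2(\theta_P)+\eta_3\,\mathbf e_3(\theta)\cdot\mathbf e_2(\theta_P)$; the first inner product is $1+O(2^{-k})$ and the second is $O(2^{-k/2})$ by Proposition \ref{prop_error_e_i}. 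Since $|\xi\cdot\mathbf e_2(\theta_P)|=|\xi_2|\lesssim 2^{j-k/2}$ and $|\eta_3|\sim 2^j$, we conclude $|\eta_2|\lesssim 2^{j-k/2}+2^j\cdot 2^{-k/2}\sim 2^{j-k/2}$. In the case $j>k$, the lower bound $|\xi_2|\gtrsim 2^{j-k/2}$ (which holds for $P_{\theta,j,k}$ itself but not necessarily for the enlargement $2P$) needs a little care: here I would instead get the lower bound on $|\eta_2|$ from the \emph{normal} constraint --- the computation in Lemma \ref{lem_distance_planks} shows $\mathrm{dist}(\xi,\Gamma)$ vanishes only on a codimension-one set, and $\xi\in\Gamma$ with $|\xi_1|\sim 2^{j-k}\ne 0$ in the $\theta_P$-frame forces the angular coordinate to be of exact size $2^{-k/2}|\xi|$, hence $|\eta_2|=|\xi\cdot\mathbf e_2(\theta)|\sim 2^{j-k/2}$.

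The main obstacle I anticipate is keeping the bookkeeping of error terms straight when passing between the $\theta$-frame and the $\theta_P$-frame: one must check that every cross term generated by $\mathbf e_i(\theta)\cdot\mathbf e_\ell(\theta_P)$ for $i\ne\ell$ is genuinely dominated by the main term at the relevant scale, and this requires using the \emph{sharp} estimates \eqref{item_01_error_curve}--\eqref{item_03_error_curve} (e.g.\ the $O(2^{-k})$ rather than $O(2^{-k/2})$ bound for $\mathbf e_1\cdot\mathbf e_3$ and for $\mathbf e_i\cdot\mathbf e_i-1$) rather than crude Lipschitz bounds. A secondary subtlety is the distinction, in the $j>k$ regime, between the two-sided bounds claimed for $P_{\theta,j,k}$ and the one-sided bounds available on its dilate $2P$; the resolution is to derive the lower bounds on $|\theta-\theta_P|$ and $|\eta_2|$ from the geometry of $\Gamma$ (Proposition \ref{prop_light_cone} and the exact distance formula in Lemma \ref{lem_distance_planks}) rather than from the plank inequalities, since $\xi$ lies on $\Gamma$ exactly. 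Everything else is a routine Taylor expansion once the standing hypothesis $|\theta-\theta_P|\ll1$ is invoked to license Proposition \ref{prop_error_e_i} at scale $2^{-k/2}$.
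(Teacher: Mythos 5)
The proposal contains a fundamental error at the very first step, and this error propagates through the whole argument. You set $\xi=\eta_2\mathbf e_2(\theta)+\eta_3\mathbf e_3(\theta)$ and conclude from $\xi\cdot\mathbf e_1(\theta)=0$ that $\mathrm{dist}(\xi,\Gamma)\le|\xi\cdot\mathbf e_1(\theta)|=0$, hence $\xi\in\Gamma$. This is false. Proposition \ref{prop_light_cone} gives $\mathrm{dist}(\xi,\Gamma)=|\xi\cdot\mathbf e_1(\omega)|$ \emph{only} for the specific foot-point angle $\omega$ realizing the minimum distance to the cone, not for an arbitrary $\theta$; there is no inequality $\mathrm{dist}(\xi,\Gamma)\le|\xi\cdot\mathbf e_1(\theta)|$ valid for all $\theta$. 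Geometrically: the plane $\mathbf e_1(\theta)^\perp$ is two-dimensional and meets the two-dimensional cone $\Gamma$ only in a one-dimensional slice (the line $\R\,\mathbf e_3(\theta)$, together with a second branch), so a generic $\xi=\eta_2\mathbf e_2(\theta)+\eta_3\mathbf e_3(\theta)$ with $\eta_2\neq0$ lies \emph{off} the cone. You in fact notice that the conclusion $\xi\in\Gamma$ collides with the lower bound $\mathrm{dist}(\xi,\Gamma)\gtrsim2^{j-k}$ from Lemma \ref{lem_distance_planks}, but instead of discarding the false premise you keep the (false) identity $\xi\cdot\mathbf e_1(\omega)=0$ for the foot point and Taylor expand around $\theta_P$ using it, which is the same error in different clothing. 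Since $\xi\notin\Gamma$ there is no reason why $\xi\cdot\mathbf e_1(\omega)$ should vanish, and the conflation of the parameter $\theta$ (for which $\xi\cdot\mathbf e_1(\theta)=0$ is true by construction) with the foot-point angle $\omega$ (for which it is not) makes the subsequent Taylor expansions unsound: the coordinates $\xi\cdot\mathbf e_2(\omega)$, $\xi\cdot\mathbf e_3(\omega)$ appearing there are not $\eta_2,\eta_3$ and do not satisfy the plank bounds you invoke.

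There is a further circularity issue that the gap above papers over: your application of Proposition \ref{prop_error_e_i} at scale $2^{-k/2}$ to control cross terms such as $\mathbf e_2(\theta)\cdot\mathbf e_3(\theta_P)$ requires $|\theta-\theta_P|\lesssim2^{-k/2}$, which is exactly one of the things to be proved. The paper resolves this by first establishing $|\theta-\theta_P|\lesssim2^{-k/2}$ via a contradiction argument, and only afterwards reading off the $\eta$-bounds. You also remark in passing that ``$|\xi_2|\gtrsim2^{j-k/2}$ holds for $P_{\theta,j,k}$ itself''; it does not — the defining inequality on $\xi_2$ in $P_{\theta,j,k}$ is one-sided, $|\xi_2|\le2^{j-k/2-100}$, and the two-sided claim must be \emph{derived}, not read off. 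The paper's actual route is shorter and avoids the cone-membership claim entirely: it uses only the true identity $\xi\cdot\mathbf e_1(\theta)=0$, the three plank bounds in the $\theta_P$-frame from $\xi\in2P$, and Taylor expansion of the frame vectors $\mathbf e_i(\theta_P)$ about $\theta$ (not about a phantom foot point), assuming $|\theta-\theta_P|\gg2^{-k/2}$ or $\ll2^{-k/2}$ and deriving contradictions with $|\xi|\sim|\eta_2|+|\eta_3|\sim2^j$ and the normal bound $|\xi\cdot\mathbf e_1(\theta_P)|\sim2^{j-k}$. To salvage your write-up, drop the claim $\xi\in\Gamma$, do not introduce $\omega$ at all, and run the contradiction argument directly in the $(\theta,\theta_P)$ pair.
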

\begin{proof}
    Write $\xi=\eta_2 \mathbf e_2(\theta)+\eta_3 \mathbf e_3(\theta)$. Since $\xi\in \supp (\psi_P)=2P$, we have (without loss of generality)
    \begin{align}
        &|\xi\cdot \mathbf e_1(\theta_P)|\lesssim 2^{j-k},\label{item_01_claim1}\\
        &|\xi\cdot \mathbf e_2(\theta_P)|\ll 2^{j-\frac k 2},\label{item_02_claim1}\\
        &\xi\cdot \mathbf e_3(\theta_P)\sim 2^{j}\label{item_03_claim1}.
    \end{align}
    In particular, $|\xi|\sim |\eta_2|+|\eta_3|\sim 2^{j}$. Assume towards contradiction that $|\theta-\theta_P|\gg 2^{-k/2}$. Using \eqref{item_02_claim1}, by Taylor expansion for $\mathbf e_2(\theta_P)$ at $\theta$ we have
    \begin{equation}\label{eqn_Sep_12}
        |\eta_2+\eta_3(\theta_P-\theta)+O(\theta_P-\theta)^2|\ll 2^{j-\frac k 2}.
    \end{equation}
     Using \eqref{item_03_claim1}, by Taylor expansion for $\mathbf e_1(\theta_P)$ at $\theta$ we have
     \begin{align*}
        2^{j-k}
        &\gtrsim |\xi\cdot [\mathbf e_1(\theta)+\mathbf e_2(\theta)(\theta_P-\theta)+(-\mathbf e_1(\theta)+\uptau(\theta)\mathbf e_3(\theta))(\theta_P-\theta)^2\\
        &+|\xi|o((\theta_P-\theta)^2)]|\\
        &=\eta_2 (\theta_P-\theta)+\eta_3(\theta_P-\theta)^2(1/2+o(1)).
    \end{align*}
    Using $|\theta-\theta_P|\gg 2^{-k/2}$, we have
    \begin{equation}
        |\eta_2 +\eta_3(\theta_P-\theta)(1/2+o(1))|\ll 2^{j-\frac k 2}.
    \end{equation}    
    Combining this with \eqref{eqn_Sep_12} and using the triangle inequality, we have
    \begin{equation}
        |\eta_2|\ll 2^{j-k/2},
    \end{equation}
    and plugging this into \eqref{eqn_Sep_12} again, we get
    \begin{equation}
        \eta_3\lesssim |\theta_P-\theta|^{-1}2^{j-k/2}\ll 2^j,
    \end{equation}
    using $|\theta-\theta_P|\ll 1$. This leads to a contradiction since $|\eta_2|+|\eta_3|\sim 2^{j}$. Thus we must have $|\theta-\theta_P|\lesssim 2^{-k/2}$.

    By Taylor approximation, it is easy to get $|\eta_2|\lesssim 2^{j-k/2}$ and $\eta_3\sim 2^{j}$.

    Lastly, we come to the case $j>k$. Assume towards contradiction that $|\theta-\theta_P|\ll 2^{-k/2}$. Then by Taylor expansion,
    \begin{equation}
        |\xi\cdot \mathbf e_1(\theta)-\xi\cdot \mathbf e_1(\theta_P)|\le |\xi\cdot \mathbf e_2(\theta_P)||\theta-\theta_P|\ll 2^{j-k},
    \end{equation}
    and so $|\xi\cdot \mathbf e_1(\theta)|\sim 2^{j-k}$, contradicting the assumption that $\xi\cdot \mathbf e_1(\theta)=0$. Thus $|\theta-\theta_P|\gtrsim 2^{-k/2}$. By Taylor approximation, it is easy to get $|\eta_2|\sim 2^{j-k/2}$ and $\eta_3\sim 2^{j}$.

\end{proof}

\subsubsection{Estimate of \texorpdfstring{$\mu_{g2}$}{Lg}}

For $\mu_{g2}$ we have $k\le j\eps$, namely, the planks $P$ are extremely far away from the cone $\Gamma$. We estimate \eqref{eqn_good2_after_Plancherel} with $\mu_{g2}$ in place of $\mu_g$. Since $k\ge j\eps$, Lemma \ref{lem_claim1} implies that
\begin{equation}
    |\eta_2|\gtrsim |\xi|^{1-\eps/2}.
\end{equation}
Thus, by \eqref{eqn_lem7_change_variables}, we have
\begin{equation}
    \int_{\R^3} |\widehat {\mu_g}(\xi)|^2 |\eta_2|^{-1}d\xi\lesssim \int_{\R^3} |\widehat {\mu}(\xi)|^2 |\xi|^{\eps/2-1}d\xi,
\end{equation}
which is finite if we choose $\eps$ small enough that $a-\eps/2>2$, since $\rho_a(\mu)\lesssim 1$ and $a>2$ (see, for instance, Theorems 2.7 and 3.10 of \cite{Mattilabook}).

\subsubsection{Estimate of \texorpdfstring{$\mu_{g1}$}{Lg}}
This part estimates the contribution from the planks that have light mass and are not too far away from the cone. By definition, we have
\begin{equation}
    \widehat {\mu_g}(\xi)=\sum_{j\ge 0}\sum_{k\in [j\eps,k]}\sum_{P\in \Lambda_{j,k}}\sum_{T\in \mathbb T_{P,g}}\widehat {M_T\mu}
\end{equation}
where we have
\begin{equation}
    \widehat {M_T\mu}(\xi)= \widehat{\eta_T} *(\widehat \mu(\xi) \psi_P(\xi)),
\end{equation}
which is supported on $3P$ where $P\in \Lambda_{j,k}$. By Plancherel's identity, we have
\begin{equation}
    \int_{\R^3} |\widehat {\mu_g}(\xi)|^2 |\eta_2|^{-1}d\xi=\sum_{j\ge 0}\sum_{k\in [j\eps,k]}\sum_{P\in \Lambda_{j,k}}\int_{\R^3} \bigg|\sum_{T\in \mathbb T_{P,g}} \widehat {M_T\mu}(\xi)\bigg|^2 |\eta_2|^{-1} d\xi.
\end{equation}
We further divide the argument into the following steps.

\underline{Step 1: Using orthorgonality.}

Fix $k\in [j\eps,k]$. We consider the cases $k<j$ and $k=j$ separately. In the former case, using Lemma \ref{lem_claim1}, we have
\begin{align*}
    \int_{\R^3} \bigg|\sum_{T\in \mathbb T_{P,g}} \widehat {M_T\mu}(\xi)\bigg|^2 |\eta_2|^{-1} d\xi
    &\sim 2^{-j+k/2}\int_{\R^3} \bigg|\sum_{T\in \mathbb T_{P,g}} \widehat {M_T\mu}(\xi)\bigg|^2  d\xi\\
    &=2^{-j+k/2}\int_{\R^3} \bigg|\sum_{T\in \mathbb T_{P,g}} M_T\mu(x)\bigg|^2  dx\\
    &\sim  2^{-j+k/2}\sum_{T\in \mathbb T_{P,g}}\int_{\R^3} | M_T\mu(x)|^2  dx.
\end{align*}
Summing over $P\in \Lambda_{j,k}$, we thus have
\begin{equation}\label{eqn_113}
    \sum_{P\in \Lambda_{j,k}}\int_{\R^3} \bigg|\sum_{T\in \mathbb T_{P,g}} \widehat {M_T\mu}(\xi)\bigg|^2 |\eta_2|^{-1} d\xi\lesssim 2^{-j+\frac k 2}\sum_{P\in \Lambda_{j,k}}\sum_{T\in \mathbb T_{P,g}}\int_{\R^3} | M_T\mu(x)|^2  dx.
\end{equation}

For $j=k$ we have \eqref{eqn_113} holds as well. To see this, fix $P$ and we go back to the form
\begin{equation}
    \int \iint \bigg|\sum_{T\in \mathbb T_{P,g}}\widehat {M_T\mu}(\eta_2 \mathbf e_2(\theta)+\eta_3 \mathbf e_3(\theta) )\bigg|^2 d\eta d\theta.
\end{equation}
By Lemma \ref{lem_claim1}, we must have $|\theta-\theta_P|\lesssim 2^{-j/2}$. For every such $\theta$, we apply Plancherel's identity in the $(\eta_2,\eta_3)$ plane to get
\begin{equation}\label{eqn_Sep_23}
    \iint \bigg|\sum_{T\in \mathbb T_{P,g}}\widehat {M_T\mu}(\eta_2 \mathbf e_2(\theta)+\eta_3 \mathbf e_3(\theta) )\bigg|^2 d\eta
    =\iint |F(x_2,x_3)|^2 dx_2dx_3,
\end{equation}
where
\begin{align*}
    F(x_2,x_3)
    &:=\sum_{T\in \mathbb T_{P,g}}\iint\widehat {M_T\mu}(\eta_2 \mathbf e_2(\theta)+\eta_3 \mathbf e_3(\theta) )e(x_2 \eta_2+x_3 \eta_3)d\eta_2 d\eta_3\\
    &=\sum_{T\in \mathbb T_{P,g}}\int_{\R^3}M_T\mu(y)\iint e(-y\cdot(\eta_2 \mathbf e_2(\theta)+\eta_3 \mathbf e_3(\theta)) )dy\\
    &\cdot e(x_2 \eta_2+x_3 \eta_3)d\eta_2 d\eta_3\\
    &=\sum_{T\in \mathbb T_{P,g}}\int_{\R^3}M_T\mu(y)\delta_0(x_2-y\cdot \mathbf e_2(\theta))\delta_0(x_3-y\cdot \mathbf e_3(\theta))dy\\
    &=\sum_{T\in \mathbb T_{P,g}}\int_{\R}M_T\mu(t\mathbf e_1(\theta)+x_2\mathbf e_2(\theta)+x_3\mathbf e_2(\theta) )dt.
\end{align*}
Plugging back into \eqref{eqn_Sep_23}, we get
\begin{align*}
    &\iint |F(x_2,x_3)|^2 dx_2dx_3\\
    =&\iint F(x_2,x_3)\overline{F(x_2,x_3)} dx_2dx_3\\
    =&\sum_{T\in \mathbb T_{P,g}}\sum_{T'\in \mathbb T_{P,g}}\iint\int_{\R}\int_{\R}M_T\mu(t\mathbf e_1(\theta)+x_2\mathbf e_2(\theta)+x_3\mathbf e_3(\theta))\\
    &\cdot \overline{M_{T'}\mu(t'\mathbf e_1(\theta)+x_2\mathbf e_2(\theta)+x_3\mathbf e_3(\theta))} dt dt' dx_2 dx_3.
\end{align*}
For a fixed $T$, recall that $M_T\mu$ is essentially supported on $T$. Since $|\theta-\theta_P|\lesssim 2^{-j/2}$, we can show that 
\begin{equation}
    |M_T\mu(t\mathbf e_1(\theta)+x_2\mathbf e_2(\theta)+x_3\mathbf e_3(\theta))| \lesssim |\tilde M_T\mu(t\mathbf e_1(\theta_P)+x_2\mathbf e_2(\theta_P)+x_3\mathbf e_3(\theta_P))|
\end{equation}
where $\tilde M_T:=w_T (\mu*\psi_P^\vee)$ and $w_T$ is a suitable weight function adapted to $T$. For fixed $x_2,x_3$, we thus have $T,T'$ must only differ at most in the $\mathbf e_1(\theta_P)$ position (otherwise we have rapid decay which can be treated as $0$; we leave the rigorous argument to the reader; cf. Section \ref{sec_weight}). However, since we are in the case $k=j$ and we may assume $T\sub B^3(0,1)$ (otherwise we have rapid decay), we must have $T=T'$. Thus we can continue estimating
\begin{align*}
    &\iint |F(x_2,x_3)|^2 dx_2dx_3\\
    &\lesssim \sum_{T\in \mathbb T_{P,g}} \int_{\R}\int_{\R} \left|\int_{\R} M_T\mu(t\mathbf e_1(\theta_P)+x_2\mathbf e_2(\theta_P)+x_3\mathbf e_3(\theta_P))dt\right|^2 dx_2 dx_3 \\
    &\lesssim \sum_{T\in \mathbb T_{P,g}}\int_{\R} \int_{\R} \int_{\R}\left|M_T\mu(t\mathbf e_1(\theta_P)+x_2\mathbf e_2(\theta_P)+x_3\mathbf e_3(\theta_P))\right|^2 dtdx_2 dx_3,
\end{align*}
where in the last line we have used the fact that $t$ is essentially supported on the unit interval. Integrating in $\theta$, using the discretization and summing over $P\in \Lambda_{j,k}$, we have \eqref{eqn_113}. 

We continue to estimate \eqref{eqn_113} and do not distinguish $j>k$ and $j=k$ anymore. Thus it suffices to estimate the term
\begin{equation}
    \sum_{P\in \Lambda_{j,k}}\sum_{T\in \mathbb T_{P,g}}\int_{\R^3} | M_T\mu(x)|^2  dx.
\end{equation}

\underline{Step 2: Preparation for refined Strichartz inequality.}

We hope to apply Theorem \ref{thm_Strichartz} stated in Section \ref{sec_Strichartz} below. Before that we need to do some preparation work. 

Write 
\begin{equation}\label{eqn_defn_f_T}
    f_T:=(\eta_T M_T\mu)*\psi_P^\vee,
\end{equation}
so that
\begin{equation}
    \sum_{P\in \Lambda_{j,k}}\sum_{T\in \mathbb T_{P,g}}\int_{\R^3} | M_T\mu(x)|^2  dx=\int \sum_{P\in \Lambda_{j,k}}\sum_{T\in \mathbb T_{P,g}}f_T d\mu. 
\end{equation}
Similar to the proof of Step 2 of Section \ref{sec_near_part}, we cover the unit ball by $O(1)$-overlapping small balls $B_l$ of radius $2^{k-j+k\beta}$ and let $\nu_l$ be the restriction of $\mu$ to $B_l$. By Cauchy-Schwarz, we have
\begin{equation}
    \int \sum_{P\in \Lambda_{j,k}}\sum_{T\in \mathbb T_{P,g}}f_T d\mu\leq \sum_{l}\mu(B_l)^{1/2}\left(\int \left|\sum_{P\in \Lambda_{j,k}}\sum_{T\in \mathbb T_{P,g}}f_T \right|^2 d\nu_l\right)^{1/2}.
\end{equation}
Let $\zeta_j$ be a non-negative bump function such that $\widehat {\zeta_j}(\xi)=1$ for $|\xi|\le 2^{j+k\beta+10}$. By the Fourier support condition of $f_T$, we have
\begin{equation}
    \int \left|\sum_{P\in \Lambda_{j,k}}\sum_{T\in \mathbb T_{P,g}}f_T \right|^2 d\nu_l=\int \left|\sum_{P\in \Lambda_{j,k}}\sum_{T\in \mathbb T_{P,g}}f_T \right|^2 d(\nu_l*\zeta_j).
\end{equation}
By dyadic pigeonholing (see Section \ref{sec_dyadic_pigeonholing} for the technique), we can find a subset
\begin{equation}
    \mathbb W_l\sub \bigcup_{P\in \Lambda_{j,k}}\{T\in \mathbb T_{P,g}:T\cap B_l\ne \varnothing\}
\end{equation}
such that $\norm {f_T}_2$ is a constant $N$ up to a factor of $2$ as $T$ varies over $\mathbb W_l$, and
\begin{equation}
    \int\left| \sum_{P\in \Lambda_{j,k}}\sum_{T\in \mathbb T_{P,g}}f_T \right|^2 d(\nu_l*\zeta_j)\lesssim j \int \left|\sum_{T\in \mathbb W_l}f_T\right|^2 d(\nu_l*\zeta_j).
\end{equation}
Now we take some $p\ge 2$ to be determined. By H\"older's inequality, we have
\begin{equation}
    \int \left|\sum_{T\in \mathbb W_l}f_T\right|^2 d(\nu_l*\zeta_j)\le \norm{\sum_{T\in \mathbb W_l}f_T}_p^2 \left(\int(\nu_l*\zeta_j)^{\frac p {p-2}}\right)^{1-\frac 2 p}.
\end{equation}
We can view $\sum_{T\in \mathbb W_l}f_T$ as essentially supported on $B^3(0,1)$ by the rapid decay outside. Thus, by dyadic pigeonholing again, there is some dyadic $M\ge 1$ and a disjoint union $Y$ of balls $Q$ of radius $2^{-j}$, such that each $Q\sub Y$ intersects $\sim M$ planks $T$ as they vary over $\mathbb W_l$, and that
\begin{equation}
    \norm{\sum_{T\in \mathbb W_l}f_T}_p^2 \left(\int(\nu_l*\zeta_j)^{\frac p {p-2}}\right)^{1-\frac 2 p}\lesssim j\norm{\sum_{T\in \mathbb W_l}f_T}_{L^p(Y)}^2 \left(\int_Y (\nu_l*\zeta_j)^{\frac p {p-2}}\right)^{1-\frac 2 p}.
\end{equation}
Combining all the above estimates, we have in particular
\begin{align}
    &\sum_{P\in \Lambda_{j,k}}\sum_{T\in \mathbb T_{P,g}}\int_{\R^3} | M_T\mu(x)|^2  dx\nonumber\\
    &\lesssim_\beta 2^{j\beta} \sum_{l}\mu(B_l)^{1/2}\norm{\sum_{T\in \mathbb W_l}f_T}_{L^p(Y)}\left(\int_Y (\nu_l*\zeta_j)^{\frac p {p-2}}\right)^{\frac 1 2-\frac 1 p}.\label{eqn_before_Strichartz}
\end{align}

\underline{Step 3: Applying refined Strichartz inequality.}

Now we apply Theorem \ref{thm_Strichartz} (to be proved later) to the rescaled functions $f_T(2^{-j}\cdot)$ to get
\begin{align}
    \norm{\sum_{T\in \mathbb W_l}f_T}_{L^p(Y)}
    &\lesssim_\beta 2^{j\beta+(3j-\frac {3k}2)(\frac 1 2-\frac 1 p)}\left(\frac{M}{\# \mathbb W_l}\right)^{\frac 1 2-\frac 1 p}\left(\sum_{T\in \mathbb W_l}\norm{f_T}_2^2\right)^{1/2}\nonumber\\
    &\sim 2^{j\beta+(3j-\frac {3k}2)(\frac 1 2-\frac 1 p)}\left(\frac{M}{\# \mathbb W_l}\right)^{\frac 1 2-\frac 1 p} (\# \mathbb W_l)^{\frac 1 2} N\nonumber\\
    &=2^{j\beta+(3j-\frac {3k}2)(\frac 1 2-\frac 1 p)}M^{\frac 1 2-\frac 1 p}(\# \mathbb W_l)^{\frac 1 2}N.\label{eqn_Sep_23_02}
\end{align}
We also need to estimate the term involving $\nu*\zeta_j$. Using the assumption that $\rho_a(\mu)\le 1$, we have $\rho_a(\nu)\le 1$, and so by the rapid decay of $\zeta_j$ outside $B^3(0,2^{-j-k\beta})$,
\begin{align*}
    \left|\int \zeta_j(x-y)d\nu_l(y)\right|
    &\lesssim \nu_l(B^3(x,2^{-j-k\beta}))2^{3(j+k\beta)}\lesssim 2^{3(j+k\beta)-a(j-k\beta)}.
\end{align*}
Hence by the light mass in the definition of $\mathbb T_{P,g}$, we have
\begin{align*}
    \int_Y (\nu_l*\zeta_j)^{\frac p {p-2}}
    &\lesssim \int_Y \nu_l*\zeta_j\cdot 2^{\frac{6(j+k\beta)-2a(j-k\beta)}{p-2}} \\
    &\sim 2^{\frac{6(j+k\beta)-2a(j-k\beta)}{p-2}} M^{-1} \sum_{T\in \mathbb W_l}\int_{T}\nu_l*\zeta_j\\
    &\lesssim 2^{\frac{6(j+k\beta)-2a(j-k\beta)}{p-2}} M^{-1} (\# \mathbb W_l) \cdot 2^{-k\frac {a_1+1}{2}-a(j-k)}.
\end{align*}
Using \eqref{eqn_before_Strichartz} and \eqref{eqn_Sep_23_02}, we have
\begin{align}
    &\sum_{P\in \Lambda_{j,k}}\sum_{T\in \mathbb T_{P,g}}\int_{\R^3} | M_T\mu(x)|^2  dx\label{eqn_Sep_23_04}\\
    &\lesssim_\beta 2^{j\beta}\sum_{l}\mu(B_l)^{1/2}2^{j\beta+(3j-\frac {3k}2)(\frac 1 2-\frac 1 p)}M^{\frac 1 2-\frac 1 p}(\# \mathbb W_l)^{\frac 1 2}N\nonumber\\
    &\cdot \left(2^{\frac{6(j+k\beta)-2a(j-k\beta)}{p-2}} M^{-1} (\# \mathbb W_l) \cdot 2^{-k\frac {a_1+1}{2}-a(j-k)}\right)^{\frac 1 2-\frac 1 p}\nonumber\\
    &\lesssim 2^{O(j\beta)}2^{j\frac{3-a}2}2^{k(\frac 1 2-\frac 1 p)(a-2-\frac {a_1}2)}\sum_{l}\mu(B_l)^{1/2}(\# \mathbb W_l)^{1/2}N\nonumber\\
    &\le 2^{O(j\beta)}2^{j\frac{3-a}2}2^{k(\frac 1 2-\frac 1 p)(a-2-\frac {a_1}2)} \mu(\R^3)^{\frac 1 2}\left(\sum_l (\# \mathbb W_l) N^2\right)^{\frac 1 2},\label{eqn_Sep_23_05}
\end{align}
where we have used Cauchy-Schwarz in the last inequality. Recalling that $N\sim \norm{f_T}_2$, we have 
\begin{align*}
    \sum_{l}  (\# \mathbb W_l)N^2
    &\sim \sum_{l}\sum_{T\in \mathbb W_l}\norm{f_T}^2_2\\
    &\lesssim \sum_{l}\sum_{T\in \mathbb W_l}\norm{M_T\mu}^2_2\\
    &\lesssim \sum_{P\in \Lambda_{j,k}}\sum_{T\in \mathbb T_{P,g}}\int_{\R^3} | M_T \mu|^2.
\end{align*}
Plugging this back into \eqref{eqn_Sep_23_05} and noting this coincides with the term \eqref{eqn_Sep_23_05} at the beginning, we obtain
\begin{equation}
  \sum_{P\in \Lambda_{j,k}}\sum_{T\in \mathbb T_{P,g}}\int_{\R^3} | M_T\mu(x)|^2  dx\lesssim_\beta 2^{O(j\beta)}2^{j(3-a)}2^{k(1-\frac 2 p)(a-2-\frac {a_1}2)} \mu(\R^3).
\end{equation}

\underline{Step 4: Conclusion.}

Using $\mu(\R^3)\lesssim 1$, \eqref{eqn_113}, \eqref{eqn_lem7_change_variables} and \eqref{eqn_good2_after_Plancherel}, we have
\begin{equation}
  \int\int_{\R^2} |\pi_{\theta\#}\mu_g|^2 d\mathcal H^2 d\theta\lesssim_\beta \sum_{j=0}^\infty \sum_{k\in [j\eps,j]}2^{O(j\beta)}  2^{j(3-a)}2^{k(1-\frac 2 p)(a-2-\frac {a_1}2)} 2^{-j+\frac k 2}.
\end{equation}
We may assume $a$ and $a_1$ are very close to $2$. Thus we have 
\begin{equation}
    (a-2-\frac {a_1}2)(1-\frac 2 p)+\frac 1 2=-\frac 1 2+\frac 2 p+\beta',
\end{equation}
where $\beta'>0$ can be chosen to be arbitrarily close to $0$. If we choose any $p>4$ and $\beta'\ll p-4$, then we have
\begin{align*}
    \sum_{k\in [j\eps,j]}\int\int_{\R^2} |\pi_{\theta\#}\mu_g|^2 d\mathcal H^2 d\theta
    &\lesssim_\beta 2^{O(j\beta)}2^{j(3-a)-j}2^{j\eps(1-\frac 2 p)(-\frac 1 2+\frac 2 p+\beta'))+\frac {j\eps}2}\\
    &\lesssim 2^{O(j\beta+j\eps)}2^{j(2-a)}.
\end{align*}
Choosing $\beta$ and $\eps$ small enough (depending on $a,a_1$ only), we have
\begin{equation}
    O(\beta)+O(\eps)+2-a <0,
\end{equation}
and so summing over $j$ gives the claim. This finishes the estimate of $\mu_{g1}$ and concludes the proof of Lemma \ref{lem_lem7} and hence Theorem \ref{thm_thm8}. 

\section{Refined Strichartz inequality}\label{sec_Strichartz}
    We begin our study of the \textit{refined Strichartz inequality}. This type of estimate was first introduced in the paper \cite{guth2018falconers} and the subsequent argument follows section 4 of \cite{guth2018falconers} closely. The reader is encouraged to compare the two papers to understand the general strategy behind proving this theorem. For this exposition we omit some of the computations and focus on the general proof strategy. The reader is referred to the original paper \cite{GGGHMW2022} for details.
    
    Recall that we had that for each $\theta$, $e_1(\theta)= \gamma(\theta), e_2(\theta) = \gamma'(\theta), e_3(\theta)=\gamma(\theta) \times \gamma'(\theta)$ forms an orthonormal basis for $\R^3$.
    
    For each $R \geq 1$, let $\Pi_{R} = \{j R^{-1/2}: j \in \Z\} \cap [0,a]$. For each $\theta \in \Pi_R$ we let
    \begin{equation*}
        \tau_{R}(\theta) = \{x_1e_1(\theta) +x_2e_2(\theta) + x_3 e_3(\theta): 1 \leq x_1 \leq 2, |x_2| \leq R^{-1/2}, |x_3| \leq R\}.    
    \end{equation*}
    For the rest of this section we fix $R$ and thus we may abbreviate $\tau_{R}(\theta)$ by $\tau(\theta)$.  Let $\mathcal{P}_{R^{-1}}= \{ \tau(\theta): \theta \in \Pi_R\}$. If $\tau = \tau( \theta) \in \mathcal{P}_{R^{-1}}$ for some fixed $\tau$ then we define $\theta_{\tau} = \theta$ for this choice of $\tau$. Let
    \begin{equation*}
        T^{\circ}_{\tau, 0} =\{x_1e_3(\theta_{\tau})+ x_2e_2(\theta_{\tau}) + x_3e_1(\theta_{\tau}): |x_1| \leq R, |x_2| \leq R^{1/2}, |x_3| \leq 1 \}.
    \end{equation*}
    Notice that $T^{\circ}_{\tau,0}$ are planks that are dual to $\tau(\theta_\tau)$. We denote by $\mathbb{T}^{\circ}_{\tau}$ the collection of translates of $T^{\circ}_{\tau, 0}$ that cover $B(0,R)$. For a fixed small constant $\delta>0$, we let $T_{\tau ,0} = R^{\delta}T^{\circ}_{\tau, 0}$, and $\mathbb{T}_{\tau} := \{ R^{\delta} T : T \in \mathbb{T}^{\circ}_{\tau} \}$. For a given plank in the covering $T \in \mathbb{T}_{\tau}$, we set $\tau(T) := \tau$.
    \begin{defn}
        Fix $T \in \mathbb{T}_{\tau}$. We say that a function $f_T: \mathbb{R}^3 \rightarrow \mathbb{C}$ is a $T-$ function if $\hat{f_T}$ is supported on $\tau(T)$ and
        \begin{equation*}
            \norm{f_T}_{L^{\infty}(B(0,R) \setminus T)} \lesssim_{\delta} R^{-1000}\norm{f_T}_{2}.
        \end{equation*}
    \end{defn}
    We are now ready to state our \textit{refined Strichartz inequality}.
    \begin{thm}\label{thm_Strichartz}
        Let $\gamma:[a,b] \rightarrow S^2 $ be a non-degenerate $C^2$ curve (i.e. with $\det(\gamma, \gamma', \gamma'')$ non vanishing. As a consequence let $B$ be such that 
        \begin{equation}\label{cond1}
            |\det(\gamma, \gamma', \gamma'')| \geq B^{-1}
        \end{equation}
        and
        \begin{equation}\label{cond2}
            \norm{\gamma}_{C^2([a,b])} \leq B.
        \end{equation}
        Let $R \geq 1$ and suppose that $f= \sum_{T \in \mathbb{W} }f_T$, where each $f_T$ is a $T-$ function and 
        \begin{equation*}
            \mathbb{W} \subseteq \bigcup_{\tau \in \mathcal{P}_{R^{-1}}} \mathbb{T}_{\tau}
        \end{equation*}
    Also assume that for all $T, T' \in \mathbb{W}$,
    \begin{equation*}
        \norm{f_T}_2 \sim \norm{f_{T'}}_2.
    \end{equation*}
    Let $Y$ be a disjoint union of unit balls in $B(0,R)$ each of which intersects $M$ sets of the form $2T$ where $T \in \mathbb{W}$. Then for $2 \leq p \leq 6$, we have
    \begin{equation*}
        \norm{f}_{L^p(Y)} \lesssim_{\epsilon, \delta} R^{\epsilon}\left(\frac{M}{|\mathbb{W}|} \right)^{1/2-1/p}\left(\sum_{T \in \mathbb{W}}\norm{f}_{p}^{2}\right)^{1/2}.
    \end{equation*}
    \end{thm}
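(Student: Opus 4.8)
The plan is to prove the refined Strichartz inequality (Theorem \ref{thm_Strichartz}) by an induction-on-scales / two-ends argument, following Section 4 of \cite{guth2018falconers}, with the Bourgain--Demeter decoupling for the cone (Theorem \ref{thm_Bourgain-Demeter_decoupling}) playing the role of the base case at scale $R$. First I would reduce to a flat, local statement: by pigeonholing and a parabolic rescaling, one may assume all the $T$-functions $f_T$ have Fourier support in planks of a fixed canonical shape, that the $\|f_T\|_2$ are comparable (this is already in the hypothesis), and that $Y$ is a union of unit balls each meeting exactly $M$ of the $2T$. The left-hand side $\|f\|_{L^p(Y)}$ should then be compared to a global quantity $\|f\|_{L^p(\R^3)}$, which by decoupling at scale $R$ (i.e. $\delta = R^{-1}$ in Theorem \ref{thm_Bourgain-Demeter_decoupling}, applied to the cone after absorbing the thin $\mathbf e_1$-direction) is controlled by $R^\eps (\sum_T \|f_T\|_p^2)^{1/2}$ up to the $\ell^2 L^p$ vs $\ell^p L^p$ discrepancy.

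The key point is to exploit the concentration of $f$ on $Y$: only an $M/|\mathbb W|$ fraction of the ``mass'' of the tube system lives over $Y$. The standard mechanism is a broad/narrow or a $k$-linear Kakeya-type decomposition combined with induction on $R$. Concretely, I would dyadically decompose $B(0,R)$ into balls $B_{\rho}$ of intermediate radius $\rho$ (e.g. $\rho = R^{1/2}$), and on each such ball run the decoupling theorem at the smaller scale $\rho$; the inductive hypothesis is applied to the rescaled pieces $f_{\tau}$ (the sum of $f_T$ over $T \subset$ a fixed $\rho$-plank $\tau$), which again are sums of $T$-functions but now at scale $\rho$, with their own intersection number $M_{\rho}$ on the rescaled copy of $Y$. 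Tracking how $M$, $|\mathbb W|$ and the $L^2$ normalizations transform under rescaling, and feeding the subsets through the pigeonhole steps, one should close the induction after $O(\log\log R)$ steps, losing only an $R^\eps$ at each stage.

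The main obstacle, as usual in refined Strichartz arguments, is bookkeeping the pigeonholing of the tube subcollections $\mathbb W$ through the rescaling so that the comparability $\|f_T\|_2 \sim \|f_{T'}\|_2$ and the ``every ball in $Y$ meets exactly $M$ tubes'' hypothesis are preserved (up to harmless dyadic factors) at each scale; one has to pass to a further subcollection $\mathbb W' \subseteq \mathbb W$ and a sub-union $Y' \subseteq Y$ at every inductive step, and verify the product of the gains $(M/|\mathbb W|)^{1/2-1/p}$ telescopes correctly. A secondary technical point is that the relevant ``surface'' here is the truncated cone generated by $\gamma$ rather than the paraboloid, so the rescaling is an affine parabolic rescaling adapted to the Frenet frame $\{\mathbf e_i(\theta)\}$; the quantitative non-degeneracy bounds \eqref{cond1}--\eqref{cond2} guarantee that this rescaling is uniformly bi-Lipschitz on the relevant scales, so the geometry of planks and their duals (as in Proposition \ref{prop_error_e_i} and Lemma \ref{lem_same_and_distinct}) is preserved with constants depending only on $B$.

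Finally, I would assemble the pieces: decoupling at scale $\rho$ gives $\|f\|_{L^p(Y)} \lesssim_\eps R^\eps (\sum_\tau \|f_\tau\|_{L^p(Y)}^2)^{1/2}$ after the $\ell^2 \to \ell^p$ Hölder step; the inductive hypothesis bounds each $\|f_\tau\|_{L^p(Y)}$ by $R^\eps (M_\rho/|\mathbb W_\tau|)^{1/2-1/p}(\sum_{T \subset \tau}\|f_T\|_p^2)^{1/2}$; and a counting lemma relating $\sum_\tau M_\rho/|\mathbb W_\tau|$ to the global $M/|\mathbb W|$ (using that $Y$ is a union of unit balls and the transversality of the $\tau$'s) delivers the claimed exponent. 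Summing in $\tau$ and using once more the comparability of the $\|f_T\|_2$ to convert $\ell^2 L^p$ back to $\ell^p L^p$ completes the estimate. The base case $R \lesssim_\eps 1$ is trivial, and choosing the number of iterations $\sim \log\log R$ absorbs all the accumulated $R^\eps$ losses into a single $R^{O(\eps)}$, which is acceptable after renaming $\eps$.
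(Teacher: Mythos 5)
Your proposal captures the essential architecture of the paper's proof: an induction on scales, a single application of cone decoupling at an intermediate scale, a Lorentz (parabolic) rescaling adapted to the Frenet frame to reduce to the inductive hypothesis, and a counting argument showing the intersection numbers and tube counts telescope correctly under the scale change. This is exactly the Guth--Iosevich--Ou--Wang refined Strichartz scheme that the paper also follows, so you have the right mechanism.

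Two small remarks on where your sketch diverges from the actual write-up. First, you propose the Bourgain--Guth-style ``fast'' intermediate scale $\rho = R^{1/2}$ with $O(\log\log R)$ iterations, whereas the paper uses a ``slow'' step $K = R^{\delta^2}$ with $\tilde R = R^{1-2\delta^2}$ and $O(\delta^{-2}\log\log R)$ iterations. Both can be made to close because the per-step decoupling loss on scale $\rho$ is $\rho^\eps$ and these sum geometrically in your version, and sum to $O(R^\eps)$ in the paper's version after choosing $\delta_0 = \eps^{100}$; the paper's slower iteration is chosen because the plank geometry degrades by a bounded factor under the Lorentz rescaling at each step, and a tiny step keeps this under control without having to re-derive uniform estimates for the rescaled curve at each level. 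Second, your mention of a broad/narrow or $k$-linear Kakeya dichotomy is a red herring: the paper's argument uses no such decomposition, since the Bourgain--Demeter decoupling already internalizes what a broad/narrow split would accomplish. The real technical content you gloss over is the chain of three dyadic pigeonholings that make $|\mathbb{W}_\square|$, $M'(\square)$, and the number of $Y_\square$ meeting each $K^2$-ball all dyadically constant, and the explicit computation showing $M'M'' \lesssim M$ and $|\mathbb{B}|\,|\mathbb{W}_\square| \lesssim |\mathbb{W}|$; your ``counting lemma'' is exactly this, and it is where the gain $(M/|\mathbb{W}|)^{1/2-1/p}$ actually gets assembled.
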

    \begin{proof}
        Assume that $[a,b] =[-1,1]$. We fix $\epsilon \in (0,1/2), \delta_0 = \epsilon^{100}$ and $\delta \in (0, \delta_0)$. We choose $R$ to be sufficiently large (the specific choice does not matter for this presentation).

        We assume that the following theorem holds for $K^2$ cubes instead of unit cubes where $K=R^{\delta^2}$ and for all scales $\Tilde{R}= \frac{R}{K^2} = R^{1- 2 \delta^2}$ and for all curves $\gamma$ satisfying the two conditions (\ref{cond1}) and (\ref{cond2}), and for all $B \geq 1$. We shall show that this assumption implies the theorem. As a consequence of inducting on scales, this implies the full theorem, for an introduction to the concept of inducting on scales we refer the reader to \cite{inductiononscales}. 
        
        For each $\tau \in \mathcal{P}_{R^{-1/2}}$, let $\kappa = \kappa(\tau) \in \mathcal{P}_K^{-1}$ which minimizes $|\theta_{\tau}-\theta_{\kappa}|$. For each such $\kappa$, let
        \begin{align*}
            \square_{\kappa, 0}
            &=\bigg\{x_1 \frac{(\gamma \times \gamma')(\theta_{\kappa})}{|(\gamma \times \gamma')(\theta_{\kappa})|} + x_2 \frac{\gamma'(\theta_{\kappa})}{|\gamma'(\theta_{\kappa})|} +x_3\gamma(\theta_{\kappa}):\\ 
            & |x_1| \leq R^{1+\delta}, |x_2| \leq R^{1+\delta}/K, |x_3| \leq R^{1+\delta}/K^2\bigg\}.
        \end{align*}
        and
        \begin{align*}
            \mathcal{P}_{\kappa}
            &=\bigg\{\square = a \gamma(\theta_{\kappa}) + b \frac{\gamma'(\theta_{\kappa})}{|\gamma'(\theta_{\kappa})|} +\square_{\kappa, 0} : \\ 
            & a \in ((1/10)R^{1+\delta}K^{-2})\mathbb{Z}, b \in ((1/10)R^{1+\delta}K^{-1})\mathbb{Z}\bigg\}.
        \end{align*}
        We let $\mathbb{P}= \bigcup_{\kappa \in \mathcal{P}_{K^{-1}}} \mathbb{P}_{\kappa}$. Given any $\tau$ and corresponding $\kappa = \kappa(\tau)$, a routine computation shows that
        \begin{equation}\label{1}
            |\inp{\gamma \times \gamma'(\theta_{\tau})} {\gamma'(\theta_{\kappa})}| \leq B^{-7}K^{-1}
        \end{equation}
        and
        \begin{equation}\label{2}
            |\inp{(\gamma \times \gamma')(\theta_{\tau})}{\gamma(\theta_{\kappa})}| \leq B^{-7}K^{-2}.
        \end{equation}
    From here we can see that for each $T \in \mathbb{T}_{\tau}$ there are $ \sim 1$ sets $\square \in \mathbb{P}_{\kappa(\tau)}$ with $T \cap 10 \square \neq \phi$ and moreover $T \subseteq 100\square$ whenever $T \cap 10\square \neq \phi$ and moreover $T \subseteq 100 \square$ whenever $T \cap 10 \square \neq \phi$. For each such $T$, let $\square = \square(T) \in \mathbb{P}_{\kappa}$ be some choice such that $T \cap 10 \square \neq \phi$, and let $\mathbb{W}_{\square}$ be the set of $T$'s that associated to the set $\square$.

    For each $\kappa$ and each $\square \in \mathbb{P}_{\kappa}$, let $\{\mathcal{Q}_{\square}\}$ be a finitely overlapping cover of $100 \square$ by translates of the ellipsoid given by:
    \begin{equation*}
        \left\{x_1\gamma(\theta_{\kappa}) +x_2 \frac{\gamma'(\theta_{\kappa})}{\abs{\gamma'(\theta_{\kappa})}} +x_3 \frac{\gamma \times \gamma'(\theta_{\kappa})}{\abs{\gamma \times \gamma'(\theta_{\kappa})}} : \abs{x_1}^2 + (\abs{x_2}K^{-1})^2 + (\abs{x_3}^{1/2}K^{-1})^{2} \leq \Tilde{K}^2\right\}.
    \end{equation*}

    Now let $\{\eta_{\mathcal{Q}_{\square}}\}_{{Q_{\square} \in \mathcal{Q}_{\square}}}$ be a smooth partition of unity such that on $\square$, we have that 
    \begin{equation*}
        \sum_{{\eta}_{Q_{\square} \in \mathcal{Q}_{\square}}} \eta_{{Q}_{\square}}=1
    \end{equation*}
    with the property that each $\eta_{\mathcal{Q}_{\square}}$ satisfies the following two bounds:
    \begin{equation*}
        \norm{\eta_{Q_{\square}}} \lesssim 1, \quad \norm{\eta_{\mathcal{Q}_{\square}}}_{L^{\infty}(\R^3 \setminus T)} \lesssim R^{-1000}
    \end{equation*}
    and
    \begin{equation*}
        \abs{\eta_{\mathcal{Q}_{\square}}(x)} \lesssim \mathrm{dist}(x, \mathcal{Q}_{\square})^{-1000}, \quad \forall x \in \R^3.
    \end{equation*}
    we may also ensure that $\widehat{\eta_{Q_{\square}}}$ is supported in the following set:
    \begin{equation*}
        \left\{x_1\gamma(\theta_{\kappa}) +x_2 \frac{\gamma'(\theta_{\kappa})}{\abs{\gamma'(\theta_{\kappa})}} +x_3 \frac{\gamma \times \gamma'(\theta_{\kappa})}{\abs{\gamma \times \gamma'(\theta_{\kappa})}} : \abs{x_1} \leq \Tilde{K}, \abs{x_2} \leq \Tilde{K}K, \abs{x_3} \leq \Tilde{K}K^2 \right\}.
    \end{equation*}
    By dyadic pigeonholing we have
    \begin{equation*}
        \norm{f}_{L^{p}(Y)} \lesssim \log R\, \left\|\sum_{\square} \sum_{T \in \mathbb{W}_{\square}}  \eta_{Y_\square}f_{T}\right\|_{L^{p}(Y)} + R^{-1000} \left( \sum_{T \in \mathbb{W}} \norm{f_T}_{p}^2\right)^{1/2},
    \end{equation*}
    where for each $\square$ we have that $Y_{\square}$ is a union of over a smaller $\textit{subset}$ $Q_{\square}$ and each $Q_{\square} \subseteq Y_{\square}$ intersects a number of sets of the form $T$ (with $T \in \mathbb{W}_{\square}$ where the number of such intersections is contained in the set $ [M'(\square), 2M'(\square))$ are constant. $\square \in \mathbb{B}$ (upto a possible factor of 2).

    We dyadic pigeonhole again to yield that
    \begin{equation*}
        \left\|\sum_{\square} \sum_{T \in \mathbb{W}_{\square}} \eta_{Y_{\square}} f_{T}\right\|_{L^{p}(Y)} \lesssim (\log R)^{2} \left\|\sum_{\square \in \mathbb{B}} \sum_{T \in \mathbb{W}_{\square}} \eta_{Y_{\square}} f_{T}\right\|_{L^{p}(Y)},
    \end{equation*}
    where now $\abs{\mathbb{W}_{\square}}$ and $M'(\square)$ are constant over $\square \in \mathbb{B}$ (once again, upto a factor of 2). We do one more pigeonholing step
    \begin{equation*}
        \left\|\sum_{\square} \sum_{T \in \mathbb{W}_{\square}} \eta_{Y_{\square}} f_{T}\right\|_{L^{p}(Y)} \lesssim (\log R) \left\|\sum_{\square \in \mathbb{B}} \sum_{T \in \mathbb{W}_{\square}} \eta_{Y_{\square}} f_{T}\right\|_{L^{p}(Y')},
    \end{equation*}
    where $Y'$ is now a union over $K^2$ balls $Q \subseteq Y$ such that each ball $2Q$ intersects a number of sets of the form $Y_{\square}$, where the number of intersections is contained in the interval $[M'', 2M'')$. Let us fix such a $Q \subseteq Y'$. Now using the decoupling theorem for $C^2$ cones (discussion in section 9.3) and applying H\"older's inequality, we have
    \begin{align*}
        &\left\|\sum_{\square} \sum_{T \in \mathbb{W}_{\square}} \eta_{Y_{\square}} 
        f_{T}\right\|_{L^{p}(Q)} \\ &\lesssim_{\epsilon}(M'')^{1/2-1/p} \left(\sum_{\square \in \mathbb{B}} \left\|\sum_{T \in \mathbb{W}_{\square}}\eta_{{Y}_{\square}} f_{T}\right\|_{L^{p}(2Q)}^{p} \right)^{1/p}\\
        & + R^{-900}\left(\sum_{T \in \mathbb{W}}\norm{f_{T}}_{p}^{2}\right)^{1/2}.
    \end{align*}
    Now summing over all $Q$ gives us
    \begin{align*}
        \norm{f}_{L^{p}(Y)} 
        &\lesssim (\log R)^{O(1)}B^{100}K^{\epsilon/100}(M'')^{1/2-1/p}\\
        &\times \left(\left\|\sum_{T \in \mathbb{W}_{\square}}\eta_{{Y}_{\square}} f_{T}\right\|_{L^{p}(Y_{\square})}^{p} \right)^{1/p} +R^{-800}\left(\sum_{T \in \mathbb{W}}\norm{f_{T}}_{p}^{2}\right)^{1/2}.
    \end{align*}
    We will bound this by using the inductive assumption following the use of a Lorentz rescaling map.
    For each $\theta \in [-1,1]$, define the Lorentz rescaling map $L_{\theta}$ at $\theta$ by
    \begin{align*}
        & L_{\theta}\left( x_1\gamma(\theta_{\kappa}) +x_2 \frac{\gamma'(\theta_{\kappa})}{\abs{\gamma'(\theta_{\kappa})}} +x_3 \frac{\gamma \times \gamma'(\theta_{\kappa})}{\abs{\gamma \times \gamma'(\theta_{\kappa})}} \right) \\=  &x_1\gamma(\theta_{\kappa}) + K x_2 \frac{\gamma'(\theta_{\kappa})}{\abs{\gamma'(\theta_{\kappa})}} + K^2x_3 \frac{\gamma \times \gamma'(\theta_{\kappa})}{\abs{\gamma \times \gamma'(\theta_{\kappa})}} 
    \end{align*}
    For each $\square \in \mathbb{B}$ and $T \in \mathbb{W}_{\square}$, let $g_{T} = f_{T} \circ L$, where $L=L_{\theta_{k}(\square)}$. Then it can be shown that (as a consequence of a calculation in \cite{GGGHMW2022}) 
    \begin{equation*}
        \left\|\sum_{T \in \mathbb{W}_{\square}} f_{T}\right\|_{L^{p}(Y_{\square})} \leq K^{3/p}\left\|\sum_{T \in \mathbb{W}_{\square}}g_T\right\|_{L^{p}(L^{-1}Y_{\square})}
    \end{equation*}.
    A consequence of the inequalities $(\ref{1})$ and $(\ref{2})$ one can show that for each $T \in \mathbb{W}_{\square}$, the set $L^{-1}(T)$ is equivalent to a plank of length of $\Tilde{R}^{1+ \delta} \times \Tilde{R}^{1/2- \delta} \times \Tilde{R}^{\delta}$, where the longest direction parallel to $L^{-1}(\gamma \times \gamma')(\theta_{\tau(T)})$. This allows us to reduce to our initial setup so that we can use our inductive assumption. In \cite{GGGHMW2022} it is shown that
    \begin{align*}
        L(\tau) 
        &\subseteq \bigg\{ x_1\gamma(\theta_{\kappa}) +x_2 \frac{\gamma'(\theta_{\kappa})}{\abs{\gamma'(\theta_{\kappa})}} +x_3 \frac{\gamma \times \gamma'(\theta_{\kappa})}{\abs{\gamma \times \gamma'(\theta_{\kappa})}}:\\
        & 1 \leq \abs{x_1} \leq 2.01, \abs{x_2} \leq (1.01)\Tilde{R}^{-1/2}, \abs{x_3} \leq \Tilde{R}^{-1}\bigg\}.
    \end{align*}
    This neatly sets up the induction step. Inductively applying the theorem at scale $\Tilde{R}$ gives that 
    \begin{equation*}
        \left\|\sum_{T \in \mathbb{W}_{\square}} f_{T}\right\|_{L^{p}(Y_{\square})} \lesssim_{\epsilon, \delta} R^{\epsilon}K^{-2 \epsilon}\left(\frac{M'}{\abs{\mathbb{W_{\square}}}}\right)^{1/2-1/p}\left(\sum_{T \in \mathbb{W}_{\square}}\norm{f_T}_{p}^{2}\right)^{1/2}.
    \end{equation*}
    Thus summing over $\square \in \mathbb{B}$ we have that 
    \begin{equation*}
        \norm{f}_{L^{p}(Y)} \lesssim_{\epsilon, \delta} B^{10^{10} / \epsilon}K^{-\epsilon}\left(\frac{M' M'' }{\abs{\mathbb{W}_{\square}}}\right)^{1/2-1/p}\left(\sum_{\square \in \mathbb{B}}\left(\sum_{T \in \mathbb{T \in \mathbb{W}_{\square}}}\norm{f_{T}}_{p}^{2}\right)^{p/2}\right)^{1/p}
        \end{equation*}
        By the dyadically constant property of $\norm{f_{T}}_{p}$ (this is a consequence of the final pigeonholing step), we note that this is 
        \begin{equation*}
        \lesssim_{\epsilon, \delta} \left(\frac{M' M'' }{\abs{\mathbb{W}}}\right)^{1/2-1/p} \left(\frac{\abs{\mathbb{B}\mathbb{W_{\square}}}}{\abs{\mathbb{W}}}\right)^{1/p}\left(\sum_{T \in \mathbb{T \in \mathbb{W}_{\square}}}\norm{f_{T}}_{p}^{2}\right)^{p/2}.
        \end{equation*}
        We begin by showing that $\frac{\abs{\mathbb{B}}\abs{\mathbb{W_{\square}}}}{\abs{\mathbb{W}}} \lesssim 1$, This is because
        \begin{equation*}
            \abs{W}= \sum_{T \in \mathbb{W}}1 \geq \sum_{\square \in \mathbb{B}} \sum_{\substack{T \in \mathbb{W} \\
            \square = \square(T)}} \gtrsim \abs{\mathbb{B}}\abs{\mathbb{W}}.
        \end{equation*}
        Thus in order to complete the inductive argument we must show that $M'M'' \lesssim M$.
        Let $Q \subseteq Y'$ where $Q$ is a ball of radius $R^{1/2}$. We have, by the definition of $M$ that 
        \begin{align*}
            M &\gtrsim \sum_{\substack{T \in \mathbb{W}\\ 2T \cap Q \neq \emptyset}} \sum_{\substack{\square \in \mathbb{B}\\ \square = \square(T)}} 1\\
             &= \sum_{\square \in \mathbb{B}} \sum_{\substack{\square \in \mathbb{B}\\ \square = \square(T)\\ 2T \cap Q \neq \emptyset}} 1\\ 
             &\geq \sum_{\square \in \mathbb{B}} \sum_{\substack{\square \in \mathbb{B}\\ 2T \cap Q \neq \emptyset}} 1 .
        \end{align*}
        Using this we perform the following computation (by beginning with recalling the definition of $M'$ and $M''$),
        \begin{align*}
            M'M'' &\sim \sum_{\substack{\square \in \mathbb{B}\\ m(Y_{\square} \cap Q) >0}} M'\\
            &\leq \sum_{\substack{\square \in \mathbb{B}\\ m(Y_{\square} \cap Q) >0}} \sum_{Q_{\square} \subseteq Y_{\square}}M'\frac{m(Q_{\square} \cap 2Q)}{m(Y_{\square} \cap 2Q)}\\
             &\sim \sum_{\substack{\square \in \mathbb{B}\\ m(Y_{\square} \cap Q) >0}} \sum_{Q_{\square} \subseteq Y_{\square}} \sum_{\substack{T \in \mathbb{W}_{\square} \\ Q_{\square} \cap (1.5)T \neq \emptyset}} \frac{m(Q_{\square} \cap 2Q)}{m(Y_{\square} \cap 2Q)}\\
             &= \sum_{\substack{\square \in \mathbb{B}\\ m(Y_{\square} \cap Q) >0}} \sum_{T \in \mathbb{W}_{\square}} \sum_{\substack{Q_{\square} \subseteq Y_{\square} \\ Q_{\square} \cap (1.5)T \neq \emptyset}} \frac{m(Q_{\square} \cap 2Q)}{m(Y_{\square} \cap 2Q)}\\
             &\leq \sum_{\substack{\square \in \mathbb{B}\\ m(Y_{\square} \cap Q) >0}} \sum_{\substack{T \in \mathbb{W}_{\square}\\ 2T \cap \neq \emptyset}} \sum_{Q_{\square} \subseteq Y_{\square}} \frac{m(Q_{\square} \cap 2Q)}{m(Y_{\square} \cap 2Q)}\\
            &\lesssim \sum_{\square \in \mathbb{B}} \sum_{\substack{T \in \mathbb{W}_{\square}\\ 2T \cap Q \neq \emptyset}} 1\\
            &\lesssim M,
        \end{align*}
        where in the second to last inequality we have used the observation that if $Q_{\square} \cap 2Q \neq \emptyset$ and if $T \in \mathbb{W}_{\square}$ is such that $Q_{\square} \cap (1.5)T \neq \emptyset$, then we must have that $2T \cap Q \neq \emptyset$.

        This closes the induction and completes the proof.
    \end{proof}

\section{Appendix}

\subsection{Proof of flat decoupling}
We give a proof of Theorem \ref{thm_flat_decoupling}. First, by a simple tiling argument and the triangle and H\"older's inequalities, we may assume $K=1$.

The idea is to use interpolation. The case $p=2$ follows from Plancherel's identity, and the case $p=\infty$ follows from the triangle and H\"older's inequalities. However, there are a few technical obstacles we need to consider. First, it is not of the form of a general sublinear operator since we have extra Fourier support conditions on each $f_T$. Also, on the right hand side of the decoupling inequality we have a mixed norm. The full detail is as follows.

\begin{proof}[Rigorous proof of flat decoupling]
For each $T\in \mathbb T$, let $\psi_T$ be a Schwartz function such that $\widehat {\psi_T}$ is supported on $CT$ and equals $1$ on $T$. Define the vector-input linear operator $L$ by
$$
L((f_T)_{T\in \mathbb T})=\sum_{T\in \mathbb T}f_T* \psi_T,
$$
where $f_T$ are arbitrary functions (with no Fourier support assumption) in $L^p(\R^3)$. Then by Plancherel's identity, we have
\begin{equation}
    \norm{L((f_T)_{T\in \mathbb T})}_{L^2(\R^3)}=\norm{\norm{f_T* \psi_T}_{L^2(\R^3)}}_{\ell^2(T\in \mathbb T)}\lesssim_C \norm{\norm{f_T}_{L^2(\R^3)}}_{\ell^2(T\in \mathbb T)},
\end{equation}
using Young's inequality, since $\norm{\psi_T}_{L^1(\R^3)}\sim 1$. On the other hand, by the triangle and H\"older's inequalities, we have
\begin{align*}
    \norm{L((f_T)_{T\in \mathbb T})}_{L^\infty(\R^3)}
    &\le(\#\mathbb T)^{1/2}\norm{\norm{f_T* \psi_T}_{L^\infty(\R^3)}}_{\ell^2(T\in \mathbb T)}\\
    &\lesssim_C (\#\mathbb T)^{1/2}\norm{\norm{f_T}_{L^\infty(\R^3)}}_{\ell^2(T\in \mathbb T)}.
\end{align*}
Thus, using the interpolation theorem for mixed normed spaces (see, for instance, Theorem 2 of Section 7 of \cite{Mixed_norm_decoupling}), we have for every $2\le p\le \infty$ that
\begin{equation}
    \norm{L((f_T)_{T\in \mathbb T})}_{L^p(\R^3)}\lesssim_C (\#\mathbb T)^{\frac 1 2-\frac 1 p}\norm{\norm{f_T}_{L^p(\R^3)}}_{\ell^2(T\in \mathbb T)}.
\end{equation}
Lastly, given any $f_T$ Fourier supported on $T$, we have $f_T=f_T*\psi_T$. Thus
\begin{equation}
    \norm{\sum_{T\in \mathbb T}f_T}_{L^p(\R^3)}=\norm{\sum_{T\in \mathbb T}f_T*\psi_T}_{L^p(\R^3)}\lesssim_C (\#\mathbb T)^{\frac 1 2-\frac 1 p}\norm{\norm{f_T}_{L^p(\R^3)}}_{\ell^2(T\in \mathbb T)}.
\end{equation}
\end{proof}

\subsection{Proof of decoupling for \texorpdfstring{$C^2$}{Lg} curves}
In this section, we roughly sketch the proof of Theorem 10 using the language of \cite{Demeter_textbook}, and point out the changes we need to make when we only have $C^2$ regularity. For simplicity of notation, we take $p=6$.

{\it Remark.} Unless otherwise stated, an uncoloured reference without a hyperlink will mean a label in \cite{Demeter_textbook}.

Below and henceforth, for each Schwartz function $f:\R^2\to \C$ and each interval $I\sub \R$, we denote by $f_I$ the Fourier restriction of $f$ onto $I\times \R$, namely, $\widehat{f_I}:=\widehat {f}1_{I\times \R}$.

\begin{thm}[Bourgain-Demeter decoupling for planar $C^2$ curves]\label{thm_decoupling_C2}
   Let $\gamma:[0,1]\to \R$ be a $C^2$-curve such that $|\gamma''(t)|\sim 1$. Let $\delta\in 4^{-\N}$ and let $N_\gamma(\delta)$ be the (vertical) $\delta$-neighbourhood of the graph of $\gamma$ over $[0,1]$, namely,
   \begin{equation}
       N_\gamma(\delta):=\{(t,\gamma(t)+s):t\in [0,1],|s|\le \delta\}.
   \end{equation}   
   Let $\mathcal I_{\delta^{1/2}}$ be a tiling of $[0,1]$ into intervals of length $\delta^{1/2}$. Let $C>0$ and denote by $D_{\gamma}(\delta)$ the smallest constant such that the following inequality holds for all $f$ Fourier supported on $N_\gamma(C\delta)$:
   \begin{equation}
       \norm{f}_{L^6(\R^2)}\leq D_{\gamma}(\delta)\norm{\norm{f_I}_{L^6(\R^2)}}_{\ell^2(I\in \mathcal I_{\delta^{1/2}})}.
   \end{equation}
   Then the following holds: for every $\eps>0$ there exists some $C_{\eps,\gamma,C}$ such that
   \begin{equation}
       D_{\gamma}(\delta)\leq C_{\eps,\gamma,C} \delta^{-\eps},\quad \forall \delta\in 4^{-\N}.
   \end{equation}
\end{thm}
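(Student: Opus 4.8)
The plan is to prove Theorem \ref{thm_decoupling_C2} by the standard Bourgain--Demeter/Pramanik--Seeger-type bootstrapping argument (as in \cite{Demeter_textbook}, Chapter 9 and 10), carefully tracking where $C^2$ regularity suffices in place of the usual $C^\infty$ assumption. The key point is that decoupling for the parabola (or more generally for curves with $|\gamma''|\sim 1$) follows from a multiscale iteration that only ever uses: (i) a parabolic rescaling that maps an arc over an interval of length $\rho$ back to an arc over $[0,1]$, and (ii) the affine-invariant bilinear/multilinear Kakeya input. Step (i) for a general $C^2$ curve requires an approximate rescaling: over a $\rho$-interval $J$ centred at $t_0$, Taylor expansion with remainder (Proposition \ref{prop_Taylor}) shows $\gamma(t_0+\rho u) = \gamma(t_0) + \rho\gamma'(t_0)u + \rho^2 R(u)$ where $R(u) = \tfrac12\int_0^1 2(1-\sigma)\gamma''(t_0+\rho\sigma u)\,d\sigma\cdot u^2$, so $R$ is a $C^0$ function with $|R''|$ only bounded in a weak (integrated) sense — but this is exactly the regularity the flat/lower-dimensional decoupling steps tolerate, since those steps treat the $\rho^2$-scale neighbourhood as a flat slab.

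First I would set up the bootstrapping apparatus: let $D_\gamma(\delta)$ be the optimal constant as defined, and prove a self-improving inequality of the form $D_\gamma(\delta) \lesssim_\eps \delta^{-\eps} D_\gamma(\delta^{1/2})^{?}\cdots$ via the ``ball-inflation + lower-dimensional decoupling'' scheme. Concretely, I would (a) decompose $[0,1]$ into $\delta^{1/4}$-intervals, apply $D_\gamma(\delta^{1/2})$ on each after the approximate affine rescaling straightening the arc to within the $C^2$ tolerance, then (b) on each rescaled piece apply the $L^6$ Bourgain--Demeter argument at the intermediate scale, using $L^6$ orthogonality (biorthogonality/flat decoupling, Theorem \ref{thm_flat_decoupling}) together with the ball-inflation inequality. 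The ball-inflation inequality for the curve is where the parabolic structure enters, and its proof uses multilinear Kakeya (Bennett--Carbery--Tao), which is purely a statement about transversality of tubes and needs no smoothness of $\gamma$ beyond $|\gamma''|\sim 1$ to guarantee the relevant transversality. Iterating the resulting recursion $O(\log\log\delta^{-1})$ times and absorbing, one gets $D_\gamma(\delta)\le C_{\eps,\gamma,C}\delta^{-\eps}$; the dependence on $C$ (the enlargement of the neighbourhood) is harmless since inflating $N_\gamma(\delta)$ to $N_\gamma(C\delta)$ only changes constants by a fixed power of $C$ absorbed into $C_{\eps,\gamma,C}$.

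The main obstacle, and the part that deserves the most care, is making the approximate parabolic rescaling rigorous: unlike the exact parabola, a $C^2$ curve does not rescale exactly onto itself, so after rescaling an arc over a $\rho$-interval one obtains not $N_\gamma(\cdot)$ again but $N_{\tilde\gamma}(\cdot)$ for a \emph{different} curve $\tilde\gamma$ with $|\tilde\gamma''|\sim 1$ (with constants uniform in the choice of arc, by the hypothesis $|\gamma''|\sim 1$ and uniform continuity of $\gamma''$ — or, in the low-regularity setting, by the bound on an integrated modulus of $\gamma''$). This is why the theorem is stated as a uniform bound over the \emph{class} of such curves: the induction must be run over the whole family $\{\gamma : |\gamma''|\sim 1\}$ simultaneously, exactly as in the reduction in \cite{Demeter_textbook}. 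I would therefore phrase the bootstrap for the quantity $\sup_\gamma D_\gamma(\delta)$ over this class, verify that each step (rescaling, ball-inflation, lower-dimensional/flat decoupling) maps the class into itself with uniform constants, and then close the iteration. A secondary technical point is handling the Schwartz tails when passing between the sharp cutoff $1_{N_\gamma(\delta)}$ and smooth Fourier projections adapted to the $\delta^{1/2}\times\delta$ slabs $I\times\R\cap N_\gamma$; this is routine and I would only sketch it, noting that it is exactly the weight-function bookkeeping already carried out in Section \ref{sec_weight}.

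Finally, I would remark on how this planar statement feeds Theorem \ref{thm_Bourgain-Demeter_decoupling}: the cone decoupling for the $C^2$ cone $\Gamma$ follows by slicing $\Gamma$ into $\sim\delta^{-1/2}$ angular sectors, noting each sector is (after a linear map) a vertical cylinder over a $C^2$ planar curve with $|\gamma''|\sim 1$ coming from the torsion condition $\det(\gamma,\gamma',\gamma'')\ne 0$, applying Theorem \ref{thm_decoupling_C2} in the two transverse variables, and then Fubini/cylindrical decoupling in the third; the $K^{10}$ and $C'K^2$-overlap losses in Theorem \ref{thm_Bourgain-Demeter_decoupling} arise precisely from restricting to the truncated cone $\Gamma_{K^{-1}}$ and the plank enlargement, and are orthogonal to the regularity issue addressed here.
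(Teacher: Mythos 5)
Your proposal follows essentially the same approach as the paper: uniformize the induction over a class $\mathcal F(A)$ of $C^2$ curves with $|\gamma''|\sim 1$, note that parabolic rescaling stays within this class, and observe that the bilinear Kakeya step only needs the $\sim 1$-separation of tube directions coming from $|\gamma''|\sim 1$ (not any higher regularity of $\gamma$). One clarification on the point you flag as needing the most care: your concern that the Taylor remainder has ``$R''$ only bounded in a weak sense'' is a red herring, since the rescaled curve is $\tilde\gamma(t)=\sigma^{-1}\gamma(\sigma^{1/2}t+c)$ (up to an affine correction that does not affect the second derivative), for which $\tilde\gamma''(t)=\gamma''(\sigma^{1/2}t+c)$ is genuinely $C^0$ with the same upper and lower curvature bounds; there is no loss of regularity under parabolic rescaling, so $\tilde\gamma$ lies squarely in $\mathcal F(A)$ without any integrated-modulus caveat, exactly as in the paper's Proposition \ref{prop_10.2_Demeter}.
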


The following corollary will be useful when the curve is given by a parametric equation.
\begin{cor}\label{cor_decoupling_C^2_curve}
    Let $\phi:[0,1]\to \R^2$ be a $C^2$-curve with nonzero velocity and nonzero curvature. Let $\delta\in 4^{-\N}$ and let $N'_\phi(\delta)$ be the $\delta$-neighbourhood of the image of $\phi$ in $\R^2$, namely,
    \begin{equation}
        N'_\phi(\delta):=\{\phi(t)+s:t\in [0,1],|s|\le \delta\}.
    \end{equation}
    Let $\mathcal T_{\delta^{1/2}}$ be a covering of $N'_\phi(C\delta)$ by $O(1)$-overlapping rectangles $T$ of dimensions $O(\delta^{1/2})\times O(\delta)$, such that $\cup \mathcal T_{\delta^{1/2}}\sub N'_\phi(O(\delta))$. Then
    \begin{equation}
       \norm{f}_{L^6(\R^2)}\lesssim_\eps \delta^{-\eps} \norm{\norm{\mathcal R_T f}_{L^6(\R^2)}}_{\ell^2(T\in \mathcal T_{\delta^{1/2}})},\quad \forall \eps>0,
   \end{equation}
   where $\mathcal R_T f:=(\hat f 1_T)^\vee$ and the implicit constant also depends on $C$ and all implicit constants in the big $O$ notations above.
\end{cor}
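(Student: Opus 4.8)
\textbf{Proof proposal for Corollary \ref{cor_decoupling_C^2_curve}.}

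The plan is to deduce the parametric statement from the graph statement in Theorem \ref{thm_decoupling_C2} by a partition-of-unity argument that reduces $\phi$ to a bounded number of pieces, each of which can be rotated and rescaled so that its image is the graph of a $C^2$ function with curvature $\sim 1$. First I would cover $[0,1]$ by finitely many (a number depending only on $\phi$) closed subintervals $J_1,\dots,J_m$ on each of which the velocity $\phi'$ stays within a small cone of a fixed unit vector; since $|\phi'|\sim 1$ everywhere this is possible by continuity and compactness. On each $J_\ell$, after an orthogonal rotation $\rho_\ell$ of $\R^2$ (chosen so that the dominant direction of $\phi'$ on $J_\ell$ becomes the horizontal axis) and an affine reparametrization, the arc $\rho_\ell(\phi(J_\ell))$ is the graph $\{(u,g_\ell(u)):u\in [0,1]\}$ of a $C^2$ function $g_\ell$ with $|g_\ell'|\lesssim 1$ and, because $\phi$ has nonvanishing curvature, $|g_\ell''|\sim 1$. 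Rotations are Fourier-support-preserving isometries, so the $\delta$-neighbourhood $N'_\phi(C\delta)$ restricted to the $\ell$-th piece is comparable to the vertical $O(\delta)$-neighbourhood $N_{g_\ell}(O(\delta))$, and an $O(\delta^{1/2})\times O(\delta)$ rectangle $T$ tangent to the curve maps to a rectangle comparable to one of the canonical pieces $I\times \R$ (with $|I|\sim \delta^{1/2}$) of the graph decomposition.

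Next I would set up the decomposition of $f$ itself. Using a smooth partition of unity $\{\chi_\ell\}$ on the cone-of-directions in frequency space subordinate to the cover $\{J_\ell\}$, write $\widehat f = \sum_\ell \widehat f \chi_\ell =: \sum_\ell \widehat{f^{(\ell)}}$; there are $O(1)$ terms, so by the triangle inequality in $L^6$ and then Cauchy--Schwarz it suffices to bound each $\|f^{(\ell)}\|_{L^6(\R^2)}$. Each $f^{(\ell)}$ is Fourier supported in (a neighbourhood of) the $\ell$-th arc, which after the rotation $\rho_\ell$ becomes the vertical neighbourhood of the graph of $g_\ell$; strictly speaking the partition of unity enlarges the $C\delta$-neighbourhood to an $O(\delta)$-neighbourhood, which is harmless since Theorem \ref{thm_decoupling_C2} is stated with an arbitrary enlargement constant. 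Apply Theorem \ref{thm_decoupling_C2} to $g_\ell$ (the constant $C_{\eps,g_\ell,\cdot}$ depends only on $\phi$ and $\eps$ since there are finitely many $\ell$) to get
\begin{equation}
    \|f^{(\ell)}\|_{L^6(\R^2)}\lesssim_\eps \delta^{-\eps}\,\big\|\,\|f^{(\ell)}_I\|_{L^6(\R^2)}\big\|_{\ell^2(I\in \mathcal I^{(\ell)}_{\delta^{1/2}})},
\end{equation}
undo the rotation, and observe that each piece $f^{(\ell)}_I$ is, up to the harmless smooth cutoff, of the form $\mathcal R_T f$ for an $O(1)$-bounded number of $T\in \mathcal T_{\delta^{1/2}}$; conversely each $T$ meets an $O(1)$-bounded number of the slabs $I\times\R$ across all $\ell$, by the $O(1)$-overlap hypothesis on $\mathcal T_{\delta^{1/2}}$ and the fact that both families are essentially the canonical tangential decomposition at scale $\delta^{1/2}$. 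Summing the $\ell^2$ quantities over the $O(1)$ values of $\ell$ and re-indexing by $T$ then gives the claimed inequality.

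The main obstacle I expect is the bookkeeping at the interfaces: making precise the comparison between the abstract covering $\mathcal T_{\delta^{1/2}}$ given in the hypothesis and the canonical intervals $\mathcal I_{\delta^{1/2}}$ produced by Theorem \ref{thm_decoupling_C2} on each rotated piece, in particular verifying the two-sided $O(1)$-overlap so that no logarithmic loss accumulates, and checking that the finitely many smooth frequency cutoffs $\chi_\ell$ (which are not compactly supported in physical space) only cost constants via Young's inequality / almost-orthogonality of the kind already used in Section \ref{sec_weight}. Handling the reparametrization correctly so that $|g_\ell''|\sim 1$ with constants uniform in $\ell$ is routine given $|\phi'|\sim 1$ and nonvanishing curvature on the compact parameter interval, but it should be spelled out. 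None of these steps is deep; the content is entirely in Theorem \ref{thm_decoupling_C2}, and the corollary is a packaging statement.
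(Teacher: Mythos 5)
Your proposal is correct and follows essentially the same route as the paper: cut the curve into $O(1)$ pieces, rotate each to a graph of a $C^2$ function with curvature $\sim 1$, apply Theorem \ref{thm_decoupling_C2}, and reassemble via triangle inequality and Cauchy--Schwarz. The paper accomplishes the "reassembly" by directly partitioning the tubes $T$ among the pieces rather than introducing a smooth frequency-side partition of unity $\{\chi_\ell\}$, which sidesteps the cutoff bookkeeping you flag at the end, but the two variants are interchangeable.
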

\begin{proof}
    Since $|\phi'|=1$, by the implicit function theorem and a compactness argument, we can cut the image of $\phi$ into $O(1)$ many pieces, such that each piece after a suitable rotation becomes the graph of some $C^2$ function $\gamma$ with non-vanishing curvature. By the triangle and Cauchy-Schwarz inequalities, it suffices to consider each such piece. However, after rotation, we can view the tiling $\mathcal T_{\delta^{1/2}}$ as a $O(1)$ overlapping cover of $N_\gamma(\delta)$ by rectangles of dimensions $\delta^{1/2}\times \delta$ that are all contained in $N_\gamma(O(\delta))$. By a simple tiling argument, we may apply Theorem \ref{thm_decoupling_C2} to get our desired conclusion, since decoupling is invariant under rotations.
\end{proof}

Now we come to the proof of Theorem \ref{thm_decoupling_C2}. One of the most important modifications is for Proposition 10.2.

Let $0<a<1<A$ be fixed constants, and denote by $\mathcal F=\mathcal F(a,A)$ the family of all $C^2$ functions $\gamma:[0,1]\to \R$ satisfying
\begin{equation}
\begin{aligned}
    &\sup_{t\in [0,1]}\{|\gamma(t)|,|\gamma'(t)|,|\gamma''(t)|\}\le A,
    &\inf_{t\in [0,1]} |\gamma''(t)|\ge a
\end{aligned}
\end{equation}

\begin{prop}[Parabolic rescaling for $\mathcal F(a,A)$]\label{prop_10.2_Demeter}
Let $\sigma\in [\delta,1]\cap 4^{-\N}$. For each $J\in \mathcal I_{\sigma^{1/2}}$, denote by $\mathcal I_{\delta^{1/2}}(J)$ the collection of all intervals $I\in \mathcal I_{\delta^{1/2}}$ that are also contained in $J$. 

Then there exists some $\tilde \gamma\in \mathcal F(a,A)$ such that for every Schwartz function $f$ Fourier supported on $N_\gamma(\delta)$, we have
\begin{equation}
       \norm {f_J}_{L^6(\R^2)}\leq D_{\tilde\gamma}(\sigma^{-1}\delta)\norm{\norm{f_I}_{L^6(\R^2)}}_{\ell^2(I\in \mathcal I_{\delta^{1/2}}(J))}.
   \end{equation}    
\end{prop}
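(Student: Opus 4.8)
This is the standard parabolic (Lorentz) rescaling step; the one thing to watch is that the rescaled curve must again land in $\mathcal F(A)$, which is exactly what $C^2$ regularity — and nothing more — provides. Write $J=[a,a+\sigma^{1/2}]$, noting that $a\in\sigma^{1/2}\Z$ since $J\in\mathcal I_{\sigma^{1/2}}$, and that $\sigma^{-1}\delta\in4^{-\N}$ because $\delta,\sigma\in4^{-\N}$ with $\delta\le\sigma$. Define
\begin{equation}
    \tilde\gamma(u):=\sigma^{-1}\bigl(\gamma(a+\sigma^{1/2}u)-\gamma(a)-\sigma^{1/2}\gamma'(a)u\bigr),\qquad u\in[0,1].
\end{equation}
Then $\tilde\gamma(0)=\tilde\gamma'(0)=0$ and, crucially, $\tilde\gamma''(u)=\gamma''(a+\sigma^{1/2}u)$, so $\sup_{u}|\tilde\gamma''(u)|+(\inf_{u}|\tilde\gamma''(u)|)^{-1}\le A$ and hence $\tilde\gamma\in\mathcal F(A)$. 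No derivative of $\gamma$ beyond the second ever appears, which is precisely why this argument is insensitive to the low regularity.

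First I would set up the affine change of variables $L:\R^2\to\R^2$,
\begin{equation}
    L(\eta_1,\eta_2)=\bigl(a+\sigma^{1/2}\eta_1,\ \gamma(a)+\sigma^{1/2}\gamma'(a)\eta_1+\sigma\eta_2\bigr),
\end{equation}
whose linear part $L_0$ has determinant $\sigma^{3/2}$. A direct substitution gives, for every $\rho>0$,
\begin{equation}
    L\bigl(\{(\eta_1,\tilde\gamma(\eta_1)+s):\eta_1\in[0,1],\,|s|\le\rho\}\bigr)=\{(t,\gamma(t)+\sigma s):t\in J,\,|s|\le\rho\},
\end{equation}
i.e. $L$ carries $N_{\tilde\gamma}(\rho)$ exactly onto the portion of $N_\gamma(\sigma\rho)$ lying above $J$. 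In the first coordinate $L$ rescales by $\sigma^{1/2}$, so (the endpoints aligning because $\sigma^{1/2}/\delta^{1/2}\in2^{\N_0}$ and $a\in\sigma^{1/2}\Z$) the map $\eta_1\mapsto a+\sigma^{1/2}\eta_1$ sends the tiling $\mathcal I_{(\sigma^{-1}\delta)^{1/2}}$ of $[0,1]$ bijectively onto $\mathcal I_{\delta^{1/2}}(J)$; write $I=I(I')$ for the image of $I'$.

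Next, given $f$ Fourier supported on $N_\gamma(\delta)$, the restriction $f_J$ is Fourier supported on $(J\times\R)\cap N_\gamma(\delta)=\{(t,\gamma(t)+s):t\in J,|s|\le\delta\}$, which by the identity above equals $L(N_{\tilde\gamma}(\sigma^{-1}\delta))$. Define $g$ by $\widehat g=|\det L_0|\,\widehat{f_J}\circ L$; then $\widehat g$ is supported on $N_{\tilde\gamma}(\sigma^{-1}\delta)\subseteq N_{\tilde\gamma}(C\sigma^{-1}\delta)$, and since $\{\eta:L(\eta)\in I\times\R\}=I'\times\R$ one gets $\widehat{g_{I'}}=|\det L_0|\,\widehat{(f_J)_I}\circ L=|\det L_0|\,\widehat{f_I}\circ L$. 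On the spatial side $g$ is, up to multiplication by a unimodular function, the composition of $f_J$ with the linear map $(L_0^{-1})^{T}$, so $\|g\|_{L^6(\R^2)}=|\det L_0|^{1/6}\|f_J\|_{L^6(\R^2)}$ and likewise $\|g_{I'}\|_{L^6(\R^2)}=|\det L_0|^{1/6}\|f_I\|_{L^6(\R^2)}$, the same Jacobian power on both sides. Applying the definition of $D_{\tilde\gamma}(\sigma^{-1}\delta)$ to $g$ and cancelling $|\det L_0|^{1/6}$ yields
\begin{equation}
    \|f_J\|_{L^6(\R^2)}\le D_{\tilde\gamma}(\sigma^{-1}\delta)\,\bigl\|\,\|f_I\|_{L^6(\R^2)}\,\bigr\|_{\ell^2(I\in\mathcal I_{\delta^{1/2}}(J))},
\end{equation}
as desired.

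The computations are entirely routine once the geometry is pinned down, so the only real point requiring care is the bookkeeping: verifying that the affine pre-image of the $J$-portion of $N_\gamma(\delta)$ is genuinely $N_{\tilde\gamma}(\sigma^{-1}\delta)$ and that the images of the $\delta^{1/2}$-intervals align with $\mathcal I_{\delta^{1/2}}$. (If one instead runs the argument starting from $f$ supported on the enlarged neighbourhood $N_\gamma(C\delta)$, the pre-image is only comparable to $N_{\tilde\gamma}(C\sigma^{-1}\delta)$ up to an $A$-dependent constant; this is harmless and can be absorbed either by fixing $C$ large once and for all throughout the induction in Theorem \ref{thm_decoupling_C2} or by the elementary observation that the decoupling constants for two comparable enlargement factors differ by at most an $\eps$-power.) Everything else, in particular $\tilde\gamma\in\mathcal F(A)$, costs nothing beyond $C^2$.
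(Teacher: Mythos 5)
Your proof is correct and follows the same parabolic-rescaling argument as the paper's, with slightly more careful bookkeeping: you work directly with $f_J$ (the paper writes $\widehat g(t,s)=\widehat f(\sigma^{1/2}t+c,\sigma s)$, which strictly speaking has Fourier support over a translate of $[0,\sigma^{-1/2}]$ rather than $[0,1]$, so $f$ there should really be $f_J$), and you subtract the affine part so that $\tilde\gamma(0)=\tilde\gamma'(0)=0$, whereas the paper simply takes $\tilde\gamma(t)=\sigma^{-1}\gamma(\sigma^{1/2}t+c)$ without normalization. Both choices give $\tilde\gamma''(u)=\gamma''(a+\sigma^{1/2}u)$ and hence $\tilde\gamma\in\mathcal F(A)$, which is the only point the $C^2$ hypothesis needs to supply, so the two write-ups are equivalent.
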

\begin{proof}
    Write $J=[c,c+\sigma^{1/2}]$ and let 
    \begin{equation}
        \widehat g(t,s):=\widehat f(\sigma^{1/2}t+c,\sigma s-\gamma(c)-\gamma'(c)\sigma^{1/2}t),
    \end{equation}
    which is supported on $N_{\tilde \gamma}(\sigma^{-1}\delta)$ where
    \begin{equation}
        \tilde \gamma(t):=\sigma^{-1}\left[\gamma(\sigma^{1/2}t+c)-\gamma(c)-\gamma'(c)\sigma^{1/2}t\right].
    \end{equation}
    If $\gamma\in \mathcal F(a,A)$, we can check that $\tilde \gamma\in \mathcal F(a,A)$ as well. Applying the definition of $D_{\tilde \gamma}(\sigma^{-1}\delta)$ to $g$, we have
    \begin{equation}
       \norm {g}_{L^6(\R^2)}\leq D_{\tilde\gamma}(\sigma^{-1}\delta)\norm{\norm{g_{\tilde I}}_{L^6(\R^2)}}_{\ell^2(\tilde I\in \mathcal I_{\sigma^{-1/2}\delta^{1/2}})}.
   \end{equation}   
   Reversing the change of variables and cancelling the same Jacobian factor on both sides, we obtain the desired result.
\end{proof}

The next modification will be for Proposition 10.14.

Fix $I_1=[0,1/4]$ and $I_2=[1/2,1]$. Define $BD_\gamma(\delta)$ to be the smallest constant such that the inequality
\begin{equation}
    \norm{|f_1f_2|^{1/2}}_{L^6(\R^2)}\le BD_\gamma(\delta)\prod_{i=1}^2\norm{\norm{f_I}_{L^6(\R^2)}}^{1/2}_{\ell^2(I\in \mathcal I_{\delta^{1/2}}(I_i))}
\end{equation}
holds for all $f_i$ with Fourier support on $N_{\gamma}(\delta)\cap (I_i\times \R)$, $i=1,2$ respectively. For our convenience, we also define
\begin{equation}
    D(\delta):=\sup_{\gamma\in \mathcal F(a,A)}D_\gamma(\delta),\quad BD(\delta):=\sup_{\gamma\in \mathcal F(a,A)}BD_\gamma(\delta).
\end{equation}
We also have the following bilinear analogue of Proposition \ref{prop_10.2_Demeter}.
\begin{prop}[Bilinear parabolic rescaling for $\mathcal F(a,A)$]\label{prop_10.2_Demeter_bilinear}
Let $\sigma\in [\delta,1/16]\cap 4^{-\N}$. For $i=1,2$, intervals $J_i\in \mathcal I_{\sigma^{1/2}}(I_i)$ and every Schwartz function $f$ Fourier supported on $N_\gamma(\delta)$, we have
\begin{equation}
       \norm {|f_{J_1}f_{J_2}|^{1/2}}_{L^6(\R^2)}\leq BD(\sigma^{-1}\delta)\prod_{i=1}^2\norm{\norm{f_I}_{L^6(\R^2)}}^{1/2}_{\ell^2(I\in \mathcal I_{\delta^{1/2}}(J_i))}.
   \end{equation}    
\end{prop}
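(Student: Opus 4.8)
The plan is to reduce the bilinear rescaling statement to the linear parabolic rescaling Proposition \ref{prop_10.2_Demeter} applied separately to each of the two intervals $J_1$ and $J_2$, combined with the trivial fact that the supremum defining $BD(\cdot)$ already ranges over all of $\mathcal F(A)$, which makes it insensitive to the two distorted curves $\tilde\gamma_1,\tilde\gamma_2$ produced by the two rescalings. Concretely, I would first fix $i\in\{1,2\}$ and $J_i\in\mathcal I_{\sigma^{1/2}}(I_i)$, write $J_i=[c_i,c_i+\sigma^{1/2}]$, and introduce the same change of variables as in the proof of Proposition \ref{prop_10.2_Demeter}, but now applied simultaneously to $f_{J_1}$ and $f_{J_2}$. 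That is, I would set $\widehat{g_i}(t,s):=\widehat{f_{J_i}}(\sigma^{1/2}t+c_i,\sigma s)$, so that $g_i$ is Fourier supported on $N_{\tilde\gamma_i}(\sigma^{-1}\delta)\cap(\tilde I_i\times\R)$ for a suitable fixed pair of reference intervals $\tilde I_1=[0,1/4]$, $\tilde I_2=[1/2,1]$ (the affine map sending $J_i$ to $[0,1/4]$ respectively $[1/2,1]$; one has to check this is consistent with the restriction $\sigma\le 1/16$ so that the images land in the standard $I_1,I_2$), and $\tilde\gamma_i(t):=\sigma^{-1}\gamma(\sigma^{1/2}t+c_i)\in\mathcal F(A)$.

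The one subtlety is that the two rescalings corresponding to $J_1$ and $J_2$ are \emph{different} affine maps in general, so there is no single $g$; instead I would work with the pair $(g_1,g_2)$ and observe that $|f_{J_1}f_{J_2}|^{1/2}$, after the (common in the $s$-variable, but $J_i$-dependent in the $t$-variable) change of variables, still transforms with a single Jacobian factor because the $s$-rescaling is the same for both and the $t$-translations $c_i$ contribute no Jacobian. To make this clean I would instead do the substitution in two steps: first translate and dilate in the $t$-variable by the map adapted to $J_1$ on both $f_{J_1}$ and $f_{J_2}$ — this is harmless since $J_2$ lies in a fixed interval relative to $J_1$ of comparable length (both have length $\sigma^{1/2}$ and are separated by $\gtrsim 1$ before rescaling, hence by $\gtrsim\sigma^{-1/2}$ after, so $\widetilde{f_{J_2}}$ is supported over an interval of length $\sim 1$ well-separated from that of $\widetilde{f_{J_1}}$), and then note that up to a further harmless affine normalization one is exactly in the situation defining $BD_{\tilde\gamma_1}(\sigma^{-1}\delta)$. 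Applying that definition to $(g_1,g_2)$ gives
\begin{equation}
    \norm{|g_1 g_2|^{1/2}}_{L^6(\R^2)}\le BD_{\tilde\gamma_1}(\sigma^{-1}\delta)\prod_{i=1}^2\norm{\norm{(g_i)_{\tilde I}}_{L^6(\R^2)}}^{1/2}_{\ell^2(\tilde I\in\mathcal I_{\sigma^{-1/2}\delta^{1/2}})}.
\end{equation}
Reversing the change of variables, the Jacobian $\sigma^{3/2}$ (or its sixth-root, $\sigma^{1/4}$, per factor) appears identically on both sides of the displayed inequality and cancels; the intervals $\tilde I$ pull back exactly to the $I\in\mathcal I_{\delta^{1/2}}(J_i)$; and $BD_{\tilde\gamma_1}(\sigma^{-1}\delta)\le BD(\sigma^{-1}\delta)$ since $\tilde\gamma_1\in\mathcal F(A)$. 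This yields precisely the claimed inequality.

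The step I expect to be the main obstacle is the bookkeeping around the two \emph{different} affine maps attached to $J_1$ and $J_2$: one must verify that a single affine change of variables (the one adapted to $J_1$, say, or a symmetric choice) maps the pair of Fourier supports of $f_{J_1},f_{J_2}$ into a configuration that is affinely equivalent to the standard $(I_1\times\R, I_2\times\R)$-separated configuration underlying the definition of $BD$, \emph{and} that the resulting curve lies in $\mathcal F(A)$. This is where the hypothesis $\sigma\le 1/16$ is used — it guarantees enough room that $J_1$ and $J_2$, each of length $\sigma^{1/2}$, sit inside $I_1=[0,1/4]$ and $I_2=[1/2,1]$ with the post-rescaling separation still at least the separation of the standard intervals. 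Everything else — the invariance of $L^6$-decoupling constants under affine maps, the cancellation of Jacobians, the identification of the small intervals — is routine and parallels verbatim the proof of Proposition \ref{prop_10.2_Demeter}.
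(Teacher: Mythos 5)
Your high-level plan — rescale each $J_i$ to the standard position, exploit that translations in the frequency variable contribute only a unimodular phase on the physical side so the Jacobian is common to both factors, and then invoke $BD(\sigma^{-1}\delta)$ — is the right spirit, and your observation that the hypothesis $\sigma\le 1/16$ provides the necessary room is correct. However, there is a genuine gap at the step you call a ``further harmless affine normalization,'' and the substitution you actually write down doesn't do what you say it does.

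Concretely, the formula $\widehat{g_i}(t,s):=\widehat{f_{J_i}}(\sigma^{1/2}t+c_i,\sigma s)$ sends each $J_i$ to $[0,1]$, not to $I_i=[0,1/4]$ or $[1/2,1]$ as you claim parenthetically. If instead you apply a single $J_1$-adapted map to both factors, you end up with $\widehat{g_1}$ supported over $[0,1]$ and $\widehat{g_2}$ supported over an interval of length $1$ at distance $\sim\sigma^{-1/2}$ from it; no affine change of variables can carry this pair to the standard pair $(I_1,I_2)$ without undoing the $\sigma^{-1/2}$-dilation (the intervals are too long to fit inside $I_1,I_2$, and the separation ratio is wrong), so after the ``normalization'' you would be back to $BD(\delta)$, which is the trivial estimate and useless for the submultiplicativity in Lemma \ref{lem_sub_multiplicativity}.

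The fix, which I believe is the intended one, uses two pieces you have not articulated. First, one must use the \emph{same} linear dilation $a$ for both $J_1,J_2$ (so the physical-side Jacobians cancel as you observed) but \emph{different} translations $b_1,b_2$, and the dilation must be chosen small enough — e.g. $a\sim(1/4)\sigma^{-1/2}$ — so that $aJ_i+b_i\subset I_i$. Second, and this is the key conceptual step missing from your proposal, the resulting $\widehat{g_1},\widehat{g_2}$ now live over $N_{\tilde\gamma_1}(a^2\delta)$ and $N_{\tilde\gamma_2}(a^2\delta)$ with \emph{different} rescaled curves $\tilde\gamma_i(t)=a^2\gamma(a^{-1}(t-b_i))$, whereas the definition of $BD_{\tilde\gamma}$ requires a single $\tilde\gamma$. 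The resolution is that the bilinear decoupling inequality only sees the curve over $I_1\cup I_2$: you can construct a single $\tilde\gamma\in\mathcal F(A)$ by requiring $\tilde\gamma''=\tilde\gamma_i''$ on $I_i$ (both of these are $\sim 1$ in size since they are just precompositions of $\gamma''$ with affine maps), interpolating $\tilde\gamma''$ continuously on $(1/4,1/2)$ while keeping $|\tilde\gamma''|\sim 1$, and choosing the constants of integration so that $\tilde\gamma-\tilde\gamma_i$ is affine on each $I_i$. The affine discrepancy is then removed by modulating $g_i$, which leaves all $L^6$ norms unchanged. After this, $BD_{\tilde\gamma}(a^2\delta)$ applies, and $a^2\delta$ is $\sigma^{-1}\delta$ up to a bounded factor, which is absorbed by a trivial Cauchy–Schwarz between comparable tilings. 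Without the curve-gluing step, the ``single $\tilde\gamma$'' in the definition of $BD$ is never produced, and the argument does not close.
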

\begin{proof}
    The proof is similar to that of Proposition \ref{prop_10.2_Demeter}, so we leave it as an exercise.
\end{proof}
\begin{lem}[Sub-multiplicativity of decoupling constants]\label{lem_sub_multiplicativity}
    For $\delta_1,\delta_2\in 4^{-\N}$ we have
    \begin{equation}
        D(\delta_1\delta_2)\le D(\delta_1)D(\delta_2),\quad BD(\delta_1\delta_2)\le BD(\delta_1)BD(\delta_2).
    \end{equation}
\end{lem}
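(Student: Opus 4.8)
The plan is to prove the sub-multiplicativity $D(\delta_1\delta_2)\le D(\delta_1)D(\delta_2)$ (the bilinear case $BD$ being completely analogous) by a two-step decoupling: first decouple at the coarse scale $\delta_1$ using short intervals of length $\delta_1^{1/2}$, and then, on each such interval, apply parabolic rescaling followed by a decoupling at the residual scale $\delta_2$. Concretely, fix $\gamma\in\mathcal F(A)$ and a Schwartz function $f$ Fourier supported on $N_\gamma(C\delta_1\delta_2)$. Since $\delta_1\delta_2\le\delta_1$, $f$ is in particular Fourier supported on $N_\gamma(C\delta_1)$, so the definition of $D_\gamma(\delta_1)$ applies and gives
\begin{equation}
    \norm{f}_{L^6(\R^2)}\le D_\gamma(\delta_1)\norm{\norm{f_J}_{L^6(\R^2)}}_{\ell^2(J\in\mathcal I_{\delta_1^{1/2}})}.
\end{equation}
Here each $J\in\mathcal I_{\delta_1^{1/2}}$ has length $\delta_1^{1/2}$, and writing $\delta:=\delta_1\delta_2$, $\sigma:=\delta_1$, note $\sigma\in[\delta,1]\cap 4^{-\N}$ since $\delta_2\in 4^{-\N}$, so Proposition \ref{prop_10.2_Demeter} is available at each $J$.

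Next I would insert, for each fixed $J\in\mathcal I_{\delta_1^{1/2}}$, the parabolic rescaling inequality of Proposition \ref{prop_10.2_Demeter}: there is some $\tilde\gamma\in\mathcal F(A)$ (depending on $J$) with
\begin{equation}
    \norm{f_J}_{L^6(\R^2)}\le D_{\tilde\gamma}(\sigma^{-1}\delta)\norm{\norm{f_I}_{L^6(\R^2)}}_{\ell^2(I\in\mathcal I_{\delta^{1/2}}(J))}.
\end{equation}
Since $\sigma^{-1}\delta=\delta_2$ and $D_{\tilde\gamma}(\delta_2)\le D(\delta_2)=\sup_{\gamma'\in\mathcal F(A)}D_{\gamma'}(\delta_2)$, this bound is uniform in $J$: $\norm{f_J}_{L^6}\le D(\delta_2)\norm{\norm{f_I}_{L^6}}_{\ell^2(I\in\mathcal I_{\delta^{1/2}}(J))}$. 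Substituting into the coarse-scale inequality and using that the collections $\{\mathcal I_{\delta^{1/2}}(J):J\in\mathcal I_{\delta_1^{1/2}}\}$ partition $\mathcal I_{\delta^{1/2}}$, so that $\sum_{J}\sum_{I\in\mathcal I_{\delta^{1/2}}(J)}\norm{f_I}_{L^6}^2=\sum_{I\in\mathcal I_{\delta^{1/2}}}\norm{f_I}_{L^6}^2$, we obtain
\begin{equation}
    \norm{f}_{L^6(\R^2)}\le D_\gamma(\delta_1)D(\delta_2)\norm{\norm{f_I}_{L^6(\R^2)}}_{\ell^2(I\in\mathcal I_{\delta_1\delta_2^{1/2}\cdot\,}\!)},
\end{equation}
which I should write cleanly as $\norm{f}_{L^6}\le D_\gamma(\delta_1)D(\delta_2)\norm{\norm{f_I}_{L^6}}_{\ell^2(I\in\mathcal I_{(\delta_1\delta_2)^{1/2}})}$. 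Taking the supremum over $\gamma\in\mathcal F(A)$ on the left (noting the right side no longer depends on $\gamma$ beyond the factor $D_\gamma(\delta_1)\le D(\delta_1)$) gives $D(\delta_1\delta_2)\le D(\delta_1)D(\delta_2)$.

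For the bilinear statement I would run the identical argument but start from the definition of $BD_\gamma(\delta_1)$ applied to the pair $f_1,f_2$ (Fourier supported on $N_\gamma(C\delta_1\delta_2)\cap(I_i\times\R)$), producing $\ell^2$ sums over $J_i\in\mathcal I_{\delta_1^{1/2}}(I_i)$, then apply the bilinear parabolic rescaling Proposition \ref{prop_10.2_Demeter_bilinear} on each pair $(J_1,J_2)$ to descend to scale $\delta_2$, and finally repartition and take suprema. The one bookkeeping point to be careful about is the constant $C$ hidden in the neighbourhood $N_\gamma(C\delta)$: rescaling at scale $\sigma$ turns $N_\gamma(C\delta)\cap(J\times\R)$ into (a subset of) $N_{\tilde\gamma}(C\sigma^{-1}\delta)$, which is exactly the hypothesis under which $D_{\tilde\gamma}(\sigma^{-1}\delta)$ is defined, so no loss in $C$ accrues — but it is worth checking that Proposition \ref{prop_10.2_Demeter} as stated already absorbs this. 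I do not anticipate a genuine obstacle here; the only mild subtlety is making sure the two families of intervals nest exactly and that the $\ell^2$-orthogonality of the repartitioning is used correctly (it is an identity, not an inequality, so nothing is lost). This is the standard iteration underlying "decoupling constants are sub-multiplicative," and the bulk of the real work was already done in Propositions \ref{prop_10.2_Demeter} and \ref{prop_10.2_Demeter_bilinear}.
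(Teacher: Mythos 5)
Your argument for the linear inequality $D(\delta_1\delta_2)\le D(\delta_1)D(\delta_2)$ is correct and is exactly the paper's (which is a one-line citation of Proposition \ref{prop_10.2_Demeter}): decouple at the coarse scale, apply parabolic rescaling on each $J$, and repartition the nested $\ell^2$ sums --- an identity, so no genuine Minkowski step is needed.

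However, your sketch for $BD$ has a real gap. After you apply $BD_\gamma(\delta_1)$, the right-hand side is the \emph{product}
\begin{equation*}
\prod_{i=1}^2\left(\sum_{J_i}\norm{f_{J_i}}_{L^6}^2\right)^{1/4},
\end{equation*}
which involves the \emph{individual} quantities $\norm{f_{J_i}}_{L^6}$ and contains no object of the form $\norm{|f_{J_1}f_{J_2}|^{1/2}}_{L^6}$. Proposition \ref{prop_10.2_Demeter_bilinear} bounds only the latter kind of quantity, so "apply the bilinear rescaling on each pair $(J_1,J_2)$" does not plug into what you have. To descend the scale inside each $J_i$ you must use the \emph{linear} parabolic rescaling Proposition \ref{prop_10.2_Demeter} on each factor $\norm{f_{J_i}}_{L^6}$, and after the usual $\ell^2$-nesting this gives $BD(\delta_1\delta_2)\le BD(\delta_1)\,D(\delta_2)$, not the claimed $BD(\delta_1)BD(\delta_2)$. (The paper's own one-line proof invokes H\"older, Minkowski, and Proposition \ref{prop_10.2_Demeter_bilinear}, but the same issue applies: a direct two-step composition naturally produces a $D$, not a second $BD$.) This is not fatal: the weaker inequality $BD(\delta_1\delta_2)\le BD(\delta_1)D(\delta_2)$, together with the already-established exact multiplicativity of $D$ and the reduction $D(\delta)\lesssim_\eps\delta^{-\eps}\max_\sigma BD(\sigma)$, is enough to carry out the induction-on-scales extension from $4^{-2^\N}$ to $4^{-\N}$ at the end of Section 8.2. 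If you want to keep the stated form $BD(\delta_1)BD(\delta_2)$, you would need an extra argument relating $\norm{f_{J_i}}_{L^6}$ to a bilinear quantity, and you should say what it is; the obvious bound $\norm{|f_{J_1}f_{J_2}|^{1/2}}_6^2\le\norm{f_{J_1}}_6\norm{f_{J_2}}_6$ goes the wrong way.
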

\begin{proof}
    The first inequality follows from Minkowski's inequality and Proposition \ref{prop_10.2_Demeter}. The second inequality follows from H\"older's inequality, Minkowski's inequality and Proposition \ref{prop_10.2_Demeter_bilinear}.
\end{proof}
\begin{prop}[Reduction to bilinear decoupling]
    For each $\eps>0$, there exists $C_\eps$ such that for all $\delta\in 4^{-\N}$ we have
    \begin{equation}
        D(\delta)\leq C_\eps \delta^{-\eps}\max_{\sigma\in [\delta,1]\cap 4^{-\N}}BD(\sigma).
    \end{equation}
\end{prop}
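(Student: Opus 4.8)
The plan is the standard Bourgain--Guth broad-narrow argument, adapted to the $\mathcal F(A)$-uniform setting so that the bilinear pieces land back inside the family. First I would fix a large integer $K$ and partition $[0,1]$ into $K$ intervals $\alpha$ of length $1/K$, writing $f=\sum_\alpha f_\alpha$. For each unit cube $Q$ in a finitely-overlapping cover of $\R^2$, the pointwise value $|f(x)|$ on $Q$ is controlled by either the ``narrow'' contribution, where two intervals $\alpha_1,\alpha_2$ with $|f_{\alpha_1}f_{\alpha_2}|^{1/2}$ dominant are $\sim 1/K$-separated, or the ``broad'' contribution coming from a single $\alpha$. Concretely, the Bourgain--Guth inequality gives
\begin{equation}
    \norm{f}_{L^6(\R^2)}^6\lesssim \sum_\alpha \norm{f_\alpha}_{L^6(\R^2)}^6 + K^{O(1)}\max_{\substack{\alpha_1,\alpha_2\\ \mathrm{dist}(\alpha_1,\alpha_2)\ge 1/K}}\norm{|f_{\alpha_1}f_{\alpha_2}|^{1/2}}_{L^6(\R^2)}^6.
\end{equation}

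Next I would treat the narrow term by parabolic rescaling. Each $\alpha$ has length $1/K$; applying Proposition~\ref{prop_10.2_Demeter} with $\sigma\sim K^{-2}$ (after harmlessly grouping the $K$ intervals of length $1/K$ into the $K^{1/2}$ intervals of $\mathcal I_{\sigma^{1/2}}$, or simply noting one can take $\sigma$ a power of $4$ comparable to $K^{-2}$ and lose a constant), we bound $\norm{f_\alpha}_{L^6}$ by $D(\sigma^{-1}\delta)$ times the $\ell^2(L^6)$ norm over the $\delta^{1/2}$-intervals inside $\alpha$. Summing in $\alpha$ and using that $\sigma^{-1}\delta = K^2\delta$, this gives a narrow contribution of the shape $D(K^2\delta)\,\norm{\norm{f_I}_{L^6}}_{\ell^2(\mathcal I_{\delta^{1/2}})}$, up to a constant depending on $K$. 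For the broad term, the two intervals $\alpha_1,\alpha_2$ are $\gtrsim K^{-1}$-separated, and after rescaling each to $[0,1/4]$ and $[1/2,1]$ respectively (which is again a parabolic rescaling, landing in $\mathcal F(A)$ by the computation in Proposition~\ref{prop_10.2_Demeter}) one obtains a bound by $BD(K^{O(1)}\delta)$ times the product of $\ell^2(L^6)$ norms; Cauchy--Schwarz then converts the product into a single $\ell^2(L^6)$ norm over $\mathcal I_{\delta^{1/2}}$. Taking the supremum over all $\gamma\in\mathcal F(A)$ throughout — which is legitimate since every rescaling stays inside $\mathcal F(A)$ — yields
\begin{equation}
    D(\delta)\le C_K D(K^2\delta) + C_K \max_{\sigma\in[\delta,1]\cap 4^{-\N}} BD(\sigma)
\end{equation}
for a constant $C_K$ depending only on $K$ and $A$ (the $\max$ over $\sigma$ absorbs the particular scale $K^{O(1)}\delta$).

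Finally I would iterate this self-improving inequality. Writing $D(\delta)\le C_K D(K^2\delta)+C_K M$ where $M:=\max_{\sigma}BD(\sigma)$, and iterating $n\sim \log_{K^2}(1/\delta)$ times until the argument of $D$ reaches $O(1)$, one gets $D(\delta)\lesssim C_K^{n}(M+1)$. Choosing $K=K(\eps)$ large enough that $\log C_K/\log K^2<\eps$ makes $C_K^n\le \delta^{-\eps}$, which is exactly the claimed bound $D(\delta)\le C_\eps\delta^{-\eps}M$ (after noting $M\ge BD(1)\gtrsim 1$ so the additive $+1$ is harmless). The main obstacle, as always in this scheme, is making the Bourgain--Guth pointwise decomposition rigorous with the correct $K$-power losses and checking that the parabolic rescalings used on both the narrow and broad pieces genuinely preserve membership in $\mathcal F(A)$ with the \emph{same} constant $A$ — the latter is guaranteed by the explicit rescaling $\tilde\gamma(t)=\sigma^{-1}\gamma(\sigma^{1/2}t+c)$ already verified in Proposition~\ref{prop_10.2_Demeter}, which is precisely why the uniform family $\mathcal F(A)$ was introduced. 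A secondary technical point is that the separated intervals $\alpha_1,\alpha_2$ in the broad case need not be exactly $[0,1/4]$ and $[1/2,1]$ after rescaling, but an affine change of variables reduces to that normalization at the cost of a constant depending only on $K$.
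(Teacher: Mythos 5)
Your overall strategy --- a Bourgain--Guth broad-narrow decomposition followed by iteration, with the $\mathcal F(A)$-uniformity keeping the rescaled curves in the family --- is exactly the argument behind Proposition 10.14 of Demeter's textbook, which is what the paper's proof cites. So you are on the right track and, unlike the paper, actually reconstructing the argument rather than referring to it. However, there is a genuine gap in your iteration step.

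You record the recursion as $D(\delta)\le C_K D(K^2\delta)+C_K M$ with a \emph{single} constant $C_K$ ``depending only on $K$ and $A$,'' iterate to get $D(\delta)\lesssim C_K^{\,n}(M+1)$ with $n\sim\log_{K^2}(1/\delta)$, and then claim that one can choose $K$ large enough that $\log C_K/\log K^2<\eps$. This last step cannot be achieved as stated: the broad term genuinely carries a $K^{O(1)}$ loss (the $K^2$ in the pointwise Bourgain--Guth inequality, plus the number of separated pairs and the bilinear rescaling losses), so $C_K=K^{O(1)}$, and then $\log C_K/\log K^2$ converges to a positive constant, not to $0$, as $K\to\infty$. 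Iterating your recursion as written therefore yields $D(\delta)\lesssim\delta^{-c}$ for some fixed $c>0$, not $\delta^{-\eps}$.

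The fix is to track the two constants separately, and this is where the argument actually closes. The pointwise inequality
\begin{equation*}
    |f(x)|\le C_0\max_\alpha|f_\alpha(x)|+C_0K^2\max_{\mathrm{sep}}|f_{\alpha_1}(x)f_{\alpha_2}(x)|^{1/2}
\end{equation*}
has an \emph{absolute} $C_0$ (the narrow case is handled by at most three caps adjacent to the maximal one, with no $K$-dependence). After taking $L^6$ norms, the narrow term is decoupled via Proposition~\ref{prop_10.2_Demeter} at scale $\sigma=K^{-2}$, and the step
$\bigl(\sum_\alpha\norm{\norm{f_I}_6}_{\ell^2(I\subset\alpha)}^6\bigr)^{1/6}\le\norm{\norm{f_I}_6}_{\ell^2(\mathcal I_{\delta^{1/2}})}$
costs nothing; only the broad term accrues a $K^{O(1)}$ factor. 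So the correct recursion is
\begin{equation*}
    D(\delta)\le C_0\,D(K^2\delta)+C_K\,M,
\end{equation*}
with $C_0$ absolute and $C_K=K^{O(1)}$. Iterating gives $D(\delta)\lesssim C_0^{\,n}\bigl(1+C_K M\bigr)\le C_K\,C_0^{\,n}(1+M)$, and now it is $\log C_0/\log K^2$ --- not $\log C_K/\log K^2$ --- that must be less than $\eps$; this is achievable by taking $K=K(\eps)$ large since $C_0$ is fixed. The residual $C_K$ is absorbed into $C_\eps$ because $K$ depends only on $\eps$. With this bookkeeping correction, your argument is complete and coincides with the one the paper inherits from Demeter.
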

\begin{proof}
    Everything in the proof of Proposition 10.14 should remain unchanged (modulo difference in notation), except that we now use Proposition \ref{prop_10.2_Demeter_bilinear} for the family $\mathcal F(a,A)$.
\end{proof}
The content of Corollary 10.18 is unchanged.
\begin{prop}[Corollary 10.18]\label{prop_bootstrap_decoupling}
    Assume $\delta=4^{-2^u}$ for some $u\in \N$. Then for each $1\le s\le u$,
    \begin{equation}
        BD(\delta)\lesssim_{s,\eps} \delta^{-\eps-2^{-s-2}}\prod_{l=1}^s D(\delta^{1-2^{-l}})^{2^{l-s-1}}.
    \end{equation}
\end{prop}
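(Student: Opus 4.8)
The plan is to follow the standard Bourgain--Demeter iteration exactly as in the proof of Corollary 10.18 in \cite{Demeter_textbook}, the only input needed being the sub-multiplicativity of $D$ and $BD$ (Lemma \ref{lem_sub_multiplicativity}) together with the reduction of linear to bilinear decoupling (the previous Proposition). I would first set up the bootstrap: by the Reduction to Bilinear Decoupling, $D(\delta)\lesssim_{\eps}\delta^{-\eps}\max_{\sigma\in[\delta,1]}BD(\sigma)$, and one checks that $BD(\sigma)$ is monotone in $\sigma$ in the relevant sense (via sub-multiplicativity, $BD(\sigma)\le BD(\delta)\,BD(\sigma/\delta)$ is not what we want, but rather the content is that controlling $BD(\delta)$ for the smallest scale suffices once we have bilinear Kakeya/$L^6$ input). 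The heart of the iteration is the ``ball inflation''/parabolic rescaling step encoded in Proposition \ref{prop_10.2_Demeter_bilinear}: at each stage we pass from scale $\delta$ to an intermediate scale $\delta^{1/2}$, pay a bilinear decoupling at scale $\delta^{1/2}$, and then rescale the two halves so that each reduces to a \emph{linear} decoupling at scale $\delta^{1-1/2}$, whence the appearance of the exponents $1-2^{-l}$ and the weights $2^{l-s-1}$.

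Concretely, I would argue by induction on $s$. For the base case $s=1$: apply bilinear reduction once, then the trivial bilinear-to-linear estimate (Cauchy--Schwarz plus $L^6$ orthogonality between the two separated caps, exactly as in the flat case handled by Theorem \ref{thm_flat_decoupling}), to obtain $BD(\delta)\lesssim_\eps \delta^{-\eps-1/4}D(\delta^{1/2})^{1/2}$, which is the claimed bound with $s=1$. For the inductive step, assuming the bound for $s$, I would use Proposition \ref{prop_10.2_Demeter_bilinear} at scale $\sigma=\delta^{1/2}$: this expresses $BD(\delta)$ in terms of $BD(\delta^{1/2})$ applied to the rescaled pieces, combined with a linear decoupling $D(\delta^{1/2})$ coming from decoupling each $\sigma^{1/2}$-interval into $\delta^{1/2}$-intervals. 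Feeding the inductive hypothesis for $BD(\delta^{1/2})$ (noting $\delta^{1/2}=4^{-2^{u-1}}$ so the induction is legitimate) and collecting exponents — the $\eps$ term becomes $\eps/2+\ldots$, the dyadic term $2^{-s-2}$ halves and a new $2^{-3}$-type contribution appears, and the product $\prod_{l=1}^s$ shifts to $\prod_{l=1}^{s+1}$ with the weights redistributing as $2^{l-(s+1)-1}$ — gives precisely the statement for $s+1$. The bookkeeping of these geometric-series exponents is routine but must be done carefully; summing $\sum_l 2^{l-s-1}(1-2^{-l})$ and $\sum_l 2^{-l}$ is what produces the stated powers.

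The main obstacle is not the iteration itself, which is purely formal once the ingredients are in place, but rather making sure the $C^2$-only hypothesis does not break any of the three pillars: (i) parabolic rescaling must preserve membership in $\mathcal F(A)$ — this is handled by Proposition \ref{prop_10.2_Demeter} and its bilinear version, where one checks $\tilde\gamma''(t)=\gamma''(\sigma^{1/2}t+c)$ so the class $\mathcal F(A)$ is genuinely scale-invariant with \emph{no loss in $A$}; (ii) the bilinear reduction, which in the $C^\infty$ case uses a Bourgain--Guth broad/narrow decomposition that only sees $|\gamma''|\sim 1$ and thus transfers verbatim; and (iii) the bilinear Kakeya / $L^6$ transversality estimate, which again depends only on the lower curvature bound. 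Since all three steps are insensitive to higher derivatives, the proof goes through unchanged; I would simply remark that the implicit constants now depend on $A$ (equivalently on $\gamma$ and $C$) but are uniform over $\delta\in 4^{-\N}$, which is all that is needed to then combine Proposition \ref{prop_bootstrap_decoupling} with the reduction proposition and optimize over $s\sim \eps^{-1}$ to conclude $D(\delta)\lesssim_\eps \delta^{-\eps}$ in Theorem \ref{thm_decoupling_C2}.
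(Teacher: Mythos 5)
Your overall plan matches the paper's: both defer to the Bourgain--Demeter iteration in \cite{Demeter_textbook} and observe that nothing there requires more than $C^2$ regularity, with the key check being that the transversality hypothesis for the bilinear Kakeya inequality follows from $|\gamma''|\sim 1$ alone (together with the remark that the class $\mathcal F(A)$ is preserved exactly under parabolic rescaling, since $\tilde\gamma''(t)=\gamma''(\sigma^{1/2}t+c)$). That part of your reasoning is correct and is in fact the entire content of the paper's proof.

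However, your sketch of the iteration itself would not close as written. You describe the inductive step as coming from Proposition \ref{prop_10.2_Demeter_bilinear} ``at scale $\sigma=\delta^{1/2}$, expressing $BD(\delta)$ in terms of $BD(\delta^{1/2})$.'' But bilinear parabolic rescaling only controls a \emph{single pair} $(f_{J_1},f_{J_2})$ of rescaled pieces; on its own it gives no way to reassemble the full sums $\sum_{J_1},\sum_{J_2}$. The recursion that actually closes --- schematically $BD(\delta)\lesssim_\eps\delta^{-\eps}\,BD(\delta^{1/2})^{1/2}D(\delta^{1/2})^{1/2}$ --- is driven by the ball-inflation step: $L^2$-orthogonality over intermediate-scale balls combined with the bilinear Kakeya inequality, which is exactly where the transversality hypothesis is used. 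Your writeup demotes bilinear Kakeya to a ``pillar to check'' rather than placing it at the heart of the inductive step, so the induction as described has no engine. This shows up already in your base case: for $s=1$ the proposition claims $BD(\delta)\lesssim_\eps\delta^{-\eps-1/8}D(\delta^{1/2})^{1/2}$, with exponent $2^{-s-2}=1/8$; you obtain $\delta^{-\eps-1/4}$, which is what a purely trivial Cauchy--Schwarz/$L^2$-orthogonality estimate gives, and is strictly weaker. Starting the induction there would not produce the stated bound. Finally, the broad/narrow bilinear reduction you mention belongs to the \emph{previous} proposition (the reduction from linear to bilinear decoupling), not to this one; invoking it inside this proof is a conflation, albeit a harmless one.

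None of this undermines the point the paper is making --- that the proof in \cite{Demeter_textbook} carries over verbatim to $C^2$ curves --- and you identify correctly where the $C^2$ assumption actually enters. But as a reconstruction of Corollary 10.18 the iterative step needs the ball-inflation / bilinear Kakeya inequality inserted explicitly; parabolic rescaling plus sub-multiplicativity alone cannot produce the claimed exponents.
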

\begin{proof}[Proof of Proposition \ref{prop_bootstrap_decoupling}]
    All arguments in \cite{Demeter_textbook} leading to Corollary 10.18 are the same, except the paragraph right after (10.16). In this case, the longer side of each $T\in \mathcal F_I$ is now pointing in the direction $(\gamma'(c_I),-1)$. Since $|\gamma''|\sim 1$, when $|c_I-c_{\tilde I}|\sim 1$, the directions are also $\sim 1$ separated. This ensures the bilinear Kakeya inequality in Theorem 10.21 is applicable. We also remark here that the analogue of the rigorous proof of Theorem 11.10 also remains unchanged.
\end{proof}
Now we come to the proof of Theorem \ref{thm_decoupling_C2}.
\begin{proof}
    Iterating Proposition \ref{prop_bootstrap_decoupling} in the same way as in \cite{Demeter_textbook}, we obtain $BD(\delta)\lesssim_\eps \delta^{-\eps}$ when $\delta\in 4^{-2^\N}$. 

    We still need to prove $BD(\delta)\lesssim_\eps \delta^{-\eps}$ for a general $\delta\in 4^{-\N}$. This part was not given in \cite{Demeter_textbook}, so we give it here for completeness. The idea is a simple induction on scales argument.
    
    Let $m\ge 1$ and assume we have proved $BD(\delta)\lesssim_\eps \delta^{-\eps}$ for every $\delta=4^{-n}$ where $0\le n< 2^{m}$. We need to show $BD(\delta)\lesssim_\eps \delta^{-\eps}$ for every $\delta=4^{-n}$ where $2^{m}< n< 2^{m+1}$. Then $n-2^m\in (0,2^m)$, and so by Lemma \ref{lem_sub_multiplicativity}, we have
    \begin{equation}
        BD(\delta)=BD(4^{-2^m}4^{2^m-n})\le BD(4^{-2^m})BD(4^{2^m-n})\lesssim_\eps 4^{2^m\eps}4^{(n-2^m)\eps}=\delta^{-\eps}, 
    \end{equation}
    using both the result already established for $\delta\in 4^{-2^\N}$ and the induction hypothesis.
\end{proof}

\subsection{Proof of decoupling for \texorpdfstring{$C^2$}{Lg} cones}\label{sec_decoupling_C2_cone}
In this section, we give a proof of Theorem \ref{thm_Bourgain-Demeter_decoupling}. Although the most important idea of the proof is already given in the appendix of \cite{GGGHMW2022}, we would like to provide more details. We also refer the reader to Section 12.2 of \cite{Demeter_textbook} as well as Section 8 of \cite{BD2015}.

We first prove Part \ref{item_BD_decoupling_01}. For the covering part, let $\theta\in [0,1]$ be arbitrary, we choose $\omega\in \delta^{1/2}\Z\cap [0,1]$ that is closest to $\theta$. Then the result follows from Proposition \ref{prop_error_e_i}. The bounded overlap follows exactly from Part \ref{item_2_same_and_distinct} of Lemma \ref{lem_same_and_distinct}. (Note the planks $A_{\omega,\delta^{1/2}}$ are denoted $C_1 P_{\omega,\delta^{1/2}}$ there.)

\subsubsection{Preliminary reductions}\label{sec_reparametrization}
To prove Part \ref{item_BD_decoupling_02}, we use an iteration argument known as Pramanik-Seeger type iteration \cite{PS2007}. For simplicity of notation we only consider $p=6$. By splitting $\Gamma$ into $\lesssim |\log K|$ many pieces and rescaling each piece back to $1\le\xi_3\le 2$, we may assume $K=1$, which leads to 
a loss of $O(K^2|\log K|)$ by the triangle and Cauchy-Schwarz inequalities (so $K^{10}$ is safe). Lastly, also by a simple tiling argument and triangle and Cauchy-Schwarz inequalities, we may assume the collection $\{A_{\omega,\delta^{1/2}}\}$ is disjoint.

\subsubsection{Introducing intermediate scale}

For $\delta\in 4^{-\N}$, we denote by $D(\delta)$ the smallest constant such that for every family of Schwartz functions $\{f_\omega:\omega\in \Omega\}$ where each $f_\omega$ has Fourier support in $A_{\omega,\delta^{1/2}}$, we have
        \begin{equation}
            \left\|\sum_{\omega\in \Omega}f_\omega\right\|_{L^6(\R^3)}\leq D(\delta)\norm{\norm{f_\omega}_{L^6(\R^3)}}_{\ell^2(\omega\in \Omega)}.
        \end{equation}
Our goal is to show that $D(\delta)\lesssim_\eps \delta^{-\eps}$ for every $\eps>0$.

To this end, we define an intermediate scale 
\begin{equation}
    \lambda:=\delta^{(1-\eps)/2},
\end{equation}
and consider the lattice $\Sigma=\lambda \Z\cap [0,1]$. The choice of $\lambda$ will be clear soon. We then recall \eqref{eqn_CP_sigma_lambda} (with $K=1$):
\begin{equation}
    CP_{\sigma,\lambda}=\left\{\sum_{i=1}^3 \xi_i \mathbf e_i(\sigma):|\xi_1|\le C\lambda^2,|\xi_2|\le C\lambda,C^{-1}\le \xi_3\le C \right\}.
\end{equation}
Since $A_{\omega,\delta^{1/2}}\sub C_1P_{\omega,\lambda}$, by Lemma \ref{lem_cover_lambda}, we know that the collection $\{C_3P_{\sigma,\lambda}:\sigma\in \Sigma\}$ forms a $O(1)$-overlapping cover of $\cup_{\omega\in \Omega}A_{\omega,\delta^{1/2}}$. 

Recall the notation $\omega\prec \sigma$, which means that $\omega-\sigma\in (\lambda/2,\lambda/2]$. Define for each $\sigma\in \Sigma$ that 
\begin{equation}
    f_\sigma=\sum_{\omega\prec \sigma}f_\omega,
\end{equation}
which has Fourier support in $C_3P_{\sigma,\lambda}$ by Part \ref{item_2_cover_lambda} of Lemma \ref{lem_cover_lambda}. Also, we have 
\begin{equation}
    \sum_{\omega\in \Omega}f_\omega=\sum_{\sigma\in \Sigma}f_\sigma.
\end{equation}
Thus, we may first apply the definition of $D(\lambda^2)$ to get
\begin{equation}\label{eqn_applying_D(lambda)}
    \left\|\sum_{\omega\in \Omega}f_\omega\right\|_{L^6(\R^3)}\leq D(\lambda^2)\norm{\norm{f_\sigma}_{L^6(\R^3)}}_{\ell^2(\sigma\in \Sigma)}.
\end{equation}
It remains to estimate each $\norm{f_\sigma}_{L^6(\R^3)}$. 

\subsubsection{Approximation by cylinder}
We also encounter some technical difficulty here, namely, the curve $\mathbf e_3$ is merely $C^1$ by Lemma \ref{lem_frame_derivatives} since the torsion function $\uptau$ may not be differentiable. This will hinder applications of Theorem \ref{thm_decoupling_C2} as well as multiple Taylor expansions.

To deal with this issue, we will reparametrize $\mathbf e_3$; note that the planks remain unchanged.
Let us be more precise. Recall by Lemma \ref{lem_frame_derivatives} that $\mathbf e'_3=-\uptau e_2$. Put
\begin{equation}
    s=s(\theta)=\int_0^\theta -\frac 1 {\uptau (t)}dt,
\end{equation}
and redefine
\begin{equation}\label{eqn_reparametrization}
    \tilde {\mathbf e}_i(\theta)=\mathbf e_i(s(\theta)),\quad i=1,2,3.
\end{equation}
In this way, we obtain the new Frenet frame formula:
\begin{equation}\label{eqn_frenet_frame_new}
    \tilde {\mathbf e}'_1=-\uptau^{-1}\tilde {\mathbf e}'_2,\quad \tilde {\mathbf e}'_2=\uptau^{-1}\tilde {\mathbf e}_1-\tilde {\mathbf e}_3,\quad \tilde {\mathbf e}'_3=\tilde {\mathbf e}_2.
\end{equation}
Thus $\tilde {\mathbf e}_3$ becomes $C^2$, at the cost that $\tilde {\mathbf e}_1$ may not necessarily be $C^2$.

For simplicity, we abuse notation and still use $\mathbf e_i$ to mean $\tilde {\mathbf e}_i$.    

Fix $\sigma \in \Sigma$. For simplicity of notation we will assume without loss of generality that 
\begin{equation}\label{eqn_e_i=e_i}
    \mathbf e_1(\sigma)=(1,0,0),\quad \mathbf e_2(\sigma)=(0,1,0),\quad \mathbf e_3(\sigma)=(0,0,1).
\end{equation}
Also write
\begin{equation}
    \mathbf e_i=(e_{i1},e_{i2},e_{i3}).
\end{equation}

Let $\eps>0$ be arbitrary. Since we allow a loss of the form $\delta^{-\eps}$, we may cut $[0,1]$ into smaller intervals of length $\delta^{\eps}$. By the triangle and Cauchy-Schwarz inequalities, it suffices to assume all angles $\theta,\omega,\sigma\in [0,\delta^{\eps}]$. For the same reason, we may partition each $A_{\omega,\delta^{1/2}}$ and assume $|\xi_3-1|\le \delta^{\eps}$ on $A_{\omega,\delta^{1/2}}$.

The key is the following geometric observation. It is more complicated than the argument outlined in the proof of Theorem 11 in \cite{GGGHMW2022}, since we have chosen to perform the approximation directly, instead of first reducing to the form $\mathbf e_3(t)=(1,t,\phi(t))$ for some $\phi\in C^2$.
\begin{lem}\label{lem_PS_cylinder}
    For each $\omega\prec \sigma$, the plank $A_{\omega,\delta^{1/2}}$ is contained in the $O(\delta)$ neighbourhood of the part of the cylinder 
    \begin{align}
        R_{\sigma,\omega,\lambda}&:=\left\{
        (e_{31}(\theta),e_{32}(\theta),\xi_3):|\theta-\omega|\lesssim \delta^{1/2},|\xi_3-1|\lesssim \delta^{\eps}\right\}.
    \end{align}
\end{lem}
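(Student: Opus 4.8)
The goal is to show that the plank $A_{\omega,\delta^{1/2}}$ lies in an $O(\delta)$-neighbourhood of the cylinder piece $R_{\sigma,\omega,\lambda}$ over the arc $\{(e_{31}(\theta),e_{32}(\theta)):|\theta-\omega|\lesssim\delta^{1/2}\}$. The natural approach is to take an arbitrary point $\xi=\sum_{i=1}^3\xi_i\mathbf e_i(\omega)\in A_{\omega,\delta^{1/2}}$, so that $|\xi_1|\le C\delta$, $|\xi_2|\le C\delta^{1/2}$, $C^{-1}\le\xi_3\le C$ (and after the preliminary reductions we may further take $\sigma,\omega\in[0,\delta^\eps]$ and $|\xi_3-1|\le\delta^\eps$), and to produce an explicit point of $R_{\sigma,\omega,\lambda}$ within $O(\delta)$ of $\xi$. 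The obvious candidate is the point whose third coordinate (in the standard basis fixed by \eqref{eqn_e_i=e_i}) equals the third coordinate of $\xi$, and whose first two coordinates are $(e_{31}(\theta),e_{32}(\theta))$ for the choice $\theta=\omega$. So the content of the lemma is really the two assertions: (i) the first two standard coordinates of $\xi$ are within $O(\delta)$ of $(e_{31}(\omega),e_{32}(\omega))$, and (ii) after replacing $\theta$ by nearby values $|\theta-\omega|\lesssim\delta^{1/2}$ one still stays inside the $O(\delta)$-neighbourhood — i.e. the arc itself has the right ``thickness'' so that the whole plank, which has aperture $\delta^{1/2}$ in the $\mathbf e_2(\omega)$ direction, is swept out.

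First I would write $\xi=\xi_1\mathbf e_1(\omega)+\xi_2\mathbf e_2(\omega)+\xi_3\mathbf e_3(\omega)$ and note that the $\mathbf e_1(\omega)$-component contributes only $O(\delta)$ in every coordinate, so it can be discarded into the error term immediately. Thus modulo $O(\delta)$ we have $\xi\approx\xi_2\mathbf e_2(\omega)+\xi_3\mathbf e_3(\omega)$ with $|\xi_2|\lesssim\delta^{1/2}$ and $\xi_3=1+O(\delta^\eps)$. Now using the reparametrized Frenet relations \eqref{eqn_frenet_frame_new}, in particular $\mathbf e_3'=\mathbf e_2$, a Taylor expansion of $\mathbf e_3$ at $\omega$ gives $\mathbf e_3(\theta)=\mathbf e_3(\omega)+(\theta-\omega)\mathbf e_2(\omega)+O(|\theta-\omega|^2)$, where the $O(\cdot)$ is genuine because $\mathbf e_3$ is now $C^2$ (this is precisely why the reparametrization was carried out). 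Choosing $\theta-\omega=\xi_2$ — which is legitimate since $|\xi_2|\lesssim\delta^{1/2}$, so $|\theta-\omega|\lesssim\delta^{1/2}$ — we get $\xi_2\mathbf e_2(\omega)+\mathbf e_3(\omega)=\mathbf e_3(\theta)+O(|\xi_2|^2)=\mathbf e_3(\theta)+O(\delta)$. Combining, $\xi=\xi_3\mathbf e_3(\theta)+O(\delta)=\mathbf e_3(\theta)+(\xi_3-1)\mathbf e_3(\theta)+O(\delta)$. Here the term $(\xi_3-1)\mathbf e_3(\theta)$ is \emph{not} small in the third standard coordinate — but that is exactly what the $\xi_3$-slot of the cylinder $R_{\sigma,\omega,\lambda}$ absorbs: its first two coordinates are $(e_{31}(\theta),e_{32}(\theta))$ and its third is a free parameter $\xi_3'$ with $|\xi_3'-1|\lesssim\delta^\eps$. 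So I would split the comparison coordinate by coordinate: the first two standard coordinates of $\xi$ match $(e_{31}(\theta),e_{32}(\theta))$ up to $O(\delta)+|\xi_3-1|\cdot|e_{3i}(\theta)|$ for $i=1,2$, and here one must use that $e_{31}(\theta),e_{32}(\theta)=O(\delta^\eps)$ for $\theta$ near $\sigma$ (because $\mathbf e_3(\sigma)=(0,0,1)$ and $\mathbf e_3$ is Lipschitz, so $|\theta-\sigma|\lesssim\delta^\eps$ forces the first two coordinates of $\mathbf e_3(\theta)$ to be $O(\delta^\eps)$) — hence $|\xi_3-1|\cdot|e_{3i}(\theta)|=O(\delta^{2\eps})$; one should double-check the bookkeeping here and, if $\delta^{2\eps}$ is not acceptable, tighten the reduction $|\xi_3-1|\le\delta^{1-\eps}$ or similar so the product is $O(\delta)$. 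The third standard coordinate of $\xi$ is then declared to be the free parameter $\xi_3'$, which lies in $1+O(\delta^\eps)$ as required.

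The main obstacle, and the point that needs the most care, is the coordinate-splitting bookkeeping in the last step: making sure that the ``radial'' fluctuation $\xi_3-1$, which is not $O(\delta)$, only ever appears multiplied by the small quantities $e_{31}(\theta),e_{32}(\theta)$ (which vanish at $\sigma$) when we compare the \emph{transverse} coordinates, while being freely absorbed into the cylinder's third slot. This is the reason the lemma is phrased with a cylinder $R_{\sigma,\omega,\lambda}$ rather than with the cone itself, and it is the reason we first reduced to $\sigma,\omega\in[0,\delta^\eps]$ and $|\xi_3-1|\lesssim\delta^\eps$. I would also explicitly record that, conversely, every point of $R_{\sigma,\omega,\lambda}$ is within $O(\delta)$ of some point of $A_{\omega,\delta^{1/2}}$ if the two-sided statement is needed later; but as stated only the one-sided containment is claimed, so the argument above suffices. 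Everything else — the Taylor remainder bounds, the Lipschitz estimates on $\mathbf e_3$ — is routine given Lemma \ref{lem_frame_derivatives}, the reparametrized relations \eqref{eqn_frenet_frame_new}, and Proposition \ref{prop_error_e_i}.
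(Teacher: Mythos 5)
Your plan correctly identifies the right preliminary reductions, the need to discard the $\mathbf e_1(\omega)$-component, and the role of the reparametrization making $\mathbf e_3\in C^2$ — all of this matches the paper. However, there is a genuine gap in the choice $\theta-\omega=\xi_2$, and you already sensed it when you flagged the ``$\delta^{2\eps}$'' bookkeeping.

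With $\theta=\omega+\xi_2$, Taylor expansion of $e_{3i}(\theta)$ at $\omega$ cancels the $\xi_2 e_{2i}(\omega)$ contributions in both transverse coordinates, leaving residuals $(\xi_3-1)e_{31}(\omega)$ and $(\xi_3-1)e_{32}(\omega)$ (up to $O(\delta)$). Using the sharp bound $|\theta-\sigma|\lesssim\lambda$ (which follows from $\omega\prec\sigma$ and is much stronger than the $\delta^\eps$ you used), the $C^2$ Taylor expansion of $\mathbf e_3$ at $\sigma$ gives $e_{31}(\omega)=O(\lambda^2)$ (because $e_{31}(\sigma)=e_{21}(\sigma)=0$) but only $e_{32}(\omega)=O(\lambda)$ (because $e_{22}(\sigma)=1\ne 0$). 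So the first coordinate is fine: $(\xi_3-1)e_{31}(\omega)=O(\delta^\eps\lambda^2)=O(\delta)$. But the second coordinate is not: $(\xi_3-1)e_{32}(\omega)=O(\delta^\eps\lambda)=O(\delta^{(1+\eps)/2})$, which is $\gg\delta$. Your fallback of tightening the reduction to $|\xi_3-1|\le\delta^{1-\eps}$ is not available — the radial interval has length $\sim\delta^\eps$ by the Cauchy–Schwarz cost of the preliminary splitting, and shrinking it to $\delta^{1-\eps}$ would incur a polynomial-in-$\delta^{-1}$ loss, destroying the estimate.

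The paper's choice is instead
\begin{equation*}
    \theta-\omega=\xi_2 - \frac{(\xi_3-1)e_{32}(\omega)}{e_{22}(\omega)}\quad(\text{up to sign}),
\end{equation*}
which is chosen to kill the second standard coordinate \emph{exactly} (the extra shift is $O(\delta^\eps\lambda)\le\delta^{1/2}$, so $|\theta-\omega|\lesssim\delta^{1/2}$ is preserved). The resulting residual in the first coordinate becomes the ``determinant'' expression $(\xi_3-1)\,e_{22}(\omega)^{-1}\bigl(e_{31}(\omega)e_{22}(\omega)-e_{21}(\omega)e_{32}(\omega)\bigr)$, and the crucial algebraic point — which your argument misses — is that \emph{both} terms inside the parentheses are $O(\lambda^2)$: $e_{31}=O(\lambda^2)$ by the second-order Taylor expansion, and $e_{21}e_{32}=O(\lambda)\cdot O(\lambda)=O(\lambda^2)$. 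Hence the first-coordinate error is $O(\delta^\eps\lambda^2)=O(\delta)$. In short: one cannot match both transverse coordinates with the naive $\theta$; one must spend $\theta$ to zero out the $\mathbf e_2(\sigma)$-direction and then rely on this determinant cancellation to control the $\mathbf e_1(\sigma)$-direction.
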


\begin{proof}[Proof of Lemma \ref{lem_PS_cylinder}]

Let $\xi=\sum_{i=1}^3 \xi_i \mathbf e_i(\omega)\in A_{\omega,\delta^{1/2}}$. It suffices to find $\theta,\xi'_3$ such that $|\xi-(e_{31}(\theta),e_{32}(\theta),\xi'_3)|\lesssim \lambda^2\delta^\eps$.

We Taylor expand $\mathbf e_3$ with respect to the centre $\omega$. Using \eqref{eqn_frenet_frame_new}, $|\xi_1|\lesssim \delta$ and $|\omega-\theta|\le\delta^{1/2}$, we get
\begin{align}
    &\sum_{i=1}^3 \xi_1 \mathbf e_i(\omega)-(e_{31}(\theta),e_{32}(\theta),\xi_3' )\nonumber\\
    =&\Big(\xi_2 e_{21}(\omega)+(\xi_3-1)e_{31}(\omega)+e_{21}(\theta-\omega)+O(\delta),\nonumber\\
    &\xi_2 e_{22}(\omega)+(\xi_3-1)e_{32}(\omega)+e_{22}(\omega)(\theta-\omega)+O(\delta)\nonumber\\
    &\xi_2 e_{23}(\omega)+\xi_3 e_{33}(\omega)-\xi_3'+O(\delta)\Big).\label{eqn_lemma_cylinder_proof}
\end{align}
We thus choose $\xi'_3=\xi_2 e_{23}(\omega)+\xi_3 e_{33}(\omega)$, which satisfies $|\xi'_3-1|\lesssim \delta^\eps$ since $|e_{33}(\omega)-1|=|e_{33}(\omega)-e_{33}(\sigma)|\lesssim \delta^\eps$. This makes the third coordinate of \eqref{eqn_lemma_cylinder_proof} of the order $O(\delta)$.

Next, we put
\begin{equation}\label{eqn_theta_proof_lemma_cylinder}
    \theta=\omega+\frac {\xi_2 e_{22}(\omega)-(\xi_3-1)e_{32}(\omega)}{e_{22}(\omega)},
\end{equation}
which satisfies $|\theta-\omega|\lesssim \delta^{1/2}$. Indeed, the denominator is $\sim 1$ since $e_{22}(\omega)\sim 1$. For the numerator, we have $|\xi_2|\lesssim \delta^{1/2}$ and $|\xi_3-1|\leq \delta^\eps$, and using Lemma \ref{eqn_e_i=e_i} and \eqref{eqn_frenet_frame_new}, we get
\begin{equation}
    |e_{32}(\omega)|=|e_{32}(\omega)-e_{32}(\sigma)|\lesssim |\omega-\sigma|\le \lambda,
\end{equation}
and so $|\theta-\omega|\lesssim \delta^\eps \lambda\le \delta^{1/2}$. This makes the second coordinate of \eqref{eqn_lemma_cylinder_proof} of the order $O(\delta)$.

It remains to estimate the first coordinate. Using \eqref{eqn_theta_proof_lemma_cylinder}, the first coordinate of \eqref{eqn_lemma_cylinder_proof} is within $O(\delta)$ of
\begin{equation}\label{eqn_Sep_1_10pm}
    e_{22}(\omega)^{-1}(\xi_3-1)(e_{31}(\omega)e_{22}(\omega)-e_{21}(\omega)e_{32}(\omega)).
\end{equation}
Now we recall $|\omega-\sigma|\le \lambda$, and use \eqref{eqn_frenet_frame_new}, Taylor expansion and \eqref{eqn_e_i=e_i} to get 
\begin{equation}
    e_{31}(\omega)=e_{31}(\sigma)+e_{21}(\sigma)(\omega-\sigma)+O(\lambda^2)=O(\lambda^2).
\end{equation}
Similarly, we also have $e_{32}(\omega)=O(\lambda^2)$. Thus \eqref{eqn_Sep_1_10pm} is bounded above by $O(\lambda^2)$, and so the first coordinate of \eqref{eqn_lemma_cylinder_proof} is of the order $O(\lambda^2\delta^\eps)=O(\delta)$.
\end{proof}

\subsubsection{Applying decoupling for \texorpdfstring{$C^2$}{Lg} curves}
The most important consequence of Lemma \ref{lem_PS_cylinder} is that if we project every plank $R_{\sigma,\omega,\lambda}$ onto $\mathbf e_3(\sigma)^\perp$, then we are essentially in the case of decoupling for $C^2$ curves as in Corollary \ref{cor_decoupling_C^2_curve}. This is a general philosophy of decoupling called the projection principle, namely, the decoupling constant for a family of subsets is dominated by the decoupling constant for the same family of subsets under any fixed orthogonal projection. Using Corollary \ref{cor_decoupling_C^2_curve} and noting the $O(1)$ overlap, the result is that
\begin{equation}\label{eqn_projection_decoupling}
    \left\|f_\sigma\right\|_{L^6(\R^3)}\lesssim_\eps \delta^{-\eps^2}\norm{\norm{f_\omega}_{L^6(\R^3)}}_{\ell^2(\omega\prec\sigma)}.
\end{equation}
The choice of $\delta^{-\eps^2}$ but not $\delta^{-\eps}$ is necessary for the bootstrap inequality in the next subsection to give the desired conclusion.

We give more details of the proof of \eqref{eqn_projection_decoupling} here. Fix $z\in \R$. For each $\omega\prec \sigma$, define $g_{z,\omega}:\R^2\to \C$ by 
\begin{equation}
    \widehat {g_{z,\omega}}(\xi_1,\xi_2):=\int_{\R}\widehat {f_\omega}(\xi_1,\xi_2,\xi_3)e(z \xi_3)d\xi_3.
\end{equation}
Then by Lemma \ref{lem_PS_cylinder}, $\widehat {g_{z,\omega}}$ is supported on the $O(\delta)$ neighbourhood of
\begin{equation}
    \{(e_{31}(\theta),e_{32}(\theta)):|\theta-\omega|\lesssim \delta^{1/2}\},
\end{equation}
which has $O(1)$ overlap, by the same proof of Lemma \ref{lem_same_and_distinct} as in Section \ref{sec_covering_lemmas}. The planar curve $(e_{31}(\theta),e_{32}(\theta))$ is $C^2$ and has curvature comparable to $ 1$. Thus we may apply Corollary \ref{cor_decoupling_C^2_curve} to the family $\{g_{z,\omega}:\omega\prec \sigma\}$ to obtain
\begin{equation}
    \left\|\sum_{\omega\prec \sigma}g_{z,\omega}\right\|_{L^6(\R^2)}\lesssim_\eps \delta^{-\eps^2}\norm{\norm{g_{z,\omega}}_{L^6(\R^2)}}_{\ell^2(\omega\prec\sigma)}.
\end{equation}
The desired inequality \eqref{eqn_projection_decoupling} then follows by Fubini's theorem and Minkowski's inequality.

\subsubsection{Iteration}
Combining \eqref{eqn_applying_D(lambda)} and \eqref{eqn_projection_decoupling}, we thus get
\begin{equation}    
            \left\|\sum_{\omega\in \Omega}f_\omega\right\|_{L^6(\R^3)}\leq D(\lambda^2)C_\eps\delta^{-\eps^2}\norm{\norm{f_\omega}_{L^6(\R^3)}}_{\ell^2(\omega\in \Omega)}, \quad \forall \eps>0, 
\end{equation}
which, by definition of $D(\delta)$, implies the following bootstrap inequality:
\begin{equation}
    D(\delta)\le D(\delta^{1-\eps})C_\eps\delta^{-\eps^2},\quad \forall \eps>0.
\end{equation}
It suffices to iterate this bootstrap inequality. For $\delta\ge 1/2$ we trivially have $D(\delta)\sim 1$. If $\delta<1/2$, we let $N$ be the largest integer such that $\delta^{(1-\eps)^N}< 1/2$, whence $N\lesssim \eps^{-1}\log\log \delta^{-1}$, and
\begin{equation}
    D(\delta)\le D(\delta^{(1-\eps)^{N+1}})C_\eps^{N}\delta^{-\eps^2 (1+(1-\eps)+\cdots+(1-\eps)^N)}\lesssim_\eps \delta^{-2\eps}.
\end{equation}

\bibliographystyle{alpha}
\bibliography{sources}

\end{document}